\let\oldtocsection=\tocsection
\let\oldtocsubsection=\tocsubsection
\renewcommand{\tocsection}[2]{\hspace{-1mm}\bf\oldtocsection{ #1}{#2}}
\renewcommand{\tocsubsection}[2]{\hspace{6.9mm}\oldtocsubsection{#1}{#2}}
\theoremstyle{plain}
\newtheorem{thm}{Theorem}[section]
\newtheorem{prop}[thm]{Proposition}
\newtheorem{cor}[thm]{Corollary}
\newtheorem{lem}[thm]{Lemma}
\theoremstyle{definition}
\newtheorem{defn}[thm]{Definition}
\newtheorem{exmp}[thm]{Example}
\theoremstyle{remark}
\newtheorem{rem}[thm]{Remark}
\newcommand{\m}{\mathfrak{m}}
\newcommand{\p}{\partial}
\newcommand{\R}{\mathbb{R}}
\newcommand{\N}{\mathbb{N}}
\newcommand{\grad}{\mathrm{grad}}
\newcommand{\identity}{\mathrm{Id}}
\newcommand{\scal}{\mathrm{scal}}
\newcommand{\ric}{\mathrm{Ric}}
\newcommand{\trace}{\mathrm{tr}}
\newcommand{\volume}{\mathrm{vol}}
\newcommand{\dv}{\, dV}
\def\hg{\widehat{g}}
\def\olg{\overline{g}}
\newcounter{mnotecount}[section]
\numberwithin{equation}{section}
\newtheorem{introthm}{\bf Theorem}
\title[A volume-renormalized mass for AH manifolds]{A volume-renormalized mass for asymptotically hyperbolic manifolds} 
\author[M. Dahl]{Mattias Dahl}
\address{Institutionen f\"or Matematik \\
Kungliga Tekniska H\"ogskolan \\
100 44 Stockholm \\
Sweden} 
\email{dahl@kth.se}
\author[K. Kr\"{o}ncke]{Klaus Kr\"{o}ncke}
\address{Institutionen f\"or Matematik \\
Kungliga Tekniska H\"ogskolan \\
100 44 Stockholm \\
Sweden} 
\email{kroncke@kth.se}
\author[S. McCormick]{Stephen McCormick}
\address{Institutionen f\"or teknikvetenskap och matematik \\
Lule{\aa} tekniska universitet \\
971 87 Lule\aa \\
Sweden} 
\email{stephen.mccormick@ltu.se}
\begin{document}

\begin{abstract}
We define a geometric quantity for asymptotically hyperbolic manifolds, which we call the volume-renormalized mass. It is essentially a linear combination of
the ADM mass surface integral and a renormalization of the volume.

We show that the volume-renormalized mass is well-defined and diffeomorphism invariant under weaker fall-off conditions than required to ensure that the renormalized volume and the ADM mass surface integral are well-defined separately. We prove several positivity results for the volume-renormalized mass. We also use it to define a renormalized Einstein--Hilbert action and a renormalized expander entropy which is nondecreasing under the Ricci flow. Further, we show that local maximizers of the entropy are local minimizers of the volume-renormalized mass.
\end{abstract}

\maketitle
\vspace{-8mm}

\onehalfspacing
\tableofcontents
\singlespacing

\section{Introduction}

For manifolds asymptotic to hyperbolic space, there is an established concept of asymptotically hyperbolic mass defined through a Hamiltonian formulation of Einstein's equations in general relativity, see \cite{ChruscielHerzlich2003, Wang2001mass}. The reference spacetime is the static Anti-de Sitter metric.

In this paper, we introduce a new mass-like quantity for the larger class of asymptotically hyperbolic (AH) manifolds that are asymptotically Poincar\'e--Einstein (APE), which means that $\ric+(n-1)g$ decays at an appropriate rate. By AH we mean that the manifolds are conformally compact with sectional curvature tending to $-1$ towards the conformal boundary. This new mass can also be deduced from a reduced Hamiltonian formulation of Einstein's equation, see \cite{DKM_forthcoming,FischerMoncrief2002}. In this case, the reference spacetime is an expanding Milne-type metric.




Given two AH manifolds $(M^n,g)$ and $(\widehat{M}^n,\hg)$ with diffeomorphic conformal infinities,
the \textit{volume-renormalized mass} of $g$ with respect to $\hg$ is defined as
\begin{equation}\begin{split}\label{eq-defn0}
\mathfrak{m}_{\rm{VR},\hg}(g)&=\int_{\partial_{\infty} M} (\mathrm{div}_{\hg}(\varphi_*g)-d\trace_{\hg}(\varphi_*g))(\nu)dA\\
&\qquad +2(n-1)\left(\int_M \dv_g-\int_{\widehat{M}}\dv_{\hg} \right),
\end{split}
\end{equation}
where $\varphi$ is a diffeomorphism between neighborhoods of the conformal infinities such that $\varphi_*g-\hg$ decays suitably, and the linear combination of integrals should be understood as an appropriate limit. 



When the asymptotic fall-off is so fast that the boundary integral vanishes, the volume-renormalized mass is simply the renormalized volume. In this sense, we can also view the quantity as a generalization of the renormalized volume. 
 Positivity of the renormalized volume has been proven for metrics on $\mathbb R^3$ asymptotic to the standard hyperbolic metric by Brendle and Chodosh \cite{BrendleChodosh2014}, which can thus be viewed as a positive mass theorem for the volume-renormalized mass under strong decay conditions.

Let us mention that the definition of APE manifolds given above is  convenient for the following reason: If $(M^n,g)$ and $(\widehat{M}^n,\hg)$ are APE manifolds with isometric conformal boundaries, there exists a diffeomorphism $\varphi$ between neighborhoods of the conformal infinities such that $\varphi_*g-\hat{g}=O(e^{-\delta r})$ for some $\delta>\frac{n-1}{2}$.
This is a consequence of the well-known Fefferman--Graham expansion for AH Einstein metrics near the conformal boundary, see Proposition \ref{prop:APE}.

We are now ready to state the main results of this paper.
\begin{introthm}\label{mainthm:mass_well_def}
Let $(M^n,g)$ and $(\widehat{M}^n,\hg)$ be APE manifolds with isometric conformal boundaries that both satisfy $\scal+n(n-1)\in L^1$.
Then $\mathfrak{m}_{\rm{VR},\hg}(g)$ is well-defined and finite. 
\end{introthm}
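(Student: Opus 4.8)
The plan is to realize $\mathfrak{m}_{\rm{VR},\hg}(g)$ as a genuine limit over an exhaustion and to show that, after converting the boundary term into a bulk integral, the integrand is absolutely integrable on the end. First I would invoke Proposition \ref{prop:APE} to fix a diffeomorphism $\varphi$ between neighborhoods of the conformal infinities with $h:=\varphi_*g-\hg=O(e^{-\delta r})$ for some $\delta>\tfrac{n-1}{2}$, where $r$ is a geodesic-type defining coordinate on the end (so $dV_{\hg}\sim e^{(n-1)r}\,dr\,d\theta$). Working on $\widehat M$, let $\Omega_r$ be the annular region bounded by a fixed inner hypersurface $\partial\widehat K$ and the coordinate sphere $S_r$, put $M_r=K\cup\varphi^{-1}(\Omega_r)$ and $\widehat M_r=\widehat K\cup\Omega_r$, and set
\begin{equation*}
\mathfrak{m}_r=\int_{S_r}\big(\mathrm{div}_{\hg}(\varphi_*g)-d\trace_{\hg}(\varphi_*g)\big)(\nu)\,dA+2(n-1)\Big(\int_{M_r}dV_g-\int_{\widehat M_r}dV_{\hg}\Big),
\end{equation*}
so that $\mathfrak{m}_{\rm{VR},\hg}(g)=\lim_{r\to\infty}\mathfrak{m}_r$. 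The goal is to prove this limit exists and is finite.

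The second step is to turn the surface integral into a bulk integral. Since the operator $T\mapsto\mathrm{div}_{\hg}T-d\trace_{\hg}T$ is linear and annihilates the $\hg$-parallel tensor $\hg$, we have $\mathrm{div}_{\hg}(\varphi_*g)-d\trace_{\hg}(\varphi_*g)=\mathrm{div}_{\hg}h-d\trace_{\hg}h$, so the divergence theorem on $\Omega_r$ together with the pointwise identity
\begin{equation*}
\mathrm{div}_{\hg}\big(\mathrm{div}_{\hg}h-d\trace_{\hg}h\big)=D\scal_{\hg}(h)+\langle\ric_{\hg},h\rangle_{\hg}
\end{equation*}
gives $\int_{S_r}(\cdots)(\nu)\,dA=\int_{\Omega_r}\big(D\scal_{\hg}(h)+\langle\ric_{\hg},h\rangle\big)\,dV_{\hg}+c_0$, where $c_0=\int_{\partial\widehat K}(\cdots)(\nu)\,dA$ is independent of $r$; similarly $\int_{M_r}dV_g-\int_{\widehat M_r}dV_{\hg}=\int_{\Omega_r}(dV_{\varphi_*g}-dV_{\hg})+c_1$ with $c_1$ fixed. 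I would then Taylor expand along $\hg+th$, writing $\scal_{\varphi_*g}=\scal_{\hg}+D\scal_{\hg}(h)+Q(h)$ and $dV_{\varphi_*g}=\big(1+\tfrac12\trace_{\hg}h+O(|h|^2)\big)\,dV_{\hg}$, with $Q(h)$ collecting all contributions at least quadratic in $h$ and its derivatives.

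The heart of the argument is a bookkeeping of the linear terms showing that the coefficient $2(n-1)$ is exactly tuned to cancel the non-integrable contributions. Substituting the expansions and using $\scal_{\hg}=-n(n-1)+(\scal_{\hg}+n(n-1))$ and $\ric_{\hg}=-(n-1)\hg+(\ric_{\hg}+(n-1)\hg)$, the $r$-dependent part of the integrand of $\mathfrak{m}_r$ rearranges as
\begin{equation*}
\begin{split}
\big(\scal_g+n(n-1)\big)\,dV_g&-\big(\scal_{\hg}+n(n-1)\big)\,dV_{\hg}+\langle\ric_{\hg}+(n-1)\hg,h\rangle\,dV_{\hg}\\
&-\tfrac12\big(\scal_{\hg}+n(n-1)\big)\trace_{\hg}h\,dV_{\hg}+\widetilde Q(h)\,dV_{\hg},
\end{split}
\end{equation*}
where the multiples of $\trace_{\hg}h$ with constant coefficient, which are the only terms that would diverge against the volume growth $e^{(n-1)r}$, cancel identically. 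Each surviving term is absolutely integrable on the end: the first two lie in $L^1$ by hypothesis; the third is a product of the APE Ricci defect of $\hg$ with $h$, integrable by Cauchy--Schwarz since both factors are in $L^2$; the fourth is an $L^1$ factor times the decaying $\trace_{\hg}h$; and the remainder $\widetilde Q(h)=O(|h|^2+|\nabla h|^2)=O(e^{-2\delta r})$ is integrable precisely because $2\delta>n-1$. Hence $\mathfrak{m}_r$ converges to a finite limit, and by absolute integrability the limit is independent of the shape of the exhausting hypersurfaces $S_r$.

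I expect the main obstacle to be the third step: verifying the exact algebraic cancellation of the non-integrable linear-in-$h$ terms and confirming that every remainder is controlled by the threshold $\delta>\tfrac{n-1}{2}$ supplied by Proposition \ref{prop:APE}. This is exactly where the specific linear combination in \eqref{eq-defn0} is essential, and it is the reason the volume-renormalized mass is well-defined under weaker fall-off than either the ADM surface integral or the renormalized volume separately. (Independence of the choice of $\varphi$ is a separate diffeomorphism-invariance statement and is not needed for finiteness.)
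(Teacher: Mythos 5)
Your proposal is correct and follows essentially the same route as the paper: the paper also converts the ADM boundary term into a bulk integral via the divergence theorem and the identity $\mathrm{div}_{\hg}(\mathrm{div}_{\hg}h-d\trace_{\hg}h)=D\scal_{\hg}[h]+\langle h,\ric_{\hg}\rangle$, Taylor expands $\scal_{\varphi_*g}$ and $\dv_{\varphi_*g}$, and exploits exactly the cancellation you identify, in which the dangerous $(n-1)\trace_{\hg}h$ terms combine into $\langle h,\ric_{\hg}+(n-1)\hg\rangle$, which is integrable by the APE condition and Cauchy--Schwarz. The only organizational difference is that the paper packages the computation as finiteness of the renormalized Einstein--Hilbert action $S^{\varphi}_{\hg}(g)$ (Theorem \ref{thm-well-def-mass}), which does not require $\scal_g+n(n-1)\in L^1$, and then obtains the mass statement by adding that hypothesis, whereas you impose it from the start and work with $\mathfrak{m}_r$ directly.
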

For the proof of the theorem, we study a renormalized version of the Einstein--Hilbert action for AH manifolds.

%
%

A priori, the definition of the volume-renormalized mass depends on the choice of  $\varphi$. From a physical perspective however, a mass should be a coordinate-invariant object and therefore not depend on the choice of diffeomorphism $\varphi$. We are indeed able to show that this is the case for the volume-renormalized mass, provided that an additional condition holds.
\begin{introthm}\label{mainthm:mass_coord_inv}
Let $(M^n,g)$ and $(\widehat{M}^n,\hg)$ be APE manifolds with isometric conformal boundaries which both satisfy $\scal+n(n-1)\in L^1$.
  If the conformal boundaries are proper, $\mathfrak{m}_{\rm{VR},\hg}(g)$ does not depend on the choice of $\varphi$.
\end{introthm}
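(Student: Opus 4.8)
The plan is to reduce the statement to an invariance under \emph{asymptotic isometries} of $\hg$ and then establish that invariance by a first-variation computation. Given two admissible diffeomorphisms $\varphi_1,\varphi_2$, both yielding $\varphi_{i*}g-\hg=O(e^{-\delta r})$ with $\delta>\frac{n-1}{2}$, I would set $\bar g=\varphi_{1*}g$ and $\psi=\varphi_2\circ\varphi_1^{-1}$, so that $\varphi_{2*}g=\psi_*\bar g$. First I would verify that $\psi$ is an asymptotic isometry, i.e.\ $\psi_*\hg-\hg=O(e^{-\delta r})$, which follows from the identity $\psi_*\hg-\hg=\psi_*(\hg-\bar g)+(\psi_*\bar g-\hg)$ together with the decay of $\bar g-\hg=O(e^{-\delta r})$ and of $\psi_*\bar g-\hg=\varphi_{2*}g-\hg=O(e^{-\delta r})$, plus uniform $C^1$-control on $\psi$ near infinity to guarantee that pushing forward an $O(e^{-\delta r})$ tensor preserves the rate. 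This reduces the theorem to showing $\mathfrak{m}_{\rm{VR},\hg}(\psi_*\bar g)=\mathfrak{m}_{\rm{VR},\hg}(\bar g)$ for every asymptotic isometry $\psi$.

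Next I would connect $\psi$ to the identity through a path of diffeomorphisms. The natural decomposition writes $\psi$, up to a diffeomorphism asymptotic to the identity, as a genuine boundary symmetry of the conformal infinity; this dichotomy comes from an asymptotic analysis of the Killing operator, since a vector field with $\mathcal L_X\hg=O(e^{-\delta r})$ splits into slowly-growing modes (corresponding to conformal transformations of $(\partial_\infty M,[\gamma])$) and decaying modes (corresponding to the asymptotic-to-identity part). The role of the \emph{proper} hypothesis enters precisely here: it guarantees that the residual boundary-symmetry modes leave the renormalized quantities unchanged, ruling out the finite-dimensional family of boundary conformal maps that would otherwise move the mass. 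It then suffices to differentiate $\mathfrak{m}_{\rm{VR},\hg}$ along a flow $\psi_t$ generated by a sufficiently decaying $X$, i.e.\ to show $\frac{d}{dt}\mathfrak{m}_{\rm{VR},\hg}((\psi_t)_*\bar g)=0$.

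For the core computation I would differentiate the two pieces of $\mathfrak{m}_{\rm{VR},\hg}$ separately along $g_t=(\psi_t)_*\bar g$. Since $g_t$ is isometric to $\bar g$, the intrinsic scalar curvature and volume form are merely transported, but the boundary integral and the renormalized volume are measured against the fixed $\hg$ and therefore do vary. Using $\frac{d}{dt}g_t=\mathcal L_{X_t}g_t$ together with the linearizations of $\mathrm{div}_{\hg}$, $\trace_{\hg}$ and the volume form, and then Stokes' theorem, the derivative collapses to a single integral over $\partial_\infty M$. The coefficient $2(n-1)$ is exactly what makes the variation of the boundary term and the variation of the renormalized volume combine into a total divergence, so the interior contributions cancel and only the boundary-at-infinity term survives.

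The main obstacle will be showing that this surviving boundary integral vanishes under the weak decay $\delta>\frac{n-1}{2}$. Because this rate lies below the threshold at which the boundary term and the volume term are separately finite, no individual term decays fast enough on its own, and the vanishing must arise from a cancellation using both the APE condition on $\ric+(n-1)g$ and the integrability $\scal+n(n-1)\in L^1$, exactly in the spirit of the proof of Theorem~\ref{mainthm:mass_well_def}. I would therefore expand the integrand in the collar coordinate, isolate the borderline $e^{-(n-1)r}$ contributions, and invoke the Fefferman--Graham--type expansion of Proposition~\ref{prop:APE} to identify the non-decaying pieces as precisely those annihilated by the properness of the conformal boundary. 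Integrating $\frac{d}{dt}\mathfrak{m}_{\rm{VR},\hg}(g_t)=0$ along the path then yields the desired $\varphi$-independence.
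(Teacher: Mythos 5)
Your reduction to asymptotic isometries, and your instinct that properness should dispose of the boundary-symmetry part, are in the right spirit (the paper does essentially this, via Lemma \ref{lem:bothslotdiffeo} and the additivity of Proposition \ref{cor:additivity_renormalized_EH}). But there is a genuine gap in your treatment of the asymptotic-to-identity part, which is exactly where the difficulty lies. You claim this part corresponds to ``decaying modes'' and propose to differentiate $\mathfrak{m}_{\rm{VR},\hg}$ along a flow generated by a sufficiently decaying vector field $X$. If $X\in C^{k,\alpha}_{\delta}(TM)$, invariance is the easy case already settled by Lemma \ref{lem:smalldiffeoinvar} and Corollary \ref{cor:diff_inv:EH}: the first variation vanishes by the contracted Bianchi identity, with no surviving boundary integral and no cancellation analysis needed. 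The hard case is the opposite one: a diffeomorphism $\chi$ fixing the conformal boundary pointwise with $\chi_*\hg-\hg\in C^{k,\alpha}_{\delta}$ is in general generated by a vector field that merely vanishes at $\partial\widehat{N}$ in the compactified picture, hence is only \emph{bounded} --- not $O(e^{-\delta r})$ --- with respect to the AH metric (the paper makes this point explicitly after Corollary \ref{cor:diff_inv:EH}, citing Lorentz boosts). For such $X$, even making sense of the first variation requires first proving that $\frac{d}{dt}\chi_t^*\hg\in C^{k,\alpha}_{\delta}$ along the flow; this weighted-regularity statement is the technical heart of the paper's Lemma \ref{lem:coord_invariance_mass} and is absent from your argument.

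The second problem is your resolution of the surviving boundary integral: ``identify the non-decaying pieces as precisely those annihilated by the properness of the conformal boundary'' misuses the hypothesis. Properness is not a condition on asymptotic expansions; it says the conformal class bounds a PE manifold $(\overline{M},\olg)$ to which every boundary conformal isometry extends as a bulk isometry. The paper uses it twice, both times structurally: first to extend $\varphi|_{\partial\widehat{N}}$ to a genuine isometry (reducing to the asymptotic-to-identity case, where isometries preserve the mass trivially by Lemma \ref{lem:bothslotdiffeo}), and second --- crucially, and missing from your proposal --- to replace the merely APE reference $\hg$ by an exactly PE reference $\olg$ via additivity. This is what closes the flow argument: along the path $\chi_t^*\olg$ every metric is PE, hence a critical point of the renormalized Einstein--Hilbert action by Corollary \ref{cor-EH}, so $S$ is constant along the path; since these metrics all have scalar curvature $-n(n-1)$, one has $S=-\m_{\rm{VR}}$ there, giving $\m^{\identity}_{\rm{VR},\olg}(\chi^*\olg)=\m^{\identity}_{\rm{VR},\olg}(\olg)=0$ with no boundary cancellation at all. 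Without passing to the PE filling, your path consists of non-critical metrics of $S_{\hg}$ and the vanishing you need is not automatic; with it, nothing remains to cancel. The two missing ideas are therefore: the weighted regularity of $\frac{d}{dt}\chi_t^*\olg$ for flows of non-decaying vector fields, and the criticality argument on the proper PE filling.
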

In this context, we call a conformal class proper if it is the conformal boundary of a PE manifold $(\overline{M},\overline{g})$ such that every isometry of the conformal boundary extends to an isometry of $(\overline{M},\overline{g})$.
It is easy to see that the conformal class of the round sphere is proper.
It is known that every conformal class of a smooth metric on a compact manifold is the conformal class of a PE manifold, see \cite{GurSze20}. It seems reasonable to believe that every such conformal class is  proper, but we do not have a proof of this conjecture at the moment.

We also show that the mass satisfies an additivity property, see Proposition \ref{cor:additivity_renormalized_EH}. As a consequence, the functional $g\mapsto \mathfrak{m}_{\rm{VR},\hg}(g)$ only changes by a constant if we change the reference metric $\hg$. Summarizing, we have for every proper conformal boundary a natural mass functional which is diffeomorphism invariant and well-defined up to a constant.

We prove the following positive mass theorem for two-dimensional manifolds.
\begin{introthm}\label{mainthm:PMT_2D}
Consider a surface $(M^2,g)$ asymptotic to $\mathbb{R}^2$ with the metric $\widehat{g}  = dr^2+\sinh^2(r)\left(\frac{\omega}{2\pi}\right)^2d\theta^2$, 
which is the hyperbolic metric with angular defect $\omega$.
Under the assumption that $\scal_g+2$ is nonnegative and integrable we have  
\begin{align}	
\m_{\rm{VR},\widehat{g}}(g)+2(2\pi-\omega) \geq 0,
\end{align}
where equality holds if and only if $(M^2,g)$ is isometric to $(\widehat{M},\widehat{g})$.
\end{introthm}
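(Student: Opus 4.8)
The plan is to reduce everything to the Gauss--Bonnet theorem, which is available because $\dimn M=2$. Here $\scal_g=2K_g$, where $K_g$ is the Gaussian curvature, so the hypothesis reads $K_g\geq -1$ with $K_g+1\in L^1$. I claim the theorem follows once the identity
\begin{equation}\label{eq-sketch-key}
\m_{\rm{VR},\hg}(g)=\int_M(\scal_g+2)\,dA_g-2(2\pi-\omega)
\end{equation}
is established, where $dA_g$ is the area element of $g$. Granting \eqref{eq-sketch-key}, the left-hand side of the desired inequality becomes
\[
\m_{\rm{VR},\hg}(g)+2(2\pi-\omega)=\int_M(\scal_g+2)\,dA_g\geq 0,
\]
since the integrand is nonnegative, with equality forcing $\scal_g+2\equiv 0$, i.e. $K_g\equiv -1$. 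This exhibits \eqref{eq-key} as a two-dimensional instance of the renormalized Einstein--Hilbert action, which in this dimension is essentially topological.

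To prove \eqref{eq-sketch-key} I would exhaust $M$ and $\widehat{M}$ by geodesic disks $M_r,\widehat{M}_r$ of radius $r$, matched near infinity through $\varphi$, and apply Gauss--Bonnet to each. For the smooth disk $(M_r,g)$ one has $\int_{M_r}K_g\,dA_g+\int_{\p M_r}\kappa_g\,ds_g=2\pi$; writing $K_g=\tfrac12(\scal_g+2)-1$ gives
\begin{equation}\label{eq-sketch-GB-g}
\tfrac12\int_{M_r}(\scal_g+2)\,dA_g-\int_{M_r}dA_g+\int_{\p M_r}\kappa_g\,ds_g=2\pi.
\end{equation}
The reference cone $\hg$ has constant curvature $-1$ away from its tip and an angular defect $2\pi-\omega$ concentrated there, so the singular Gauss--Bonnet theorem yields
\begin{equation}\label{eq-sketch-GB-ghat}
-\int_{\widehat{M}_r}dA_{\hg}+(2\pi-\omega)+\int_{\p\widehat{M}_r}\kappa_{\hg}\,ds_{\hg}=2\pi.
\end{equation}
Subtracting \eqref{eq-sketch-GB-ghat} from \eqref{eq-sketch-GB-g} and multiplying by $2$ gives, for every $r$, the exact relation
\begin{equation}\label{eq-sketch-diff}
\begin{split}
\int_{M_r}(\scal_g+2)\,dA_g-2(2\pi-\omega)
&=2\Big(\int_{M_r}dA_g-\int_{\widehat{M}_r}dA_{\hg}\Big)\\
&\quad-2\Big(\int_{\p M_r}\kappa_g\,ds_g-\int_{\p\widehat{M}_r}\kappa_{\hg}\,ds_{\hg}\Big).
\end{split}
\end{equation}

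The remaining ingredient is the first-variation identity linking the mass surface integral to the difference of total geodesic curvatures: with $h=\varphi_*g-\hg$, the ADM-type integrand is, to leading order, minus twice the linearization of the total geodesic curvature, so that
\begin{equation}\label{eq-sketch-lin}
\int_{\p M_r}(\mathrm{div}_{\hg}h-d\trace_{\hg}h)(\nu)\,dA+2\int_{\p M_r}\kappa_g\,ds_g-2\int_{\p\widehat{M}_r}\kappa_{\hg}\,ds_{\hg}\to 0
\end{equation}
as $r\to\infty$. Substituting \eqref{eq-sketch-diff} and \eqref{eq-sketch-lin} into the definition \eqref{eq-defn0} of $\m_{\rm{VR},\hg}(g)$ (with $n=2$, so the volume coefficient equals $2$) and letting $r\to\infty$ produces \eqref{eq-sketch-key}, the right-hand side being finite precisely because $\scal_g+2\in L^1$. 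The hard part will be \eqref{eq-sketch-lin}: I would expand $\kappa_g$ in powers of $h$, discard the quadratic terms, and verify that the surviving linear boundary term equals the ADM integrand up to a tangential divergence that integrates to zero over the closed curve $\p M_r$. The decay $h=O(e^{-\delta r})$ with $\delta>\tfrac{n-1}{2}=\tfrac12$, available for APE metrics, is exactly the threshold making the quadratic remainder $o(1)$, since the boundary length grows like $e^{r}$ while $|h|^2\sim e^{-2\delta r}$.

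Finally, for rigidity, equality in the theorem forces $\int_M(\scal_g+2)\,dA_g=0$, hence $\scal_g+2\equiv 0$ and $K_g\equiv-1$ on all of $M$. A complete surface of constant curvature $-1$ with the topology of $\R^2$ is locally hyperbolic, and matching the prescribed conical asymptotics of angle $\omega$ together with completeness pins down the metric uniquely, identifying $(M^2,g)$ isometrically with the model $(\widehat{M},\hg)$.
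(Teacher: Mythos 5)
Your asymptotic lemma relating the ADM integrand to the linearized total geodesic curvature is not actually the weak point: in dimension two it is provable exactly along the lines you sketch, and more easily than you fear, because the Einstein tensor vanishes identically, so the first variation of $\int_{B_R}\scal\,\dv$ has no bulk term and equals the ADM surface integral $\m^{\identity}_{{\rm ADM},g}(h,R)$; Gauss--Bonnet makes the same variation equal to $-2$ times the variation of $\int_{\p B_R}\kappa\,ds$, and the quadratic remainder is $O(e^{-2\delta r})$ against boundary length $\sim e^{r}$, which vanishes since $\delta>\tfrac12$. So that step is sound.

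The genuine gap is topological. The theorem does not assume $M\cong\R^2$: being asymptotic to $(\R^2,\hg)$ in the sense of Definition \ref{defn-AH} only constrains a neighborhood of infinity, and $M$ may carry arbitrary topology in a compact set. Your Gauss--Bonnet step for $(M_r,g)$ uses $\chi(M_r)=1$, i.e.\ that the exhausting sets are disks, which silently presupposes $M\cong\R^2$. For general $M$ one has $\chi(M_r)=\chi(\overline{M})-1$, where $\overline{M}$ is the one-point compactification, and your key identity acquires an extra term; the correct statement is \eqref{eq:m_dim_two} of Theorem \ref{thm-2d},
\begin{align*}
\m_{\rm{VR},\hg}(g)+2(2\pi-\omega)=\int_M(\scal_g+2)\,\dv_g+4\pi\bigl(2-\chi(\overline{M})\bigr).
\end{align*}
The inequality itself survives, since $\chi(\overline{M})\le 2$ makes the missing term nonnegative, but your derivation only proves it when $M\cong\R^2$, and the equality case genuinely breaks: equality must force \emph{both} $\scal_g\equiv-2$ \emph{and} $\chi(\overline{M})=2$, and it is precisely the Euler-characteristic term that rules out nontrivial topology. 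Your rigidity paragraph instead assumes ``the topology of $\R^2$,'' which is the very thing to be proved. Note how the paper avoids both the topology bookkeeping at infinity and your boundary analysis: it uses that the renormalized Einstein--Hilbert action is constant in dimension two (Corollary \ref{cor-EH}), deforms $g$ to a metric agreeing with $\hg$ outside a compact set, and then applies the conical Gauss--Bonnet theorem once to compactified surfaces, which produces the $\chi(\overline{M})$ term automatically.
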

For three-dimensional manifolds we prove the following.
\begin{introthm}\label{mainthm:PMT_3D}
Let $g$ be a complete APE metric on $\R^3$ whose conformal boundary is the hyperbolic metric $g_{\rm hyp}$. Assume furthermore that $\scal_g+6$ is nonnegative and integrable. Then $\mathfrak{m}_{{\rm VR},g_{\rm  hyp}}(g)$ is nonnegative and vanishes if and only if $g$ is isometric to $g_{\rm hyp}$.
\end{introthm}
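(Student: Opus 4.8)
The plan is to run a Geroch-type monotonicity argument along an exhaustion of $M\cong\R^3$ by surfaces and to match the limiting quantity at infinity with $\m_{\rm{VR},g_{\rm hyp}}(g)$, thereby reducing the theorem to the scalar curvature sign $\scal_g+6\geq 0$. The guiding principle is that, under strong decay, the boundary integral in the definition of the volume-renormalized mass vanishes and $\m_{\rm{VR},g_{\rm hyp}}(g)=4\,V_{\rm ren}(g)$, so the statement is precisely the volume comparison of Brendle--Chodosh \cite{BrendleChodosh2014}. The content of the present theorem is to upgrade that comparison to the weaker APE fall-off, where the boundary term is genuinely nonzero and must be carried along.

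First I would fix, using Proposition \ref{prop:APE} and the fact that the conformal infinity of $g_{\rm hyp}$ is the round sphere (so that the reference Poincar\'e--Einstein metric is exactly $\mathbb{H}^3$), a diffeomorphism $\varphi$ identifying the ends with $\varphi_*g-g_{\rm hyp}=O(e^{-\delta r})$, $\delta>1$. Since the round conformal class is proper, Theorem \ref{mainthm:mass_coord_inv} guarantees that $\m_{\rm{VR},g_{\rm hyp}}(g)$ is independent of this choice. I would then foliate the end by the level sets $\Sigma_r$ of the radial function and, since $M$ is diffeomorphic to $\R^3$, extend this to a global foliation sweeping out from an interior point; each leaf is a topological $2$-sphere, so Gauss--Bonnet gives $\int_{\Sigma_r}K_{\Sigma_r}=4\pi$. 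Along this foliation I would study a monotone quantity $\mathcal{Q}(r)$ built from the enclosed volume, the area, and the total mean curvature of $\Sigma_r$, whose derivative is controlled from below by $\int_{\Sigma_r}(\scal_g+6)$ after using the Gauss equation and Gauss--Bonnet to eliminate the intrinsic curvature; this is where the hypothesis $\scal_g+6\geq 0$ enters and where the choice of the hyperbolic comparison model fixes the constant $6=n(n-1)$.

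The two remaining pieces are the limits of $\mathcal{Q}(r)$ at the two ends. At the interior, where the leaves collapse to a point, $\mathcal{Q}$ should tend to a nonnegative value (vanishing exactly for the round model), while at infinity I would expand $\mathcal{Q}(r)$ using $\varphi_*g-g_{\rm hyp}=O(e^{-\delta r})$ and $\scal_g+6\in L^1$ and show that $\lim_{r\to\infty}\mathcal{Q}(r)$ is a fixed positive multiple of $\m_{\rm{VR},g_{\rm hyp}}(g)$; the renormalized volume accounts for the bulk term in \eqref{eq-defn0} and the first-order asymptotics of the area and total mean curvature reproduce the surface integral $\int_{\partial_\infty M}(\mathrm{div}_{g_{\rm hyp}}(\varphi_*g)-d\trace_{g_{\rm hyp}}(\varphi_*g))(\nu)\,dA$. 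Combining monotonicity with these two limits yields $\m_{\rm{VR},g_{\rm hyp}}(g)\geq 0$. For the rigidity statement, equality forces $\scal_g+6\equiv 0$ and saturates every comparison inequality, so each leaf is totally umbilic with the model second fundamental form; integrating this rigidity across the foliation identifies $(M,g)$ isometrically with $(\mathbb{H}^3,g_{\rm hyp})$.

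The main obstacle I anticipate is precisely the computation of $\lim_{r\to\infty}\mathcal{Q}(r)$: under the APE decay the boundary term does not vanish, so one must show that the first-order deviations of the area and total mean curvature of $\Sigma_r$ from their hyperbolic values assemble exactly into the ADM-type surface integral in \eqref{eq-defn0}, with the correct coefficient relative to the renormalized volume. This matching is delicate because both the area/volume expansions and the surface integral are only conditionally convergent at the rate $\delta>1$, and controlling the error terms requires the integrability $\scal_g+6\in L^1$ together with the Fefferman--Graham structure from Proposition \ref{prop:APE}. A secondary technical point is the existence and regularity of a global foliation of $\R^3$ with round leaves near infinity; if a smooth radial foliation is not directly available one would instead run the argument along the inverse mean curvature flow in its weak Huisken--Ilmanen formulation and argue by approximation.
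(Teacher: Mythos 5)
Your plan has a genuine gap at its core: the monotone quantity $\mathcal{Q}(r)$ is never constructed, and its existence is not a technical detail to be filled in later --- it is the entire difficulty. The known Geroch-type monotonicity in the asymptotically hyperbolic setting is that of the hyperbolic Hawking mass $|\Sigma|^{1/2}\bigl(16\pi-\int_\Sigma(H^2-4)\bigr)$ along (weak) inverse mean curvature flow, and its limit at infinity, when it can be identified at all, is the Wang--Chru\'sciel--Herzlich mass aspect integral. That quantity requires decay of order $e^{-3r}$ and is simply not defined at the APE rate $\delta>1$; the only well-defined object at this rate is the combination of the ADM-type surface integral with $2(n-1)$ times the renormalized volume. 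To make your limit equal $\m_{\rm{VR},g_{\rm hyp}}(g)$ you must insert an enclosed-volume term into $\mathcal{Q}$, but the derivative of enclosed volume along IMCF is $\int_\Sigma H^{-1}\,dA$, which does not combine with the Geroch computation to produce a sign controlled by $\int_\Sigma(\scal_g+6)$; no such modified monotonicity is known, and proving one would be a new theorem considerably stronger than the statement at hand. Worse, there is a known structural obstruction to exactly this strategy: Neves showed that in the asymptotically hyperbolic setting the IMCF leaves need not become round enough for the Hawking-type quantity to converge to the mass, which is why Geroch monotonicity has failed to yield even the standard AH positive mass theorem. Your fallback (``weak Huisken--Ilmanen formulation and argue by approximation'') inherits this obstruction rather than avoiding it. Note also that Brendle--Chodosh's own positivity of the renormalized volume, which you invoke as the strong-decay special case, is not proved by a Geroch argument but by an isoperimetric-profile comparison, a further sign that the monotone quantity you postulate is unlikely to exist in the form you need.

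For contrast, the paper's proof avoids foliations entirely and reduces to Brendle--Chodosh by conformal methods: using Proposition \ref{prop_conformal_decomposition} one deforms $g$ conformally to $\overline{g}$ with $\scal_{\overline{g}}=-6$; the conformal positive mass theorem (Theorem \ref{thm:conformal_positive_mass}) gives $\m_{\rm{VR},g_{\rm hyp}}(g)\geq\m_{\rm{VR},g_{\rm hyp}}(\overline{g})$ with equality iff $g=\overline{g}$; then a density argument approximates $\overline{g}$ in $C^{k,\alpha}_{\delta}$ by metrics $g_i$ equal to $g_{\rm hyp}$ outside compact sets, whose Yamabe conformal factors are shown by elliptic bootstrapping to decay at any rate $\beta<3$, so that the ADM boundary term vanishes and $\m_{\rm{VR},g_{\rm hyp}}(\overline{g}_i)=4\,RV_{g_{\rm hyp}}(\overline{g}_i)\geq 0$ by Brendle--Chodosh; analyticity of the mass on the constant-scalar-curvature slice $\mathcal{C}$ (Corollary \ref{cor:crit_points_mass}) passes positivity to the limit, and rigidity follows because zero mass makes $\overline{g}$ a critical point of $\m_{\rm{VR},g_{\rm hyp}}|_{\mathcal{C}}$, hence PE, hence of constant curvature since $n=3$. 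If you want to salvage your approach, the honest formulation is that you would first need to prove a new monotonicity theorem adapted to the volume-renormalized mass --- which the paper deliberately circumvents.
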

The proof of this theorem uses  positivity of the renormalized volume by Brendle and Chodosh \cite{BrendleChodosh2014}, combined with a density argument and the following  conformal positive mass theorem.
\begin{introthm}\label{mainthm:conformal_positive_mass}
Let $(M^n,\hg)$ be a complete APE manifold with $\scal_{\hg}=-n(n-1)$, and let $(M^n,g)$ be a complete APE manifold conformal to $(M^n,\hg)$. 
Then if $\scal_g + n(n-1)$ is nonnegative and integrable, we have $\mathfrak{m}_{\rm{VR},\hg}(g) \geq 0$ with equality only if $g = \hg$.
\end{introthm}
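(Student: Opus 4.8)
The plan is to take $\varphi=\mathrm{id}$ and reduce everything to the conformal factor. Since $g$ and $\hg$ are conformal on the same $M$, I would write $g=u^{4/(n-2)}\hg$ with $u>0$; as both are APE with the same conformal infinity, Proposition~\ref{prop:APE} gives $u\to1$ at rate $O(e^{-\delta r})$ with $\delta>\frac{n-1}{2}$ (together with decay of $\partial_r u$). Setting $N=\frac{n+2}{n-2}$, $c_n=\frac{4(n-1)}{n-2}$ and $S:=\scal_g+n(n-1)\ge0$, the conformal change of scalar curvature together with $\scal_{\hg}=-n(n-1)$ yields the Yamabe-type equation, which I would rearrange as
\[
-c_n\,\Delta_{\hg}u+n(n-1)(u^N-u)=S\,u^N\ge0 .
\]
This single equation drives the whole argument.

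From it I would extract two facts. First, a maximum principle gives $u\ge1$: at an interior minimum one has $\Delta_{\hg}u\ge0$, so $n(n-1)(u^N-u)=Su^N+c_n\Delta_{\hg}u\ge0$ forces $u\ge1$ there, and since $u\to1$ at infinity any minimum below $1$ would be attained in a compact set, hence $u\ge1$ on all of $M$. Second, I would rewrite the mass as one convergent bulk integral. With $\varphi=\mathrm{id}$ a direct computation gives $\mathrm{div}_{\hg}(g)-d\trace_{\hg}(g)=-(n-1)\,d\!\left(u^{4/(n-2)}\right)$, so the surface integral equals $-(n-1)\lim_R\int_{\partial B_R}\partial_\nu\!\left(u^{4/(n-2)}\right)dA$; using the fall-off to discard the factor $u^{4/(n-2)-1}-1$ modulo a term that vanishes in the limit (this is exactly where $\delta>\frac{n-1}{2}$ and $S\in L^1$ are used), the divergence theorem and the equation above turn it into $\lim_R\int_{B_R}(\scal_g u^N+n(n-1)u)\dv_{\hg}$. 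Adding the volume term $2(n-1)\int_M(u^{N+1}-1)\dv_{\hg}$ (note $\dv_g=u^{N+1}\dv_{\hg}$) and regrouping via $\scal_g=S-n(n-1)$ produces
\[
\mathfrak{m}_{\rm{VR},\hg}(g)=\int_M\Big[S\,u^N+(n-1)\,k(u)\Big]\dv_{\hg},\qquad k(u):=nu-nu^N+2u^{N+1}-2 .
\]

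Finally, positivity would be pointwise. The first term is $\ge0$ since $S\ge0$ and $u>0$. For the second I would check $k(1)=k'(1)=k''(1)=0$ and $k''(u)>0$ for $u>1$, so $k$ is nonnegative (and strictly increasing) on $[1,\infty)$; combined with $u\ge1$ this makes $(n-1)k(u)\ge0$. Hence the integrand is nonnegative and $\mathfrak{m}_{\rm{VR},\hg}(g)\ge0$. In the equality case both terms must vanish almost everywhere, and $k(u)=0$ with $u\ge1$ forces $u\equiv1$, i.e. $g=\hg$. The hard part will not be the algebra but the analytic bookkeeping of the renormalized limit: I must verify that the individually divergent boundary and volume contributions genuinely combine into the stated convergent integral—controlling the discarded boundary term through the fall-off, and checking $\int_M k(u)\dv_{\hg}<\infty$, which holds because $k(u)=O((u-1)^3)$ near $u=1$ while $u-1$ decays faster than $e^{-(n-1)r/2}$ against the volume growth $e^{(n-1)r}$. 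Securing $u\ge1$ on the noncompact manifold is the other delicate point, since it underpins the pointwise sign of $k(u)$.
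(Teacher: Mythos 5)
Your proposal is correct and follows essentially the same route as the paper's proof: the substitution $g=\phi^{4/(n-2)}\hg$ with $\varphi=\identity$, the maximum-principle argument giving $\phi\geq 1$, the divergence theorem combined with the Yamabe equation to rewrite the mass as $\int_M\bigl[2(n-1)F(\phi)+(\scal_g+n(n-1))\phi^{(n+2)/(n-2)}\bigr]\dv_{\hg}$, and the convexity analysis of the auxiliary function (your $k$ is exactly $2F$, with the same vanishing of $k,k',k''$ at $1$ and $k''>0$ for $u>1$). The only differences are cosmetic: you use the opposite sign convention for the Laplacian and run the maximum principle on $u$ rather than on $w=\tfrac{2}{n-2}\log u$ as the paper does.
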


Furthermore, we use the volume-renormalized mass to define a renormalized expander entropy
 $g\mapsto \mu_{AH,\hg}(g)$ 
 for AH manifolds,
  which is monotone under the (normalized) Ricci flow 
$\partial_tg=-2\ric_g-2(n-1)g$
and whose critical points are PE. This part of the article is inspired by work of Deruelle and Ozuch \cite{deruelle2020ojasiewicz} who use the ADM mass to define a version of Perelman's $\lambda$-functional for asymptotically locally Euclidean (ALE) manifolds which is monotone under the Ricci flow in its standard form on ALE manifolds. However, their functional is a priori only defined near a Ricci-flat manifold and seems not to be defined for every ALE metric.
In contrast, our version of the expander entropy is defined for every APE manifold.

Our final main result is a local positive mass theorem which is as follows.
\begin{introthm}\label{mainthm:local_PMT}
Let $(M,\hg)$ be a complete PE manifold. Then the following two assertions are equivalent:
\begin{itemize}
\item[(i)] $\hg$ is a local maximiser of $\mu_{\rm{AH},\hg}$
\item[(ii)] $\hg$ is a local minimum of $\mathfrak{m}_{\rm{VR},\hg}$ among all metrics with $\scal+n(n-1)$ being nonnegative and integrable.
\end{itemize}
Furthermore, we have:
\begin{itemize}
\item[(a)] If $(M,\hg)$ is linearly stable and integrable, then (i) and (ii) hold.
\item[(b)] If (i) and (ii) hold, then  $(M,\hg)$ is scalar curvature rigid under a volume constraint.
\end{itemize}
\end{introthm}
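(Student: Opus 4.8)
The plan is to route everything through the variational definition of the renormalized expander entropy $\mu_{\rm{AH},\hg}$ and its relation to the volume-renormalized mass. Recall that $\mu_{\rm{AH},\hg}(g)$ is obtained by taking the infimum of an entropy functional over a weight function with prescribed boundary behaviour, that its critical points are exactly the PE metrics, and that $\mathfrak{m}_{\rm{VR},\hg}(\hg)=0$ (immediate from the definition on taking $\varphi=\identity$). The first step is the bridge relation: evaluating the entropy functional at the trivial weight $\phi\equiv1$ and using $\scal_g+n(n-1)\ge0$ to estimate the resulting curvature integral gives, for $g$ near $\hg$ with $\scal_g+n(n-1)$ nonnegative and integrable,
\[
\mu_{\rm{AH},\hg}(g)\le \mu_{\rm{AH},\hg}(\hg)-\mathfrak{m}_{\rm{VR},\hg}(g),
\]
with equality at $g=\hg$, where the constraint is saturated and $\phi\equiv1$ is the minimizer. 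This already yields the easy implication (ii) $\Rightarrow$ (i): if $\mathfrak{m}_{\rm{VR},\hg}(g)\ge0=\mathfrak{m}_{\rm{VR},\hg}(\hg)$ throughout the constraint set near $\hg$, then $\mu_{\rm{AH},\hg}(g)\le\mu_{\rm{AH},\hg}(\hg)$ there; for metrics violating the constraint the infimum over $\phi$ can exploit the region where $\scal_g+n(n-1)<0$ to make $\mu_{\rm{AH},\hg}(g)$ strictly smaller, so the local maximum property (i) is decided on the constraint set.

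The reverse implication (i) $\Rightarrow$ (ii) does not follow from this inequality alone, and here I would pass to the second variation at $\hg$. The decisive structural fact is that the infimum over the weight neutralizes the conformal (scalar-curvature) direction, so that $\mathrm{Hess}\,\mu_{\rm{AH},\hg}(\hg)$ is governed by the linearized Einstein operator restricted to transverse-traceless (TT) tensors after a suitable gauge fixing; on the mass side the renormalized-volume term together with the constraint plays the analogous role, reducing the volume-constrained Hessian of $\mathfrak{m}_{\rm{VR},\hg}$ at $\hg$ to the same quadratic form on TT tensors. On this subspace the trivial weight stays optimal, so the bridge inequality becomes an equality to second order and the two Hessians agree up to sign. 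Hence (i) and (ii) are both equivalent to the nonpositivity of one and the same quadratic form, which establishes the equivalence in full.

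For part (a), linear stability of $(M,\hg)$ is precisely the statement that this TT quadratic form has the favourable sign, so the second variation of $\mu_{\rm{AH},\hg}$ at $\hg$ is nonpositive. To upgrade this second-order information to a genuine local maximum I would use integrability: the kernel of the second variation consists of infinitesimal Einstein deformations, and integrability means that these directions integrate to an honest family of PE metrics along which $\mu_{\rm{AH},\hg}$ is constant and equal to its maximal value. A {\L}ojasiewicz--Simon inequality along the Einstein moduli space, in the spirit of Deruelle and Ozuch \cite{deruelle2020ojasiewicz}, then promotes the degenerate maximum to a genuine local maximum, giving (i); the equivalence then gives (ii).

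For part (b), I would read off the consequence directly from (ii). Imposing a volume constraint fixes the renormalized-volume contribution $2(n-1)\bigl(\int_M\dv_g-\int_{\widehat{M}}\dv_{\hg}\bigr)$ to the mass, so local minimality on the constraint set forbids any volume-preserving deformation that keeps $\scal+n(n-1)\ge0$ and strictly decreases $\mathfrak{m}_{\rm{VR},\hg}$; combined with the bridge relation this says that no nontrivial nearby metric can have nonnegative, integrable $\scal+n(n-1)$ and the prescribed renormalized volume, which is the asserted scalar curvature rigidity under a volume constraint. The main obstacle throughout is the second-variation analysis of the second paragraph: isolating the conformal and volume directions on both sides so that the entropy and the mass truly reduce to the same quadratic form on TT tensors, and then carrying out the {\L}ojasiewicz--Simon argument in the asymptotically hyperbolic setting, where the weighted function spaces and the decay of the optimal weight $\phi$ must be controlled with care.
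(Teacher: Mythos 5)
Your central ``bridge relation'' is derived incorrectly, and it is false. Evaluating the entropy functional at the trivial weight gives
\begin{align*}
\mu_{\rm{AH},\hg}(g)\;\leq\;\widetilde{\mathcal{W}}_{\rm{AH},\hg}(g,1)\;=\;\int_M\left(\scal_g+n(n-1)\right)\dv_g-\m_{\rm{VR},\hg}(g)\;=\;S_{\hg}(g),
\end{align*}
and under the constraint $\scal_g+n(n-1)\geq 0$ the curvature integral enters with the \emph{wrong sign} to be discarded: this bound is weaker than $-\m_{\rm{VR},\hg}(g)$, not stronger. The paper proves (in Theorem \ref{thm_local_positive_mass}) the \emph{reverse} inequality $\mu_{\rm{AH},\hg}(g)\geq-\m_{\rm{VR},\hg}(g)$, by evaluating at the true minimizer $\omega_g$ and dropping the terms $4|\nabla\omega_g|^2$, $(\scal_g+n(n-1))\omega_g^2$ and the nonnegative logarithmic term, all of which are $\geq 0$; your inequality $\mu_{\rm{AH},\hg}(g)\leq-\m_{\rm{VR},\hg}(g)$ therefore fails for every constraint metric outside the constant-scalar-curvature slice $\mathcal{C}$. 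Consequently you also have the logic backwards: the sandwich $0=\mu_{\rm{AH},\hg}(\hg)\geq\mu_{\rm{AH},\hg}(g)\geq-\m_{\rm{VR},\hg}(g)$ is what proves (i)$\Rightarrow$(ii) (with the equality case $\m_{\rm{VR},\hg}(g)=0$ forcing $g$ to be another maximizer, hence PE by Corollary \ref{cor:Entropy_critical_points}), whereas (ii)$\Rightarrow$(i) is the direction that cannot be had cheaply; your one-sentence dismissal of metrics violating the constraint (``the infimum over $\phi$ can exploit the region where $\scal_g+n(n-1)<0$'') is not an argument.

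Your substitute for the hard direction --- that (i) and (ii) are ``both equivalent to nonpositivity of one and the same quadratic form'' on TT tensors --- is not a valid method: in this degenerate, infinite-dimensional setting, semi-definiteness of a Hessian neither implies nor is implied by local extremality, so matching second variations cannot establish the equivalence. The paper instead closes the loop as (ii)$\Rightarrow$(iii)$\Rightarrow$(iv)$\Rightarrow$(i): on $\mathcal{C}$ the minimizer is $f_g=0$, whence the exact identity $\mu_{\rm{AH},\hg}=-\m_{\rm{VR},\hg}$ there, and the genuinely hard step (iv)$\Rightarrow$(i) uses the global conformal decomposition of Proposition \ref{prop_conformal_decomposition} to write any nearby $g$ as a conformal deformation of a unique $\olg\in\mathcal{C}$, together with the conformal second-variation formula of Proposition \ref{prop_conformal_second_variation} and the third-variation bound of Lemma \ref{lem_third_variation}, to get $\mu_{\rm{AH},\hg}(g)\leq\mu_{\rm{AH},\hg}(\olg)\leq\mu_{\rm{AH},\hg}(\hg)$; none of this conformal machinery appears in your proposal. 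For (a), the paper does not invoke {\L}ojasiewicz--Simon at all: with integrability, the nearby critical set is a manifold $\mathcal{E}$ on which the entropy is constant, and the spectral bound $\Delta_{E,\hg}\geq\tfrac{1}{4}(n-1)^2$ on the complement of its kernel, combined with the slice lemma and the third-variation control, gives the local maximum directly (Theorem \ref{thm:local_maximum_entropy}); your {\L}ojasiewicz route is conceivable but left entirely unverified in the weighted AH setting. For (b), the actual statement (Corollary \ref{cor:scr}) concerns metrics agreeing with $\hg$ outside a compact set $K$ with $\volume(K,g)=\volume(K,\hg)$: such $g$ have $\m_{\rm{VR},\hg}(g)=0$, the equality case of (ii) forces $g$ to be PE, and the isometry then comes from the unique continuation result of DeTurck--Kazdan \cite{deturckkazdan1981} for Einstein metrics agreeing outside a compact set --- a final step your sketch omits and that does not follow from minimality considerations alone.
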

Ilmanen conjectured a relation between Ricci flow and the positive mass theorem, partly proven in \cite[Proposition 0.1]{deruelle2020ojasiewicz} and \cite[Theorem 8.1]{HHS_Ricciflat_cones_2014} for ALE manifolds. Theorem \ref{mainthm:local_PMT} solves an AH version of this conjecture.
 In the article \cite{kroencke2023dynamical}, Yudowitz and the second author prove that a PE manifold is stable under the Ricci flow if and only if (i) and (ii) in Theorem \ref{mainthm:local_PMT} are satisfied.

%

The structure of the paper is as follows. In Section \ref{Sec-Notation}, we explain some notation and make some of the definitions of this introduction more precise.
 In Section \ref{sec:well-defined}, we prove Theorem \ref{mainthm:mass_well_def} and \ref{mainthm:mass_coord_inv} and we study the renormalized Einstein--Hilbert action.
In Section \ref{Sec:PMTs} we prove Theorem \ref{mainthm:PMT_2D}, Theorem \ref{mainthm:conformal_positive_mass} and Theorem \ref{mainthm:PMT_3D}.
In Section \ref{sec:entropy}, we define and study the renormalized expander entropy $\mu_{\rm{AH},\hg}$. 
Then finally in Section \ref{sec:localPMT}, we establish Theorem \ref{mainthm:local_PMT} as a combination of Theorem \ref{thm:local_maximum_entropy}, Theorem \ref{thm_local_positive_mass} and Corollary \ref{cor:scr} in Section \ref{sec:localPMT}.

\subsection*{Acknowledgements}
We are grateful to Eric Bahuaud, Piotr Chru\'sciel, Marc Herzlich, Jan Metzger and Eric Woolgar for helpful comments and inspiring discussions.

\section{Notation and definitions} \label{Sec-Notation}

Throughout the article, we use $n$ to denote the dimension of the manifold
We assume $n\geq 3$ unless otherwise specified. For the Laplacian we use the sign convention $\Delta=-\mathrm{div}\circ d=-\trace\circ\nabla^2$.

\begin{defn}
Let $N$ be a compact manifold with compact boundary $\partial N$. Let $\rho : N \to [0,\infty)$ be a smooth {\em boundary defining function}, which means that $\rho^{-1}(0) = \partial N$ and $d\rho|_{\partial N} \neq 0$. Let $M = N \setminus \partial N$. We say that a Riemannian metric $g$ on $M$ is {\em conformally compact} of class $C^{k,\alpha}$, if there is a $C^{k,\alpha}$-Riemannian metric $b$ on $N$ 
so that $g = \rho^{-2} b$. In this case,
the sectional curvatures of $g$ tend to $-|d\rho|^2_{h}$ at $\partial N$. If $|d\rho|^2_{h} = 1$ so that all sectional curvatures tend to $-1$ at $\partial N$ we say that $(M,g)$ is {\em asymptotically hyperbolic}, or simply AH. The Riemannian manifold $(N,b)$ is called the {\em conformal background} of $(M,g)$. With $\sigma=b|_{\partial N}$, we call $(\p N,[\sigma])$ the {\em conformal boundary} of $(M,g)$.
\end{defn}
\begin{rem}
We also call a manifold $(M,g)$ conformally compact if it is the complement of a compact set of a manifold that is conformally compact in the above sense.
\end{rem}
%

Throughout, we will make use of a radial function $r$ defined by $\rho =e^{-r}$. We will work in weighted H\"older spaces 
 $C^{k,\alpha}_\delta(M)=e^{-\delta r}C^{k,\alpha}(M)$, equipped with the norm
\begin{align*}
\|u\|_{k,\alpha,\delta} = \|e^{\delta r}u\|_{C^{k,\alpha}(M)}.
\end{align*}
Here, $\delta\in\mathbb R$ and $C^{k,\alpha}(M)$ denotes the standard H\"older space with the norm $\|\cdot\|_{C^{k,\alpha}(M)}$.
Weighted H\"older spaces of sections of bundles are defined similarly,
see Lee \cite{Lee06} for further details.

For a fixed AH manifold $(\widehat{M}, \hg)$, 
we define the space of Riemannian metrics on $\widehat{M}$ asymptotic to $\hg$ as
\begin{align*}
\mathcal{R}^{k,\alpha}_{\delta}(\widehat{M},\hg)
=\left\{ g \mid g-\hg\in C^{k,\alpha}_{\delta}(S^2_+T^*\widehat{M})\right\},
\end{align*}
where $S^2_+T^*\widehat{M}$ is the bundle of positive definite symmetric bilinear forms on $\widehat{M}$. 

\begin{defn}\label{defn-AH}
Let $(M,g)$, $(\widehat{M} ,\hg)$ be AH manifolds with conformal backgrounds $N,\widehat{N}$, respectively. We say that $(M,g)$ is \emph{asymptotic to} $(\widehat{M} ,\hg)$ of order $\delta>0$ with respect to $\varphi$, if there are
bounded and closed sets $K\subset M$, $\widehat{K}\subset \widehat{M}$ and a $C^{k+1,\alpha}$-diffeomorphism $\varphi: N \setminus K\to \widehat{N}\setminus \widehat{K}$
of manifolds with boundary such that 
$\varphi_*g \in \mathcal{R}^{k,\alpha}_{\delta}(\widehat{M} \setminus \widehat{K}, \hg)$.
\end{defn}
With $\hg, K,\widehat{K}$ and $\varphi$ as in Definition \ref{defn-AH}, we define
\begin{align*}
\mathcal{R}^{k,\alpha}_{\delta}(M,\varphi,\hg)=\left\{ g \mid g\in C^{k,\alpha}(S^2_+T^*M), \varphi_*g \in \mathcal{R}^{k,\alpha}_{\delta}(\widehat{M} \setminus \widehat{K}, \hg)\right\}.
\end{align*}

Given an AH manifold $(M,g)$ asymptotic to $(\widehat{M} ,\hg)$ with diffeomorphism $\varphi$, we choose the boundary defining functions on $N$ and $\widehat{N}$ so that $\rho = \widehat{\rho} \circ \varphi$
and $r = \widehat{r} \circ \varphi$ on $N\setminus K$. Define the sets
\begin{align*}
B_R=\{x\in {M} \mid {r}(x)<R \} \subset M, \qquad
\p B_R=\{x\in {M} \mid {r}(x)=R \} \subset{M},
\end{align*}
and
\begin{align*}
\widehat{B}_R=\{x\in \widehat{M} \mid \widehat{r}(x)<R\}\subset\widehat{M},\qquad
\p\widehat{B}_R=\{x\in \widehat{M} \mid \widehat{r}(x)=R\}\subset\widehat{M}.
\end{align*}
For $R$ so large that $\varphi(\p B_R)=\p\widehat{B}_R$ for $R$, let
\begin{align*}
\m^\varphi_{{\rm ADM},\widehat{g}}(g,R)
&= \int_{\p\widehat{B}_R}(\mathrm{div}_{\hg}(\varphi_*g)-d\trace_{\hg}(\varphi_*g))(\nu_{\hg})\dv_{\widehat{g}},\\
RV^\varphi_{\widehat{g}}(g,R)
&= \int_{B_R}\dv_g-\int_{\widehat{B}_{R}}\dv_{\widehat{g}},
\end{align*}
where $\nu_{\hg}$ is the outward unit normal to $\p \widehat B_R$ in $(\widehat M,\hg)$.

\begin{defn}\label{defn-VRmass}
Let $(M,\hg)$ be asymptotically hyperbolic. 
We define the {\em volume-renormalized mass} $\mathfrak{m}^\varphi_{\rm{VR},\hg}(g)$ of $g$ with respect to $\hg$ and $\varphi$ as
\begin{equation} \label{eq-first-m-defn}
\mathfrak{m}^\varphi_{\rm{VR},\hg}(g)=
\lim_{R \to \infty}
\left( \m^\varphi_{{\rm ADM},\widehat{g}}(g,R)+2(n-1)RV^\varphi_{\widehat{g}}(g,R) \right).
\end{equation}
\end{defn}
Theorem \ref{thm-well-def-mass} in the following section demonstrates that this quantity is well-defined under the assumptions that
$\varphi_*g \in \mathcal{R}^{k,\alpha}_{\delta}(\widehat{M} \setminus \widehat{K}, \hg)$ for some $\delta>\frac{n-1}{2}$, where $\hg$ is APE (in the sense of Definition \ref{defn:APE_2} below)
and the scalar curvature satisfies $\scal_{\hg}+n(n-1)\in L^1(M)$. 

Given a boundary defining function $\rho$, an AH metric $g$ can be written as
\begin{align}\label{eq:expansion_PE}
g=\rho^{-2}(d\rho^2+\widehat{\sigma}_{\rho}),
\end{align}
where $\widehat{\sigma}_{\rho}$ is a family of metrics on $\partial N$.
If $g$ is PE, it is known from the work of Fefferman and Graham \cite{FefGra85}, that $\widehat{\sigma}_{\rho}$ has the asymptotic expansion
\begin{align}\label{eq:expansion_PE_even}
\widehat{\sigma}_{\rho}=\sigma_0+\rho^2\sigma_2+\rho^3\sigma_3+\ldots +\rho^{n-2}\sigma_{n-2}+\rho^{n-1}\sigma_{n-1}+O(\rho^n)
\end{align} 
if $n$ is even and
 \begin{align}\begin{split}\label{eq:expansion_PE_odd}
 \widehat{\sigma}_{\rho}&=\sigma_0+\rho^2\sigma_2+\rho^3\sigma_3+\ldots +\rho^{n-3}\sigma_{n-3}\\&\qquad+\rho^{n-1}(\sigma_{n-1}+\log(\rho)\tilde{\sigma}_{n-1})+O(\rho^n\log(\rho)),\end{split}
 \end{align}
if $n$ is odd.
 Here, $\sigma_0=b|_{\partial N}$ and the tensors $\sigma_i$, $2\leq i\leq n-2$, 
are uniquely determined by $\sigma_0$. In the odd case, the tensor $\tilde{\sigma}_{n-1}$ is also determined by $\sigma_0$. 
The metric $\sigma_0$ together with the first undetermined term $\sigma_{n-1}$,  determine all remaining terms of the asymptotic expansion.
 
\begin{defn}\label{defn:APE_2} 
We say an AH manifold $(M,g)$  of class $C^{k,\alpha}$, $k\geq 2$ is \emph{asymptotically Poincar\'{e}--Einstein} (APE) of order $\delta$ if
$|\ric_g+(n-1)g|_g\in C^{k-2,\alpha}_\delta(M)$ for some  $\frac{n-1}{2}<\delta<n-1$ satisfying $\delta \leq k+\alpha$. The set of all such metrics is denoted by
$\mathcal{R}^{k,\alpha}_{\delta}(M)$.
\end{defn}
\begin{prop}\label{prop:APE}
Let $(M,g)$ and $(\widehat{M},\widehat{g})$ be $C^{k,\alpha}$-asymptotically hyperbolic manifolds with isometric conformal boundaries. 
Then, if both manifolds are APE of order $\delta$, $(M,g)$ is asymptotic to $(\widehat{M},\widehat{g})$ of order $\delta$.
\end{prop}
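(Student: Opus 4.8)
The plan is to put both metrics into Fefferman--Graham geodesic normal form relative to a common representative of the conformal boundary, and then to use the Einstein recursion to show that the two normal forms agree up to order $\delta$.

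First I would use the isometry $\psi$ of the conformal boundaries to identify $\partial N$ with $\partial\widehat N$ together with the boundary representatives, so that after pulling back we may take $\sigma_0=\widehat\sigma_0$. For this representative there is a unique geodesic boundary defining function, giving $g=\rho^{-2}(d\rho^2+h_\rho)$ and $\widehat g=\widehat\rho^{-2}(d\widehat\rho^2+\widehat h_{\widehat\rho})$ with $h_0=\widehat h_0=\sigma_0$; existence and $C^{k,\alpha}$-regularity of this gauge for conformally compact metrics is standard (see Lee \cite{Lee06}). These two coordinate systems identify collar neighbourhoods of the two boundaries, and the diffeomorphism $\varphi$ is simply the resulting map, equal to $\psi$ on the boundary and to the identity in the radial variable.

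Next I would write $\ric_g+(n-1)g=E$ with $|E|_g=O(e^{-\delta r})=O(\rho^\delta)$ in geodesic gauge, obtaining the singular second-order system of Fefferman--Graham for the family $h_\rho$. The homogeneous (Einstein) part of this system determines the coefficients $\sigma_2,\dots,\sigma_{n-2}$ in \eqref{eq:expansion_PE_even}--\eqref{eq:expansion_PE_odd} universally from $\sigma_0$, with the first undetermined coefficient appearing only at order $\rho^{n-1}$. The key observation is that the inhomogeneity $E=O(\rho^\delta)$ feeds into this recursion only at orders $\geq\delta$; since $\delta<n-1$ lies strictly below the first free order, every coefficient of $h_\rho$ of order $<\delta$ is forced to equal the universal Fefferman--Graham coefficient determined by $\sigma_0$ (the regularity hypothesis $\delta\leq k+\alpha$ guarantees that these coefficients exist). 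Running the identical argument for $\widehat g$, and using that the universal coefficients are natural in $\sigma_0$ and hence preserved by the isometry $\psi$, gives $h_\rho-\widehat h_\rho=O(\rho^\delta)$ as a family of tensors on $\partial N$.

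Finally, since $\varphi_*g-\widehat g=\widehat\rho^{-2}(h_\rho-\widehat h_\rho)$ in these coordinates and the $\widehat g$-norm of a $(0,2)$-tensor scales by $\widehat\rho^{2}$ relative to the coordinate norm, this yields $|\varphi_*g-\widehat g|_{\widehat g}=O(\rho^\delta)=O(e^{-\delta r})$, and similarly for the derivatives up to order $k$, so that $\varphi_*g-\widehat g\in C^{k,\alpha}_\delta$. This is precisely the statement that $(M,g)$ is asymptotic to $(\widehat M,\widehat g)$ of order $\delta$. I expect the main obstacle to be the coefficient-matching step: because APE is a decay bound on $\ric_g+(n-1)g$ rather than an exact equation, the $h_\rho$ are not given by a convergent power series, so one cannot merely match formal Taylor coefficients. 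Instead one must integrate the singular ODE with the error term $E$ present, controlling its contribution order by order in the weighted H\"older norm, and handle with care the borderline behaviour at orders approaching $\delta$ as well as the finite differentiability $C^{k,\alpha}$.
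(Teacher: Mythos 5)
Your proposal is correct and follows essentially the same route as the paper: both proofs put the two metrics in Fefferman--Graham normal form over a common boundary representative via a diffeomorphism built from geodesic boundary defining functions and the boundary isometry, use the APE decay to force every expansion coefficient below order $\delta$ (which lies below the first free order $n-1$) to equal the universal one determined by $\sigma_0$, and conclude that the difference of the two metrics is $O(\rho^{\delta})$. The only real difference is that the step you flag as the main obstacle --- integrating the singular Fefferman--Graham system with the inhomogeneity $E=O(\rho^{\delta})$ at finite $C^{k,\alpha}$ regularity --- is exactly what the paper delegates to Bahuaud--Mazzeo--Woolgar \cite[Prop.~2.2]{BMW2015}, after first lowering the regularity so that $\delta=k+\alpha$.
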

\begin{proof}
This follows from the Fefferman--Graham expansion discussed above.
Fix a boundary defining function $\rho$ and write $g$ as in \eqref{eq:expansion_PE}
The family $\widehat{\sigma}_{\rho}$ is now a $C^{k,\alpha}$-family of metrics on $\partial N$ with $\widehat{\sigma}_{\rho}|_{\rho=0}=\sigma_0$. 
By lowering the regularity, we may assume that $\delta=k+\alpha$. Using the APE condition, the arguments of Bahuaud, Mazzeo and Woolgar \cite[Prop. 2.2]{BMW2015}, then show that
\begin{align*}
\widehat{\sigma}_{\rho}=\sigma_0+\rho^2\sigma_2+\ldots +\rho^{k}\sigma_k+O(\rho^{k+\alpha}),
\end{align*}
where the tensor fields $\sigma_2, \dots, \sigma_k$ are uniquely determined by $\sigma_0$.
Fix a boundary defining function $\widehat{\rho}$ on $\widehat{M}$ such that $\widehat{\sigma}_0=\widehat{\rho}^2\hg|_{\partial \widehat{N}}$ is isometric to $\sigma_0$. Let $\varphi:N\setminus K\to \widehat{N}\setminus \widehat{K}$ a diffeomorphism such that $\widehat{\rho}\circ \varphi=\rho$ and $(\varphi|_{\partial N})^*\widehat{\sigma}_0=\sigma_0$. Repeating the above arguments, we get
\begin{align*}
\varphi^*\hg=\rho^{-2}(d\rho^2+\widetilde{\sigma}_{\rho})
\end{align*}
with 
\begin{align*}
\widetilde{\sigma}_{\rho}=\sigma_0+\rho^2\sigma_2+\ldots +\rho^{k}\sigma_k+O(\rho^{k+\alpha}),
\end{align*}
so that $\varphi^*\hg-g\in O(\rho^{k+\alpha})$, or equivalently, $\varphi_*g-\hg\in O(\widehat{\rho}^{k+\alpha})$, as desired. 
\end{proof}

\begin{rem}
A similar, but stronger, definition of APE manifolds 
for a different purpose 
is given in \cite{BMW2015}, where they use the weight $\delta=n$.
\end{rem}


\section{Well-definedness and coordinate invariance of the mass}\label{sec:well-defined}

\subsection{A renormalized Einstein--Hilbert action}

We utilize an AH version of the Einstein--Hilbert action to establish well-definedness of the volume-renormalized mass.
\begin{thm}\label{thm-well-def-mass}
Let $(\widehat M,\hg)$ be an APE manifold with 
$\scal_{\hg}+n(n-1)\in L^1$. Then for $(M,g)$ asymptotic to $(\widehat M,\hg)$ of order $\delta>\frac{n-1}2$, the limit
\begin{align*}
S^{\varphi}_{\hg}(g) = \lim_{R \to \infty}
\Big( 
& \int_{B_R} \left( \scal_g + n(n-1) \right) \dv_g\\
&\qquad -\m^{\varphi}_{{\rm ADM},\widehat{g}}(g,R) 
- 2(n-1)RV^{\varphi}_{\widehat{g}}(g,R)
\Big).
\end{align*}
is well-defined and finite, where $\varphi$ is the diffeomorphism from Definition \ref{defn-AH}. In particular, $\m^{\varphi}_{{\rm VR},\widehat{g}}(g)$ is well-defined and finite if $\scal_g + n(n-1)\in L^1(M)$. 
\end{thm}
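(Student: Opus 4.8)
The plan is to reduce everything to the asymptotic end, push $g$ forward by $\varphi$, and compare the integrand $(\scal_g+n(n-1))\dv_g$ with the reference data via the linearisation of scalar curvature. Write $h=\varphi_*g-\hg$, which lies in $C^{k,\alpha}_\delta$ on $\widehat M\setminus\widehat K$ with $\delta>\frac{n-1}{2}$. The cornerstone is the pointwise identity
$$\scal_{\hg+h}-\scal_{\hg}=\mathrm{div}_{\hg}\!\left(\mathrm{div}_{\hg}h-d\trace_{\hg}h\right)-\langle\ric_{\hg},h\rangle+Q(h),$$
where $D\scal_{\hg}(h)=\mathrm{div}_{\hg}(\mathrm{div}_{\hg}h-d\trace_{\hg}h)-\langle\ric_{\hg},h\rangle$ is the linearisation and $Q(h)$ collects the quadratic-and-higher remainder, schematically $Q(h)\sim h\ast\hnab^2h+\hnab h\ast\hnab h$ with coefficients built from $\hg$. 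Integrating the divergence term over $\widehat B_R\setminus\widehat K$ and applying the divergence theorem produces exactly the ADM integrand $\m^\varphi_{{\rm ADM},\hg}(g,R)$ at the outer boundary $\p\widehat B_R$, plus a fixed constant coming from the inner boundary $\p\widehat K$. This is the reason the boundary surface integral in the definition of the mass appears naturally.

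The second ingredient is the volume expansion $\dv_{\varphi_*g}=\big(1+\tfrac12\trace_{\hg}h+O(|h|^2)\big)\dv_{\hg}$, so that $2(n-1)RV^\varphi_{\hg}(g,R)=(n-1)\int_{\widehat B_R\setminus\widehat K}\trace_{\hg}h\,\dv_{\hg}+[\text{integrable remainder}]+C'$. On the other hand, writing $\ric_{\hg}=-(n-1)\hg+E$ with $E=\ric_{\hg}+(n-1)\hg\in C^{k-2,\alpha}_\delta$ by the APE hypothesis, the Ricci term becomes $-\langle\ric_{\hg},h\rangle=(n-1)\trace_{\hg}h-\langle E,h\rangle$. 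Thus when I assemble $I(R):=\int_{B_R}(\scal_g+n(n-1))\dv_g-\m^\varphi_{{\rm ADM},\hg}(g,R)-2(n-1)RV^\varphi_{\hg}(g,R)$, two cancellations occur: the ADM boundary term produced by integrating $D\scal_{\hg}(h)$ cancels the subtracted $\m^\varphi_{{\rm ADM},\hg}(g,R)$, and the term $(n-1)\int\trace_{\hg}h\,\dv_{\hg}$ coming from the Ricci contribution cancels the one coming from $2(n-1)RV$. What survives is $\int_{\widehat B_R\setminus\widehat K}(\scal_{\hg}+n(n-1))\dv_{\hg}$, a collection of quadratic remainder integrals, and $R$-independent constants from the compact interior and the inner boundary.

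It then remains to check that every surviving $R$-dependent integral converges as $R\to\infty$. The term $\int_{\widehat B_R\setminus\widehat K}(\scal_{\hg}+n(n-1))\dv_{\hg}$ converges by the hypothesis $\scal_{\hg}+n(n-1)\in L^1$. For the remainders I use that the AH volume element grows like $e^{(n-1)r}$, while $Q(h)$, $\langle E,h\rangle$, the quadratic volume term, and the cross terms are all $O(e^{-2\delta r})$; since $2\delta>n-1$ these are integrable over the end, and the product $(\scal_{\hg}+n(n-1))\trace_{\hg}h$ is integrable because $\trace_{\hg}h$ is bounded. Hence $\lim_{R\to\infty}I(R)=S^\varphi_{\hg}(g)$ exists and is finite. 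The main obstacle is bookkeeping the two cancellations precisely: individually both $\m^\varphi_{{\rm ADM},\hg}(g,R)$ and $RV^\varphi_{\hg}(g,R)$ diverge like $e^{(n-1-\delta)r}$ when $\delta<n-1$, so finiteness genuinely relies on the combination being the correct one, which is exactly what the identity $D\scal_{\hg}(h)=\mathrm{div}_{\hg}(\mathrm{div}_{\hg}h-d\trace_{\hg}h)-\langle\ric_{\hg},h\rangle$ and the volume expansion deliver. Finally, the ``in particular'' statement is immediate: if in addition $\scal_g+n(n-1)\in L^1(M)$, then $\lim_R\int_{B_R}(\scal_g+n(n-1))\dv_g$ exists and is finite, and since $\m^\varphi_{{\rm VR},\hg}(g)=\lim_R\int_{B_R}(\scal_g+n(n-1))\dv_g-S^\varphi_{\hg}(g)$ is a difference of two convergent limits, it too is well-defined and finite.
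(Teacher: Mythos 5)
Your proof is correct and follows essentially the same route as the paper's: both convert the ADM surface integral into a bulk integral of the linearized scalar curvature via the divergence theorem, Taylor-expand the scalar curvature and the volume form, use the APE condition to cancel the $(n-1)\int\trace_{\hg}h\,\dv_{\hg}$ term against the renormalized volume, and conclude finiteness from $\scal_{\hg}+n(n-1)\in L^1$ together with the integrability of the remainders guaranteed by $2\delta>n-1$. The only difference is presentational: the paper organizes the surviving terms as pairings of $L^2$ quantities (e.g.\ $\langle h,\ric_{\hg}+(n-1)\hg\rangle$ and $(\scal_{\varphi_*g}+n(n-1))(\dv_{\varphi_*g}-\dv_{\hg})$), whereas you expand everything into pointwise $O(e^{-2\delta r})$ estimates, but the cancellation structure is identical.
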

\begin{defn}
We call the functional $g\mapsto S^{\varphi}_{\hg}(g)$ the
\em{renormalized Einstein--Hilbert action}.
\end{defn}
\begin{proof}[Proof of Theorem \ref{thm-well-def-mass}]
Fix some large $R_0$ so that $K\subset B_{R_0}$. For $R>R_0$ we define the annular regions $A_R=B_R\setminus B_{R_0}$ and $\widehat A_R=\phi(A_R)$. This allows us to work with $\varphi_* g$ on $A_R$.
	
We now note that $\m^\varphi_{{\rm ADM},\widehat{g}}(g,R)$ can be expressed via the divergence theorem in terms of the linearization of scalar curvature as
\begin{align*}
\m^\varphi_{{\rm ADM},\widehat{g}}(g,R)
&=
\int_{\widehat B_R} \left(\mathrm{div}_{\hg}(\mathrm{div}_{\hg}(\varphi_*g))+\Delta_{\hg}(\trace_{\hg}(\varphi_*g))\right)\, \dv_{\hg}\\
&=
\int_{\widehat A_R} \left(D\scal_{\hg}[h]+\langle h,\ric_ {\hg}\rangle\right)\, \dv_{\hg}+C,
\end{align*}
where $h=\varphi_*g-\hg$ and $C$ is the finite contribution from the integral over $B_{R_0}$. We will use $C$ throughout the proof to denote such a finite term independent of $R$, where the exact value may vary from line to line.

By Taylor expanding the scalar curvature we may write the above expression as
\begin{align*}
\m^\varphi_{{\rm ADM},\widehat{g}}(g,R)
&= \int_{\widehat A_R} \left( \scal_{\varphi_*g}-\scal_{\hg} 
+ \langle h,\ric_{\hg}\rangle + Q_1(h)
\right) \dv_{\hg}+C\\
&= \int_{\widehat A_R} \left( \scal_{\varphi_*g}-\scal_{\hg} 
+ \langle h,\ric_{\hg}\rangle
\right) \dv_{\hg}+C.
\end{align*}
Here, $Q_1(h)$ is a remainder term quadratic in $h$ and its first two derivatives. For the second equality, we have used that $Q(h)$ is integrable, which follows from $h\in C^{2,\alpha}_{\delta}$ with $\delta>\frac{n-1}{2}$.
Similarly, we Taylor expand the volume form
\begin{align*}
\dv_{\phi_*g}
= \left(1+ \frac{1}{2} \trace_{\hg}(h) +Q_2(h)\right)\dv_{\hg},
\end{align*}
where $Q_2(h)$ is quadratic in $h$ and integrable.
We get
\begin{align*}
RV^\varphi_{\widehat{g}}(g,R) 
= \frac{1}{2} \int_{\widehat A_R}\trace_{\hg}(h) \dv_{\hg}
+ C.
\end{align*}
Now we bring everything together to obtain
\begin{align*}
S^\varphi_{\hg}(g)
&= \lim_{R \to \infty} \Big(
\int_{B_R} \left( \scal_{g} +n(n-1) \right)\dv_g- \m^\varphi_{{\rm ADM},\widehat{g}}(g,R)
\\&\qquad
-2(n-1)RV^\varphi_{\widehat{g}}(g,R)\Big) \\
&= \lim_{R \to \infty} \Big(\int_{\widehat A_R} \left( \scal_{\varphi_*g} +n(n-1) \right)\dv_{\varphi_*g}- \m^\varphi_{{\rm ADM},\widehat{g}}(g,R)
\\&\qquad
-2(n-1)RV^\varphi_{\widehat{g}}(g,R)\Big)+C\\
&= \lim_{R \to \infty} \Big(\int_{\widehat A_R} \left( \scal_{\varphi_*g} +n(n-1) \right)\dv_{\varphi_*g}\\
&\qquad -\int_{\widehat A_R} \left(\scal_{\varphi_*g}-\scal_{\hg} 
+ \langle h,\ric_{\hg}\rangle
+(n-1)\trace_{\hg}(h) \right)\dv_{\hg})\Big)+C\\
&= \lim_{R \to \infty} \Big(\int_{\widehat A_R} \left( \scal_{\varphi_*g} +n(n-1) \right)(\dv_{\varphi_*g}-\dv_{\hg})\\
&\qquad +\int_{\widehat A_R} \left(\scal_{\hg} +n(n-1)
- \langle h,\ric_{\hg}+(n-1)\hg\rangle
\right)\dv_{\hg}\Big)+C.
\end{align*}
By assumption, $\scal_{\hg}+n(n-1)\in L^1$. Since $g$ is APE, $\ric_{\hg}+(n-1)\hg$ and $\scal_{\varphi_*g} +n(n-1)$ are in $L^2$. Since also $h\in L^2$, we conclude that the limit is finite.
\end{proof}
From Definition \ref{defn:APE_2} and Proposition \ref{prop:APE}, we immediately obtain:
\begin{cor}\label{cor:mass_well_def}
Let $(M,g)$ and $(\widehat{M},\widehat{g})$ be APE manifolds with isometric conformal boundaries and assume that $\scal_{\hg}+n(n-1)\in L^1$. Then, $S^{\varphi}_{\hg}(g)$ is well-defined and finite, where $\varphi$ is a diffeomorphism in the sense of Definition \ref{defn-AH}. If furthermore $\scal_g+n(n-1)\in L^1$, then $\m^{\varphi}_{{\rm VR},\widehat{g}}(g)$ is well-defined and finite.
\end{cor}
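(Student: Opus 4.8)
The plan is to deduce this corollary directly from Theorem \ref{thm-well-def-mass}, verifying that the APE hypotheses supply exactly the ingredients that theorem requires: an APE reference metric $\hg$ with $\scal_{\hg}+n(n-1)\in L^1$, together with a diffeomorphism $\varphi$ realizing $(M,g)$ as asymptotic to $(\widehat{M},\hg)$ of some order $\delta>\frac{n-1}{2}$. The only genuine work is to produce such a $\varphi$ from Proposition \ref{prop:APE}, for which I first reconcile the possibly different APE orders of the two metrics.

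First I would record the orders. By Definition \ref{defn:APE_2} there exist $\delta_1,\delta_2\in(\frac{n-1}{2},n-1)$ with $(M,g)$ APE of order $\delta_1$ and $(\widehat{M},\hg)$ APE of order $\delta_2$. Since a decay estimate of order $\delta_i$ implies the same estimate for any smaller exponent, both metrics are APE of the common order $\delta:=\min\{\delta_1,\delta_2\}$, which still lies in $(\frac{n-1}{2},n-1)$ and satisfies the weight constraint $\delta\le k+\alpha$; in particular $\delta>\frac{n-1}{2}$.

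Next I would apply Proposition \ref{prop:APE} with this common order. As $(M,g)$ and $(\widehat{M},\hg)$ are AH with isometric conformal boundaries and both APE of order $\delta$, the proposition furnishes a diffeomorphism $\varphi$ between neighborhoods of the conformal infinities, in the sense of Definition \ref{defn-AH}, with $\varphi_*g-\hg\in C^{k,\alpha}_\delta$; that is, $(M,g)$ is asymptotic to $(\widehat{M},\hg)$ of order $\delta$. Now every hypothesis of Theorem \ref{thm-well-def-mass} holds, so that theorem immediately gives that $S^\varphi_{\hg}(g)$ is well-defined and finite.

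Finally, the assertion about the mass is the ``in particular'' clause of Theorem \ref{thm-well-def-mass}: if in addition $\scal_g+n(n-1)\in L^1(M)$, then $\int_{B_R}(\scal_g+n(n-1))\dv_g$ converges as $R\to\infty$, and subtracting this convergent term from the finite limit defining $S^\varphi_{\hg}(g)$ shows that $\lim_{R\to\infty}\bigl(\m^\varphi_{{\rm ADM},\widehat{g}}(g,R)+2(n-1)RV^\varphi_{\widehat{g}}(g,R)\bigr)=\m^\varphi_{{\rm VR},\hg}(g)$ exists and is finite. I do not expect any serious obstacle; the single point deserving explicit mention is the reduction to the common order $\delta=\min\{\delta_1,\delta_2\}$, since Proposition \ref{prop:APE} is phrased for two metrics sharing one order.
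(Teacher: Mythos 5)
Your proposal is correct and follows essentially the same route as the paper, which deduces the corollary immediately from Definition \ref{defn:APE_2}, Proposition \ref{prop:APE} and Theorem \ref{thm-well-def-mass}. The only detail you add beyond the paper's one-line deduction is the explicit reduction to the common order $\delta=\min\{\delta_1,\delta_2\}$ before invoking Proposition \ref{prop:APE}, which is a harmless (and valid) elaboration of what the paper leaves implicit.
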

We will often use Corollary \ref{cor:mass_well_def} without giving an explicit reference to it. Furthermore, for the sake of convenience, we will say that an APE manifold has \emph{integrable normalized scalar curvature}
 if $\scal+n(n-1)\in L^1$.

\begin{rem}
The definition is of the volume-renormalized mass uses an exhaustion of $M$ by coordinate balls.
 The proof of Theorem~\ref{thm-well-def-mass} demonstrates that the volume-renormalized mass is independent of the choice of exhaustion. In particular, it is independent of the pair of boundary defining functions on $M$ and $\widehat{M}$, as long as these are $\varphi$-compatible.
\end{rem}

\begin{rem}Note that under the conditions of Theorem \ref{thm-well-def-mass},
$\m^{\varphi}_{{\rm VR},\widehat{g}}(g)=\pm\infty$  if $\pm(\scal_g + n(n-1))\geq0$ and $\scal_g + n(n-1)\notin L^1(M)$. In particular, this is independent of the chosen diffeomorphism $\varphi$.
\end{rem}
We have the following additivity properties for the renormalized Einstein--Hilbert action and for the volume-renormalized mass. A similar additivity property for mass invariants of asymptotically hyperbolic manifolds is studied in  \cite{CG21}.
\begin{prop}\label{cor:additivity_renormalized_EH}
Let $(M,g),(\widetilde M,\widetilde g),(\widehat M,\hg)$ be APE manifolds
with isometric conformal boundaries and assume that $(\widetilde M,\widetilde g)$ and $(\widehat M,\hg)$ have integrable normalized scalar curvature. 
Then the renormalized Einstein--Hilbert action satisfies the additivity property
\begin{equation*}
S^{\widetilde\varphi\circ\varphi}_{\hg}(g)
= S^\varphi_{\widetilde g}(g)
-\mathfrak{m}^{\widetilde\varphi}_{\rm{VR},\hg}(\widetilde{g}),
\end{equation*} 
where $\varphi:N\setminus K\to \widetilde N\setminus \widetilde K$ and $\widetilde \varphi:\widetilde N\setminus \widetilde K\to \widehat{N}\setminus\widehat{K}$, respectively, are diffeomorphisms as in Definition~\ref{defn-AH}. If $(M,g)$ also has integrable normalized scalar curvature, it holds that
\begin{align*}
\mathfrak{m}^{\widetilde\varphi\circ\varphi}_{\hg}(g)
= \mathfrak{m}^{\varphi}_{\widetilde g}(g)
+\mathfrak{m}^{\widetilde\varphi}_{\rm{VR},\hg}(\widetilde{g}).
\end{align*}
\end{prop}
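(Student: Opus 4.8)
The plan is to prove the first identity (for the action $S$) directly, and then obtain the second (for $\mathfrak{m}_{\rm{VR}}$) as a formal consequence. All the limits below exist and are finite by Theorem~\ref{thm-well-def-mass} and Corollary~\ref{cor:mass_well_def}, so the manipulations are justified. The bridge between the two identities is the elementary observation that when $\scal_g+n(n-1)\in L^1$ the intrinsic integral $\int_{B_R}(\scal_g+n(n-1))\dv_g$ converges on its own, so splitting it off in Definition~\ref{defn-VRmass} gives, for every APE reference $b$ with integrable normalized scalar curvature and every compatible $\psi$,
\[
S^{\psi}_{b}(g)=\int_M(\scal_g+n(n-1))\dv_g-\mathfrak{m}^{\psi}_{\rm{VR},b}(g).
\]
Granting the first identity, I would apply this to $S^{\widetilde\varphi\circ\varphi}_{\hg}(g)$ and to $S^{\varphi}_{\widetilde g}(g)$ (permissible once $\scal_g+n(n-1)\in L^1$); the two copies of $\int_M(\scal_g+n(n-1))\dv_g$ cancel and the first identity turns verbatim into the second. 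So the whole content lies in the first identity.

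For the first identity, the key structural point is that $\int_{B_R}(\scal_g+n(n-1))\dv_g$ depends only on $(M,g)$ and not on the reference metric. Subtracting the defining limits of $S^{\varphi}_{\widetilde g}(g)$ and $S^{\widetilde\varphi\circ\varphi}_{\hg}(g)$ therefore cancels it and leaves a limit of boundary and volume terms only. I would first dispose of the volume terms: since $RV^{\psi}_{b}(g,R)=\int_{B_R}\dv_g-\int_{(\cdot)}\dv_b$ and the radial functions are $\widetilde\varphi$-compatible (so $\widetilde\varphi(\p\widetilde B_R)=\p\widehat B_R$), the volume of $(M,g)$ drops out and
\[
RV^{\widetilde\varphi\circ\varphi}_{\hg}(g,R)-RV^{\varphi}_{\widetilde g}(g,R)=\int_{\widetilde B_R}\dv_{\widetilde g}-\int_{\widehat B_R}\dv_{\hg}=RV^{\widetilde\varphi}_{\hg}(\widetilde g,R),
\]
which is exactly the volume renormalization occurring in $\mathfrak{m}^{\widetilde\varphi}_{\rm{VR},\hg}(\widetilde g)$.

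For the ADM terms I would use two facts. First, the surface density $(\mathrm{div}_b T-d\trace_b T)(\nu_b)\dv_b$, which I abbreviate $U_b(T)$, is linear in $T$ and vanishes at $T=b$ (because $\mathrm{div}_b b=0$ and $\trace_b b\equiv n$), hence depends on $T$ only through $T-b$. Second, by naturality of $\mathrm{div}$, $\trace$ and $d$ under $\widetilde\varphi$, the $\widetilde g$-mass integral may be transported to $\p\widehat B_R$. Writing $\bar g:=\widetilde\varphi_*\widetilde g=\hg+k$ and $G:=(\widetilde\varphi\circ\varphi)_*g=\bar g+T$, with $k:=\widetilde\varphi_*\widetilde g-\hg$ and $T:=\widetilde\varphi_*(\varphi_*g-\widetilde g)$ both in $C^{2,\alpha}_{\delta}$, this yields $\m^{\widetilde\varphi\circ\varphi}_{{\rm ADM},\hg}(g,R)=\int_{\p\widehat B_R}U_{\hg}(T+k)$ and $\m^{\varphi}_{{\rm ADM},\widetilde g}(g,R)=\int_{\p\widehat B_R}U_{\bar g}(T)$. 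By linearity $U_{\hg}(T+k)=U_{\hg}(k)+U_{\hg}(T)$, and $\int_{\p\widehat B_R}U_{\hg}(k)=\m^{\widetilde\varphi}_{{\rm ADM},\hg}(\widetilde g,R)$ (since $U_{\hg}(k)=U_{\hg}(\bar g)$), so the ADM difference equals $\m^{\widetilde\varphi}_{{\rm ADM},\hg}(\widetilde g,R)+\int_{\p\widehat B_R}\big(U_{\hg}(T)-U_{\bar g}(T)\big)$. Combined with the volume computation, the first identity follows provided this last correction integral tends to zero.

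The main obstacle is precisely this error estimate. Here $T$ is held fixed and only the background is moved from $\hg$ to $\bar g=\hg+k$; since $U_b(T)$ sees $b$ through its connection, inverse metric, unit normal and volume element, the difference $U_{\hg}(T)-U_{\bar g}(T)$ is, to leading order, bilinear in $(k,T)$ and their first derivatives. As $k,T\in C^{2,\alpha}_{\delta}$ and the AH covariant derivative preserves the weight, this difference is $O(e^{-2\delta r})\dv_{\hg}$. Because the induced volume of $\p\widehat B_R$ grows like $e^{(n-1)R}$ and $2\delta>n-1$, the correction integral is $O(e^{(n-1-2\delta)R})\to0$, giving
\[
S^\varphi_{\widetilde g}(g)-S^{\widetilde\varphi\circ\varphi}_{\hg}(g)=\lim_{R\to\infty}\big(\m^{\widetilde\varphi}_{{\rm ADM},\hg}(\widetilde g,R)+2(n-1)RV^{\widetilde\varphi}_{\hg}(\widetilde g,R)\big)=\mathfrak{m}^{\widetilde\varphi}_{\rm{VR},\hg}(\widetilde g).
\]
The delicate part throughout is the weighted-H\"older bookkeeping ensuring the $2\delta$ rate (rather than the individually divergent rate $\delta$), together with the exact cancellations $U_b(b)=0$ and $\widetilde\varphi(\p\widetilde B_R)=\p\widehat B_R$ that reorganize the leading terms into $\mathfrak{m}^{\widetilde\varphi}_{\rm{VR},\hg}(\widetilde g)$.
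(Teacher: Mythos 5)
Your proposal is correct and follows essentially the same route as the paper: the same cancellation of the intrinsic scalar curvature integral and of the volume of $(M,g)$, the same use of linearity of the ADM density together with $U_b(b)=0$ to split off $\m^{\widetilde\varphi}_{{\rm ADM},\hg}(\widetilde g,R)$, and the same bilinear error term $U_{\hg}(T)-U_{\bar g}(T)=O(e^{-2\delta r})$ killed by $2\delta>n-1$ against the $e^{(n-1)R}$ area growth. The paper leaves the reduction of the mass identity to the action identity implicit, whereas you spell it out, but this is only a presentational difference.
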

\begin{proof}
For $R$ sufficiently large, we have
\begin{align*}
RV^{\tilde{\varphi}\circ \varphi}_{\widehat{g}}(g,R)=
RV^{ \varphi}_{\widetilde g}(g,R)+
RV^{\tilde{\varphi}}_{\widehat{g}}(\widetilde g,R)
\end{align*}
so that
\begin{align*}
S^\varphi_{\widetilde g}(g)-S^{\widetilde\varphi\circ\varphi}_{\hg}(g)
&=\lim_{R\to\infty} \left(
\m^{\widetilde{\varphi}\circ\varphi}_{{\rm ADM},\widehat{g}}(g,R)-\m^{\varphi}_{{\rm ADM},\widetilde{g}}(g)
+2(n-1)RV^{\tilde{\varphi}}_{\widehat{g}}(\tilde{g},R)
\right)
\end{align*}
Let $\gamma=\widetilde{\varphi}_*\varphi_*g$. By diffeomorphism invariance, we get 
\begin{align*}
&\m^{\widetilde{\varphi}\circ\varphi}_{{\rm ADM},\widehat{g}}(g,R)-\m^{\varphi}_{{\rm ADM},\widetilde{g}}(g,R) \\
&\qquad=
\m^{\identity}_{{\rm ADM},\widehat{g}}(\widetilde{\varphi}_*\varphi_*g,R)-\m^{\identity}_{{\rm ADM},\widetilde{\varphi}_*\widetilde{g}}(\widetilde{\varphi}_*\varphi_*g,R)\\
&\qquad=
\m^{\identity}_{{\rm ADM},\widehat{g}}(\gamma,R)-\m^{\identity}_{{\rm ADM},\widetilde{\varphi}_*\widetilde{g}}(\gamma-\widetilde{\varphi}_*\widetilde{g},R)\\
&\qquad=
\m^{\identity}_{{\rm ADM},\widehat{g}}(\gamma,R)-\m^{\identity}_{{\rm ADM},\widehat{g}}(\gamma-\widetilde{\varphi}_*\widetilde{g},R)\\
&\qquad\qquad+
\m^{\identity}_{{\rm ADM},\widehat{g}}(\gamma-\widetilde{\varphi}_*\widetilde{g},R)-\m^{\identity}_{{\rm ADM},\widetilde{\varphi}_*\widetilde{g}}(\gamma-\widetilde{\varphi}_*\widetilde{g},R)\\
&\qquad=
\m^{\identity}_{{\rm ADM},\widehat{g}}(\widetilde{\varphi}_*\widetilde{g},R)\\
&\qquad\qquad+
\m^{\identity}_{{\rm ADM},\widehat{g}}(\gamma-\widetilde{\varphi}_*\widetilde{g},R)-\m^{\identity}_{{\rm ADM},\widetilde{\varphi}_*\widetilde{g}}(\gamma-\widetilde{\varphi}_*\widetilde{g},R)\\
&\qquad=\m^{\widetilde{\varphi}}_{{\rm ADM},\widehat{g}}(\widetilde{g},R)\\
&\qquad\qquad+
\m^{\identity}_{{\rm ADM},\widehat{g}}(\gamma-\widetilde{\varphi}_*\widetilde{g},R)-\m^{\identity}_{{\rm ADM},\widetilde{\varphi}_*\widetilde{g}}(\gamma-\widetilde{\varphi}_*\widetilde{g},R).
\end{align*}
Let $h=\gamma-\widetilde{\varphi}_*\widetilde{g}$ and $\overline{g}=\widetilde{\varphi}_*\widetilde{g}$.
For the last two terms, we have
\begin{align*}
\left|\m^{\identity}_{{\rm ADM},\widehat{g}}(h,R)-\m^{\identity}_{{\rm ADM},\overline{g}}(h,R)\right|\leq C\int_{\partial \widehat{B}_R} 
\left(|\widehat{\nabla}(\widehat{g}-\overline{g})||h|+|\widehat{g}-\overline{g}||\widehat{\nabla} h|\right)\dv_{\widehat{g}}
\end{align*}
Because $\widehat{g}-\overline{g},h\in C^{k,\alpha}_{\delta}$ with $\delta>\frac{n-1}{2}$, this converges to $0$ as $R\to\infty$, which proves the proposition.
\end{proof}

We now compute the first variation of the renormalized Einstein--Hilbert action.
\begin{prop}\label{prop:first_variation_EHaction}
The first variation of $S^\varphi_{\hg}$ is given by
\begin{equation} \label{eq:firstvarS}
D_gS^\varphi_{\hg}[h]=
-\int_M \left\langle \ric_ g-\frac{1}{2}\scal_g\, g-\frac{1}{2}(n-2)(n-1)g,h\right\rangle_g \dv_g.
\end{equation}
In particular, $S_{\hg}^\varphi$ is an analytic functional on the space $\mathcal{R}^{k,\alpha}_{\delta}(M,\varphi,\hg)$ for any $\delta>\frac{n-1}2$.
\end{prop}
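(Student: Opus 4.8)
The plan is to differentiate each of the three ingredients of $S^\varphi_{\hg}$ separately on the \emph{fixed} domain $B_R$, add the results, and pass to the limit $R\to\infty$; the fixedness of $B_R$ (it is cut out by the $g$-independent function $r$) means only the integrands, not the domain, get varied. Write $h$ for the variation and $\bar g=\varphi^*\hg$, and recall from the proof of Theorem~\ref{thm-well-def-mass} the linearization
\[
D\scal_g[h]=\mathrm{div}_g\mathrm{div}_g h+\Delta_g(\trace_g h)-\langle\ric_g,h\rangle_g,
\]
together with $D(\dv_g)[h]=\tfrac12(\trace_g h)\dv_g$. Differentiating $\int_{B_R}(\scal_g+n(n-1))\dv_g$ and applying the divergence theorem to the two total-divergence terms produces a bulk part
\[
\int_{B_R}\Big(-\langle\ric_g,h\rangle_g+\tfrac12(\scal_g+n(n-1))\trace_g h\Big)\dv_g
\]
and a boundary part $\int_{\partial B_R}(\mathrm{div}_g h-d\trace_g h)(\nu_g)\,dA_g$.

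Next I would treat the remaining two pieces. Since $RV^\varphi_{\hg}(g,R)=\int_{B_R}\dv_g-\int_{\widehat B_R}\dv_{\hg}$ and the reference term is $g$-independent, the term $-2(n-1)RV^\varphi_{\hg}(g,R)$ contributes the purely bulk variation $-(n-1)\int_{B_R}(\trace_g h)\dv_g$. Adding this to the bulk above and using $\tfrac12(\scal_g+n(n-1))-(n-1)=\tfrac12\scal_g+\tfrac12(n-2)(n-1)$ collapses the bulk into exactly
\[
-\int_{B_R}\Big\langle\ric_g-\tfrac12\scal_g\,g-\tfrac12(n-2)(n-1)g,\,h\Big\rangle_g\dv_g,
\]
which converges as $R\to\infty$ because, for an APE metric, the bracket equals $-(\ric_g+(n-1)g)+\tfrac12(\scal_g+n(n-1))g\in C^{k-2,\alpha}_\delta$ and is thus integrable against $h\in C^{k,\alpha}_\delta$ precisely when $\delta>\tfrac{n-1}2$. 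The ADM piece contributes only a boundary term: since the integrand of $\m^\varphi_{{\rm ADM},\hg}$ is \emph{linear} in its tensor argument, $-D_g\m^\varphi_{{\rm ADM},\hg}(g,R)[h]=-\int_{\partial B_R}(\mathrm{div}_{\bar g}h-d\trace_{\bar g}h)(\nu_{\bar g})\,dA_{\bar g}$ after pulling back along $\varphi$.

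The crux — and the step I expect to be the main obstacle — is to show the two boundary contributions cancel in the limit. They are the same expression evaluated for the two metrics $g$ and $\bar g=\varphi^*\hg$, so their difference is controlled by $|g-\bar g|$ and $|\nabla(g-\bar g)|$ times $|h|$ and $|\nabla h|$. Both $g-\bar g$ and $h$ lie in $C^{k,\alpha}_\delta$, so the integrand difference decays like $e^{-2\delta r}$, while the induced area of $\partial B_R$ grows like $e^{(n-1)R}$; hence the boundary difference is $O(e^{(n-1-2\delta)R})\to 0$ exactly because $\delta>\tfrac{n-1}2$. This is the same mechanism and the same decay threshold that make the mass well-defined in Theorem~\ref{thm-well-def-mass}, and it is where the hypothesis on $\delta$ is indispensable. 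Combining the surviving bulk term with the vanishing of the net boundary term yields \eqref{eq:firstvarS}.

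Finally, for analyticity I would invoke the absolutely convergent representation of $S^\varphi_{\hg}$ from the proof of Theorem~\ref{thm-well-def-mass}, in which $S$ is an integral over $M$ whose integrand is built algebraically from $g$, $g^{-1}$ and the first two derivatives of $g$ (through $\scal_g$, $\dv_g$ and $h=\varphi_*g-\hg$), the reference quantities being fixed. Each such integrand depends real-analytically on the $2$-jet of $g$, and the decay $2\delta>n-1$ makes $g\mapsto(\text{integrand})$ an analytic map from $\mathcal{R}^{k,\alpha}_{\delta}(M,\varphi,\hg)$ into a weighted $L^1$-space; post-composing with the linear (hence analytic) integration functional shows that $S^\varphi_{\hg}$ is analytic.
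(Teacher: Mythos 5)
Your proposal is correct and follows essentially the same route as the paper: differentiate the three terms on the fixed ball $B_R$, convert the linearized scalar curvature into a bulk term plus the boundary term $\m^{\identity}_{{\rm ADM},g}(h,R)$ via the divergence theorem, and cancel it against the linear variation $\m^{\varphi}_{{\rm ADM},\hg}(h,R)$ in the limit using exactly the $O(e^{(n-1-2\delta)R})$ estimate that the paper borrows from the proof of Proposition \ref{cor:additivity_renormalized_EH}. One immaterial slip: the bulk bracket equals $+\left(\ric_g+(n-1)g\right)-\tfrac12\left(\scal_g+n(n-1)\right)g$, not its negative, though this does not affect your decay argument.
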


\begin{proof}
Let $g_t=g+th$. We compute
\begin{align*}
&\frac{d}{dt}\int_{B_R}\left( \scal_{g_t}+n(n-1)\right)\dv_{g_t}\big|_{t=0}\\
&\qquad=\int_{B_R}\left( D_g\scal[h]+\left( \scal_{g}+n(n-1)\right)\frac{1}{2}\trace_gh\right)\dv_{g}\\
&\qquad=\int_{B_R}\left(\mathrm{div}_g(\mathrm{div}_g(h))+\Delta_g(\trace_g h)-\langle h, \ric_ g \rangle\right)\dv_g\\
&\qquad\qquad +\int_{B_R}\left( \scal_{g}+n(n-1)\right)\frac{1}{2}\trace_gh\dv_{g} \\
&\qquad=\m^{\identity}_{{\rm ADM},g}(h,R)-\int_{B_R}\langle\ric-\frac{1}{2}(\scal_g+n(n-1))g,h\rangle_g \dv_g,
\end{align*}
where we use the divergence theorem in the last line. We further compute
\begin{align*}
\frac{d}{dt}RV^{ \varphi}_{\widehat{g}}(g_t,R)|_{t=0}=\frac{1}{2}\int_{B_R}\trace_gh\dv_g=\frac{1}{2}\int_{B_R}\langle g,h\rangle \dv_g
\end{align*}
and
\begin{align*}
\frac{d}{dt}\m^{\varphi}_{{\rm ADM},\hg}(g_t,R)|_{t=0}=
\m^{\varphi}_{{\rm ADM},\hg}(h,R).
\end{align*}
As in the proof of Proposition \ref{cor:additivity_renormalized_EH}, we find that
\begin{align*}
\lim_{R\to\infty}\left( \m^{\identity}_{{\rm ADM},g}(h,R)- \m^{\varphi}_{{\rm ADM},\hg}(h,R)\right)=0.
\end{align*}
Adding up the above identities and letting $R\to\infty$ thus finishes the proof.
\end{proof}
The proposition demonstrates that $S^{\varphi}_{\hg}$ is a natural version of the Einstein--Hilbert action for asymptotically hyperbolic manifolds, illustrated also by the following corollary.
\begin{cor}\label{cor-EH}{\,}
\begin{itemize}
\item[(i)] If $n=2$, the functional $g\mapsto S^\varphi_{\hg}(g)$ is constant on $\mathcal{R}^{k,\alpha}_{\delta}(M,\varphi,\hg)$.
\item[(ii)] If $n\geq 3$, the critical points of $S^\varphi_{\hg}$ are exactly the PE metrics.
\end{itemize}
\end{cor}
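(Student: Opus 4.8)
The plan is to read both claims off the first variation formula established in Proposition~\ref{prop:first_variation_EHaction}. Writing
\[
E_g := \ric_g-\tfrac{1}{2}\scal_g\,g-\tfrac{1}{2}(n-2)(n-1)g,
\]
the formula says $D_gS^\varphi_{\hg}[h]=-\int_M\langle E_g,h\rangle_g\dv_g$, so everything reduces to understanding the zero set of $E_g$, together with a structural observation about the domain $\mathcal{R}^{k,\alpha}_{\delta}(M,\varphi,\hg)$.

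For part (i) I would simply note that in dimension $n=2$ one has the pointwise identity $\ric_g=\tfrac12\scal_g\,g$, while the remaining term $\tfrac12(n-2)(n-1)g$ vanishes because $n-2=0$; hence $E_g\equiv 0$ for \emph{every} metric in the domain, and so $D_gS^\varphi_{\hg}\equiv 0$. To upgrade the vanishing of the differential to constancy of $S^\varphi_{\hg}$, I would use that the domain is convex: if $g_0,g_1\in\mathcal{R}^{k,\alpha}_{\delta}(M,\varphi,\hg)$ then the segment $g_t=(1-t)g_0+tg_1$ stays positive definite and satisfies $\varphi_*g_t-\hg\in C^{k,\alpha}_\delta$, so it lies in the domain, with admissible variation $h=g_1-g_0$ (indeed $\varphi_*h=(\varphi_*g_1-\hg)-(\varphi_*g_0-\hg)\in C^{k,\alpha}_\delta$). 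Then $\frac{d}{dt}S^\varphi_{\hg}(g_t)=D_{g_t}S^\varphi_{\hg}[h]=0$ for all $t$, whence $S^\varphi_{\hg}(g_0)=S^\varphi_{\hg}(g_1)$.

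For part (ii), a metric $g$ is critical precisely when $\int_M\langle E_g,h\rangle_g\dv_g=0$ for all admissible $h$. Since smooth compactly supported symmetric $2$-tensors are admissible variations (they decay faster than any weight and preserve positive definiteness of $g+th$ for small $t$), the fundamental lemma of the calculus of variations forces the pointwise condition $E_g=0$. It then remains to identify $\{E_g=0\}$ with the PE metrics. Taking the $g$-trace gives
\[
\trace_g E_g=-\tfrac{n-2}{2}\bigl(\scal_g+n(n-1)\bigr),
\]
so for $n\geq 3$ the condition $E_g=0$ implies $\scal_g=-n(n-1)$; substituting this back yields $\ric_g=\tfrac12\scal_g\,g+\tfrac12(n-2)(n-1)g=-(n-1)g$, i.e. $g$ is PE. Conversely, a direct substitution shows that $\ric_g=-(n-1)g$ (hence $\scal_g=-n(n-1)$) makes $E_g$ vanish, so every PE metric is critical.

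The computations here are purely algebraic, and the only points requiring care are the structural ones: that compactly supported tensors lie in the tangent space, so that $E_g=0$ may legitimately be concluded pointwise in (ii), and that the domain is path-connected through admissible paths, so that a vanishing differential gives constancy in (i). Neither is a genuine obstacle. The conceptual content is simply that $E_g$ is, up to trace-reversal and the cosmological term, the Einstein tensor of a metric with cosmological constant tuned so that its zeros are exactly the metrics with $\ric_g=-(n-1)g$.
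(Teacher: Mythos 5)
Your proposal is correct and follows the same route the paper intends: the corollary is stated as an immediate consequence of the first variation formula in Proposition~\ref{prop:first_variation_EHaction}, which is exactly what you do, with the dimension-two identity $\ric_g=\tfrac12\scal_g\,g$ for (i) and the trace argument identifying the zero set of the gradient with PE metrics for (ii). Your explicit fill-ins (convexity of $\mathcal{R}^{k,\alpha}_{\delta}(M,\varphi,\hg)$ to pass from vanishing differential to constancy, and compactly supported variations plus the fundamental lemma to get pointwise vanishing) are precisely the details the paper leaves implicit, and they are sound.
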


\subsection{Diffeomorphism invariance}
In this subsection, we demonstrate that the volume-renormalized mass is a diffeomorphism-invariant quantity.
A first indication is given by the following lemma, which requires slightly more regularity than we have previously assumed. 

\begin{lem} \label{lem:smalldiffeoinvar}
Let $(M,g)$ and $(\widehat{M},\hg)$ be APE manifolds with isometric conformal boundaries and assume that $(\widehat{M},\hg)$ has integrable normalized scalar curvature. Assume also that the manifolds are $C^{k,\alpha}$, with $k\geq3$.
Then for all $X\in C^{k,\alpha}_\delta(TM)$, we have 
\begin{align*}D_gS^\varphi_{\hg}(\mathcal{L}_Xg)=0.
\end{align*}
\end{lem}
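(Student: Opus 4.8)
The plan is to feed $h=\mathcal{L}_Xg$ into the first variation formula of Proposition \ref{prop:first_variation_EHaction}, and then exploit the fact that the ``normalized Einstein tensor'' appearing there is divergence-free. By that proposition,
\begin{equation*}
D_gS^\varphi_{\hg}(\mathcal{L}_Xg)=-\int_M\langle E_g,\mathcal{L}_Xg\rangle_g\dv_g,
\qquad
E_g:=\ric_g-\tfrac12\scal_g\,g-\tfrac12(n-2)(n-1)g.
\end{equation*}
First I would record that, writing $P:=\ric_g+(n-1)g$ and using $\scal_g+n(n-1)=\trace_gP$, the pure-trace terms cancel exactly, giving
\begin{equation*}
E_g=P-\tfrac12(\trace_gP)\,g.
\end{equation*}
Since $(M,g)$ is APE of order $\delta$, we have $|P|_g\in C^{k-2,\alpha}_\delta(M)$, hence $|E_g|_g\lesssim e^{-\delta r}$. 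Together with $|X|_g\lesssim e^{-\delta r}$ and $|\mathcal{L}_Xg|_g\lesssim e^{-\delta r}$ (the latter since $\nabla X\in C^{k-1,\alpha}_\delta$), and using $\dv_g\sim e^{(n-1)r}\,dr$ near infinity, the integrand is $O(e^{(n-1-2\delta)r})$ with $n-1-2\delta<0$; thus the integral converges absolutely and equals $\lim_{R\to\infty}\int_{B_R}$.

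The structural heart of the argument is the contracted second Bianchi identity $\mathrm{div}_g(\ric_g-\tfrac12\scal_g g)=0$, which together with $\nabla_gg=0$ yields $\mathrm{div}_gE_g=0$. This is exactly where the added regularity $k\geq3$ is used: it ensures $E_g$ is $C^1$, so that $\mathrm{div}_gE_g$ is defined and the integration by parts below is legitimate. Using $(\mathcal{L}_Xg)_{ij}=\nabla_iX_j+\nabla_jX_i$ and the symmetry of $E_g$, I would integrate by parts over $B_R$:
\begin{equation*}
\int_{B_R}\langle E_g,\mathcal{L}_Xg\rangle_g\dv_g
=2\int_{B_R}E_g^{ij}\nabla_iX_j\dv_g
=-2\int_{B_R}(\mathrm{div}_gE_g)(X)\dv_g+2\int_{\p B_R}E_g(X,\nu)\,dA
=2\int_{\p B_R}E_g(X,\nu)\,dA,
\end{equation*}
where $\nu$ is the outward $g$-unit normal. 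So the entire first variation reduces to a boundary term.

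It remains to show this boundary term vanishes. On $\p B_R=\{r=R\}$ we have $|E_g(X,\nu)|\leq|E_g|_g|X|_g\lesssim e^{-2\delta R}$, while the induced area satisfies $\mathrm{Area}_g(\p B_R)\sim e^{(n-1)R}$ (near infinity $g\sim dr^2+e^{2r}\sigma_0$ with $\rho=e^{-r}$, so the sphere $\{r=R\}$ carries the metric $\approx e^{2R}\sigma_0$). Hence
\begin{equation*}
\left|\int_{\p B_R}E_g(X,\nu)\,dA\right|\lesssim e^{(n-1-2\delta)R}\xrightarrow{\ R\to\infty\ }0,
\end{equation*}
because $\delta>\tfrac{n-1}2$ forces $n-1-2\delta<0$. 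Therefore $D_gS^\varphi_{\hg}(\mathcal{L}_Xg)=-\lim_{R\to\infty}2\int_{\p B_R}E_g(X,\nu)\,dA=0$, as claimed.

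The main obstacle I anticipate is precisely the boundary-term estimate: one must verify that the quadratic decay $e^{-2\delta r}$ of $E_g(X,\nu)$ strictly beats the exponential area growth $e^{(n-1)r}$ of the geodesic spheres, which is the sole place the hypothesis $\delta>\tfrac{n-1}2$ is consumed. The two supporting ingredients are purely bookkeeping by comparison: the divergence-freeness of $E_g$ (Bianchi) is what collapses the bulk integral to a pure boundary integral, and the assumption $k\geq3$ is what makes that collapse rigorous. Everything else is the routine identification $E_g=P-\tfrac12(\trace_gP)g$ and the weighted-space decay bounds.
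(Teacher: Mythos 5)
Your proof is correct and follows essentially the same route as the paper's: the first variation formula of Proposition \ref{prop:first_variation_EHaction}, integration by parts against $\mathcal{L}_Xg$, and the contracted second Bianchi identity. The only difference is presentational: you make explicit (via the identity $E_g=P-\tfrac12(\trace_g P)\,g$ with $P=\ric_g+(n-1)g$, and the estimate $e^{(n-1-2\delta)R}\to 0$) the decay argument that the paper compresses into the phrase ``a boundary term at infinity which vanishes due to the decay conditions.''
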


\begin{proof}
Since $\mathcal L_X g\in C^{k-1,\alpha}_\delta$, Proposition \ref{prop:first_variation_entropy} gives us
\begin{equation*}
D_gS_{\hg}^\varphi(\mathcal L_X g)
=
-\int_M \left\langle \ric_ g-\frac{1}{2}\scal_g\cdot g-\frac{1}{2}(n-1)(n-2)g,
\mathcal L_X g\right\rangle_g \dv_g.
\end{equation*}
Integrating by parts we get a boundary term at infinity which vanishes due to the decay conditions. This leaves us with
\begin{equation*}
D_gS_{\hg}^\varphi(\mathcal L_X g)
=
-2\int_M \mathrm{div}\left( \ric_ g-\frac{1}{2}\scal_g\cdot g\right)(X) \dv_g,
\end{equation*}
which vanishes by the contracted second Bianchi identity.
\end{proof}

An immediate consequence is the following result.

\begin{cor}\label{cor:diff_inv:EH}
Let $(M,g)$, $(\widehat{M},\hg)$ and $\varphi$ be as in Lemma \ref{lem:smalldiffeoinvar}. Additionally let $X\in C^{k,\alpha}_{\delta}(TM)$ and $\psi_t$ be the group of diffeomorphisms generated by $X$. Then,
\begin{align*}
S^\varphi_{\hat{g}}((\psi_t)_*g)=S^\varphi_{\hg}(g)
\end{align*}
for all $t\in\R$. If in addition $\scal_g+n(n-1)\in L^1$, we obtain
\begin{align*}
\m^\varphi_{{\rm VR},\hg}((\psi_t)_*g)=\m^\varphi_{{\rm VR},\hg}(g)
\end{align*}
for all $t\in\R$.
\end{cor}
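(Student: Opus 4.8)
The plan is to regard $t \mapsto S^\varphi_{\hg}(g_t)$, where $g_t := (\psi_t)_* g$, as a differentiable curve of real numbers and to show that its derivative vanishes identically; then it is constant and equal to its value $S^\varphi_{\hg}(g)$ at $t=0$. First I would identify the infinitesimal generator of the flow acting on metrics: using the group law $\psi_{s+t} = \psi_s \circ \psi_t$ together with $(\psi_s)_* = (\psi_{-s})^*$, one computes
\begin{equation*}
\frac{d}{dt} g_t
= \frac{d}{ds}\Big|_{s=0} (\psi_s)_* g_t
= -\mathcal{L}_X g_t,
\end{equation*}
so that the velocity of the curve at time $t$ is $-\mathcal{L}_X g_t \in C^{k-1,\alpha}_\delta(S^2 T^*M)$. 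The sign will be immaterial for the conclusion.

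Before differentiating, I must check that each $g_t$ is again admissible, that is, that $(M,g_t)$ is APE and asymptotic to $(\widehat M,\hg)$ of order $\delta > \frac{n-1}{2}$ with respect to the \emph{same} $\varphi$, so that $S^\varphi_{\hg}(g_t)$ is well-defined by Theorem \ref{thm-well-def-mass} and so that Lemma \ref{lem:smalldiffeoinvar} can be applied at $g_t$. This is where the hypotheses $X \in C^{k,\alpha}_\delta(TM)$ and $k \geq 3$ are used: the flow $\psi_t$ of such an $X$ is a diffeomorphism asymptotic to the identity with the relevant quantities decaying at order $\delta$, so that $\varphi_* g_t - \hg = (\varphi \circ \psi_t)_* g - \hg$ still lies in $C^{k-1,\alpha}_\delta$ and the APE estimate for $\ric_{g_t} + (n-1)g_t$ persists with the same order. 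I expect the bookkeeping here --- propagating the weighted H\"older decay through the flow and absorbing the single derivative lost when a metric is pushed forward --- to be the main technical obstacle, although it is routine given the decay of $X$.

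With admissibility established, the chain rule together with the first-variation formula of Proposition \ref{prop:first_variation_EHaction} (which, being an integral pairing, extends to the lower-regularity variation $\mathcal{L}_X g_t$, exactly as used in Lemma \ref{lem:smalldiffeoinvar}) gives
\begin{equation*}
\frac{d}{dt} S^\varphi_{\hg}(g_t)
= D_{g_t} S^\varphi_{\hg}\big({-}\mathcal{L}_X g_t\big)
= -\, D_{g_t} S^\varphi_{\hg}\big(\mathcal{L}_X g_t\big).
\end{equation*}
Applying Lemma \ref{lem:smalldiffeoinvar} at the metric $g_t$ with the same vector field $X$ shows that the right-hand side vanishes for every $t$. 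Hence $t \mapsto S^\varphi_{\hg}(g_t)$ has zero derivative and is constant, which yields $S^\varphi_{\hg}((\psi_t)_* g) = S^\varphi_{\hg}(g)$ for all $t$.

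For the statement about the mass I would combine this with the diffeomorphism invariance of the total normalized scalar curvature. Under the additional assumption $\scal_g + n(n-1) \in L^1$, comparing the definition of $S^\varphi_{\hg}$ in Theorem \ref{thm-well-def-mass} with that of $\m^\varphi_{{\rm VR},\hg}$ yields the identity
\begin{equation*}
S^\varphi_{\hg}(g) + \m^\varphi_{{\rm VR},\hg}(g)
= \int_M \left( \scal_g + n(n-1) \right) \dv_g.
\end{equation*}
The right-hand side is invariant under $g \mapsto (\psi_t)_* g$ by the change-of-variables formula, since pushforward by a diffeomorphism preserves the total integral of a scalar curvature integrand (and the same $L^1$ bound holds for $g_t$). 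As $S^\varphi_{\hg}$ is invariant by the first part, it follows that $\m^\varphi_{{\rm VR},\hg}((\psi_t)_* g) = \m^\varphi_{{\rm VR},\hg}(g)$ for all $t$, which completes the argument.
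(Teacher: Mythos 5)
Your proof is correct and takes essentially the same route the paper intends: the corollary is stated there as an immediate consequence of Lemma \ref{lem:smalldiffeoinvar}, obtained precisely by integrating the infinitesimal invariance $D_{g_t}S^\varphi_{\hg}(\mathcal{L}_X g_t)=0$ along the flow as you do, with the mass statement following from the identity $S^\varphi_{\hg}(g)+\m^\varphi_{{\rm VR},\hg}(g)=\int_M(\scal_g+n(n-1))\dv_g$ and the diffeomorphism invariance of that integral. Your explicit admissibility check (that each $g_t$ stays in the weighted class where Theorem \ref{thm-well-def-mass} and Lemma \ref{lem:smalldiffeoinvar} apply, at the cost of one derivative absorbed by $k\geq 3$) is a detail the paper leaves implicit.
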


This corollary asserts that the mass does not change if we modify the $C^{k,\alpha}$-diffeomorphism $\varphi: N \setminus K\to \widehat{N}\setminus \widehat{K}$ slightly by a diffeomorphism on $N$, generated by a vector field with sufficiently fast falloff. This is does not give a completely satisfying answer to the question of diffeomorphism invariance. For example, it excludes Lorentz boosts on hyperbolic space, which are generated by vector fields that do not decay towards the conformal boundary.

From now on, we may again allow $C^{k,\alpha}$, with $k\geq2$ and $k+\alpha\geq \delta$.
The following lemma is entirely straightforward, however we state it for completeness.
\begin{lem}\label{lem:bothslotdiffeo}
Let $(M,g)$ and $(\widehat{M},\hg)$ be APE manifolds with integrable normalized scalar curvature and isometric conformal boundaries
and let $\psi:M\to M$ be a diffeomorphism. Then
\begin{equation*}
\m_{{\rm VR},\hg}^\varphi(g)=\m_{\hg}^{\varphi\circ\psi^{-1}}(\psi_*g),
\end{equation*}
where $\varphi$ is the usual defining diffeomorphism as in Definition \ref{defn-AH}. Similarly, if $\chi:\widehat{M}\to \widehat{M}$ is a diffeomorphism,
\begin{equation*}
\m_{{\rm VR},\hg}^\varphi(g)=\m_{\chi_ *\hg}^{\chi\circ \varphi}(g).
\end{equation*}
\end{lem}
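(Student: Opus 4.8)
The plan is to show that both quantities entering Definition~\ref{defn-VRmass} --- the ADM surface integral $\m^{\bullet}_{{\rm ADM},\bullet}$ and the renormalized volume $RV^{\bullet}_{\bullet}$ --- agree \emph{term by term} for every sufficiently large $R$, after a compatible relabelling of the exhaustion, so that the two limits coincide without any analytic estimate. The only points requiring care are the bookkeeping of the pushforward convention $\varphi_*g=(\varphi^{-1})^*g$ and of the induced boundary defining functions, which by our normalization satisfy $r=\widehat r\circ\varphi$.

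For the first identity I would begin with the purely algebraic observation that, by functoriality of pullback,
\[
(\varphi\circ\psi^{-1})_*(\psi_*g)=(\varphi^{-1})^*\psi^*(\psi^{-1})^*g=(\varphi^{-1})^*g=\varphi_*g.
\]
Hence the integrand of the ADM surface integral over $\widehat M$ is literally unchanged, and since both the reference $\hg$ and the surfaces $\p\widehat B_R$ are unaffected, one gets $\m^{\varphi\circ\psi^{-1}}_{{\rm ADM},\hg}(\psi_*g,R)=\m^{\varphi}_{{\rm ADM},\hg}(g,R)$ for all $R$. For the volume term, the new defining function on $M$ is $r'=\widehat r\circ\varphi\circ\psi^{-1}=r\circ\psi^{-1}$, so the new exhaustion is $B'_R=\psi(B_R)$; the substitution $x=\psi(y)$ together with $\psi^*(\dv_{\psi_*g})=\dv_g$ gives $\int_{B'_R}\dv_{\psi_*g}=\int_{B_R}\dv_g$, while $\int_{\widehat B_R}\dv_{\hg}$ is untouched. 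Thus $RV^{\varphi\circ\psi^{-1}}_{\hg}(\psi_*g,R)=RV^{\varphi}_{\hg}(g,R)$ for all $R$, and the first identity follows by letting $R\to\infty$.

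For the second identity the key point is that $\chi\colon(\widehat M,\hg)\to(\widehat M,\chi_*\hg)$ is an isometry by construction, since $\chi^*(\chi_*\hg)=\hg$, and therefore intertwines all the natural operators ($\mathrm{div}$, $\trace$, $d$, the outward unit normal, and the volume form) appearing in the ADM integrand. Choosing the defining function $\widehat r'=\widehat r\circ\chi^{-1}$ for $\chi_*\hg$ gives $\p\widehat B'_R=\chi(\p\widehat B_R)$, and because $(\chi\circ\varphi)_*g=\chi_*(\varphi_*g)$, isometry invariance yields $\m^{\chi\circ\varphi}_{{\rm ADM},\chi_*\hg}(g,R)=\m^{\varphi}_{{\rm ADM},\hg}(g,R)$. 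Moreover this choice leaves the defining function on $M$ unchanged, since $\widehat r'\circ(\chi\circ\varphi)=\widehat r\circ\varphi=r$, so the exhaustion $B_R\subset M$ and the term $\int_{B_R}\dv_g$ are the same; and $\int_{\widehat B'_R}\dv_{\chi_*\hg}=\int_{\widehat B_R}\dv_{\hg}$ by the change of variables $\chi$. Hence every term agrees for each $R$ and the identity again follows in the limit. The only genuine obstacle is the routine verification that the boundary defining functions are chosen $\varphi$-compatibly, so that the exhaustions line up as claimed; once that bookkeeping is fixed, no estimate is needed and both statements are purely formal consequences of diffeomorphism covariance, which is why the lemma is entirely straightforward.
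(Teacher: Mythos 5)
Your proposal is correct and follows essentially the same route as the paper's own (much terser) proof: the ADM term is unchanged because $(\varphi\circ\psi^{-1})_*(\psi_*g)=\varphi_*g$, the renormalized volume is invariant under the change of variables by $\psi$ (respectively $\chi$), and the second identity reduces to the fact that $\chi:(\widehat{M},\hg)\to(\widehat{M},\chi_*\hg)$ is an isometry. Your version merely makes explicit the bookkeeping of defining functions and exhaustions that the paper leaves implicit.
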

\begin{proof}
The renormalized volume clearly is unchanged by a diffeomorphism and the ADM term is unchanged because $(\varphi\circ\psi^{-1})_*(\psi_* g)=\varphi_* g$. The other argument is analogous.
\end{proof}
In the following, we are going to prove a much stronger type of diffeomorphism invariance under a natural condition. 
\begin{defn}
Let $(\widehat{M},\widehat{g})$ be a (possibly incomplete) PE manifold with conformal boundary $(\partial \widehat{N},[\widehat{\sigma}])$. We call $(\widehat{M},\widehat{g})$ {\em proper} if every conformal isometry $\psi_{\partial}\in \mathrm{Iso}(\partial \widehat{N},[\widehat{\sigma}])$ extends to a $C^{k+1,\alpha}$-diffeomorphism ${\psi}:\widehat{N}\to \widehat{N}$
which restricts to an isometry of $(\widehat{M},\widehat{g})$. We call a conformal class $(\partial \widehat{N},[\widehat{\sigma}])$ proper if it is the conformal boundary of a proper PE manifold.
\end{defn}
The key technical step
is provided by the following lemma.
\begin{lem}\label{lem:coord_invariance_mass}
Let $(\widehat{M},\widehat{g})$ be a proper PE manifold. Then,
for every pair of open neighborhoods ${U},{V}$ of $\partial \widehat{N}$ in $\widehat{N}$ and
 every $C^{k+1,\alpha}$-diffeomorphism $\varphi:U\to V$ with $\varphi_*\widehat{g}\in \mathcal{R}^{k,\alpha}_{\delta}(V,\hg)$, we have that

\begin{align*}
\m^{\varphi}_{{\rm VR},\hg}(\hg)=0.
\end{align*}
\end{lem}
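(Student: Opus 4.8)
The plan is to first use properness to reduce to the case where $\varphi$ restricts to the identity on the conformal boundary, and then to deform $\varphi$ to the identity through global diffeomorphisms, showing that the volume-renormalized mass is unchanged along the deformation because the tensor governing the first variation of the renormalized action vanishes identically on Poincar\'e--Einstein metrics.

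\emph{Reduction to $\varphi_\partial=\identity$.} Since $\varphi_*\hg-\hg\in C^{k,\alpha}_\delta$ with $\delta>0$, the induced boundary map $\varphi_\partial$ preserves the conformal class, i.e.\ $\varphi_\partial\in\mathrm{Iso}(\partial\widehat N,[\widehat\sigma])$. As $(\widehat M,\hg)$ is proper, $\varphi_\partial$ extends to a diffeomorphism $\psi$ of $\widehat N$ that restricts to an isometry of $(\widehat M,\hg)$. Setting $\widetilde\varphi=\psi^{-1}\circ\varphi$, the second part of Lemma \ref{lem:bothslotdiffeo} together with $\psi_*\hg=\hg$ gives $\m^{\varphi}_{\rm{VR},\hg}(\hg)=\m^{\widetilde\varphi}_{\rm{VR},\hg}(\hg)$, and now $\widetilde\varphi_\partial=\identity$. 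Thus I may assume from the start that $\varphi$ fixes $\partial\widehat N$ pointwise.

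\emph{Globalization and the key observation.} I would then extend $\varphi$ to a global diffeomorphism $\Phi$ of $\widehat N$ agreeing with $\varphi$ near infinity (interpolating to the identity on a fixed compact core via isotopy extension). Applying the first part of Lemma \ref{lem:bothslotdiffeo} with $\psi=\Phi$, and using that $\varphi\circ\Phi^{-1}=\identity$ near infinity, yields $\m^{\varphi}_{\rm{VR},\hg}(\hg)=\m^{\identity}_{\rm{VR},\hg}(\Phi_*\hg)$. The crucial gain is that $g':=\Phi_*\hg$ is \emph{globally} isometric to $\hg$; since $\hg$ is PE, $g'$ is Einstein with $\scal_{g'}=-n(n-1)$ everywhere, while $g'-\hg\in C^{k,\alpha}_\delta$ near infinity. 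Because $\varphi$ fixes the boundary and $\varphi_*\hg-\hg$ decays, $\Phi$ can be placed inside a differentiable family $\Phi_t$ of global diffeomorphisms with $\Phi_0=\identity$, $\Phi_1=\Phi$, such that $g_t:=(\Phi_t)_*\hg$ satisfies $g_t-\hg\in C^{k,\alpha}_\delta$ near infinity uniformly in $t$. Each $g_t$ is globally isometric to $\hg$, hence Einstein.

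\emph{First variation along the family.} For each $t$ one has $\int_M(\scal_{g_t}+n(n-1))\dv_{g_t}=0$, so by the definition of the renormalized action $\m^{\identity}_{\rm{VR},\hg}(g_t)=-S^{\identity}_{\hg}(g_t)$. Writing $\dot g_t=\mathcal L_{Y_t}g_t\in C^{k-1,\alpha}_\delta$, Proposition \ref{prop:first_variation_EHaction} gives
\[
\tfrac{d}{dt}S^{\identity}_{\hg}(g_t)=-\int_M\Big\langle \ric_{g_t}-\tfrac12\scal_{g_t}\,g_t-\tfrac12(n-2)(n-1)g_t,\ \dot g_t\Big\rangle_{g_t}\dv_{g_t}.
\]
Since $g_t$ is PE, the bracketed tensor vanishes identically, so $\tfrac{d}{dt}S^{\identity}_{\hg}(g_t)=0$. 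Note that because the integrand is \emph{identically} zero, no integration by parts is needed, so no regularity beyond Proposition \ref{prop:first_variation_EHaction} is required and the argument is valid for $k\geq 2$. Hence $S^{\identity}_{\hg}(g_t)$ is constant, equal to $S^{\identity}_{\hg}(\hg)=0$ (all comparison terms vanish for the identity diffeomorphism and $g=\hg$). Therefore $\m^{\identity}_{\rm{VR},\hg}(g')=0$, and tracing back the two reductions gives $\m^{\varphi}_{\rm{VR},\hg}(\hg)=0$.

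\emph{Main obstacle.} The only nontrivial analytic point is the construction of the family $\Phi_t$: producing global diffeomorphisms connecting $\identity$ to $\Phi$ for which $g_t-\hg$ stays in $C^{k,\alpha}_\delta$ uniformly and differentiably in $t$. This is exactly where both properness (which removes the boundary isometry obstructing such an isotopy) and the precise decay $\varphi_*\hg-\hg\in C^{k,\alpha}_\delta$ with $\delta>\frac{n-1}{2}$ are used; concretely, near infinity one realises $\varphi$ as the time-one flow of a weighted vector field $Y\in C^{k,\alpha}_\delta(TM)$ and extends it by a cutoff, the weighted estimates on $Y$ being the content to verify.
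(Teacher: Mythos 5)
Your proposal follows essentially the same route as the paper's proof: use properness to replace $\varphi$ by a boundary-fixing diffeomorphism (composing with the extended isometry $\psi$ and invoking Lemma \ref{lem:bothslotdiffeo}), glue with the identity on a compact core, realize the resulting global diffeomorphism as the time-one flow of a vector field, and conclude by observing that along the resulting family of mutually isometric PE metrics the renormalized action $S^{\identity}_{\hg}$ is constant (the first-variation tensor of Proposition \ref{prop:first_variation_EHaction} vanishes identically on PE metrics) while $S^{\identity}_{\hg}=-\m^{\identity}_{\rm{VR},\hg}$ holds because the scalar curvature is constantly $-n(n-1)$; since $S^{\identity}_{\hg}(\hg)=0$, the mass vanishes. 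All of these reductions, including the bookkeeping through Lemma \ref{lem:bothslotdiffeo}, are correct and match the paper.

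The one place where you diverge is the technical heart that you defer to the end: you propose to verify that the generating field $Y$ lies in $C^{k,\alpha}_{\delta}(TM)$, i.e.\ has $\hg$-norm decaying at rate $\delta$. The paper does not prove this and does not need it. Since the diffeomorphism fixes $\partial\widehat{N}$ and is $C^{k+1,\alpha}$ up to the boundary, its generator is only shown to be \emph{bounded} with respect to $\hg$ (equivalently $O(\rho)$ in the conformal background), which is what makes the flow $\chi_t$ uniformly controlled; the needed weight-$\delta$ decay is then obtained not for $Y$ but for $\frac{d}{dt}\hg_t=\chi_t^*(\mathcal{L}_X\hg)$, a posteriori, from the hypothesis $\chi^*\hg-\hg\in C^{k,\alpha}_{\delta}$ together with the Taylor-type identity $\chi^*\hg-\hg=\int_0^1\chi_t^*(\mathcal{L}_X\hg)\,dt$ and the fact that the flow preserves the weighted H\"older classes up to uniform equivalence (using $C_1\rho\leq\chi_t^*\rho\leq C_2\rho$). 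Your stronger claim about $Y$ is plausible, but proving it would require additional elliptic theory for the Killing operator in weighted spaces (absence of decaying Killing-type homogeneous solutions, an isomorphism statement for $Y\mapsto\mathrm{div}\,\mathcal{L}_Y\hg$ in the relevant weight range), none of which is needed in the paper's argument. So your outline is sound, but if you execute the last step as sketched you will be doing strictly more work than necessary; the decay should be extracted from the metric difference, not imposed on the vector field.
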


\begin{proof}
Because $\varphi$ restricts to a $C^{k+1,\alpha}$-diffeomorphism on the boundary and $\varphi_*\widehat{g}-\widehat{g}\in C^{k,\alpha}_{\delta}$, we get that $\varphi|_{\partial \widehat{N}}$ is a conformal isometry on the boundary. Since $(\widehat{M},\widehat{g})$ is proper, 
there exists a $C^{k+1,\alpha}$ diffeomorphism $\psi$ on $\widehat{N}$ with $\psi|_{\partial \widehat{N}}=\varphi|_{\partial \widehat{N}}$ which restricts to an isometry of $(\widehat{M},\hg)$. The diffeomorphism $\chi=\varphi^{-1}\circ\psi:\psi(V)\to U$ satisfies 
$\chi_*\widehat{g}-\widehat{g}\in  C^{k,\alpha}_{\delta}$ and 
$\chi|_{\partial \widehat{N}}=\identity_{\partial\widehat{N}}$. Furthermore, 
by Lemma \ref{lem:bothslotdiffeo} and since $\psi$ is an isometry we have
\begin{align*}
\m^{\varphi}_{{\rm VR},\hg}(\hg)
=\m^{\varphi\circ\psi^{-1}}_{{\rm VR},\hg}(\psi_*\hg)=\m^{\chi^{-1}}_{{\rm VR},\hg}(\hg).
\end{align*}
Let us extend $\psi$ to a $C^{k+1,\alpha}$-diffeomorphism on $\widehat{M}$, again denoted by $\psi$. This does not change the value of the mass and by Lemma \ref{lem:bothslotdiffeo}, we have
\begin{align*}
\m^{\chi^{-1}}_{{\rm VR},\hg}(\hg)=\m^{\identity}_{{\rm VR},\hg}(\chi^*\hg).
\end{align*}
It therefore remains to show that $\m^{\identity}_{{\rm VR},\hg}(\chi^*\hg)=0$. 

Changing the diffeomorphism  $\chi$ inside a ball $B_{R}$ does not change the mass. By deforming $\chi$ on a bounded subset, we may therefore assume that $\chi=\identity$ inside such a ball. Choosing $R$ sufficiently large will make the difference $\chi-\identity$ arbitrarily small (measured with respect to the conformal background $\widehat{h}$).
Provided that $R$ is chosen large enough, we therefore find a $C^{k+1,\alpha}$-vector field $X$ on $\widehat{N}$ such that the flow $\chi_t$ generated by $X$ satisfies $\chi_1=\chi$. Note that $X$ vanishes on $\partial\widehat{N}$ and $B_R$.
 The family $t\mapsto \hg_t=\chi_t^*\hg$ then connects $\hg$ and $\chi^*\hg$. 
  By Taylor expansion in time,
\begin{align}\label{eq:metric_curve_taylor_expansion}
\chi^*\widehat{g}-\widehat{g}
=\int_{0}^1\frac{d}{dt} \widehat{g}_tdt
=\int_{0}^1\frac{d}{dt} (\chi_{t}^*\widehat{g})dt
=\int_{0}^1\chi_{t}^*(\mathcal{L}_X\widehat{g})dt.
\end{align}
Our goal is now to determine the weighted regularity of $\frac{d}{dt} \widehat{g}_t$.

For this purpose, let $\rho$ be a boundary defining function for $\partial\widehat{N}$ and denote by $y$ the coordinates on $\partial\widehat{N}$. By Taylor expansion at $\rho=0$ and because $X$ vanishes on the boundary, we get that 
\begin{align}\label{eq:expansion_vectorfield}
X=a\rho \partial_\rho+ \rho Y(\rho)+O(\rho^2),
\end{align}
where $Y(\rho)$ is a $\rho$-dependant family of vector fields on $\partial\widehat{N}$ and $a$ is a function on the boundary. With respect to the conformal background metric $\widehat{b}$, we have 
\begin{align*}
|\nabla^{(l)}_{\hg}X|_{\hg}=\rho^{l-1}|\nabla^{(l)}_{\hg}X|_{\widehat{b}}.
\end{align*}
To compare covariant derivatives of $\hg$ and $\widehat{h}$, we note that
\begin{align*}
\Gamma(\hg)_{ij}^l-\Gamma(\widehat{b})_{ij}^l
=
-2\rho^{-1}(\delta_j^l\partial_i\rho +\delta_i^l\partial_j\rho-\widehat{b}^{lm}\widehat{b}_{ij}\partial_m\rho )
=
\nabla^{(1)}´_{\widehat{h}}\rho*\rho^{-1},
\end{align*}
where $*$ denotes a linear combination of tensor products and contractions using the metric $\widehat{h}$.
An induction argument then shows that
\begin{align*}
\nabla^{(l)}_{\hg}X-\nabla^{(l)}_{\widehat{b}}X
=
\sum_{l_0+\ldots +l_r+r=l} \left(\frac{\nabla^{(l_1+1)}_{\widehat{h}}\rho}{\rho}*\ldots *
\frac{\nabla^{(l_r+1)}_{\widehat{h}}\rho}{\rho}\right)*\nabla^{(l_0)}_{\widehat{h}}X.
\end{align*}
From this, we find
\begin{align*}
|\nabla^{(l)}_{\hg}X|_{\hg}=\rho^{l-1}|\nabla^{(l)}_{\hg}X|_{\widehat{b}}
\leq C\sum_{m=0}^l\rho^{m-1}|\nabla^{(m)}_{\widehat{b}}X|_{\widehat{b}},
\end{align*}
which implies
\begin{align*}
\left\| X\right\|_{C^{k+1,\alpha}(\hg)}\leq \left\| \nabla_{\widehat{b}}^{(1)} X\right\|_{C^{k,\alpha}(\widehat{b})}+\left\| \rho^{-1}X\right\|_{C^{0}(\widehat{b})}.
\end{align*}
Since $X$ is $C^{k+1,\alpha}$-regular and vanishes on the boundary, we have $|X|_{\widehat b}=O(\rho)$ so that $\left\| X\right\|_{C^{k+1,\alpha}(\hg)}<\infty$. Therefore, the family of metrics $t\mapsto \hg_t=\chi_t^*\hg$ is a smooth family in $\mathcal{R}^{k,\alpha}_{0}(\widehat{M}, \hg)$. In particular, the $C^{k,\alpha}$-norms of the metrics $\hg_t$, $t\in [0,1]$ are uniformly equivalent and $\frac{d}{dt}\hg_t=\chi_{t}^*(\mathcal{L}_X\widehat{g})\in C^{k,\alpha}(\chi_{t}^*\hg)=C^{k,\alpha}(\hg)$.
Next, we will improve the weight of this regularity.
By \eqref{eq:expansion_vectorfield}, there are constants $a_-<a_+$, depending on the bounds of the function $a$ such that
\begin{align*}
e^{a_-t}\rho(x)\leq \rho(\chi_t(x))\leq e^{a_+t}  \rho(x)
\end{align*}
for all $x\in P$ and $t\in\R$.
We therefore find uniform constants $C_1,C_2>0$ such that
\begin{align*}
C_1\rho \leq \chi_t^*\rho\leq C_2\rho
\end{align*}
for all $t\in [0,1]$.
Thus for any weight $\delta\in\R$ and $t\in[0,1]$, the weighted spaces $C^{k,\alpha}_{\delta}$ generated by the metrics $\hg_t$ are also uniformly equivalent and we have that $h\in C^{k,\alpha}_{\delta}(S^2T^*\widehat{M},\hg)$ (or equivalently, $\chi_t^*h\in  C^{k,\alpha}_{\delta}(S^2T^*\widehat{M},\chi_t^*\hg)$) if and only if $\chi_t^*h\in  C^{k,\alpha}_{\delta}(S^2T^*\widehat{M},\hg)$. 
Since $\chi^*\widehat{g}-\widehat{g}\in C^{k,\alpha}_{\delta}$, the expansion \eqref{eq:metric_curve_taylor_expansion} implies that $\frac{d}{dt}\hg_t\in C^{k,\alpha}_{\delta}(S^2T^*\widehat{M},\hg)$ for $t\in [0,1]$.

Thus, $\hg_t$ is a smooth family of isometric PE metrics in $C^{k,\alpha}_{\delta}(S^2_+T^*\widehat{M},\hg)$ connecting $\hg$ and $\chi^*\hg$.
The function $t\mapsto S^{\identity}_{\widehat{g}}(\chi_{t}^*\widehat{g})$ is constant, as it is evaluated along a family of critical metrics. In addition, 
$S^{\identity}_{\widehat{g}}(\chi_{t}^*\widehat{g})=-\m^{\identity}_{{\rm VR},\widehat{g}}(\chi_{t}^*\widehat{g})$ because all $\chi_{t}^*\widehat{g}$ have constant scalar curvature $-n(n-1)$.
Therefore, 
\begin{align*}
\m^{\identity}_{{\rm VR},\hg}(\chi^*\hg)=\m^{\identity}_{{\rm VR},\hg}(\hg)=0,
\end{align*}
which was to be proven.
\end{proof}

\begin{thm}	\label{thm:diffeoinvar}
Let $(M,g)$ and $(\widehat M,\hg)$ be APE manifolds with integrable normalized scalar curvature and isometric conformal boundaries. Assume that the conformal boundaries are proper. Then, the definition of $\m^{\varphi}_{\rm{VR},\hg}(g)$ is independent of the choice of $C^{k+1,\alpha}$-diffeomorphism $\varphi$.
\end{thm}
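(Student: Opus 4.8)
The plan is to reduce the $\varphi$-independence of $\m^{\varphi}_{{\rm VR},\hg}(g)$ entirely to the self-mass computation already carried out in Lemma \ref{lem:coord_invariance_mass}, using the additivity of the renormalized mass (Proposition \ref{cor:additivity_renormalized_EH}) together with the diffeomorphism moves of Lemma \ref{lem:bothslotdiffeo}. The crucial observation is that Lemma \ref{lem:coord_invariance_mass} applies only to a genuinely \emph{Poincar\'e--Einstein} reference metric, whereas $\hg$ is merely APE. Hence the first and conceptually essential step is to replace the reference $\hg$ by the proper PE manifold furnished by the properness hypothesis, and this is exactly where that hypothesis is used.

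\textbf{Reduction to a PE reference.} Since the common conformal boundary is proper, by definition it is the conformal boundary of a proper PE manifold $(\overline M,\overline g)$. Both $\overline g$ and $\hg$ are APE with this boundary, so Proposition \ref{prop:APE} supplies a fixed diffeomorphism $\eta:\overline N\setminus \overline K\to \widehat N\setminus \widehat K$ with $\eta_*\overline g-\hg\in C^{k,\alpha}_\delta$; note $\overline g$ has integrable normalized scalar curvature because $\scal_{\overline g}+n(n-1)\equiv 0$. Given two admissible diffeomorphisms $\varphi_1,\varphi_2$, I factor $\varphi_i=\eta\circ\zeta_i$ with $\zeta_i=\eta^{-1}\circ\varphi_i:N\setminus K\to \overline N\setminus \overline K$. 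Applying Proposition \ref{cor:additivity_renormalized_EH} with intermediate reference $\overline g$ yields
\[
\m^{\varphi_i}_{{\rm VR},\hg}(g)=\m^{\zeta_i}_{{\rm VR},\overline g}(g)+\m^{\eta}_{{\rm VR},\hg}(\overline g),
\]
and since the last term does not depend on $i$, it suffices to prove $\m^{\zeta_1}_{{\rm VR},\overline g}(g)=\m^{\zeta_2}_{{\rm VR},\overline g}(g)$, i.e.\ $\varphi$-invariance for the PE reference $\overline g$.

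\textbf{Reduction to a self-mass.} For the reduced statement I set $\chi=\zeta_2\circ\zeta_1^{-1}$, a $C^{k+1,\alpha}$-diffeomorphism between neighborhoods of $\partial\overline N$; a short computation, using $(\zeta_i)_*g-\overline g\in C^{k,\alpha}_\delta$ and the fact that pushing forward by the $\zeta_i$ preserves the weighted spaces, shows $\chi_*\overline g-\overline g\in C^{k,\alpha}_\delta$, so $\chi$ is admissible for Lemma \ref{lem:coord_invariance_mass}. Using the second identity of Lemma \ref{lem:bothslotdiffeo} to turn the change of diffeomorphism $\zeta_1\rightarrow\zeta_2$ into the change of reference metric $\overline g\to\chi_*\overline g$, I obtain $\m^{\zeta_1}_{{\rm VR},\overline g}(g)=\m^{\zeta_2}_{{\rm VR},\chi_*\overline g}(g)$. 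The additivity formula, applied along $(M,g)\to(\overline M,\chi_*\overline g)\to(\overline M,\overline g)$ via $\zeta_2$ and then $\identity$, then identifies the discrepancy of references as a self-mass, up to sign equal to $\m^{\identity}_{{\rm VR},\overline g}(\chi_*\overline g)$, which by the first identity of Lemma \ref{lem:bothslotdiffeo} equals $\m^{\chi}_{{\rm VR},\overline g}(\overline g)$.

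\textbf{Conclusion and main obstacle.} Since $(\overline M,\overline g)$ is a proper PE manifold and $\chi$ is an admissible neighborhood diffeomorphism, Lemma \ref{lem:coord_invariance_mass} gives $\m^{\chi}_{{\rm VR},\overline g}(\overline g)=0$. Hence $\m^{\zeta_1}_{{\rm VR},\overline g}(g)=\m^{\zeta_2}_{{\rm VR},\overline g}(g)$, and tracing back through the factorization $\varphi_i=\eta\circ\zeta_i$ gives $\m^{\varphi_1}_{{\rm VR},\hg}(g)=\m^{\varphi_2}_{{\rm VR},\hg}(g)$, as claimed. The genuine mathematical content sits entirely in Lemma \ref{lem:coord_invariance_mass}; what remains here is bookkeeping, namely arranging the factorizations so that additivity applies and checking that every composition and pushforward preserves the weighted decay $C^{k,\alpha}_\delta$ with $\delta>\frac{n-1}{2}$. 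The one step that is conceptual rather than routine, and where properness is indispensable, is the passage to the PE reference $\overline g$: without a true PE metric sharing the conformal boundary there would be no metric to which the self-mass lemma could be applied, and the argument would stall at the APE reference $\hg$.
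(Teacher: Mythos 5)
Your proposal is correct and follows essentially the same route as the paper's own proof: reduce to a proper PE reference $(\overline M,\overline g)$ via the additivity of Proposition \ref{cor:additivity_renormalized_EH}, convert the discrepancy between the two admissible diffeomorphisms into a self-mass term $\m^{\chi}_{\rm{VR},\overline g}(\overline g)$ using the two identities of Lemma \ref{lem:bothslotdiffeo}, and annihilate it with Lemma \ref{lem:coord_invariance_mass}. The only detail the paper makes explicit that you gloss over is extending $\chi=\zeta_2\circ\zeta_1^{-1}$ (defined only near infinity) to a global $C^{k+1,\alpha}$-diffeomorphism of $\overline M$ before invoking Lemma \ref{lem:bothslotdiffeo}, since that lemma and the pushforward metric $\chi_*\overline g$ appearing in the self-mass require a diffeomorphism defined on all of $\overline M$.
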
	

\begin{proof}	

Let $\varphi_1:N\setminus K_1\to \widehat{N}\setminus \widehat{K}_1$ and $\varphi_2:N\setminus K_2\to \widehat{N}\setminus \widehat{K}_2$ be  $C^{k+1,\alpha}$-diffeomorphisms with the property that 	
\begin{align*}	
(\varphi_1)_*g\in \mathcal{R}^{k,\alpha}_{\delta}(\widehat{M}\setminus \widehat{K}_1,\hg),
\qquad		
(\varphi_2)_*g\in \mathcal{R}^{k,\alpha}_{\delta}(\widehat{M}\setminus \widehat{K}_2,\hg).
\end{align*}	
Our goal is to show that	
\begin{align*}	
\m^{\varphi_1}_{\rm{VR},\hg}(g)=\m^{\varphi_2}_{\rm{VR},\hg}(g).	
\end{align*}
Let $(\overline{M},\overline{g})$ be a proper PE manifold with the same conformal boundary as $(\widehat{M},\widehat{g})$.	Let furthermore 
$\chi:\widehat{N}\setminus \widehat{K}\to \overline{N}\setminus\overline{K}$ be a $C^{k+1,\alpha}$-diffeomorphism such that	
\begin{align*}	
\chi_*\hg\in 
\mathcal{R}^{k,\alpha}_{\delta}(\overline{M}\setminus \overline{K},\overline{g}).	
\end{align*}	
Assume without loss of generality that $\widehat{K}\subset \widehat{K}_1\cap \widehat{K}_2$ so that the diffeomorphisms $\psi_i=\chi\circ\varphi_i:N\setminus K_i\to \overline{N}\setminus \overline{K}_i$, $i=1,2$ are defined. 
We then get	
\begin{align}	
\label{eq_psi1}(\psi_1)_*g-\overline{g} &\in C^{k,\alpha}_{\delta}(S^2T^*(\overline{M}\setminus \overline{K}),\overline{g}),\\
\label{eq_psi2}
(\psi_2)_*g-\overline{g}&\in C^{k,\alpha}_{\delta}(S^2T^*(\overline{M}\setminus \overline{K}),\overline{g}),
\end{align}	
and Proposition \ref{cor:additivity_renormalized_EH}  implies that	
\begin{align*}	
\m^{\psi_i}_{\rm{VR},\overline{g}}(g)
=\m^{\varphi_i}_{\rm{VR},\hg}(g) + \m^{\chi}_{\rm{VR},\overline{g}}(\hg)
\end{align*}	
for $i=1,2$. Thus, it suffices to show 	
\begin{align*}	
\m^{\psi_1}_{\rm{VR},\overline{g}}(g)=\m^{\psi_2}_{\rm{VR},\overline{g}}(g).
\end{align*}
For this purpose, observe that we may choose closed and bounded subsets 
$\overline{K}_1$ and $\overline{K}_2$ of $\overline{N}$ such that we get a diffeomorphism
\begin{align*}
\psi_2\circ (\psi_1)^{-1}:\overline{N}\setminus \overline{K}_1
\to \overline{N}\setminus \overline{K}_2.
\end{align*}
Note that \eqref{eq_psi2} implies 
\begin{align*}
C^{k,\alpha}_{\delta}(S^2T^*(\overline{M}\setminus \overline{K}_2),(\psi_2)_*g)=C^{k,\alpha}_{\delta}(S^2T^*(\overline{M}\setminus \overline{K}_2).\overline{g})
\end{align*}
By \eqref{eq_psi1} and diffeomorphism invariance, we thus get
\begin{align*}
(\psi_2)_*g - (\psi_2\circ \psi_1^{-1})_*	 \overline{g}
&\in 
C^{k,\alpha}_{\delta}(S^2T^*(\overline{M}\setminus \overline{K}_2),(\psi_2)_*g)\\
&\qquad=C^{k,\alpha}_{\delta}(S^2T^*(\overline{M}\setminus \overline{K}_2),\overline{g}).
\end{align*}
Together with \eqref{eq_psi2}
and the triangle inequality, we obtain
\begin{align*}
(\psi_2\circ \psi_1^{-1})_* \overline{g}-\overline{g}
\in 
C^{k,\alpha}_{\delta}(S^2T^*(\overline{M}\setminus \overline{K}_2),\overline{g}).
\end{align*}
We can also extend $\psi_2\circ \psi_1^{-1}$ to a $C^{k+1,\alpha}$-diffeomorphism 
$\theta:\overline{M} \to \overline{M}$. Thus, $\psi_2=\theta\circ \psi_1$ and 
\begin{align*}
\theta_* \overline{g}-\overline{g}
\in 
C^{k,\alpha}_{\delta}(S^2_+T^*(\overline{M}\setminus \overline{K}_2),\overline{g}).
\end{align*}	
Proposition \ref{cor:additivity_renormalized_EH} together with Lemma \ref{lem:coord_invariance_mass} implies that
\begin{align*}
\m^{\psi_2}_{\rm{VR},\overline{g}}(g)
&=
\m^{\psi_2}_{\rm{VR},\theta* \overline{g}}(g)	
- \m^{\identity}_{\rm{VR},\theta_*\overline{g}}(	\overline{g})\\
&=
\m^{\psi_2}_{\rm{VR},\theta*	\overline{g}}(g)
+\m^{\identity}_{\rm{VR},\overline{g}}(\theta_* 	\overline{g})\\
&=
\m^{\theta\circ \psi_1}_{\rm{VR},\theta_*	\overline{g}}(g) \\
&=
\m^{\psi_1}_{\rm{VR},\overline{g}}(g)		,	
\end{align*}
where we used Lemma \ref{lem:bothslotdiffeo} for the last equality.
\end{proof}

\begin{rem}
Note that the key arguments are indeed performed in the proof of Lemma \ref{lem:coord_invariance_mass}. Interestingly, the proof does not require a choice of preferred coordinate system, which may be contrasted to the proofs of diffeomorphism invariance of the ADM mass, see \cite{Bartnik86,Chrusciel1986}. 
\end{rem}

\begin{rem}
The Fefferman--Graham expansions \eqref{eq:expansion_PE_even}, \eqref{eq:expansion_PE_odd} imply that two PE metrics $(M,g)$ and $(\widehat{M},\hg)$ with isometric conformal boundaries are asymptotic to each other of order $n-1$.
%
  Therefore the ADM boundary term is finite and computed from the first undetermined term $\sigma_{n-1}$ in the Fefferman--Graham expansion.
 Thus, the renormalized volume is finite as well. 
The functional $g\mapsto \m^{\varphi}_{\rm{VR},\hg}(g)$
is constant along a family of complete Einstein metrics by Corollary \ref{cor-EH}, but the renormalized volume and ADM boundary term may vary. 
\end{rem}

%
%

\section{Special positive mass theorems}\label{Sec:PMTs}
%

For the remainder of this paper, we will assume that all 
 AH manifolds have proper conformal boundaries, so that Theorem \ref{thm:diffeoinvar} holds.
We may thus drop the dependence of the mass and the renormalized Einstein--Hilbert action on the diffeomorphism in the notation and  write $S_{\hg}(g)$ instead of  $S_{\hg}^{\varphi}(g)$ and $\m_{\rm{VR},\hg}(g)$ instead of $\m_{\rm{VR},\hg}^{\varphi}(g)$.

\subsection{Two-dimensional positive mass theorem}

In Corollary \ref{cor-EH}, we have seen that the functional $g\mapsto S_{\hg}(g)$ is constant in dimension two. We will now determine the value of this constant and deduce a positive mass theorem for surfaces.

As a reference surface, we use the hyperbolic plane with an angle defect. That is, $(\widehat{M},\widehat{g})=(\mathbb{R}^2,g_{\mathrm{hyp},\omega})$, where the metric $g_{\mathrm{hyp},\omega}$ is given by
\begin{align*}	
g_{\mathrm{hyp},\omega}
= dr^2+\sinh^2(r)\left(\frac{\omega}{2\pi}\right)^2d\theta^2, 
\qquad \theta\in [0,2\pi] \text{ mod }2\pi.	
\end{align*}
in polar coordinates.

\begin{thm}\label{thm-2d}	
Consider a complete APE surface $(M^2,g)$ which is asymptotic to $(\widehat{M},\widehat{g})$. We then have	
\begin{align}\label{eq:S_dim_two}
S_{\widehat{g}}(g) = 4\pi(\chi(\overline{M})-2) + 2(2\pi-\omega),	
\end{align}	
where $\overline{M}$ is the one-point compactification of $M$. Under the assumption that $\scal_g+2\in L^1$ we conclude the identity
\begin{align}\label{eq:m_dim_two}	
\m_{\rm{VR},\widehat{g}}(g)+2(2\pi-\omega)
=
\int_M (\scal_g+2)\dv_g
+ 4\pi(2-\chi(\overline{M})).	
\end{align}
If in addition, $\scal_g\geq -2$, we find that 
\begin{align}\label{eq:pmt_dim_two}	
\m_{\rm{VR},\widehat{g}}(g)+2(2\pi-\omega) \geq 0,
\end{align}
where equality holds if and only if $(M^2,g)$ is isometric to $(\widehat{M},\widehat{g})$. In this case, $(\widehat{M},\widehat{g})$ is also complete, and thus $\omega=2\pi$.
\end{thm}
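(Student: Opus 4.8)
The plan is to prove the three displayed assertions in the order stated, since they are nested: once the value \eqref{eq:S_dim_two} of the renormalized Einstein--Hilbert action is known, the mass identity \eqref{eq:m_dim_two} is just a rearrangement of the definitions of $S_{\widehat g}(g)$ and $\m_{\rm{VR},\widehat g}(g)$, and the inequality \eqref{eq:pmt_dim_two} together with its rigidity statement follows from sign considerations and the classification of complete constant-curvature surfaces.

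To establish \eqref{eq:S_dim_two} I would exploit Corollary \ref{cor-EH}(i): in dimension two one has $\ric_g=\tfrac12\scal_g\,g$, so the integrand in the first variation \eqref{eq:firstvarS} vanishes identically and $g\mapsto S_{\widehat g}(g)$ is constant on $\mathcal{R}^{k,\alpha}_{\delta}(M,\varphi,\widehat g)$. This lets me replace $g$ by a convenient representative $g'$ that agrees with $\varphi^*\widehat g$ outside a large ball $B_{R_0}$ (glue $\varphi^*\widehat g$ near infinity to an arbitrary smooth metric in the interior by a cutoff). For $R>R_0$ the pushforward $\varphi_*g'$ equals $\widehat g$ near $\partial\widehat{B}_R$, so $\mathrm{div}_{\widehat g}(\varphi_*g')=\mathrm{div}_{\widehat g}\widehat g=0$ and $\trace_{\widehat g}(\varphi_*g')=n$ is constant; hence the ADM integrand vanishes and $\m_{{\rm ADM},\widehat g}(g',R)=0$. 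Gauss--Bonnet on the compact surface-with-boundary $B_R$ gives $\int_{B_R}\scal_{g'}\dv_{g'}=4\pi\chi(B_R)-2\int_{\partial B_R}k_{g'}\,ds$ with $\chi(B_R)=\chi(\overline M)-1$, and combining $2\int_{B_R}\dv_{g'}$ with $-2RV_{\widehat g}(g',R)$ leaves $2\,\mathrm{Area}_{\widehat g}(\widehat{B}_R)$. Therefore, for $R>R_0$,
\[
S_{\widehat g}(g)=S_{\widehat g}(g')=4\pi(\chi(\overline M)-1)-2\int_{\partial\widehat{B}_R}k_{\widehat g}\,ds+2\,\mathrm{Area}_{\widehat g}(\widehat{B}_R).
\]
A direct computation for $\widehat g=dr^2+\sinh^2(r)(\omega/2\pi)^2 d\theta^2$ gives $\int_{\partial\widehat{B}_R}k_{\widehat g}\,ds=\omega\cosh(R)$ and $\mathrm{Area}_{\widehat g}(\widehat{B}_R)=\omega(\cosh(R)-1)$; the divergent terms $\pm 2\omega\cosh(R)$ cancel and we obtain $S_{\widehat g}(g)=4\pi(\chi(\overline M)-2)+2(2\pi-\omega)$.

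For the remaining assertions I would use that, when $\scal_g+2\in L^1$, the definitions give $S_{\widehat g}(g)=\int_M(\scal_g+2)\dv_g-\m_{\rm{VR},\widehat g}(g)$; substituting \eqref{eq:S_dim_two} and rearranging yields \eqref{eq:m_dim_two}. The inequality \eqref{eq:pmt_dim_two} is then immediate, since $\scal_g\geq-2$ makes the bulk integral nonnegative and $\chi(\overline M)\leq 2$ for any closed surface makes $4\pi(2-\chi(\overline M))\geq 0$. Equality forces both terms to vanish, i.e. $\scal_g\equiv-2$ (so $(M^2,g)$ has constant curvature $-1$) and $\chi(\overline M)=2$ (so $\overline M=S^2$ and $M=\mathbb{R}^2$). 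A complete simply connected surface of constant curvature $-1$ is isometric to $\mathbb{H}^2$, and since the conformal boundaries of $g$ and $\widehat g$ must be isometric their boundary circles have equal length, forcing $\omega=2\pi$ and $g=\widehat g$.

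I expect the main care to lie not in the Gauss--Bonnet computation, which is routine, but in the bookkeeping around the reference $g_{\mathrm{hyp},\omega}$, which carries a cone point of angle $\omega$ at the origin when $\omega\neq 2\pi$. One must check that this singularity is harmless: it contributes only finite area to $\mathrm{Area}_{\widehat g}(\widehat{B}_R)$ and does not enter the first-variation computation behind the constancy of $S_{\widehat g}$, which takes place on the smooth manifold $M$. The one genuinely delicate point is to confirm that the convenient representative $g'$, equal to $\varphi^*\widehat g$ near infinity, is a bona fide APE metric lying in $\mathcal{R}^{k,\alpha}_{\delta}(M,\varphi,\widehat g)$ so that Corollary \ref{cor-EH}(i) applies to it.
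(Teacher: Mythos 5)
Your proof of \eqref{eq:S_dim_two} is correct but takes a slightly different computational route from the paper. Both begin identically: use Corollary \ref{cor-EH}(i) to replace $g$ by an interpolated metric $g'$ agreeing with $\varphi^*\widehat{g}$ outside a compact set (linear interpolation by a cutoff keeps positive-definiteness by convexity and keeps $\varphi_*g'-\widehat{g}\in C^{k,\alpha}_{\delta}$, so your "delicate point" is harmless), after which the ADM term vanishes. From there you apply Gauss--Bonnet with boundary on $B_R$, using $\chi(B_R)=\chi(\overline{M})-1$ and the explicit cone-metric computations $\int_{\partial\widehat{B}_R}k_{\widehat{g}}\,ds=\omega\cosh R$, $\mathrm{Area}_{\widehat{g}}(\widehat{B}_R)=\omega(\cosh R-1)$, and let the divergent terms cancel; this gives $4\pi(\chi(\overline{M})-1)-2\omega$, which indeed equals the right-hand side of \eqref{eq:S_dim_two}. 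The paper instead caps off the end with a disk, so that $\widehat{g}$ extends to a conical metric on $S^2$ and $g'$ to a smooth metric on $\overline{M}$, and invokes the Gauss--Bonnet theorem for closed surfaces with conical singularities (Troyanov) to compare the two total curvature integrals. Your version is more elementary and self-contained; the paper's avoids any boundary-term computation. The passage to \eqref{eq:m_dim_two} and the inequality \eqref{eq:pmt_dim_two} are the same in both arguments.

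There is, however, a genuine gap in your rigidity argument. Having shown $\scal_g\equiv-2$, $\chi(\overline{M})=2$ and hence $(M,g)\cong\mathbb{H}^2$, you conclude $\omega=2\pi$ because "the conformal boundaries of $g$ and $\widehat{g}$ must be isometric, so their boundary circles have equal length." This inference fails: the conformal boundary of an AH surface is only a conformal class of metrics on $S^1$, and on $S^1$ \emph{all} metrics are conformally equivalent, so the conformal boundary carries no length information whatsoever --- in particular, the cone metrics $\widehat{g}_\omega$ have identical conformal boundaries for every $\omega$. What rules out $\omega\neq 2\pi$ is the much stronger hypothesis actually available, namely that $\varphi_*g-\widehat{g}\in C^{k,\alpha}_{\delta}$ with $\delta>\tfrac12$. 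The paper uses it as follows: since $\varphi_*g$ and $\widehat{g}$ both have constant curvature $-1$ and their difference decays, they agree up to isometry on a neighborhood of infinity; then two constant-curvature metrics on diffeomorphic surfaces that agree near infinity are isometric, so $(M,g)\cong(\widehat{M},\widehat{g})$, and completeness of $(M,g)$ forces $(\widehat{M},\widehat{g})$ to be a complete smooth surface, i.e.\ $\omega=2\pi$. (Equivalently, one can finish with a developing-map argument: the loops $\partial B_R$ are contractible in $M\cong\R^2$, so the rotational holonomy $\omega$ of the cone end must be trivial.) Either way, the decay of $\varphi_*g-\widehat{g}$, not the conformal boundary, is the input that pins down the cone angle, and your proof as written is missing this step.
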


\begin{proof}	
By linear interpolation we deform $g$ to a metric $\overline{g}$ such that $\varphi_* \overline{g} = \widehat{g}$ on $\widehat{M}\setminus \widehat{B}_{R_0-\epsilon}$ for some large radius $R_0$. From  Corollary \ref{cor-EH} we have 	
\begin{equation} \label{eq:2_dim_comp}	
\begin{split}
S_{\widehat{g}}(g)
=
S_{\widehat{g}}(\overline{g})
&=\lim_{R\to\infty}	
\left(\int_{B_{R}}(\scal_{\overline{g}}+2)\dv_{\overline{g}}-2R_{\widehat{g}}(\overline{g},R)\right)\\	
&=
\lim_{R\to\infty}\left(\int_{B_{R}}\scal_{\overline{g}}\dv_{\overline{g}}+2\int_{\widehat{B}_{R}}\dv_{\widehat{g}}\right)\\	
&=
\int_{B_{R_0}}\scal_{\overline{g}} \, \dv_{\overline{g}}
-\int_{\widehat{B}_{R_0}}\scal_{\widehat{g}} \, \dv_{\widehat{g}}.	
\end{split}	
\end{equation}	
Now replace the closed set $\widehat{M}\setminus\widehat{B}_{R_0}$ by its one-point compactification at infinity, that is, a closed disk $D$, and choose a metric $\widetilde{g}$ on $D\cup (\widehat{B}_{R_0}\setminus \widehat{B}_{R_0-\epsilon})$ which agrees with $\widehat{g}$ on $\widehat{B}_{R_0}\setminus \widehat{B}_{R_0-\epsilon}$. Then $\widehat{g}$ extends to a metric $\widehat{g}_1$ on $D\cup \widehat{B}_{R_0}\cong S^2$, and to a metric $\overline{g}_1$ on $D\cup_{\varphi}B_{R_0}\cong \overline{M}$. Note that $\widehat{g}_1$ has a conical singularity with the same angle defect as $\widehat{g}$. The Gauss--Bonnet theorem for compact conical surfaces (see, for example, \cite[Proposition~1]{Troyanov1991}) yields	
\begin{align*}	
\int_{S^2}\scal_{\widehat{g}_1}\dv_{\widehat{g}_1} 
= 4\pi\chi(S^2)+2(\omega-2\pi).	
\end{align*}	
Using the fact that $\widehat{g}_1$ and $\overline{g}_1$ agree on $D$, we obtain	
\begin{align*}	
4\pi(\chi(\overline{M})-2)+2(2\pi-\omega)
&=
4\pi\chi(\overline{M})-(4\pi\chi(S^2)+2(\omega-2\pi))\\	
&=
\int_{\overline{M}}\scal_{\overline{g}_1}\dv_{\overline{g}_1}	
-	
\int_{S^2}\scal_{\widehat{g}_1}\dv_{\widehat{g}_1}\\	
&=
\int_{D}	\scal_{\overline{g}_1}\dv_{\overline{g}_1}	
+\int_{B_R}\scal_{\overline{g}_1}\dv_{\overline{g}_1}\\	
&\qquad  
-\int_{D} \scal_{\widehat{g}_1}\dv_{\widehat{g}_1}
-\int_{\widehat{B}_{R}}\scal_{\widehat{g}_1}\dv_{\widehat{g}_1}\\	
&=	
\int_{B_R}\scal_{\overline{g}}\dv_{\overline{g}}	
-\int_{\widehat{B}_{R}}\scal_{\widehat{g}}\dv_{\widehat{g}} ,
\end{align*}	
and combining this with \eqref{eq:2_dim_comp} yields \eqref{eq:S_dim_two}. The identity \eqref{eq:m_dim_two} is immediate. The inequality \eqref{eq:pmt_dim_two} follows easily from the assumption $\scal_g\geq -2$ and the well-known identity $\chi(\overline{M})\leq  \chi(S^2)=2$. In case of equality in \eqref{eq:pmt_dim_two}, $\scal_g\equiv -2$, so $g$ is a metric of constant curvature $-1$, and $\overline{M}$ is diffeomorphic to $S^2$, so $M$ is diffeomorphic to $\R^2$. Since $g$ is asymptotic to $\hat{g}$ and both are of constant curvature, they must agree up to isometry in a neighborhood of infinity. Since both metrics are of constant curvature and they are defined on diffeomorphic manifolds, they must be isometric.
\end{proof}
\begin{rem}
An analogue of this formula has been established for asymptotically conical surfaces, see for example \cite[Sec.~1.1.1]{Chr13}.
\end{rem}

\subsection{A conformal positive mass theorem}\label{subsec:conformal_PMT}

%

Let $\hg$ be an APE metric of constant scalar curvature $\scal_{\hg}=-n(n-1)$ on the manifold $M$. Let $\delta>\frac{n-1}{2}$ and consider the set
\begin{align}\label{eq:constantscalarcurvaturemetrics}
\mathcal{C}
=\left\{g\in \mathcal{R}^{k,\alpha}_{\delta}(M,\hg)\mid \scal_g=-n(n-1)\right\}
\end{align}
of constant scalar curvature metrics asymptotic to $\hg$.

\begin{prop}\label{prop_conformal_decomposition}
For each $g\in \mathcal{R}^{k,\alpha}_{\delta}(M,\hg)$, there exists a unique function $w\in C^{k,\alpha}_{\delta}$ such that $\overline{g}=e^{2w}g\in \mathcal{C}$. Moreover, the set $\mathcal{C}$ is an analytic manifold and
the map 
\begin{align*}
\Phi: C^{k,\alpha}_{\delta}(M)\times \mathcal{C}&\to \mathcal{R}^{k,\alpha}_{\delta}(M,\hg) ,\\
(w,g)&\mapsto e^{2w}g,
\end{align*}
is a diffeomorphism of Banach manifolds.
\end{prop}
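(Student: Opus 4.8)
The plan is to read the condition $\overline{g}=e^{2w}g\in\mathcal{C}$ as a prescribed-scalar-curvature equation for $w$ and to solve it by the analytic implicit function theorem, after which the bijectivity of $\Phi$ follows from existence and uniqueness of the conformal factor. In the sign convention $\Delta=-\mathrm{div}\circ d$ the conformal law reads $\scal_{e^{2w}g}=e^{-2w}\big(\scal_g+2(n-1)\Delta_g w-(n-1)(n-2)|\nabla w|_g^2\big)$, so the equation $\scal_{e^{2w}g}=-n(n-1)$ is semilinear elliptic in $w$. I would first verify that $\mathcal{N}(w,g)=\scal_{e^{2w}g}+n(n-1)$ defines an analytic map $C^{k,\alpha}_\delta(M)\times\mathcal{R}^{k,\alpha}_\delta(M,\hg)\to C^{k-2,\alpha}_\delta(M)$; landing in the correct weighted space uses $\scal_g+n(n-1)=\scal_g-\scal_{\hg}\in C^{k-2,\alpha}_\delta$, which holds since $g-\hg\in C^{k,\alpha}_\delta$. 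The key computation is that at any $g_0\in\mathcal{C}$ the partial differential $D_w\mathcal{N}(0,g_0)$ equals $2(n-1)(\Delta_{g_0}+n)$.

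This operator is the crux of the whole argument. Its indicial roots are $-1$ and $n$, the zeros of $s^2-(n-1)s-n=(s-n)(s+1)$, and since it is bounded below by the positive constant $n$ it has trivial $L^2$-kernel. By the Fredholm theory for geometric operators on conformally compact manifolds of Lee \cite{Lee06}, it is therefore an isomorphism $C^{k,\alpha}_\delta(M)\to C^{k-2,\alpha}_\delta(M)$ for every weight strictly between the indicial roots, in particular for our $\delta\in(\tfrac{n-1}{2},n-1)\subset(-1,n)$. What matters is that this isomorphism holds at every element of $\mathcal{C}$, not only at $\hg$.

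For fixed $g$, existence and uniqueness of $w$ is the main obstacle, since the implicit function theorem alone only produces a solution for $g$ near $\hg$. Uniqueness I would obtain from the maximum principle: if $w_1,w_2$ are two solutions, then $u=w_2-w_1$ satisfies $2(n-1)\Delta_{g_1}u-(n-1)(n-2)|\nabla u|_{g_1}^2=-n(n-1)(e^{2u}-1)$ with $g_1=e^{2w_1}g\in\mathcal{C}$; evaluating at an interior extremum of $u$ (which exists because $u\to0$ at infinity) and using the strict monotonicity of $t\mapsto e^{2t}$ forces $u\equiv0$. For existence I would run the method of continuity along the convex path $g_t=(1-t)\hg+tg$, which remains in $\mathcal{R}^{k,\alpha}_\delta(M,\hg)$: the set of $t$ admitting a solution is nonempty ($t=0$, $w=0$), open by the implicit function theorem via the isomorphism of the previous paragraph (linearizing at the current solution $e^{2w_t}g_t\in\mathcal{C}$), and closed by a uniform $C^0$-bound from the maximum principle together with Schauder estimates to recover $C^{k,\alpha}_\delta$. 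Alternatively this step can be cited from the literature on conformal deformation to constant negative scalar curvature on asymptotically hyperbolic manifolds.

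The remaining assertions are then formal. That $\mathcal{C}$ is an analytic manifold follows from the regular-value theorem applied to the analytic map $g\mapsto\scal_g+n(n-1)$ on $\mathcal{R}^{k,\alpha}_\delta(M,\hg)$: restricting its differential to conformal directions $h=2vg$ recovers $2(n-1)(\Delta_g+n)$, which is surjective, so the differential itself is surjective and the conformal directions provide a closed complement to its kernel, giving the splitting and the identification $T_g\mathcal{C}=\ker D\scal_g$. Finally $\Phi$ is analytic, and bijective by the first step, being injective by uniqueness and surjective by existence of the conformal factor. Its inverse $\overline{g}\mapsto(-w(\overline{g}),e^{2w(\overline{g})}\overline{g})$ is analytic because the analytic implicit function theorem, again using invertibility of $D_w\mathcal{N}$, makes $\overline{g}\mapsto w(\overline{g})$ analytic; hence $\Phi$ is an analytic diffeomorphism of Banach manifolds.
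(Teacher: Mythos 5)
Your linear analysis and the formal parts are sound and essentially coincide with the paper's own argument: the linearization $2(n-1)(\Delta_{g_0}+n)$ at $g_0\in\mathcal{C}$, its invertibility on $C^{k,\alpha}_{\eta}$ for $\eta\in(-1,n)$ via Lee's Fredholm theory, the regular-value argument for the manifold structure of $\mathcal{C}$, uniqueness of the conformal factor by the maximum principle, and the passage from bijectivity plus the analytic implicit function theorem to the diffeomorphism property. (The paper instead shows $\Phi$ is a local diffeomorphism at points $(0,g)$, $g\in\mathcal{C}$, and reduces general base points by a translation trick; your global-inverse formulation is a mild and acceptable variant.)

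The genuine gap is in the existence step. In your method of continuity, closedness is claimed to follow from ``a uniform $C^0$-bound from the maximum principle together with Schauder estimates to recover $C^{k,\alpha}_{\delta}$.'' The maximum principle only yields the upper bound: at an interior minimum $x_0$ of $w_t$ one gets $n(n-1)e^{2w_t(x_0)}\geq -\scal_{g_t}(x_0)$, which is vacuous wherever $\scal_{g_t}\geq 0$, and nothing prevents the path metrics $g_t=(1-t)\hg+tg$ from having positive scalar curvature on a compact set. A lower bound requires constructing a genuine subsolution --- the nontrivial half of the negative-case Yamabe problem --- and, moreover, interior Schauder estimates only control the limit in $C^{k,\alpha}_{\mathrm{loc}}$; membership in the weighted space $C^{k,\alpha}_{\delta}$ requires uniform decay control via barriers near infinity. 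Your fallback of citing the literature is legitimate and is exactly what the paper does, invoking \cite[Theorem 1.7]{AllenIsenbergLee2018}; but that citation alone does not finish the proof either, because it only produces $w\in C^{k,\alpha}_{1}$, which is weaker than the assertion since $\delta>\tfrac{n-1}{2}\geq 1$. The paper therefore supplements the citation with a bootstrap: rewriting the Yamabe equation as
\begin{align*}
2(n-1)(\Delta w+nw)=(n-1)(n-2)|dw|^2_g-(\scal_g+n(n-1))-n(n-1)F(w),
\qquad F(x)=e^{2x}-1-2x,
\end{align*}
one sees the right-hand side lies in $C^{k-2,\alpha}_{\min\{2,\delta\}}$, so the isomorphism $\Delta+n$ upgrades $w$ to $C^{k,\alpha}_{\min\{2,\delta\}}$, and finitely many iterations give $w\in C^{k,\alpha}_{\delta}$. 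This decay-improvement argument is the real content of the paper's proof of the first assertion, and it is missing from your proposal in either of its two branches.
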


\begin{proof}
The first assertion follows from the resolution of the Yamabe problem in the AH setting as formulated in \cite[Theorem 1.7]{AllenIsenbergLee2018}, except the precise decay of the conformal factor as formulated in this proposition.
According to \cite[Theorem 1.7]{AllenIsenbergLee2018}, there is a function $w \in C^{k,\alpha}_1$ so that the metric $\overline{g}=e^{2w}g$ has constant scalar curvature $-n(n-1)$.
The function $w$ satisfies the Yamabe equation
 \begin{align*}
-e^{2w}(n-1)n=e^{2w}\scal_{\overline{g}}=\scal_g+2(n-1)\Delta_gw-(n-1)(n-2)|dw|^2_g.
 \end{align*}
In the following we are going to use this equation to show that $w\in  C^{k,\alpha}_{\delta}$. We rewrite the above equation as
\begin{equation}\begin{split}\label{eq:Yamabe_equation}
2(n-1)(\Delta w +n\omega)&=(n-1)(n-2)|dw|^2_g-(\scal_g+n(n-1))\\
&\qquad-n(n-1)F(w),
\end{split}
\end{equation} 
where $F(x)=e^{2x}-1-2x$.
By assumption, $g-\hat{g}\in C^{k,\alpha}_{\delta}$, so that  we have $\scal_g+n(n-1)\in C^{k-2,\alpha}_{\delta}$. From $w \in C^{k,\alpha}_1$ and because $F(x)=O(x^2)$ as $x\to 0$, we get 
\begin{align*}
(n-1)(n-2)|dw|^2_g-(\scal_g+n(n-1))
-n(n-1)F(w)\in C^{k,\alpha}_{\min\left\{2,\delta\right\}}.
\end{align*}
By \cite[Theorem C and Proposition E]{Lee06}, we know that the operator 
\begin{align*}\Delta  +n:C^{k,\alpha}_{\eta}\to C^{k-2,\alpha}_{\eta}
\end{align*}
 is an isomorphism for  $\eta\in (-1,n)$. Thus we obtain $w\in C^{k,\alpha}_{\min\left\{2,\delta\right\}}$.
This implies
 \begin{align*}
2|dw|^2_g-(\scal_g+6)-6F(w)\in C^{k-2,\alpha}_{\min\left\{4,\delta\right\}},
\end{align*}
and therefore, $w \in C^{k,\alpha}_{\min\left\{4,\delta\right\}}$. Repeating this procedure a finite number of times yields $w \in C^{k,\alpha}_{\delta}$, as desired.

To show that $\mathcal{C}$ is a manifold, consider the analytic map 
\begin{align*}
\Psi:\mathcal{R}^{k,\alpha}_{\delta}(M,\hg) &\to C^{k-2,\alpha}_{\delta}(M),\\
\Psi:g&\mapsto \scal_g+n(n-1).
\end{align*}
The differential of this map is given by
\begin{align*}
D_g\Psi(h)=D_g\scal(h)=\Delta_g(\trace h)+\mathrm{div}_g(\mathrm{div}_g h)-\langle \ric_g,h\rangle_g.
\end{align*}
In particular, if $g\in \mathcal{C}$ and $f\in C^{k,\alpha}_{\delta}$,
\begin{align*}
D_g\Psi(f g)=(n-1)(\Delta_{g} f+nf).
\end{align*}
Due to \cite[Theorem C and Proposition E]{Lee06}, the operator
\begin{align*}
\Delta_g+n: C^{k,\alpha}_{\delta}(M)\to C^{k-2,\alpha}_{\delta}(M)
\end{align*}
is an isomorphism.
Thus, $D_g\Psi$ is surjective and hence, $\mathcal{C}$ is an analytic manifold with $D_g\Psi=T_g\mathcal{C}$ by the implicit function theorem for Banach manifolds.

It remains to show that the map $\Phi$ is a diffeomorphism. We already know  that $\Phi$ is smooth and bijective. Thus we are done, if we can show that $\Phi$ is a local diffeomorphism in a neighborhood of each tuple 
$(w,g)\in  C^{k,\alpha}_{\delta}(M)\times \mathcal{C}$.
At first we have
\begin{align}\label{eq:conformal_splitting}
C^{k,\alpha}_{\delta}(S^2T^*M)=C^{k,\alpha}_{\delta}(M)\oplus T_g\mathcal{C}.
\end{align}
for each $g\in \mathcal{C}$: For $h\in C^{k,\alpha}_{\delta}(S^2T^*M)$, there exists a unique function $w\in C^{k,\alpha}_{\delta}(M,\hg)$ solving the equation 
\begin{align*}D_g\Psi(h)=D_g\Phi(f g)=(n-1)(\Delta_{g} w+nw)
\end{align*}
 and we obtain \eqref{eq:conformal_splitting} by writing $h=w g+(h-w g)$. The differential
\begin{align*}
D_{(0,g)}\Phi: C^{k,\alpha}_{\delta}(M)\oplus T_g\mathcal{C}\to C^{k,\alpha}_{\delta}(S^2T^*M)
\end{align*}
directly corresponds to the splitting \eqref{eq:conformal_splitting} and is therefore an isomorphism. Thus, $\Phi$ is a local diffeomorphism around each tuple $(1,g)$ where $g\in \mathcal{C}$. For a general tuple $(w_0,g)\in C^{k,\alpha}_{\delta}(M)\times \mathcal{C}$, we write $\Phi$ is the composition of maps
\begin{align*}
(w,g)\mapsto (w-w_0,g)\mapsto \Phi(w-w_0,g)=e^{2w}e^{-2w_0}g\mapsto e^{2w}g=\Phi(w,g).
\end{align*}
We see that $\Phi$ is a local diffeomorphism around $(w_0,g)$ since $\Phi$ is a local diffeomorphism around $(1,g)$ and the maps $w\mapsto w-w_0$ and $g\mapsto e^{2w_0}g$ are obviously diffeomorphisms. This finishes the proof.
\end{proof}

\begin{cor}\label{cor:crit_points_mass}
The functional $g\mapsto \m_{\rm{VR},\widehat{g}}(g)$ is an analytic functional on the analytic manifold $\mathcal{C}$. Furthermore, $g\in \mathcal{C}$ is a critical point of $\m_{\rm{VR},\widehat{g}}$ if and only if $\ric_g=-(n-1)g$.
\end{cor}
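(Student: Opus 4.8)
The plan is to reduce everything to the first variation of the renormalized Einstein--Hilbert action $S_{\hg}$, which is already understood. First I would record that on $\mathcal{C}$ the two functionals agree up to sign: since $\scal_g+n(n-1)=0$ for every $g\in\mathcal{C}$, Theorem \ref{thm-well-def-mass} gives
\[
S_{\hg}(g)=\int_M(\scal_g+n(n-1))\dv_g-\m_{\rm{VR},\hg}(g)=-\m_{\rm{VR},\hg}(g)
\]
for all $g\in\mathcal{C}$. As $S_{\hg}$ is an analytic functional by Proposition \ref{prop:first_variation_EHaction} and $\mathcal{C}$ is an analytic submanifold by Proposition \ref{prop_conformal_decomposition}, the restriction $\m_{\rm{VR},\hg}|_{\mathcal{C}}=-S_{\hg}|_{\mathcal{C}}$ is analytic, which settles the first assertion.

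For the critical point characterisation I would differentiate along $\mathcal{C}$ using Proposition \ref{prop:first_variation_EHaction}. For $h\in T_g\mathcal{C}$,
\[
D_g\m_{\rm{VR},\hg}[h]=-D_gS_{\hg}[h]=\int_M\left\langle\ric_g-\frac{1}{2}\scal_g\,g-\frac{1}{2}(n-2)(n-1)g,h\right\rangle_g\dv_g.
\]
Substituting $\scal_g=-n(n-1)$, the tensor in the inner product simplifies to $E:=\ric_g+(n-1)g$, so that $D_g\m_{\rm{VR},\hg}[h]=\int_M\langle E,h\rangle_g\dv_g$ for all $h\in T_g\mathcal{C}$. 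The decisive observation is that $E$ is \emph{trace-free} on $\mathcal{C}$, since $\trace_gE=\scal_g+n(n-1)=0$. The implication $\ric_g=-(n-1)g\Rightarrow$ critical is then immediate, as $E=0$ forces $D_g\m_{\rm{VR},\hg}$ to vanish on all of $T_g\mathcal{C}$.

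The converse is where the trace-free property does the work. Suppose $g\in\mathcal{C}$ is critical, i.e. $\int_M\langle E,h\rangle_g\dv_g=0$ for every $h\in T_g\mathcal{C}$. I would invoke the conformal splitting \eqref{eq:conformal_splitting} from Proposition \ref{prop_conformal_decomposition}: an arbitrary $h\in C^{k,\alpha}_\delta(S^2T^*M)$ decomposes as $h=wg+h_0$ with $w\in C^{k,\alpha}_\delta(M)$ and $h_0\in T_g\mathcal{C}$. Then
\[
\int_M\langle E,h\rangle_g\dv_g=\int_Mw\,\trace_gE\,\dv_g+\int_M\langle E,h_0\rangle_g\dv_g=0,
\]
because the first integrand vanishes identically by trace-freeness and the second vanishes by criticality (all integrals converge since $E\in C^{k-2,\alpha}_\delta$ and $2\delta>n-1$). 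Hence $E$ is $L^2$-orthogonal to the entire space $C^{k,\alpha}_\delta(S^2T^*M)$; testing against smooth compactly supported symmetric $2$-tensors and using that $E=\ric_g+(n-1)g$ is continuous (as $k\geq2$), the fundamental lemma of the calculus of variations yields $E=0$, that is $\ric_g=-(n-1)g$.

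The content of the argument is concentrated in the last two paragraphs, and I do not expect a genuine obstacle. The one insight is that restricting to constant scalar curvature renders the gradient $E$ trace-free; the conformal splitting then automatically upgrades orthogonality to $T_g\mathcal{C}$ into orthogonality to the full tangent space, forcing the conclusion. Notably, only the trace-free property of $E$ is used, not its divergence-freeness, so no appeal to the Bianchi identity is required. The only technical care concerns the weighted regularity of the function $w$ in the splitting, but this is precisely what Proposition \ref{prop_conformal_decomposition} already supplies via Lee's isomorphism theorem for $\Delta_g+n$.
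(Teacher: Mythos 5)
Your proposal is correct and takes essentially the same approach as the paper: identify $\m_{\rm{VR},\hg}=-S_{\hg}$ on $\mathcal{C}$ to get analyticity, then observe that the gradient $\ric_g+(n-1)g$ of $S_{\hg}$ is trace-free when $\scal_g=-n(n-1)$, so that the conformal splitting \eqref{eq:conformal_splitting} upgrades criticality along $\mathcal{C}$ to criticality in all directions, forcing $\ric_g=-(n-1)g$. The paper's proof is simply a terser version of this argument, leaving implicit the fundamental-lemma step that you spell out.
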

\begin{proof}
We know that $g\mapsto S_{\hat{g}}(g)$ is an analytic functional on $\mathcal{R}^{k,\alpha}_{\delta}(\widehat{M},\hg)$ which by definition agrees with $g\mapsto -m_{\rm{VR},\widehat{g}}(g)$ on $\mathcal{C}$ and the first assertion follows. 
For the second assertion, Proposition \ref{prop:first_variation_EHaction} tells us that the gradient of $S_{\hg}$ is
\begin{align*}
\grad S_{\hg}(g)=
\ric_ g-\frac{1}{2}\scal_g\cdot g-\frac{1}{2}(n-1)(n-2)g.
\end{align*}
In particular, $\grad S_{\hg}(g)$ is trace-free if $\scal_g=-n(n-1)$.
Thus, any $g\in \mathcal{C}$ is a critical point of $S$ with respect to conformal variations. By \eqref{eq:conformal_splitting}, we therefore have $\ric_ g=-(n-1)g$ for $g\in \mathcal{C}$ if and only if it is a critical point of $S_{\hg}|_{\mathcal{C}}=-\m_{\rm{VR},\widehat{g}}|_{\mathcal{C}}$. This proves the assertion.
\end{proof}
 We next turn to the case where $g$ is conformal to $\hg$, with $\scal_{\hg}=-n(n-1)$. 

\begin{thm}\label{thm:conformal_positive_mass}
Let $(M^n,\hg)$ be a complete APE manifold with $\scal_{\hg}=-n(n-1)$. Let $w\in C^{k,\alpha}_{\delta}(M)$ for some $\delta>\frac{n-1}{2}$ and $k+\alpha\geq\delta$, and 
consider the conformal metric $g=e^{2w}\hg$ on $M$.\\
Then if $\scal_{g}\geq -n(n-1)$ and $\scal_g+n(n-1)\in L^1$, we have $\mathfrak{m}_{\rm{VR},\hg}(g) \geq 0$.
If furthermore $\mathfrak{m}_{\rm{VR},\hg}(g) = 0$, then $g = \hg$.
\end{thm}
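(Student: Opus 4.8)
The plan is to reduce the whole statement to the single inequality $w\geq 0$ and then to read off positivity of the mass from a manifestly nonnegative integral. Since $g=e^{2w}\hg$ with $\scal_{\hg}=-n(n-1)$, I would first record the conformal transformation law
\[
e^{2w}\scal_g = \scal_{\hg} + 2(n-1)\Delta_{\hg}w - (n-1)(n-2)|dw|^2_{\hg},
\]
and rewrite it in conformal-Laplacian form by setting $\phi=e^{(n-2)w/2}>0$, namely $\tfrac{4(n-1)}{n-2}\Delta_{\hg}\phi-n(n-1)\phi=\scal_g\,\phi^{(n+2)/(n-2)}$, noting that $\phi\to 1$ at the conformal infinity because $w\in C^{k,\alpha}_\delta$.

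The first main step is a maximum principle. The hypothesis $\scal_g\geq -n(n-1)$ converts the equation into the differential inequality $\tfrac{4}{n-2}\Delta_{\hg}\phi \geq n\phi\bigl(1-\phi^{4/(n-2)}\bigr)$. If $\phi<1$ somewhere, then since $\phi\to 1$ at infinity its infimum is attained at an interior point $x_0$, where (with the sign convention $\Delta=-\mathrm{div}\circ d$) one has $\Delta_{\hg}\phi(x_0)\leq 0$; but $0<\phi(x_0)<1$ makes the right-hand side strictly positive, a contradiction. Hence $\phi\geq 1$, i.e. $w\geq 0$. Here completeness of $(M,\hg)$ together with $\phi\to 1$ is exactly what guarantees that the infimum is attained at an interior point.

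The second main step is to express the mass. Taking $\varphi=\identity$, the perturbation is $h=g-\hg=(e^{2w}-1)\hg$, so $\mathrm{div}_{\hg}h - d\trace_{\hg}h = -(n-1)\,d(e^{2w})$ and the renormalized-volume integrand is $e^{nw}-1$. After converting the ADM surface integral into a bulk integral by the divergence theorem and substituting $\Delta_{\hg}w$ from the scalar curvature equation, the contributions involving $\scal_g+n(n-1)$ and $|dw|^2_{\hg}$ cancel against one another (after one further integration by parts), and I expect to arrive at
\[
\m_{\rm{VR},\hg}(g) = 2(n-1)\int_M\bigl(e^{nw}-1\bigr)\dv_{\hg}.
\]
Because $w\geq 0$ the integrand is nonnegative, giving $\m_{\rm{VR},\hg}(g)\geq 0$; and equality forces $e^{nw}\equiv 1$, hence $w\equiv 0$ and $g=\hg$. (As a consistency check, integrating the first variation of the mass along $g_t=e^{2tw}\hg$ gives the same constant $2(n-1)$.)

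The hard part will be the renormalization in the second step. Individually the ADM surface term and the renormalized volume each diverge, since the leading $e^{-\delta r}$ behaviour of $w$ need not be integrable against the volume growth $e^{(n-1)r}$ when $\delta<n-1$; only their combination is finite. I would control this by invoking Corollary~\ref{cor:mass_well_def}: finiteness of $\m_{\rm{VR},\hg}(g)$, which the assumption $\scal_g+n(n-1)\in L^1$ guarantees, forces the divergent boundary contributions at infinity to cancel, which in turn renders $\int_M(e^{nw}-1)\dv_{\hg}$ convergent and makes the leftover surface term vanish in the limit. Tracking these boundary terms honestly and verifying the cancellation from the APE asymptotics is the technical heart of the argument; a convenient alternative, if the direct asymptotic analysis proves awkward, would be to first establish the identity for compactly supported $w$, where the ADM term drops out entirely, and then pass to the general case by a density argument in $C^{k,\alpha}_\delta$.
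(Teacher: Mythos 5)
Your setup---taking $\varphi=\identity$, the conformal transformation law, and the maximum principle giving $\phi=e^{(n-2)w/2}\geq 1$---is exactly the paper's, and that part is correct (the paper runs the same argument in terms of $w$). The genuine gap is at what you yourself call the technical heart. Your identity $\m_{\rm{VR},\hg}(g)=2(n-1)\int_M(e^{nw}-1)\dv_{\hg}$ is equivalent to the assertion that $\lim_{R\to\infty}\m_{{\rm ADM},\hg}(g,R)=0$ and that $RV_{\hg}(g,R)$ converges on its own, and your justification for this is a non sequitur: finiteness of the \emph{sum} $\m_{{\rm ADM},\hg}(g,R)+2(n-1)RV_{\hg}(g,R)$ (Corollary~\ref{cor:mass_well_def}) says nothing about either summand having a limit, let alone the ADM term tending to zero. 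Since $\phi-1$ is only $O(e^{-\delta r})$ with $\delta$ possibly well below $n-1$, both the flux $\int_{\partial B_R}d\phi(\nu)\dv_{\hg}$ and $\int_{B_R}(e^{nw}-1)\dv_{\hg}$ can a priori diverge. Ruling that out under the stated hypotheses would require a decay or $L^1$-mapping statement for $\Delta_{\hg}+n$ (the assumption $\scal_g+n(n-1)\in L^1$ only gives $(\Delta_{\hg}+n)w\in L^1$ up to quadratic terms), which is not in the paper, is not covered by Lee's weighted H\"older theory, and is not supplied by you. The density fallback fails for the same reason: the functional $w\mapsto\int_M(e^{nw}-1)\dv_{\hg}$ is not even finite, much less continuous, on $C^{k,\alpha}_{\delta}$ when $\delta\leq n-1$, and compactly supported approximations of $w$ do not preserve the constraint $\scal_g\geq -n(n-1)$. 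Your treatment of the equality case inherits the same gap, since it rests on the unproven identity.

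The paper closes this by never separating the two divergent pieces: it converts the ADM term into the bulk integral $\frac{4(n-1)}{n-2}\int_{B_R}\Delta_{\hg}\phi\,\dv_{\hg}$, substitutes the conformal scalar curvature equation \emph{before} letting $R\to\infty$, and arrives at
\begin{align*}
\m_{\rm{VR},\hg}(g)=\int_M\left[2(n-1)F(\phi)+(\scal_g+n(n-1))\phi^{(n+2)/(n-2)}\right]\dv_{\hg},
\end{align*}
where $F(x)=x^{2n/(n-2)}-1+\tfrac{n}{2}x-\tfrac{n}{2}x^{(n+2)/(n-2)}$. The entire point of this combination is that $F(1)=F'(1)=0$, so $F(\phi)=O((\phi-1)^2)=O(e^{-2\delta r})$ is integrable precisely because $2\delta>n-1$: the renormalization happens pointwise in the integrand, not through a claimed cancellation of boundary terms at infinity. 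Nonnegativity then follows from $\phi\geq 1$, $\scal_g+n(n-1)\geq 0$, and the elementary fact that $F\geq 0$ on $[1,\infty)$ with equality only at $1$ (via $F''(x)>0$ for $x>1$), and the equality case forces $\phi\equiv 1$. Your formula is what the paper's identity reduces to when $\phi-1$ decays fast enough that $\int_M\Delta_{\hg}\phi\,\dv_{\hg}=0$, but in the generality of the theorem you must keep the extra terms; as written, your proof does not close.
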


\begin{proof}
We assume that the diffeomorphism $\varphi$ for defining the mass is $\varphi = \identity$.
We substitute $\phi=e^{\frac{1}{2}(n-2)w}$ so that $g=\phi^{4/(n-2)}\hg$. Note that $\phi-1\in C^{k,\alpha}_{\delta}$. 
We have
\begin{align*}
\mathfrak{m}_{{\rm ADM},\widehat{g}}(g,R)
&= \int_{\partial B_R}({\mathrm{div}}_{\hg}(g)-d{\trace}_{\hg}(g))(\nu)\dv_{\widehat{g}}\\ 
&= \int_{\partial B_R} (1-n) d(\phi^{4/(n-2)})(\nu)\dv_{\widehat{g}}\\ 
&= -\frac{4(n-1)}{n-2} \int_{\partial B_R} \phi^{(6-n)/(n-2)} d\phi(\nu)\dv_{\widehat{g}}
\end{align*}
and  
\begin{align*}
RV_{\widehat{g}}(g,R)
&= \int_{B_R}\dv_g - \int_{B_{R}}\dv_{\widehat{g}} 
= 
\int_{B_R} \left( \phi^{2n/(n-2)} - 1 \right) \dv_{\widehat{g}}
\end{align*}
so that
\begin{equation}
\begin{split}
\label{eq:conformal_mass}
\mathfrak{m}_{\rm{VR},\hg}(g)
&=
\lim_{R \to \infty}
\left( m_{{\rm ADM},\widehat{g}}(g,R)+2(n-1) RV_{\widehat{g}}(g,R) \right) \\
&=
\lim_{R \to \infty} \Bigg(
-\frac{4(n-1)}{n-2} \int_{\partial B_R} d\phi(\nu)\dv_{\widehat{g}} \\
&\qquad\qquad
-\frac{4(n-1)}{n-2} \int_{\partial B_R} \left(\phi^{(6-n)/(n-2)} - 1\right) d\phi(\nu)\dv_{\widehat{g}}\\
&\qquad\qquad
+ 2(n-1) \int_{B_{R}} \left( \phi^{2n/(n-2)} - 1 \right) \dv_{\widehat{g}}
\Bigg) .
\end{split}
\end{equation}
With the decay of $\phi$, the first and last terms in this expression may be infinite. However, we will see below that they combine to yield a finite quantity with the correct sign. Furthermore, the integrand of the middle term satisfies $\left(\phi^{(6-n)/(n-2)} - 1\right) d\phi(\nu)\in O(e^{-2\delta r})$ and since $2\delta>n-1$, this term vanishes in the limit.

To handle the first and last terms in \eqref{eq:conformal_mass}, we need to establish that $\phi \geq 1$. 
In order to do this, recall that the scalar curvatures of $g$ and $\hg$ are related by
\begin{equation*}
e^{2w}\scal_g=\scal_{\hg}+2(n-1){\Delta}_{\hg}w-(n-2)(n-1)|d w|_{\hg}^2,
\end{equation*}
from which we have
\begin{equation}\label{eq-confchange}
2{\Delta}_{\hg}w
=\frac{1}{(n-1)}\left( e^{2w} \scal_g-\scal_{\hg}+(n-1)(n-2)|d w|_{\hg}^2 \right).
\end{equation}
Using the fact that $\scal_g \geq \scal_{\hg}=-n(n-1)$ we find that
\begin{equation*}
2{\Delta}_{\hg}w \geq -n(e^{2w}-1) + (n-2)|d w|_{\hg}^2.
\end{equation*}
For the sake of contradiction assume that $w<0$ somewhere on $M$. Since we know that $w\to0$ at infinity, there must be a minimum attained at some  point $x_0$ where $w<0$ and therefore $e^{2w}-1<0$ at $x_0$. In particular, this implies ${\Delta}_{\hg}w>0$ at $x_0$, which by the maximum principle contradicts the fact that $x_0$ was a minimum.
We therefore conclude that $w\geq0$ and $\phi \geq 1$.

Applying the divergence theorem to \eqref{eq:conformal_mass} and using the fact that the middle integral vanishes in the limit, we see that
\[ \begin{split}
\mathfrak{m}_{\rm{VR},\hg}(g)&=\lim_{R \to \infty} 
2(n-1) \int_{{B}_{R}}
\left( \phi^{2n/(n-2)}-1+\frac{2}{n-2}{\Delta}_{\hg}\phi \right) \dv_{\widehat{g}}\\
&=2(n-1) \int_{M}
\left( \phi^{2n/(n-2)}-1+\frac{2}{n-2}{\Delta}_{\hg}\phi \right) \dv_{\widehat{g}}
\end{split} \]
In terms of $\phi$, the scalar curvatures of $g$ and $\hg$ are related by
\begin{equation}\label{eq:conformal_scal}
\begin{split}
\frac{4(n-1)}{n-2}{\Delta}_{\hg}\phi
&= \scal_g\phi^{(n+2)/(n-2)}-\scal_{\hg}\phi \\ 
&= n(n-1) \left( \phi -\phi^{(n+2)/(n-2)} \right)\\
&\qquad +
 \left(\scal_g+(n-1)n\right)\phi^{(n+2)(n-2)}
\end{split}
\end{equation}
from which we find that
\begin{align*}
\mathfrak{m}_{\rm{VR},\hg}(g)&=2(n-1)\int_{M} \left( \phi^{2n/(n-2)}-1+\frac{2}{n-2}{\Delta}_{\hg}\phi \right) \dv_{\widehat{g}}\\
&=\int_{M}[2(n-1)F(\phi)+(\scal_g+n(n-1))\phi^{(n+2)/(n-2)}]\dv_{\hg},
\end{align*}
where the smooth function $F:(0,\infty)\to \R$ is defined by
\begin{align}\label{eq:conformal_auxialliary_function}
F(x) = x^{2n/(n-2)}-1+\frac{n}{2}x-\frac{n}{2}x^{(n+2)/(n-2)}.
\end{align}
Recall that $\phi \geq 1$, so we are done with the proof if we can show $F(x)\geq 0$ for $x\geq 1$ with $F(x)=0$ only if $x=1$. 
In fact, a direct computation shows that $F(1)=F'(1)=0$ and
\begin{align*}
F''(x) 
= \frac{2n(n+2)}{(n-2)^2}x^{(6-n)/(n-2)}\left( x-1 \right),
\end{align*}
hence $F''(1)=0$ and $F''(x)>0$ for $x>1$
and the desired statement follows.
\end{proof}

\begin{rem}Since every rotationally symmetric AH metric on $\R^n$ is conformal to the hyperbolic metric, we also get a positive mass theorem for such metrics.
\end{rem}

\begin{rem}
An analogue of Theorem \ref{thm:conformal_positive_mass} is well known for the ADM mass of asymptotically Euclidean manifolds, {(see, for example, \cite[Lem.~3.3]{SchoenYau1979})}.
\end{rem}

\subsection{Positive mass theorem for AH metrics on $\mathbb R^3$}

Now we consider AH metrics on $\mathbb R^3$. In this case, we are able to prove a positive mass theorem for the volume-renormalized mass.


\begin{thm}\label{thm:3dpmt}
Let $(\R^3,g)$ be an APE manifold with $g\in
 \mathcal{R}^{k,\alpha}_{\delta}(\R^3,g_{\mathrm{hyp}})$ for some $\delta>1$, satisfying $\scal_g\geq -6$ {and $\scal_g+6\in L^1$}. Then, $\m_{{\rm VR},g_{\mathrm{hyp}}}(g)\geq 0$ and equality holds if and only if $g$ is isometric to $g_{\mathrm{hyp}}$.
\end{thm}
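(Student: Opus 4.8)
The plan is to reduce the general case to the conformal positive mass theorem (Theorem~\ref{thm:conformal_positive_mass}) via two ingredients: a density argument and the known positivity of the renormalized volume due to Brendle and Chodosh. First I would use Proposition~\ref{prop_conformal_decomposition} to conformally rescale: for any $g\in\mathcal{R}^{k,\alpha}_{\delta}(\R^3,g_{\mathrm{hyp}})$ there is a unique $w\in C^{k,\alpha}_\delta$ with $\overline{g}=e^{2w}g$ of constant scalar curvature $\scal_{\overline{g}}=-6$. The additivity property (Proposition~\ref{cor:additivity_renormalized_EH}) together with the conformal theorem then splits the mass as
\begin{align*}
\m_{{\rm VR},g_{\mathrm{hyp}}}(g)=\m_{{\rm VR},\overline{g}}(g)+\m_{{\rm VR},g_{\mathrm{hyp}}}(\overline{g}),
\end{align*}
where the first term is $\geq 0$ by Theorem~\ref{thm:conformal_positive_mass} applied to the pair $(\overline{g},g)$ (using $\scal_g\geq-6$), and the second term involves only the constant-scalar-curvature metric $\overline{g}$.

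The remaining task is therefore to prove nonnegativity of $\m_{{\rm VR},g_{\mathrm{hyp}}}(\overline{g})$ for $\overline{g}$ of \emph{constant} scalar curvature $-6$ on $\R^3$. Here I would invoke Corollary~\ref{cor:crit_points_mass}, which tells us that on the constant-scalar-curvature manifold $\mathcal{C}$ the functional $\m_{{\rm VR},g_{\mathrm{hyp}}}$ is analytic and its critical points are exactly the Einstein (hyperbolic) metrics. The strategy is a density/deformation argument: for $\overline{g}$ with fast enough decay one can directly appeal to the Brendle--Chodosh positivity of the renormalized volume \cite{BrendleChodosh2014}, since when the ADM boundary term vanishes the volume-renormalized mass reduces to the renormalized volume, which they show is nonnegative with equality only for $g_{\mathrm{hyp}}$. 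To cover the general decay rate $\delta>1$, I would approximate $\overline{g}$ by metrics in $\mathcal{C}$ with faster fall-off (using density of such metrics in $\mathcal{C}$ under the weighted norm) and use continuity of the analytic functional $\m_{{\rm VR},g_{\mathrm{hyp}}}$ on $\mathcal{C}$ to pass to the limit, preserving the inequality.

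For the rigidity statement, equality $\m_{{\rm VR},g_{\mathrm{hyp}}}(g)=0$ forces both summands to vanish. Vanishing of $\m_{{\rm VR},\overline{g}}(g)$ gives $g=\overline{g}$ by the rigidity clause of Theorem~\ref{thm:conformal_positive_mass}, so $g$ itself has constant scalar curvature $-6$. Vanishing of $\m_{{\rm VR},g_{\mathrm{hyp}}}(\overline{g})$ combined with the rigidity in the Brendle--Chodosh result (respectively the equality case surviving the density argument) then forces $\overline{g}=g_{\mathrm{hyp}}$ up to isometry, completing the proof.

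I expect the main obstacle to be the density argument in the second paragraph: one must show that constant-scalar-curvature metrics with decay fast enough to apply Brendle--Chodosh directly are dense, in the appropriate weighted topology, among those with only $\delta>1$ decay, and that the equality case is stable under this approximation. The analyticity of $\m_{{\rm VR},g_{\mathrm{hyp}}}$ on $\mathcal{C}$ and the fact that its critical points are precisely the hyperbolic metrics (Corollary~\ref{cor:crit_points_mass}) are the key tools that make such a deformation argument viable, but controlling the rigidity under the limit is the delicate step.
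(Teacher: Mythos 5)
Your overall strategy coincides with the paper's: conformally rescale to $\overline{g}=e^{2w}g\in\mathcal{C}$ via Proposition~\ref{prop_conformal_decomposition}, use the conformal positive mass theorem (Theorem~\ref{thm:conformal_positive_mass}) to reduce to $\m_{{\rm VR},g_{\mathrm{hyp}}}(g)\geq\m_{{\rm VR},g_{\mathrm{hyp}}}(\overline{g})$ with equality iff $g=\overline{g}$, and then prove $\m_{{\rm VR},g_{\mathrm{hyp}}}(\overline{g})\geq 0$ by approximation, Brendle--Chodosh, and continuity of the analytic functional on $\mathcal{C}$ (Corollary~\ref{cor:crit_points_mass}). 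Concerning the density step you flag as the main obstacle: the paper does not prove density of fast-decay metrics inside $\mathcal{C}$ abstractly. Instead it approximates $\overline{g}$ in $C^{k,\alpha}_{\delta}$ by metrics $g_i$ with $g_i-g_{\mathrm{hyp}}$ compactly supported (density of $C^{\infty}_c$ in $C^{k,\alpha}_{\delta}$), Yamabe-rescales each one to $\overline{g}_i=e^{2w_i}g_i\in\mathcal{C}$ via Proposition~\ref{prop_conformal_decomposition}, and then bootstraps the Yamabe equation, using that $\Delta+3:C^{k,\alpha}_{\beta}\to C^{k-2,\alpha}_{\beta}$ is an isomorphism, to conclude $\overline{g}_i-g_{\mathrm{hyp}}\in C^{k,\alpha}_{\beta}$ for every $\beta<3$. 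This decay kills the ADM boundary term, so $\m_{{\rm VR},g_{\mathrm{hyp}}}(\overline{g}_i)=2(n-1)RV_{g_{\mathrm{hyp}}}(\overline{g}_i)\geq 0$, and analyticity on $\mathcal{C}$ passes the inequality to $\overline{g}$. So that part of your plan is sound and fillable essentially as you anticipate.

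The genuine gap is in your equality case. You propose to conclude $\overline{g}=g_{\mathrm{hyp}}$ from ``the rigidity in the Brendle--Chodosh result surviving the density argument,'' and you rightly call this delicate --- in fact it does not work: rigidity is not stable under limits. The approximating metrics $\overline{g}_i$ have no reason to have zero mass, and a sequence of metrics with strictly positive mass can converge to one with zero mass, so the equality case cannot be made to ``survive'' the approximation. The paper avoids this entirely with a variational argument: since the density argument shows $\m_{{\rm VR},g_{\mathrm{hyp}}}\geq 0$ on all of $\mathcal{C}$, the hypothesis $\m_{{\rm VR},g_{\mathrm{hyp}}}(\overline{g})=0$ makes $\overline{g}$ a global minimizer of the analytic functional $\m_{{\rm VR},g_{\mathrm{hyp}}}|_{\mathcal{C}}=-S_{g_{\mathrm{hyp}}}|_{\mathcal{C}}$, hence a critical point; Corollary~\ref{cor:crit_points_mass} then gives $\ric_{\overline{g}}=-2\overline{g}$, and since $n=3$ an Einstein metric has constant sectional curvature, so $\overline{g}$ is isometric to $g_{\mathrm{hyp}}$. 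This is the ingredient you should substitute for the limit-rigidity argument; note that it uses the second assertion of Corollary~\ref{cor:crit_points_mass} (critical points of the mass on $\mathcal{C}$ are exactly the PE metrics), which you cite but deploy only for continuity.
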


\begin{proof}
Let $\mathcal{C}$ be as in \eqref{eq:constantscalarcurvaturemetrics}, with $M=\R^3$ and $\hg=g_{\mathrm{hyp}}$.
By Proposition \ref{prop_conformal_decomposition}, there is a function $w\in C^{k,\alpha}_{\delta}(\R^3,g_{\mathrm{hyp}})$ such that $\overline{g}=e^{2\omega}g\in\mathcal{C}$.
From Theorem \ref{thm:conformal_positive_mass}, it follows that $\m_{{\rm VR},\overline{g}}(g)\geq 0$, or equivalently $\m_{{\rm VR},g_{\mathrm{hyp}}}(g)\geq m_{{\rm VR},g_{\mathrm{hyp}}}(\overline{g})$ with equality if and only if $g=\overline{g}$.

It thus remains to show that $\m_{{\rm VR},g_{\mathrm{hyp}}}(\overline{g})\geq 0$, with equality if and only if $\overline{g}$ is isometric to $g_{\mathrm{hyp}}$.
We know that $\overline{g}-g_{\mathrm{hyp}}\in C^{k,\alpha}_{\delta}(S^2T^*\R^3,g_{\mathrm{hyp}})$.
Since $C^{\infty}_{c}$ is dense in $C^{k,\alpha}_{\delta}$, we can find a sequence $\{g_i\}_{i\in\N}$ of AH metrics on $\R^3$ such that $g_i-g_{\mathrm{hyp}}$ is compactly supported for each $i$ and $g_i\to \overline{g}$ with respect to the $C^{k,\alpha}_{\delta}$-norm as $i\to\infty $.
 By Proposition \ref{prop_conformal_decomposition}, we get a sequence
 of constant scalar curvature metrics $\overline{g}_i=e^{2w_i}g_i$, where the sequence of conformal factors $w_i\in C^{k,\alpha}_{\delta}$ converges to $0$ in $C^{k,\alpha}_{\delta}$. 
By \eqref{eq:Yamabe_equation}, we have
\begin{align*}
4(\Delta_{g_i}w_i +3w_i)=2|dw_i|^2_{g_i}-(\scal_{g_i}+6)-6F(w_i),
\end{align*}
where $F(x)=e^{2x}-1-2x$.
Recall that $\scal_{g_i}+6$ is compactly supported, so that $w_i \in C^{k,\alpha}_{\beta}$ for all $\beta>0$. We now repeat the above argument to improve the decay rate of the functions $w_i$. First, $|dw_i|^2_{g_i}-(\scal_{g_i}+6)-6F(w_i)\in C^{k-2,\alpha}_{2\delta}$, so  by the isomorphism property of $\Delta+3$, we have $w_i\in C^{k-2,\alpha}_{\min\left\{2\delta,3\right\}}$. Repeating this step twice, we see that $|dw_i|^2_{g_i}-(\scal_{g_i}+6)-6F(\omega_i)\in  C^{k-2,\alpha}_{\beta}$ and $w_i\in H^{k}_{\beta}$ for any $\beta< 3$. Therefore, $w_i\in O(e^{-{\beta r}})$ for any $\beta> -3$ and $\overline{g}_i-g_{\mathrm{hyp}}\in C^{k,\alpha}_{\beta}$ for any  $\beta< 3$. For this reason, $\m_{{\rm ADM},\widehat{g}}(g,R)\to 0$ as $R\to\infty$, and
\begin{align*}
\m_{{\rm VR},g_{\mathrm{hyp}}}(\overline{g}_i)
=2(n-1)RV_{g_{\mathrm{hyp}}}(\overline{g}_i).
\end{align*}
{Furthermore, we know by the work of Brendle and Chodosh \cite{BrendleChodosh2014} (see also \cite[Sec.~5]{Chodosh2016})} that $RV_{g_{\mathrm{hyp}}}(\overline{g}_i)\geq0$. Since $\overline{g}_i$ is a sequence of constant scalar curvature metrics converging to $\overline{g}$ in $C^{k,\alpha}_{\delta}$, we get from Corollary \ref{cor:crit_points_mass} that
\begin{align*}
2(n-1)RV_{g_{\mathrm{hyp}}}(\overline{g}_i)=\m_{{\rm VR},g_{\mathrm{hyp}}}(\overline{g}_i)\to \m_{{\rm VR},g_{\mathrm{hyp}}}(\overline{g}).
\end{align*}
Consequently, $\m_{{\rm VR},g_{\mathrm{hyp}}}(\overline{g})\geq 0$. To finish the proof, it remains to consider the equality case. Suppose that $\m_{{\rm VR},g_{\mathrm{hyp}}}(\overline{g})= 0$. Then, $\overline{g}$ is a critical point of 
\begin{align*}
\mathcal{C}\ni g\mapsto S_{g_{\mathrm{hyp}}}(g)= - \m_{{\rm VR},g_{\mathrm{hyp}}}(g).
\end{align*}
By Corollary \ref{cor:crit_points_mass}, it follows that $\overline{g}$ is a PE metric on $\R^3$.
 Since $n=3$, metric $\overline{g}$ has constant curvature and must therefore be isometric to $g_{\mathrm{hyp}}$.
\end{proof}

\section{A renormalized expander entropy for AH manifolds}\label{sec:entropy}

In this section we will define and study a renormalized expander entropy for AH manifolds, which turns out to be a monotone quantity for the Ricci flow. Throughout, let $(M,g)$ and $(\widehat M,\hg)$ be APE manifolds with isometric conformal boundaries. Assume that $(M,g)$ is complete and that $(\widehat M,\hg)$ has integrable normalized scalar curvature.
%

\subsection{Definition of the entropy}
In analogy to the expander entropy for compact manifolds defined in \cite{FIN05}, we consider the following definition.
\begin{lem}\label{lem_W_entropy}
For
$f\in C^{\infty}_c(M)$, the limit
\begin{align*}
\mathcal{W}_{\rm{AH},\hg}(g,f) & 
=\lim_{R \to \infty}\Big(\int_{B_R} \Big(
\left( |\nabla f|^2+\scal_g-2(n-1)f \right)e^{-f}\Big) \dv_g\\
&\qquad + (n-2)(n-1)\int_{ B_R}e^{-f}\dv_g\\
&\qquad +
2(n-1)\int_{\hat{B}_R}\dv_{\hat{g}}- m_{{\rm ADM},\widehat{g}}(g,R)\Big)\\
&=\lim_{R \to \infty}\Big(\int_{B_R} \Big(
\left( |\nabla f|^2+\scal_g+f \right)e^{-f}\dv_g\\
&\qquad\qquad -2(n-1) \left( (f+1)e^{-f} - 1 \right)
\Big) \dv_g\\
&\qquad\qquad - m_{{\rm ADM},\widehat{g}}(g,R)
 - 2(n-1)RV_{\widehat{g}}(g,R)\Big).
\end{align*}
is finite.
\end{lem}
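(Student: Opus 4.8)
The goal is to show that the limit defining $\mathcal{W}_{\rm{AH},\hg}(g,f)$ exists and is finite. The plan is to exploit the compact support of $f$ to reduce the problematic part of the integrand to the already-controlled quantities from Theorem~\ref{thm-well-def-mass}. The strategy rests on the observation that the two displayed expressions for $\mathcal{W}_{\rm{AH},\hg}(g,f)$ are algebraically equal, so it suffices to analyze whichever form is more convenient; the second form is preferable because it isolates the $R$-dependence into terms already known to converge.

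First I would verify the algebraic identity between the two expressions. Expanding $-2(n-1)\bigl((f+1)e^{-f}-1\bigr)$ and comparing with the first form, the $f$-dependent terms involving $e^{-f}$ must reorganize into $|\nabla f|^2 + \scal_g + f$ weighted against $e^{-f}$, plus the renormalizing constant $2(n-1)$; matching the coefficient of $e^{-f}$ uses $-2(n-1)f + (n-2)(n-1) = $ the contribution from $f e^{-f}$ and the constant shift, while the bare constant $+2(n-1)$ in the second line reassembles the renormalized volume term $2(n-1)RV_{\widehat g}(g,R)$ together with $2(n-1)\int_{\widehat B_R}\dv_{\widehat g}$ from the first form. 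This is a routine bookkeeping computation and I would present only the key cancellation.

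The main step is then the convergence argument. Because $f \in C^\infty_c(M)$, choose $R_0$ so large that $\supp f \subset B_{R_0}$. For $R > R_0$, every integrand containing a factor of $e^{-f}$, $f$, or $|\nabla f|^2$ differs from its $f=0$ counterpart only on the fixed compact set $B_{R_0}$, contributing a constant independent of $R$. Specifically, writing $C$ for the (finite) integral over $B_{R_0}$ of all the $f$-dependent terms, the second expression becomes
\begin{align*}
\mathcal{W}_{\rm{AH},\hg}(g,f)
= C + \lim_{R\to\infty}\Big(
\int_{B_R}(\scal_g + n(n-1))\dv_g
- m_{{\rm ADM},\widehat g}(g,R)
- 2(n-1)RV_{\widehat g}(g,R)
\Big),
\end{align*}
where outside $B_{R_0}$ the combination $\scal_g + f + 2(n-1) - 2(n-1)(\cdot)$ collapses, for $f\equiv 0$, to $\scal_g + n(n-1)$ integrated against $\dv_g$ (here I would carefully track that at $f=0$ the term $-2(n-1)((f+1)e^{-f}-1)$ vanishes and $(\scal_g+f)e^{-f} = \scal_g$). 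The bracketed limit is exactly $S^\varphi_{\hg}(g)$ from Theorem~\ref{thm-well-def-mass}.

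The main obstacle — though it is a mild one — is confirming that the reduction outside $\supp f$ produces precisely the renormalized Einstein--Hilbert action and not some shifted variant; this requires care with the additive constants $\pm 2(n-1)$ and the distinction between $\dv_g$ and $\dv_{\widehat g}$ in the volume terms. Once the integrand outside $B_{R_0}$ is identified with the integrand of $S^\varphi_{\hg}(g)$, finiteness is immediate: Theorem~\ref{thm-well-def-mass} guarantees the bracketed limit exists and is finite under the standing hypotheses that $(\widehat M,\hg)$ is APE with $\scal_{\hg}+n(n-1)\in L^1$ and $(M,g)$ is asymptotic to it of order $\delta > \frac{n-1}{2}$. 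Therefore $\mathcal{W}_{\rm{AH},\hg}(g,f) = C + S^\varphi_{\hg}(g)$ is finite, completing the argument.
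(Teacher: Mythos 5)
Your overall strategy---use the compact support of $f$ to reduce the functional to the renormalized Einstein--Hilbert action plus an $R$-independent finite correction, then quote Theorem \ref{thm-well-def-mass}---is sound, and it is essentially the paper's own proof: the paper substitutes $u=e^{-f/2}-1\in C^{\infty}_c(M)$ and rewrites the functional as $\int_M\big(4|\nabla u|^2+(\scal_g+n(n-1))u^2+2(\scal_g+n(n-1))u+G(u)\big)\dv_g+S_{\hg}(g)$ with $G(0)=0$, so that finiteness follows from Theorem \ref{thm-well-def-mass} and compact support of $u$. You perform the same reduction directly, without the substitution.

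However, the step you defer as ``routine bookkeeping'' is the only nontrivial content of the lemma, and your sketch of it is internally inconsistent. If one actually compares the two displayed forms, they are \emph{not} equal as printed: after absorbing the constant $+2(n-1)$ from $-2(n-1)\big((f+1)e^{-f}-1\big)$ together with $-2(n-1)RV^{\varphi}_{\hg}(g,R)$ into $2(n-1)\int_{\widehat{B}_R}\dv_{\hg}$, the second form carries the coefficient $|\nabla f|^2+\scal_g+f-2(n-1)f-2(n-1)$ of $e^{-f}$, while the first carries $|\nabla f|^2+\scal_g-2(n-1)f+(n-2)(n-1)$; these agree only if $f\equiv n(n-1)$. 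So the ``$+f$'' in the second form is a typo for ``$+n(n-1)$'' (this is also what the paper's subsequent computations use). Your proof inherits exactly this problem at its crux: your parenthetical correctly notes that, for the literal second form, the integrand at $f=0$ is $(\scal_g+f)e^{-f}=\scal_g$, yet your displayed identity asserts it is $\scal_g+n(n-1)$. The difference is fatal, not cosmetic: since $\scal_g\to-n(n-1)$ and the volume is infinite, $\int_{B_R}\scal_g\dv_g-\m^{\varphi}_{{\rm ADM},\hg}(g,R)-2(n-1)RV^{\varphi}_{\hg}(g,R)\to-\infty$, so without the extra $n(n-1)$ your limit diverges. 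The missing constant has to be produced explicitly: either from the corrected ``$+n(n-1)$'' in the second form, or by running your argument on the first form, where outside $\supp f$ the term $(n-2)(n-1)e^{-f}$ becomes $(n-2)(n-1)=n(n-1)-2(n-1)$ and the leftover $-2(n-1)\int_{B_R}\dv_g$ recombines with $2(n-1)\int_{\widehat{B}_R}\dv_{\hg}$ into $-2(n-1)RV^{\varphi}_{\hg}(g,R)$, yielding $C+S_{\hg}(g)$. Writing out either computation closes the gap; as written, the key display of your proof does not follow from---indeed contradicts---the computation you sketch.
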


\begin{proof}
Substitute $e^{-f}=\omega^2$ and define
\begin{align*}
\widetilde{\mathcal{W}}_{\rm{AH},\hg}(g,\omega)=\mathcal{W}_{\rm{AH},\hg}(g,f).
\end{align*}
Then we get
\begin{align*}
\widetilde{\mathcal{W}}_{\rm{AH},\hg}(g,\omega) 
&= 
\lim_{R \to \infty}
\Big( \int_{B_R}\Big( 4|\nabla \omega|^2 +(\scal_g+n(n-1))\omega^2\\
&\qquad +2(n-1)[(\log(\omega^2)-1)\omega^2+1]
\Big) \dv_g - RV_{\widehat{g}}(g,R) \Big).
\end{align*}
Substitute further $u= \omega - 1$ and let
\begin{align*}
\overline{\mathcal{W}}_{\rm{AH},\hg}(g,u)=\widetilde{\mathcal{W}}_{\rm{AH},\hg}(g,\omega).
\end{align*}
In addition, set $G(x)=2(n-1) \left( (\log((x+1)^2)-1)(x+1)^2+1 \right)$. Note that $G$ is nonnegative and $G(0)=0$. Then,
\begin{align*}
\overline{\mathcal{W}}_{\rm{AH},\hg}(g,u) &=\lim_{R \to \infty}\Big(\int_{ B_R} \left( 4|\nabla u|^2+(\scal_g +n(n-1))(u+1)^2 + G(u) \right)\dv_g\\
&\qquad\qquad -\left( m_{{\rm ADM},\widehat{g}}(g,R)+2(n-1)RV_{\widehat{g}}(g,R) \right)\Big) \\
&=
\int_M \Big( 4|\nabla u|^2+(\scal_g +n(n-1))u^2\\
&\qquad +2(\scal_g +n(n-1))u+G(u) \Big) \dv_g 
+ S_{\hg}(g),
\end{align*}
where $ S_{\hg}(g)$ is well-defined and finite by Theorem \ref{thm-well-def-mass}. Since $f\in C^{\infty}_c(M)$ we have $\omega-1\in C^{\infty}_c(M)$ so $u\in C^{\infty}_c(M)$ and $G(u)\in C^{\infty}_c(M)$, and the integral is finite, completing the proof.
\end{proof}

\begin{defn}
The {\em renormalized expander entropy} of $(M,g)$ is defined as
\begin{align*}
\mu_{\rm{AH},\hg}(g)
=\inf_{ f\in C_c^{\infty}(M)}\mathcal{W}_{\rm{AH},\hg}(g,f)
=\inf_{ u\in C_c^{\infty}(M)}\overline{\mathcal{W}}_{\rm{AH},\hg}(g,u).
\end{align*}
\end{defn}
%

\begin{rem}
The proof of Lemma \ref{lem_W_entropy} implies that
\begin{align*}
\mu_{\rm{AH},\hg}(g)&=S_{\hg}(g)+\inf_{u\in C^{\infty}_c(M)}\int_M \Big( 4|\nabla u|^2+(\scal_g +n(n-1))u^2\\
&\qquad\qquad\qquad\qquad\qquad +2(\scal_g +n(n-1))u+G(u) \Big) \dv_g  ,
\end{align*}
where $G(x)=2(n-1) \left( (\log((x+1)^2)-1)(x+1)^2+1 \right)$.
We obtain an additivity of the functional $\mu_{\rm{AH},\hg}$. Suppose that $(\overline{M},\tilde{g})$ is another APE manifold with integrable normalized scalar curvature whose conformal boundary is isometric to the one of $(M,g)$. Then the additivity of the renormalized Einstein--Hilbert action in Proposition \ref{cor:additivity_renormalized_EH}  gives
\begin{align*}
\mu_{\rm{AH},\tilde{g}}(g)=\mu_{\rm{AH},\hg}(g)+m_{\rm{VR},\hg}(\tilde{g}).
\end{align*}
Thus, the variational properties of the functional $g\mapsto \mu_{\rm{AH},\hg}(g)$ are independent of the choice of the reference metric $\hg$.
\end{rem}

\subsection{Existence of minimizers}\label{subsec:compact_exhaustion}
We are going to show that the infimum in the definition $\mu_{\rm{AH},\hg}(g)$ is always achieved by a unique $C^{k,\alpha}$-function $f_g$ and that $f_g$ and hence also $\mu_{\rm{AH},\hg}(g)$ depend analytically on the metric $g$.

\begin{lem}\label{lem_H1-comparison}
For sufficiently small $\epsilon>0$ there are positive constants $C_{\epsilon}$ and $C$, depending on $g$ and $\hg$, such that
\begin{align*}
C\left\|u\right\|_{H^1}^2 - C
\leq 
\overline{\mathcal{W}}_{\rm{AH},\hg}(g,u)- S_{\hg}(g)
\leq 
C_{\epsilon} (1+\left\|u\right\|_{H^1}^{\epsilon}) \left\|u\right\|_{H^1}^2 + C
\end{align*}
for all $u\in C^{\infty}_c(M)$. In particular, 
\begin{align*}
\mu_{\rm{AH},\hg}(g)
= \inf_{ u\in H^1(M)}\overline{\mathcal{W}}_{\rm{AH},\hg}(g,u)
\end{align*}
and $\mu_{\rm{AH},\hg}(g)>-\infty$.
\end{lem}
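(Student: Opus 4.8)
The plan is to work throughout with the reduced functional $I(u) := \overline{\mathcal{W}}_{\rm{AH},\hg}(g,u) - S_{\hg}(g)$, which by the computation in the proof of Lemma~\ref{lem_W_entropy} is
\begin{align*}
I(u) = \int_M \Big( 4|\nabla u|^2 + V u^2 + 2 V u + G(u) \Big)\dv_g,
\end{align*}
where $V := \scal_g + n(n-1)$ and $G(x) = 2(n-1)\big( (\log((x+1)^2)-1)(x+1)^2 + 1\big)$. Since $g$ is APE, $V \in C^{k-2,\alpha}_\delta$ with $\delta > \tfrac{n-1}{2}$, so $V$ is bounded, tends to $0$ at infinity, and lies in $L^2$. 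The first step is to record the elementary behaviour of $G$ on its domain $(-1,\infty)$ (recall $u+1 = e^{-f/2} > 0$): $G \geq 0$ with $G(0)=0$, $G(x) = 4(n-1)x^2 + o(x^2)$ near $x=0$, $G$ stays bounded as $x \to -1^+$, and $G(x) \sim 4(n-1)x^2\log x$ as $x\to+\infty$. From continuity and positivity of $x\mapsto G(x)/x^2$ together with its positive limits at both endpoints I extract a constant $c_0>0$ giving the \emph{global} lower bound $G(x)\geq c_0 x^2$; and from the merely logarithmically superquadratic growth I obtain, for each $\epsilon>0$, an upper bound $G(x)\leq C x^2 + C_\epsilon |x|^{2+\epsilon}$.

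For the upper bound I would estimate the terms of $I(u)$ one by one. The gradient term is $\leq 4\|u\|_{H^1}^2$, the term $\int_M Vu^2 \leq \|V\|_\infty\|u\|_{L^2}^2$, and $2\int_M Vu \leq 2\|V\|_{L^2}\|u\|_{L^2} \leq \|u\|_{H^1}^2 + C$ by Young. For $\int_M G(u)$ I insert the pointwise bound and control $\int_M|u|^{2+\epsilon}$ via the Sobolev embedding $H^1(M)\hookrightarrow L^{2n/(n-2)}(M)$ (available since $(M,g)$ is complete with the bounded geometry of an AH manifold) followed by interpolation between $L^2$ and $L^{2n/(n-2)}$, which gives $\int_M|u|^{2+\epsilon}\leq C\|u\|_{H^1}^{2+\epsilon}$ once $2+\epsilon\leq\tfrac{2n}{n-2}$. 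Summing yields $I(u) \leq C\|u\|_{H^1}^2 + C_\epsilon\|u\|_{H^1}^{2+\epsilon} + C \leq C_\epsilon(1+\|u\|_{H^1}^\epsilon)\|u\|_{H^1}^2 + C$.

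The heart of the matter is the lower bound, and the main obstacle is the indefinite potential term $\int_M Vu^2$: a priori $V$ may have a deep negative well, so this term cannot be absorbed by the gradient through a spectral gap or Poincaré argument alone. My key idea is to absorb it pointwise into the superquadratic nonlinearity. Splitting $G = \tfrac12 G + \tfrac12 G$, I observe that whenever $|V(x)|\leq c_0/2$ one has $\tfrac12 G(s) + V(x)s^2 \geq \tfrac{c_0}{2}s^2 - \tfrac{c_0}{2}s^2 = 0$ for all $s\in(-1,\infty)$; and since $V\to0$ at infinity, the set $\{|V|>c_0/2\}$ is bounded, hence of finite volume, while on it $\tfrac12 G(s)+V(x)s^2$ is bounded below uniformly in $s$ because $\tfrac12 G$ outgrows $s^2$. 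Therefore $\int_M\big(\tfrac12 G(u)+Vu^2\big)\dv_g \geq -C$. The leftover half satisfies $\tfrac12 G(u)\geq\tfrac{c_0}{2}u^2$, so after estimating $2\int_M Vu \geq -\tfrac{c_0}{4}\|u\|_{L^2}^2 - C$ by Young I reach
\begin{align*}
I(u) \geq 4\int_M |\nabla u|^2\dv_g + \tfrac{c_0}{4}\int_M u^2\dv_g - C \geq C_1\|u\|_{H^1}^2 - C_2,
\end{align*}
with $C_1>0$; notably this requires no spectral information about $g$.

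Finally, the two bounds show that $\overline{\mathcal{W}}_{\rm{AH},\hg}(g,\cdot)$ is finite on $H^1$ and, together with continuity of $u\mapsto\int_M G(u)\dv_g$ (from the growth bound via the Sobolev embedding) and of the quadratic and linear terms, that it extends continuously to $H^1(M)$. Density of $C^\infty_c(M)$ then gives $\mu_{\rm{AH},\hg}(g)=\inf_{u\in H^1(M)}\overline{\mathcal{W}}_{\rm{AH},\hg}(g,u)$, while the lower bound yields $\overline{\mathcal{W}}_{\rm{AH},\hg}(g,u)\geq S_{\hg}(g)-C_2$, so $\mu_{\rm{AH},\hg}(g)>-\infty$.
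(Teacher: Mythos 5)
Your upper bound is fine and is essentially the paper's own argument: the growth estimate $G(x)\leq Cx^2+C_\epsilon|x|^{2+\epsilon}$ holds on all of $\R$, and together with Young's inequality, $V:=\scal_g+n(n-1)\in L^2$, and the embedding $H^1\subset L^{2+\epsilon}$ it yields the stated inequality. The genuine gap is in the lower bound. Your key inequality $G(x)\geq c_0x^2$ is true only on $(-1,\infty)$; on $\R$ it fails, since $G(-2)=0$ (the paper notes that $G$ vanishes exactly at $x=0$ and $x=-2$). The lemma, however, quantifies over \emph{all} $u\in C^\infty_c(M)$ and concludes that the infimum over all of $H^1(M)$ is finite. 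The restriction ``$u+1=e^{-f/2}>0$'' you invoke is not available at this stage: $\overline{\mathcal{W}}_{\rm{AH},\hg}(g,\cdot)$ is defined for arbitrary $u$ by the integral formula, and the minimizing sequences this lemma must control later (Proposition \ref{prop:bdry_value_problem}, Theorem \ref{thm:existence_on_M}) range over all of $H^1_0(\Omega)$ with no pointwise constraint. So your argument proves the estimate only on the subset $\{u>-1\}$, not the lemma as stated.

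The gap is not cosmetic, and it cannot be closed without the spectral input you explicitly advertise avoiding. Take $u\approx-2$ on a large ball, cut off to $0$ across an annulus. Then $G(u)\approx 0$ and $Vu^2+2Vu=V\left((u+1)^2-1\right)\approx 0$, so $\overline{\mathcal{W}}_{\rm{AH},\hg}(g,u)-S_{\hg}(g)\approx 4\int_M|\nabla u|^2\dv_g$, while $\|u\|_{L^2}^2\approx 4\volume(\supp u)$; for such $u$ the claimed bound $C\|u\|_{H^1}^2-C$ is, in substance, the Poincar\'e inequality $\int_M|\nabla u|^2\dv_g\geq\Lambda\int_M u^2\dv_g$. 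That is precisely how the paper argues: by \cite[Proposition~F]{Lee06} the scalar Laplacian of an AH manifold satisfies $\Delta\geq\Lambda>0$, and the paper spends half the gradient term through this spectral gap to get $L^2$-coercivity, needing from $G$ only the two facts that do hold globally on $\R$, namely $G\geq 0$ and $G(x)-Cx^2\geq-C_1$. Your pointwise absorption of $Vu^2$ into $\tfrac12 G(u)$ where $|V|\leq c_0/2$, plus the finite-volume argument on $\{|V|>c_0/2\}$, is a nice device and is correct on $\{u>-1\}$; one could try to reduce to that case via the symmetrization $u\mapsto|u+1|-1$, which leaves the integrand invariant, but this map can strictly decrease $\|u\|_{H^1}$, so it would recover only $\mu_{\rm{AH},\hg}(g)>-\infty$ and not the two-sided $H^1$ estimate — and in any case no such step appears in your write-up.
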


\begin{proof}
Recall that $G(x)=2(n-1) \left( (\log((x+1)^2)-1)(x+1)^2+1 \right)$.
This function satisfies $G(x) \geq 0$ for all $x\in\R$ and $G(x)=0$ if and only if $x=0$ or $x=-2$. Taylor expansion at $x=0$ shows that $G(x)=4(n-1)x^2+O(|x|^3)$ as $x\to 0$, whereas  $G(x)=O(\log(|x|)x^2)$ as $|x|\to\infty$. Summarizing these estimates, we get for any given $\epsilon>0$ a constant $C_{\epsilon}>0$ such that
\begin{align}\label{eq:estimate_G}
G(x)\leq 4(n-1)x^2+C_{\epsilon}|x|^{2+\epsilon}.
\end{align}
Therefore, we have
\begin{align*}
&\overline{\mathcal{W}}_{\rm{AH},\hg}(g,u) - S_{\hg}(g) \\
&\qquad=
\int_M \left( 4|\nabla u|^2+(\scal_g +n(n-1))u^2 +2(\scal_g +n(n-1))u+G(u) \right) \dv_g \\
&\qquad \leq 
\int_M 
\left( C(|\nabla u|^2+u^2)+(\scal_g +n(n-1))^2 + C_{\epsilon} u^{2+\epsilon} \right) \dv_g \\
&\qquad \leq
C_{\epsilon} (1+\left\|u\right\|_{H^1}^{\epsilon}) \left\|u\right\|_{H^1}^2
+\left\| \scal_g +n(n-1)\right\|_{L^2}^2.
\end{align*}
Here, we used \eqref{eq:estimate_G} and the Peter--Paul inequality in the first inequality. In the second one, we used that $L^{2+\epsilon}\subset H^1$ for sufficiently small $\epsilon>0$ and
$\scal_{g}+n(n-1)\in C^{k-2,\alpha}_{\delta}\subset L^2$ since $\delta>\frac{n-2}{2}$. This gives us the upper bound.

Now let us find the lower bound. From \cite[Proposition~F]{Lee06} we know that there is a gap around zero in the essential spectrum of the scalar Laplacian $\Delta$. Since there is also no zero eigenvalue, we get a lower bound $\Delta \geq \Lambda > 0$. Therefore, again by the Peter--Paul inequality,
\begin{align*}
&\overline{\mathcal{W}}_{\rm{AH},\hg}(g,u) - S_{\hg}(g)\\
&\qquad=
\int_M \left( 4|\nabla u|^2+(\scal_g +n(n-1))u^2 + 2(\scal_g +n(n-1))u+G(u) \right) \dv_g \\
&\qquad\geq
\int_M \left( 2|\nabla u|^2 + 2 \Lambda u^2+(\scal_g +n(n-1))u^2  + G(u) \right) \dv_g \\
&\qquad \qquad 
-\int_M \left( \epsilon u^2+\frac{1}{\epsilon}(\scal_g +n(n-1))^2 \right) \dv_g \\
&\qquad 
=\int_M \left( 2|\nabla u|^2 + \left(2\Lambda + \scal_g +n(n-1) - \epsilon \right) u^2 + G(u)\right) \dv_g \\
&\qquad \qquad 
-\frac{1}{\epsilon} \left\| \scal_g +n(n-1)\right\|_{L^2}^2
\end{align*}
for any $\epsilon>0$. Since $\scal_g +n(n-1)\to 0$ at infinity, the set
\begin{align*}
K=\left\{x\in M \mid  2 \Lambda + \scal_g +n(n-1) - \epsilon \leq 0\right\}
\end{align*}
is compact, provided that we chose $\epsilon>0$ such that $\epsilon < 2\Lambda$. Since $G$ is a nonnegative function and since the scalar curvature is bounded from below, there exists a constant $C>0$ such that
\begin{align*}
\overline{\mathcal{W}}_{\rm{AH},\hg}(g,u) - S_{\hg}(g)
&\geq 2
\int_M|\nabla u|^2\dv_g+\int_K (G(u)-C u^2)\dv_g  \\
&\qquad
-\frac{1}{\epsilon} \left\| \scal_g +n(n-1)\right\|_{L^2}^2
\end{align*}
Since the function $G(x)$ has faster than quadratic growth, 
we have $ G(x) - C x^2 \geq -C_1$ for some constant $C_1 > 0$. Thus,
\begin{align*}
&\overline{\mathcal{W}}_{\rm{AH},\hg}(g,u) - S_{\hg}(g) \\
&\qquad \geq 2
\int_M|\nabla u|^2\dv_g - C_1 \volume(K,g)
-\frac{1}{\epsilon}\left\| \scal_g +n(n-1)\right\|_{L^2}^2\\
&\qquad \geq 
\int_M
\left( |\nabla u|^2 + \Lambda u^2 \right) \dv_g  
- C_1 \volume(K,g)
-\frac{1}{\epsilon}\left\| \scal_g +n(n-1)\right\|_{L^2}^2\\
&\qquad \geq 
C_2 \left\|u\right\|_{H^1}^2 
- C_1 \volume(K,g)
-\frac{1}{\epsilon}\left\| \scal_g +n(n-1)\right\|_{L^2}^2.
\end{align*}
This proves the desired lower bound.
\end{proof}

\begin{prop} \label{prop:EL_equation_entropy}
The Euler--Lagrange equation of the functional $\mathcal{W}_{\rm{AH},\hg}(g,f)$ is
\begin{align} \label{eq:EL_equation_entropy}
2\Delta f+|\nabla f|^2-\scal_g-n(n-1)+2(n-1)f=0
\end{align}
\end{prop}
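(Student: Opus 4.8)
The plan is to compute the first variation of $\mathcal{W}_{\rm{AH},\hg}(g,\cdot)$ directly and read off the Euler--Lagrange equation. The crucial simplification is that the variation is taken over $f\in C^\infty_c(M)$ and that, in the expression for $\mathcal{W}_{\rm{AH},\hg}(g,f)$ from Lemma \ref{lem_W_entropy}, the two terms $2(n-1)\int_{\hat{B}_R}\dv_{\hat{g}}$ and $m_{{\rm ADM},\widehat{g}}(g,R)$ are independent of $f$. Hence they drop out of any variation, and since the variation field $\phi\in C^\infty_c(M)$ has compact support, the limit $R\to\infty$ becomes irrelevant: for $R$ large the first variation reduces to a single integral over a compact set, with no boundary contribution at infinity. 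This reduction is exactly what makes the formal Euler--Lagrange computation rigorous despite $\mathcal{W}_{\rm{AH},\hg}$ being defined through a renormalizing limit.

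Concretely, I would set $f_t=f+t\phi$ and differentiate the $f$-dependent integrand $I(f)=A\,e^{-f}$ at $t=0$, where $A:=|\nabla f|^2+\scal_g-2(n-1)f+(n-2)(n-1)$. Using $\frac{d}{dt}A=2\langle\nabla f,\nabla\phi\rangle-2(n-1)\phi$ and $\frac{d}{dt}e^{-f_t}=-\phi e^{-f}$, one obtains
\[
D_f\mathcal{W}[\phi]=\int_M\Big[\big(2\langle\nabla f,\nabla\phi\rangle-2(n-1)\phi\big)e^{-f}-A\phi e^{-f}\Big]\dv_g.
\]

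The only nonroutine step is integrating by parts the gradient term. Using the sign convention $\Delta=-\mathrm{div}\circ\nabla$ fixed in Section \ref{Sec-Notation}, we have $\mathrm{div}(e^{-f}\nabla f)=-e^{-f}(\Delta f+|\nabla f|^2)$, so that $\int_M 2\langle\nabla f,\nabla\phi\rangle e^{-f}\dv_g=\int_M 2\phi\,e^{-f}(\Delta f+|\nabla f|^2)\dv_g$; the absence of boundary terms is guaranteed by the compact support of $\phi$. Substituting this back and collecting all terms leaves $D_f\mathcal{W}[\phi]=\int_M\phi\,e^{-f}\,\mathcal{E}(f)\dv_g$ with
\[
\mathcal{E}(f)=2\Delta f+|\nabla f|^2-\scal_g+2(n-1)f-2(n-1)-(n-2)(n-1).
\]

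Finally I would simplify the constant via $-2(n-1)-(n-2)(n-1)=-(n-1)\big(2+(n-2)\big)=-n(n-1)$, which gives $\mathcal{E}(f)=2\Delta f+|\nabla f|^2-\scal_g-n(n-1)+2(n-1)f$. Since $e^{-f}>0$ and $\phi\in C^\infty_c(M)$ is arbitrary, the vanishing of the first variation is equivalent to $\mathcal{E}(f)=0$, which is exactly \eqref{eq:EL_equation_entropy}. I do not expect any genuine obstacle beyond careful bookkeeping of the constants and the sign convention for $\Delta$; the one conceptual point worth emphasizing is the reduction, noted above, to a compactly supported integral.
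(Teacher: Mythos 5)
Your proposal is correct and follows essentially the same route as the paper: a direct first-variation computation of $\mathcal{W}_{\rm{AH},\hg}(g,\cdot)$ against a compactly supported test function, followed by integration by parts of the term $2\langle\nabla f,\nabla\phi\rangle e^{-f}$ using $\mathrm{div}(e^{-f}\nabla f)=-e^{-f}(\Delta f+|\nabla f|^2)$. The only cosmetic difference is that you work with the first form of the functional from Lemma \ref{lem_W_entropy} while the paper uses the second (equivalent) form, and you make explicit the reduction to a compact integral that the paper leaves implicit.
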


\begin{proof}
For $v\in C^{\infty}_c(M)$ we get
\begin{align*}
&\frac{d}{dt} \mathcal{W}_{\rm{AH},\hg}(g,f+tv)|_{t=0} \\
&\qquad=
\frac{d}{dt}\int_M 
\left( (|\nabla f|^2+\scal_g +n(n-1)) e^{-f} -2(n-1)( (f+1)e^{-f}-1) \right)
\dv |_{t=0} \\
&\qquad=
\int_M
\left( 
2\langle \nabla f,\nabla v\rangle  -v \left( |\nabla f|^2 + \scal_g + n(n-1) \right)
\right)
e^{-f}\\
&\qquad\qquad 
-2(n-1) \int_M (ve^{-f}-(f+1)ve^{-f})\dv\\
&\qquad=
\int_M \left( 2\Delta f+|\nabla f|^2- \scal_g-n(n-1)+2(n-1)f \right) ve^{-f}\dv,
\end{align*}
where we used integration by parts in the last equality.
\end{proof}

Next, we are going to discuss existence and uniqueness of solutions of the equation \eqref{eq:EL_equation_entropy} with suitable conditions at infinity.
We begin by proving uniqueness of solutions.

\begin{lem} \label{lem:uniqueness_EL_equation}
There exists at most one solution $f$ of \eqref{eq:EL_equation_entropy} such that $f\in C^{k,\alpha}(M)$ and $f\to 0$ at infinity.
Moreover, for every bounded domain $\Omega$ with smooth boundary, there is at most one solution of \eqref{eq:EL_equation_entropy} such that 
$f\in C^{k,\alpha}(\Omega)\cap C^0(\overline{\Omega})$  
and $f|_{\partial \Omega} = 0$.
\end{lem}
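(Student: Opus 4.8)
The plan is to reduce both statements to the maximum principle, exploiting the favourable sign of the zeroth-order term in \eqref{eq:EL_equation_entropy}. Suppose $f_1$ and $f_2$ both solve \eqref{eq:EL_equation_entropy} and set $w = f_1 - f_2$. Subtracting the two equations and using $|\nabla f_1|^2 - |\nabla f_2|^2 = \langle \nabla(f_1 + f_2), \nabla w\rangle$, the terms $\scal_g + n(n-1)$ that do not involve the $f_i$ cancel, leaving the linear homogeneous equation
\begin{align*}
2\Delta w + \langle \nabla(f_1 + f_2), \nabla w\rangle + 2(n-1) w = 0.
\end{align*}
With the sign convention $\Delta = -\mathrm{div}\circ d$, this is a linear elliptic operator $L$ whose principal part is $2\Delta$, whose first-order coefficient $\nabla(f_1+f_2)$ is a fixed continuous vector field, and whose zeroth-order coefficient is the \emph{positive} constant $2(n-1)$. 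In the analyst's convention this zeroth-order coefficient reads $c = -2(n-1) \leq 0$, which is exactly the sign hypothesis needed for the maximum principle.

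For the bounded-domain statement, $w \in C^{k,\alpha}(\Omega) \cap C^0(\overline{\Omega})$ solves $Lw = 0$ on $\Omega$ with $w|_{\partial \Omega} = 0$. Since the zeroth-order term has the right sign, the weak maximum principle yields $\sup_{\Omega} w \leq \sup_{\partial \Omega} w^+ = 0$. By linearity $-w$ also solves $L(-w) = 0$, so the same argument gives $w \geq 0$ on $\Omega$. Hence $w \equiv 0$ and the two Dirichlet solutions coincide.

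For the global statement on the complete boundaryless manifold $M$, I would argue pointwise. Both $f_i \to 0$ at infinity, so $w \to 0$ at infinity, and therefore each superlevel set $\{|w| \geq \epsilon\}$ is compact. Suppose for contradiction that $\sup_M w = m > 0$; then $m$ is attained at some point $x_0$ lying in a compact set $\overline{B_R}$, and since $M$ has no boundary $x_0$ is an interior point, where $\nabla w = 0$ and $\nabla^2 w \leq 0$, so $\Delta w(x_0) = -\trace \nabla^2 w(x_0) \geq 0$. The equation then forces $2(n-1) w(x_0) = -2\Delta w(x_0) \leq 0$, contradicting $w(x_0) = m > 0$. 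Applying the same reasoning to $-w$ rules out $\inf_M w < 0$, whence $w \equiv 0$.

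The argument is essentially routine, so I do not expect a genuine obstacle; the only points demanding care are the sign bookkeeping under the convention $\Delta = -\mathrm{div}\circ d$, which is what makes the zeroth-order term enter with the sign favourable to the maximum principle, and, in the noncompact case, the use of the decay $f_i \to 0$ at infinity, which is precisely what guarantees that a positive supremum (or negative infimum) is attained at an interior point and thus that the pointwise maximum principle applies directly.
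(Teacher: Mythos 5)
Your proposal is correct and follows essentially the same route as the paper: subtract the two equations to get the linear homogeneous equation $2\Delta w + \langle \nabla(f_1+f_2),\nabla w\rangle + 2(n-1)w = 0$ for the difference, then use the sign of the zeroth-order term at an interior maximum or minimum (guaranteed to exist by the decay at infinity, respectively by the vanishing boundary data) to force $w \equiv 0$. The paper runs the pointwise maximum-principle argument explicitly for the noncompact case and notes the Dirichlet case is analogous, whereas you invoke the weak maximum principle for the Dirichlet case; this is a cosmetic difference, since with the strictly negative zeroth-order coefficient the same pointwise contradiction at an interior extremum is all that is needed in both cases.
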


\begin{proof}
We consider the case where $f\to 0$ at infinity. The other case is treated similarly.
Suppose $f_1,f_2$ are solutions of \eqref{eq:EL_equation_entropy} with $f_1,f_2\to0$ at infinity. The difference $f_0=f_1-f_2$ satisfies
\begin{align}\label{eq:EL_equation_entropy_2}
2\Delta f_0+\langle \nabla(f_1+f_2), \nabla f_0 \rangle + 2(n-1)f_0=0.
\end{align}
We have $f_0\to 0$ at infinity, so it either vanishes identically or it has a  maximum or a minimum in the interior.
If $f_0$ attains a maximum at an interior point $p$, we have $f_0(p)>0$, 
$\nabla f_0(p)=0$ and $\Delta f_0(p)\geq0$ which contradicts \eqref{eq:EL_equation_entropy_2}. The argument for the minimum is analogous. Therefore, $f_0=0$, which proves uniqueness.
\end{proof}
\noindent For a bounded domain $\Omega\subset M$, we define
\begin{align*}
\overline{\mathcal{W}}_{\Omega}(g,u) &=\int_{\Omega} 
\left( 4|\nabla u|^2+(\scal_g +n(n-1))u^2\right) \dv_g\\
&\qquad+2\int_{\Omega}\left( (\scal_g +n(n-1))u+G(u) \right) \dv_g.
\end{align*}
with associated localized entropy
\begin{align*}
\mu_{\Omega}(g)=\inf_{u\in C^{\infty}_c(\Omega)}\overline{\mathcal{W}}_{\Omega}(g,u).
\end{align*}
Note that
\begin{align}\label{eq:local_lagrangian}
\overline{\mathcal{W}}_{\Omega}(g,u)=
\overline{\mathcal{W}}_{\rm{AH},\hg}(g,u)- S_{\hg}(g)
\end{align}
for all $u\in C^{\infty}_c(\Omega)\subset C^{\infty}_c(M)$.
\begin{prop}\label{prop:bdry_value_problem}
Let $\Omega\subset M$ be a bounded domain with smooth boundary. Then there exists a function $u$ that realizes the infimum of $\overline{\mathcal{W}}_{\Omega}(g,\cdot)$. The function $f$ such that
\begin{align*}
e^{-f}=(u+1)^2
\end{align*} 
is then the unique solution of \eqref{eq:EL_equation_entropy} such that $f|_{\partial \Omega}= 0$.
\end{prop}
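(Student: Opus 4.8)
The plan is to produce a minimizer by the direct method of the calculus of variations, to reduce to the case $\omega:=u+1\geq 0$, and then to identify the minimizer with the desired solution of \eqref{eq:EL_equation_entropy} through the substitution $\omega=e^{-f/2}$.

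For existence, I would work on $H^1_0(\Omega)$. By \eqref{eq:local_lagrangian} and the lower bound of Lemma \ref{lem_H1-comparison} we have the coercivity estimate $\overline{\mathcal{W}}_{\Omega}(g,u)\geq C\|u\|_{H^1}^2-C$ for $u\in C^\infty_c(\Omega)$, which extends to $H^1_0(\Omega)$ by density. A minimizing sequence is therefore bounded in $H^1_0(\Omega)$; passing to a subsequence, it converges weakly in $H^1_0$, strongly in $L^2$ (Rellich) and pointwise almost everywhere to some $u$. The Dirichlet term is weakly lower semicontinuous, the terms carrying the coefficient $\scal_g+n(n-1)$ (which is bounded on the compact set $\overline{\Omega}$) pass to the limit by $L^2$-convergence, and since $G\geq 0$ the term $\int_\Omega G(u)\dv_g$ is lower semicontinuous by Fatou's lemma. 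Hence $u$ minimizes $\overline{\mathcal{W}}_{\Omega}(g,\cdot)$ over $H^1_0(\Omega)$. As $G$ has subcritical growth (below $2^*$), the functional is continuous on $H^1_0(\Omega)$, so by density the infimum over $H^1_0(\Omega)$ equals $\mu_\Omega(g)=\inf_{C^\infty_c(\Omega)}\overline{\mathcal{W}}_{\Omega}(g,\cdot)$, and $u$ realizes it.

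Next I would derive the Euler--Lagrange equation. Writing everything in terms of $\omega=u+1$, the functional depends on $\omega$ only through $\omega^2$, $|\nabla\omega|^2$ and $\log(\omega^2)$, so replacing $\omega$ by $|\omega|$ leaves its value unchanged; we may thus assume $\omega\geq 0$. Since the integrand $\omega^2\log(\omega^2)$ is $C^1$ in $\omega$ (its derivative $2\omega\log(\omega^2)+2\omega$ vanishes at $\omega=0$), the functional is $C^1$ on $H^1_0(\Omega)$ and the minimizer is a weak solution of its Euler--Lagrange equation, which in terms of $\omega$ reads
\begin{equation*}
4\Delta\omega+(\scal_g+n(n-1))\omega+4(n-1)\omega\log\omega=0 .
\end{equation*}
The main obstacle is the strict positivity $\omega>0$ needed to define $f=-2\log\omega$, because the nonlinearity $\omega\log\omega$ degenerates at $\omega=0$. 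I would first use De Giorgi--Nash--Moser theory to bound $\omega$ in $L^\infty$ and then in $C^{0,\alpha}$, which is possible since $\omega\log\omega$ grows more slowly than any power. Positivity then follows from the sign of the nonlinearity: if $\omega$ vanished at an interior point $p$, then on a small ball where $\omega$ is sufficiently small one has $-(\scal_g+n(n-1))-4(n-1)\log\omega\geq 0$ (as $\log\omega\to-\infty$ and $\scal_g+n(n-1)$ is bounded), so the equation shows that $\omega$ is weakly superharmonic there; the strong minimum principle forces $\omega\equiv 0$ on the ball, and since $\Omega$ is connected with $\omega=1$ on $\partial\Omega$ this is a contradiction. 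Thus $\omega>0$ in $\Omega$, and with $\log\omega$ now smooth a Schauder bootstrap (limited by $\scal_g\in C^{k-2,\alpha}_\delta$) upgrades $\omega$ to $C^{k,\alpha}(\Omega)$.

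Finally I would set $f=-2\log\omega\in C^{k,\alpha}(\Omega)\cap C^0(\overline{\Omega})$, so that $e^{-f}=\omega^2=(u+1)^2$ and, since $\omega=1$ on $\partial\Omega$, $f|_{\partial\Omega}=0$. Substituting $\omega=e^{-f/2}$, for which $\Delta\omega=-\tfrac14\omega|\nabla f|^2-\tfrac12\omega\Delta f$ and $\log\omega=-\tfrac12 f$, into the $\omega$-equation above and dividing by $-\omega$ gives exactly
\begin{equation*}
2\Delta f+|\nabla f|^2-\scal_g-n(n-1)+2(n-1)f=0 ,
\end{equation*}
which is \eqref{eq:EL_equation_entropy}. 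That this $f$ is the unique solution with $f|_{\partial\Omega}=0$ is then immediate from Lemma \ref{lem:uniqueness_EL_equation}.
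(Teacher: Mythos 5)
Your proposal is correct and follows essentially the same route as the paper: the direct method in $H^1_0(\Omega)$ using the coercivity from Lemma \ref{lem_H1-comparison} via \eqref{eq:local_lagrangian}, weak $H^1$/strong $L^p$ compactness and lower semicontinuity to produce the minimizer, then the Euler--Lagrange equation, elliptic regularity, and uniqueness from Lemma \ref{lem:uniqueness_EL_equation}. The only difference is one of detail: the paper compresses everything after existence into ``standard arguments involving elliptic regularity and Sobolev embedding,'' whereas you explicitly carry out the crucial such step --- the strict positivity of $\omega=u+1$ via the strong minimum principle --- which is exactly what legitimizes the substitution $e^{-f}=(u+1)^2$ and the passage to \eqref{eq:EL_equation_entropy}.
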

\begin{proof}
 From \eqref{eq:local_lagrangian} and Lemma \ref{lem_H1-comparison}, we get
\begin{align*}
C(\left\|u\right\|_{H^1}^2-1)
\leq \overline{\mathcal{W}}_{\Omega}(g,u) \leq 
C \left( (1+\left\|u\right\|_{H^1}^{\epsilon}) \left\|u\right\|_{H^1}^2+1\right).
\end{align*}
Thus,
\begin{align*}
\mu_{\Omega}(g)=\inf_{u\in H^1_0(\Omega)}\overline{\mathcal{W}}_{\Omega}(g,u)>-\infty.
\end{align*}
 Let $u_i$ be a minimizing sequence for $\mu_{\Omega}(g)$. The sequence is obviously bounded in $H^1$, hence there exists a subsequence, again denoted by $u_i$, which converges weakly in $H^1$ and strongly in $L^p$ for some fixed $p<\frac{2n}{n-2}$ to a function $u\in H^1_0(\Omega)$. The functional $u\mapsto \overline{\mathcal{W}}_{\Omega}(g,u)$ is lower semicontinuous in $H^1_0(\Omega)$. Therefore,
\begin{align*}
\mu_{\Omega}(g)\leq \overline{\mathcal{W}}_{\Omega}(g,u)&\leq\liminf_{i\to\infty}
\overline{\mathcal{W}}_{\Omega}(g,u_{i})\leq \mu_{\Omega}(g).
\end{align*}
Consequently, $u\in H^1_0(\Omega)$ is the desired minimizer. By variational calculus, the function $u$ is a weak solution of \eqref{eq:EL_equation_entropy}. Standard arguments involving elliptic regularity and Sobolev embedding yield $u\in C^{k,\alpha}$ and uniqueness holds due to Lemma \ref{lem:uniqueness_EL_equation}.
\end{proof}

\begin{lem} \label{lem:uniform_bounds}
Let $\Omega\subset M$ be a bounded domain with smooth boundary and  $f\in C^{k,\alpha}(\Omega)\cap C^0(\overline{\Omega})$ be a solution of \eqref{eq:EL_equation_entropy} with $f|_{\partial \Omega}= 0$.
Then
\begin{align*}
\frac{1}{2(n-1)}\inf_M(\scal_g +n(n-1)) 
\leq f \leq 
\frac{1}{2(n-1)}\sup_M(\scal_g +n(n-1)).
\end{align*}
\end{lem}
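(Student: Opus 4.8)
The plan is to prove both inequalities by a direct application of the weak maximum principle to the Euler--Lagrange equation \eqref{eq:EL_equation_entropy}, rewritten in the form
\begin{align*}
2\Delta f + |\nabla f|^2 + 2(n-1)f = \scal_g + n(n-1).
\end{align*}
First I would record the relevant sign convention: since $\Delta = -\trace\circ\nabla^2$, at an interior maximum of $f$ one has $\nabla f = 0$ and $\Delta f \ge 0$, while at an interior minimum $\nabla f = 0$ and $\Delta f \le 0$. I would also observe that $\scal_g + n(n-1)$ tends to $0$ at infinity and lies in $C^{k-2,\alpha}_\delta$, hence is bounded, so that $\inf_M(\scal_g+n(n-1))$ and $\sup_M(\scal_g+n(n-1))$ are finite and satisfy $\inf_M(\scal_g+n(n-1)) \le 0 \le \sup_M(\scal_g+n(n-1))$. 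Note that since $k\ge 2$ the function $f$ is $C^2$ on the interior $\Omega$, so the Hessian is available at interior critical points.

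For the upper bound, let $p\in\overline{\Omega}$ be a point at which $f$ attains its maximum over $\overline{\Omega}$. If $p\in\partial\Omega$, then $f(p) = 0 \le \frac{1}{2(n-1)}\sup_M(\scal_g+n(n-1))$, because the right-hand side is nonnegative by the previous observation. If instead $p$ is an interior point, then $\nabla f(p) = 0$ and $\Delta f(p)\ge 0$, so evaluating the equation at $p$ yields
\begin{align*}
2(n-1)f(p) = (\scal_g+n(n-1))(p) - 2\Delta f(p) - |\nabla f(p)|^2 \le (\scal_g+n(n-1))(p),
\end{align*}
whence $f(p) \le \frac{1}{2(n-1)}\sup_M(\scal_g+n(n-1))$. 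In either case $\max_{\overline{\Omega}} f$ obeys the claimed upper bound.

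The lower bound follows by the symmetric argument applied at a point $q$ realizing the minimum of $f$ over $\overline{\Omega}$: on the boundary one uses $f(q)=0 \ge \frac{1}{2(n-1)}\inf_M(\scal_g+n(n-1))$ since the right-hand side is nonpositive, and at an interior minimizer one uses $\Delta f(q)\le 0$ to get $2(n-1)f(q) = (\scal_g+n(n-1))(q) - 2\Delta f(q) - |\nabla f(q)|^2 \ge (\scal_g+n(n-1))(q)$. Since the whole argument is a clean application of the maximum principle, I do not anticipate a genuine obstacle; the only points requiring care are the correct bookkeeping of the sign convention for $\Delta$ and the observation that the boundary value $0$ already lies in the asserted interval, which is precisely what reconciles the boundary case with the stated bound.
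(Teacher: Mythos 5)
Your proof is correct and takes essentially the same approach as the paper: evaluate the Euler--Lagrange equation at an extremum point, using the sign convention $\Delta=-\trace\circ\nabla^2$ so that $\Delta f\geq 0$ at a maximum and $\Delta f\leq 0$ at a minimum, and read off the bound. You are in fact slightly more careful than the paper's own proof, which tacitly takes the extremum to be interior; your observation that $\scal_g+n(n-1)\to 0$ at infinity forces $\inf_M(\scal_g+n(n-1))\leq 0\leq \sup_M(\scal_g+n(n-1))$ cleanly disposes of the case where the extremum sits on $\partial\Omega$ with $f=0$.
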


\begin{proof}
Let $x\in\Omega$ be the point where $f$ attains its maximum. It follows that $\nabla f(x)=0$ and $\Delta f(x)\geq0$. From \eqref{eq:EL_equation_entropy} we get
\begin{align*}
0 &=
2\Delta f(x) +  |\nabla f|^2-\scal(x)-n(n-1)+2(n-1)f(x) \\
&\geq 
2(n-1)f(x)-\scal(x)-n(n-1),
\end{align*}
which implies the upper bound. The argument for the lower bound is similar.
\end{proof}

We can now prove existence on the whole manifold.

\begin{thm} \label{thm:existence_on_M}
There are unique bounded functions $u_g,f_g\in C^{k,\alpha}(M)$ with $e^{-f_g}=(u_g+1)^2$, such that
\begin{align*}
\mu_{\rm{AH},\hg}(g)=\mathcal{W}_{\rm{AH},\hg}(g,f_g)=
\overline{\mathcal{W}}_{\rm{AH},\hg}(g,u_g).
\end{align*}
\end{thm}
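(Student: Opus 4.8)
The plan is to construct the minimizer on $M$ as a limit of the Dirichlet minimizers on an exhausting sequence of bounded domains, exploiting that the associated solutions of \eqref{eq:EL_equation_entropy} satisfy bounds that are uniform in the domain. Concretely, I would fix an exhaustion $\Omega_1 \subset \Omega_2 \subset \cdots$ of $M$ by bounded domains with smooth boundary with $\bigcup_i \Omega_i = M$. By Proposition~\ref{prop:bdry_value_problem}, for each $i$ there is a minimizer $u_i \in H^1_0(\Omega_i)$ of $\overline{\mathcal{W}}_{\Omega_i}(g,\cdot)$, and the function $f_i$ with $e^{-f_i} = (u_i+1)^2$ solves \eqref{eq:EL_equation_entropy} with $f_i|_{\partial\Omega_i} = 0$. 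The decisive input is Lemma~\ref{lem:uniform_bounds}: since its bounds involve only $\inf_M$ and $\sup_M$ of $\scal_g + n(n-1)$, the functions $f_i$ are bounded in $L^\infty$ uniformly in $i$, and hence so are the $u_i = e^{-f_i/2}-1$.

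Given the uniform $L^\infty$ bound, interior elliptic estimates for \eqref{eq:EL_equation_entropy} (whose coefficients are then uniformly controlled on compact sets) yield uniform $C^{k,\alpha}$ bounds on compact subsets. A diagonal argument extracts a subsequence with $f_i \to f_g$ in $C^{k',\alpha'}_{\mathrm{loc}}$ for $k'<k$, where $f_g \in C^{k,\alpha}(M)$ is bounded and solves \eqref{eq:EL_equation_entropy} on all of $M$; setting $u_g = e^{-f_g/2}-1$ gives $e^{-f_g} = (u_g+1)^2$. On the other hand, since $\overline{\mathcal{W}}_{\Omega_i}(g,u_i) = \mu_{\Omega_i}(g)$ is bounded (it decreases to $\mu_{\rm{AH},\hg}(g) - S_{\hg}(g)$ as $\Omega_i \uparrow M$, because $\bigcup_i C^\infty_c(\Omega_i) = C^\infty_c(M)$), the lower bound in Lemma~\ref{lem_H1-comparison} together with \eqref{eq:local_lagrangian} shows that $\|u_i\|_{H^1}$ is uniformly bounded. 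Passing to a further subsequence, $u_i \rightharpoonup u_g$ weakly in $H^1(M)$, the weak limit agreeing with the local limit, so $u_g \in H^1(M)$.

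It remains to check that $u_g$ realizes the infimum and to deduce uniqueness. For the former, I would verify that $\overline{\mathcal{W}}_{\rm{AH},\hg}(g,\cdot)$ is lower semicontinuous along this sequence: the Dirichlet term is weakly lower semicontinuous, $\int_M G(u_i)$ is controlled by Fatou using $G \geq 0$, and the potential terms $\int_M (\scal_g + n(n-1))(u^2 + 2u)\dv_g$ in fact converge, because $\scal_g + n(n-1) \in L^2$ vanishes at infinity while the $u_i$ are bounded in $L^\infty \cap H^1$, so the contribution of a neighborhood of infinity is uniformly small and on compact sets one has strong $L^2$ convergence. This gives $\overline{\mathcal{W}}_{\rm{AH},\hg}(g,u_g) \leq \liminf_i \overline{\mathcal{W}}_{\rm{AH},\hg}(g,u_i) = \mu_{\rm{AH},\hg}(g)$, while the reverse inequality holds by the $H^1$-characterization of the infimum in Lemma~\ref{lem_H1-comparison}; hence $u_g$, equivalently $f_g$, is a minimizer.

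For uniqueness, I would show that any minimizer decays at infinity and then invoke Lemma~\ref{lem:uniqueness_EL_equation}. A minimizer lies in $H^1(M) \cap L^\infty$ and solves \eqref{eq:EL_equation_entropy}, so elliptic regularity gives regularity $C^{k,\alpha}(M)$; since $u_g \in L^2(M)$, the local boundedness estimate (Moser iteration) for the elliptic equation satisfied by $u_g$, applied on unit balls centred at points going to infinity, yields $\sup_{B(x,1/2)} |u_g| \leq C\|u_g\|_{L^2(B(x,1))} \to 0$, so $f_g \to 0$ at infinity. As the same reasoning applies to every minimizer, Lemma~\ref{lem:uniqueness_EL_equation} forces them to coincide. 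I expect the main obstacle to be precisely the noncompactness: ensuring no energy escapes to infinity in the lower-semicontinuity step, and proving the pointwise decay $f_g \to 0$ required for uniqueness. Both are tamed by combining the uniform $L^\infty$ bound from Lemma~\ref{lem:uniform_bounds}, the uniform $H^1$ bound, and the integrability and decay of $\scal_g + n(n-1)$.
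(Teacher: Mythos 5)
Your proposal follows essentially the same route as the paper's proof: exhaustion by bounded domains with smooth boundary, the Dirichlet minimizers and Euler--Lagrange solutions from Proposition~\ref{prop:bdry_value_problem}, the uniform $L^\infty$ bounds of Lemma~\ref{lem:uniform_bounds}, local elliptic compactness to extract a limit solving \eqref{eq:EL_equation_entropy} on all of $M$, and passage to the limit in the functional using domain monotonicity of $\mu_{\Omega_i}$ together with Lemma~\ref{lem_H1-comparison}. The differences are only technical refinements within that strategy: you justify the limit passage via weak lower semicontinuity, Fatou, and uniform tail estimates (using the decay and square-integrability of $\scal_g+n(n-1)$) where the paper invokes a dominated convergence argument, and you make the uniqueness claim explicit (decay of any minimizer via Moser iteration, then Lemma~\ref{lem:uniqueness_EL_equation}), a step the paper leaves implicit.
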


In other words, there exists unique functions realizing the infimum in the definition of the entropy.
\begin{proof}
Let $\Omega_i$ be a sequence of bounded domains with smooth boundaries such that $\Omega_i\subset \Omega_{i+1}$ and $\cup_{i=1}^{\infty} \Omega_i=M$. Let the sequence $f_i$ be the solutions of \eqref{eq:EL_equation_entropy} such that $f_i\in C^{k,\alpha}(\Omega_i)\cap C^0(\overline{\Omega}_i)$ and $f_i|_{\partial \Omega_i} = 0$. By Lemma \ref{lem:uniform_bounds}, we have uniform bounds
\begin{align*}
\frac{1}{2(n-1)}\inf_M(\scal_g +n(n-1))
\leq f_i \leq 
\frac{1}{2(n-1)}\sup_M(\scal_g +n(n-1)).
\end{align*}
Thus, there exists a subsequence, which we also denote by $f_i$, which converges locally uniformly in $C^{k,\alpha}$ to a function $f_g$ defined on the whole manifold, which again solves \eqref{eq:EL_equation_entropy}. Note that $f$ necessarily satisfies the same bounds.

It remains to show that $f_g$ is the minimizer of ${\mathcal{W}}_{\rm{AH},\hg}(g,\cdot)$, or equivalently, the associated function $u_g$ is the minimizer of $\overline{\mathcal{W}}_{\rm{AH},\hg}(g,\cdot)$. By domain monotononicity we have
\begin{align*}
\mu_{\Omega_i}(g)\geq \mu_{\Omega_{i+1}}(g)
\end{align*}
and since $\cup_{i=1}^{\infty} \Omega_i=M$ we get from \eqref{eq:local_lagrangian} that
\begin{align*}
\lim_{i\to \infty}\mu_{\Omega_i}(g) + S_{\hg}(g) = \mu_{\rm{AH},\hg}(g).
\end{align*}
In particular, we have an upper bound on $\mu_{\Omega_i}(g)$.
Let $u_i$ be the minimizer of $\overline{\mathcal{W}}_{\Omega_i}(g,\cdot)$.
Then,
\begin{align*}
&\mu_{\rm{AH},\hg}(g) - S_{\hg}(g) \\
&\qquad=\lim_{i\to \infty}\mu_{\Omega_i}(g)
=\lim_{i\to \infty}{\mathcal{W}}_{\Omega}(g,f_i)
=\lim_{i\to \infty}\overline{\mathcal{W}}_{\Omega}(g,u_i)\\
&\qquad=
\lim_{i\to \infty}\int_M \left( 4|\nabla u_i|^2+(\scal_g +n(n-1))u_i^2
+2(\scal_g +n(n-1))u_i+G(u_i)\right) \dv_g.
\end{align*}
As in the proof of Lemma \ref{lem:uniform_bounds}, we can estimate
\begin{align*}
&\int_M 
\left| 4|\nabla u_i|^2+(\scal_g +n(n-1))u_i^2 + 2(\scal_g +n(n-1))u_i+G(u_i) \right|
\dv_g\\
&\qquad\leq
\int_M \left( 4|\nabla u_i|^2+|\scal_g +n(n-1)|u_i^2
+2|\scal_g +n(n-1)| \cdot |u_i|+|G(u_i)| \right)\dv_g\\
&\qquad\leq C \left( 1+(\left\|u_i\right\|_{H^1}^{2})^{\epsilon/2} \right) 
\left\|u_i\right\|_{H^1}^2+C\\
&\qquad\leq C \left(1 + C(\overline{\mathcal{W}}_{\Omega_i}(g,u_i)+1)^{\epsilon/2}\right)
(\overline{\mathcal{W}}_{\Omega_i}(g,u_i)+1)\\
&\qquad\leq 
C\left( 1+C(\mu_{\Omega_i}(g)+1)^{\epsilon/2}\right) (\mu_{\Omega_i}(g)+1) 
\leq C.
\end{align*}
Note that from the third to fourth line, we used the lower bound from Lemma~\ref{lem:uniform_bounds}. Note also that the  constants involved are independent of $\Omega_i$. Thus, since $u_i\to u_g$ locally uniformly in all derivatives, we can apply the dominated convergence theorem and conclude
\begin{align*}
\mu_{\rm{AH},\hg}(g)
&=
\lim_{i\to \infty}\int_M 
\big(4|\nabla u_i|^2+(\scal_g +n(n-1))u_i^2\\
&\qquad+2(\scal_g +n(n-1))u_i+G(u_i) \big)
\dv_g+ S_{\hg}(g)\\
&=
\int_M 
\big(4|\nabla u_g|^2+(\scal_g +n(n-1))u_g^2 \\
&\qquad +2(\scal_g +n(n-1))u_g+G(u_g) \big) \dv_g
+ S_{\hg}(g)\\
&=\overline{\mathcal{W}}_{\rm{AH},\hg}(g,u_g)\\
&=\mathcal{W}_{\rm{AH},\hg}(g,f_g),
\end{align*}
which is the desired result.
\end{proof}

Next, we consider the asymptotics of the minimizing function $f_g$.
\begin{lem}\label{lem:super_sub_solns}
For each constant $C>0$ there exist a positive function $f_+$
satisfying
\begin{align}\label{eq:super_solns}
2\Delta f_++|\nabla f_+|^2-\scal_g-n(n-1)+2(n-1)f_+\geq 0
\end{align}
 and a negative function $f_-$ such that
\begin{align}\label{eq:sub_solns}
2\Delta f_-+|\nabla f_-|^2-\scal_g-n(n-1)+2(n-1)f_-\leq 0
\end{align}
both defined on the complement $M\setminus B_R$ of a large ball $B_R$, such that $\pm f_{\pm}|_{\partial B_R}>  C$ and $f_{\pm}=O(e^{-2\mu r})$ for some $\mu>0$.
\end{lem}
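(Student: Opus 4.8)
The plan is to produce the barriers explicitly as multiples of the decaying function $e^{-2\mu r}$ and to verify the differential inequalities by computing the leading-order action of the operator
\[
L[f] = 2\Delta f + |\nabla f|^2 - \scal_g - n(n-1) + 2(n-1)f
\]
in the asymptotic region. I would set $f_\pm = \pm A\,\phi$ with $\phi = e^{-2\mu r}$, for constants $A>0$ and $\mu>0$ to be fixed. Then the sign conditions $f_+>0$, $f_-<0$ and the decay $f_\pm = O(e^{-2\mu r})$ are automatic, and the boundary condition $\pm f_\pm|_{\partial B_R} = A e^{-2\mu R} > C$ only requires $A$ to be large relative to $C$ and $R$.

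First I would record the asymptotics of $L$ on $\phi$. Since $|\nabla r|_g \to 1$ and $\mathrm{div}_g(\nabla r) = -\Delta r \to n-1$ as $r\to\infty$ on an AH manifold, a direct computation gives $\Delta\phi = 2\mu\,\phi\,(\mathrm{div}_g(\nabla r) - 2\mu|\nabla r|_g^2) = (2\mu(n-1-2\mu)+o(1))\phi$, while $|\nabla\phi|^2 = 4\mu^2|\nabla r|_g^2\,\phi^2 = O(e^{-4\mu r})$. Hence $2\Delta\phi + 2(n-1)\phi = (c_\mu + o(1))\phi$, where $c_\mu := 2(n-1)+4(n-1)\mu-8\mu^2 > 0$ for $\mu$ small. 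The APE hypothesis gives $\scal_g+n(n-1) = O(e^{-\delta r})$ with $\delta>\tfrac{n-1}{2}$, so fixing $\mu$ small enough that $2\mu<\delta$ makes both the quadratic gradient term and the scalar-curvature term decay strictly faster than the leading term $c_\mu\phi$. For the supersolution this finishes things immediately: $|\nabla f_+|^2\geq 0$ only helps, and for $R$ large $L[f_+]\geq \tfrac12 Ac_\mu\phi - C_1 e^{-\delta r}\geq (\tfrac12 Ac_\mu - C_1)\phi \geq 0$ once $A\geq 2C_1/c_\mu$, where $|\scal_g+n(n-1)|\leq C_1 e^{-\delta r}$.

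The main obstacle is the subsolution, where the quadratic term $|\nabla f_-|^2 = 4\mu^2 A^2|\nabla r|_g^2\,e^{-4\mu r}\geq 0$ now works \emph{against} the required inequality $L[f_-]\leq 0$ and carries a factor $A^2$, which must be large because $A e^{-2\mu R}>C$ forces $A$ large. I would resolve this by keeping the product $t:=A e^{-2\mu R}$ bounded. Pointwise for $r\geq R$ one has $4\mu^2 A^2 e^{-4\mu r}\leq 4\mu^2\,t\,A e^{-2\mu r}$, so choosing $\mu$ small enough that $4\mu^2 t\leq \tfrac14 c_\mu$ (feasible since $c_\mu\to 2(n-1)$ while $\mu^2\to 0$, taking $t=2C$) dominates the gradient term by a quarter of the good term $-\tfrac12 c_\mu A\phi$. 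The scalar-curvature contribution satisfies $C_1 e^{-\delta r}\leq C_1 e^{-(\delta-2\mu)R}\phi\leq \tfrac14 c_\mu A\phi$ once $R$ is large, because $A = t\,e^{2\mu R}\to\infty$ while $e^{-(\delta-2\mu)R}\to 0$. Combining these bounds leaves $L[f_-]\leq \big(-\tfrac12 + \tfrac14 + \tfrac14\big)c_\mu A\phi \leq 0$. Thus the order of the choices is: fix $\mu$ small (so that $2\mu<\delta$, $c_\mu>0$, and $8\mu^2 C\leq\tfrac14 c_\mu$), set $A=2Ce^{2\mu R}$, and then take $R$ large enough for the $o(1)$ asymptotic estimates and the scalar-curvature bound to hold; this delivers both barriers simultaneously, with common decay rate $2\mu$.
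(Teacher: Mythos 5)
Your proposal is correct and follows essentially the same route as the paper: both proofs use radial exponential barriers ($\pm\lambda e^{-\mu r}$ in the paper, $\pm A e^{-2\mu r}$ for you), exploit that the operator acts on such functions with positive leading coefficient $2(n-1)+O(\mu)$, and tame the quadratic gradient term in the subsolution case by taking the decay rate $\mu$ small. The only differences are organizational: the paper reduces to a radial ODE inequality via the Graham--Lee normal form $g=dr^2+e^{2r}\sigma_r$ and chooses $R$, then the amplitude $\lambda$, then $\mu$, whereas you work invariantly with the asymptotics of $\Delta r$ and $|\nabla r|$, fix $\mu$ first, and control the quadratic term by holding $Ae^{-2\mu R}=2C$ fixed as $R\to\infty$.
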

\begin{proof}[Proof of Lemma \ref{lem:super_sub_solns}]
Given a metric $\sigma_0$ on the conformal boundary of $(M,g$), we can choose a boundary defining function $\rho$ such that on a neighborhood $U\subset M$ near $\partial N$, $g$ is of the form
\begin{align*}
g=\rho^{-2}(d\rho^2+\sigma_\rho),
\end{align*}
where $\rho\mapsto \sigma_{\rho}$ is a $C^{k,\alpha}$ family of Riemannian metrics on $\partial N$, see \cite[Theorem 2.11]{GraLee91}.
With $\rho=e^{r}$, we obtain
\begin{align*}
g=dr^2+e^{2r}\sigma_{r}.
\end{align*}
Note that $\partial_rh=-e^ {-r}\partial_{\rho}\sigma$, so that $|\partial_r\sigma|_\sigma=O(e^{-r})$ as $r\to\infty$. The Laplacian of $g$ is
\begin{align*}
\Delta_gf=-\partial^2_{rr}f-(n-1)\partial_rf-\frac{1}{2}(\trace_{\sigma}\partial_r\sigma)\partial_r f+e^{-2r}\Delta_{\sigma_r}f.
\end{align*}
Suppose, without loss of generality, that $U=\left\{x\in M\mid r(x)>R_0\right\}$ for some radius $R_0$ and let $f:U\to \R$ be a function depending only on $r$. Define functions on $M\setminus B_{R_0}$ by $a=\trace_\sigma(\partial_r \sigma)$ and $b=\scal_g+n(n-1)$.
Then, $f$ satisfies \eqref{eq:super_solns} (resp.\ \eqref{eq:sub_solns}) if and only if
\begin{align*}
-2\partial^2_{rr}f-2(n-1)\partial_rf-a \partial_rf+(\partial_rf)^2-b+2(n-1)f\geq 0 \text{ }(\text{resp.}\leq 0).
\end{align*}
With the ansatz $f_+(r)=\lambda e^{-\mu r}$, $f_+$ satisfies \eqref{eq:super_solns} if
\begin{align*}
\lambda \left(-2\mu^2+2\mu(n-1)+2(n-1)\right)e^{-\mu r}\geq b-\lambda\mu ae^{-\mu r}+(\lambda \mu)^2e^{-2\mu r}.
\end{align*}
We know that there exist constants $A,B\geq0$ such that $|a|\leq A e^{-r}$ and $|b|\leq B e^{-\delta r}$. Choose $R\geq R_0$ and $\lambda>\max\left\{C,\left(2(n-1)\right)^{-1}Be^{-\delta R} \right\}$, where $C>0$ is the constant in the statement of the lemma. Now choose $\mu\in (0,\delta]$ so small that
\begin{align*}
\lambda e^{-2\mu R}&> C,\\
\lambda \left(-2\mu^2+2\mu(n-1)+2(n-1)\right)e^{-\mu R}&\geq Be^{-\delta R}+A\lambda\mu e^{-(\mu+1)R}
 +\mu^2\lambda^2e^{-2\mu R}.
\end{align*}
Because $\mu\leq \delta$, we get for all $r\geq R$ that 
\begin{align*}
\lambda \left(-2\mu^2+2\mu(n-1)+2(n-1)\right)e^{-\mu r}&\geq B e^{-\delta r}+A\lambda\mu e^{-(\mu+1) r}+(\lambda\mu)^2e^{-2\mu r}\\
&\geq b-\lambda \mu ae^{-\mu r}+(\lambda\mu)^2e^{-2\mu r}
\end{align*}
for all $r\geq R$. Thus for these choices of $\mu$ and $\lambda$, $f_+$  is a positive function satisfying \eqref{eq:super_solns} on $M\setminus B_{R}$ with $f_+(R)> C$.

Similarly, with the ansatz $f_-(r)=-\lambda e^{-\mu r}$, the function $f_-$ satisfies \eqref{eq:sub_solns} if
\begin{align*}
-\lambda \left(-2\mu^2+2\mu(n-1)+2(n-1)\right)e^{-\mu r}\leq b+A\lambda\mu e^{-\mu r}+(\lambda \mu)^2e^{-2\mu r}.
\end{align*}
Let $\lambda, \mu$ and $R$ be as before. Then,
\begin{align*}
-\lambda  \left(-2\mu^2+2\mu(n-1)+2(n-1)\right)e^{-\mu r}&\leq -B  e^{-\delta r}-AC\mu e^{-(\mu+1) r}\\
&\leq b-\lambda\mu ae^{-\mu r}+(\lambda \mu)^2e^{-2\mu r}
\end{align*}
for all $r\geq R$ and $f_-$  is a negative function satisfying \eqref{eq:sub_solns} on $M\setminus B_{R}$ with $f_-(R)< -C$.
\end{proof}

\begin{lem}\label{lem:minimizer_little_decay}
Let $f$ be the minimizer given by Theorem~\ref{thm:existence_on_M}. Then, $f\in O(e^{-2\mu r})$ for some $\mu>0$.
\end{lem}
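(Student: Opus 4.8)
\emph{Proof proposal.} The plan is to sandwich $f$ between the exponentially decaying super- and sub-solutions produced in Lemma~\ref{lem:super_sub_solns} by a comparison argument. For this to work I first need to know that $f \to 0$ at infinity; once this is available, the maximum principle will pin $f$ between $f_-$ and $f_+$ outside a large ball, and both of these are $O(e^{-2\mu r})$.

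To see that $f \to 0$ at infinity, recall from the proof of Theorem~\ref{thm:existence_on_M} that the associated function $u_g$, with $e^{-f}=(u_g+1)^2$, arises as a locally uniform limit of the minimizers $u_i$ on the exhausting domains, and that these carry a uniform $H^1$-bound: by \eqref{eq:local_lagrangian} and the lower estimate in Lemma~\ref{lem_H1-comparison} one has $C\|u_i\|_{H^1}^2 - C \leq \overline{\mathcal{W}}_{\Omega_i}(g,u_i)=\mu_{\Omega_i}(g)\leq \mu_{\Omega_1}(g)$. Hence $u_g \in H^1(M) \subset L^2(M)$. Since $u_g$ is bounded and solves a uniformly elliptic semilinear equation with coefficients controlled by $\scal_g+n(n-1)$ and a bounded nonlinearity, interior elliptic estimates on unit balls give $\sup_{B_{1/2}(x)}|u_g| \lesssim \|u_g\|_{L^2(B_1(x))} + \|\scal_g+n(n-1)\|_{L^\infty(B_1(x))}$. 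As $r(x)\to\infty$ both terms tend to $0$, using that unit balls have uniformly bounded volume, that $u_g \in L^2(M)$, and that $\scal_g+n(n-1)\to 0$. Therefore $u_g \to 0$ and $f = -2\log(1+u_g) \to 0$ at infinity.

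Now fix $C = \sup_M |f|$, which is finite by Theorem~\ref{thm:existence_on_M}, and let $f_+,f_-$ be the functions from Lemma~\ref{lem:super_sub_solns} on $M\setminus B_R$ with $\pm f_{\pm}|_{\partial B_R}>C$ and $f_{\pm}=O(e^{-2\mu r})$. Writing $b = \scal_g+n(n-1)$, the minimizer obeys $2\Delta f + |\nabla f|^2 + 2(n-1)f = b$ by \eqref{eq:EL_equation_entropy}, while $f_+$ obeys $2\Delta f_+ + |\nabla f_+|^2 + 2(n-1)f_+ \geq b$. Subtracting, and using the difference-of-squares identity $|\nabla f|^2-|\nabla f_+|^2 = \langle \nabla(f+f_+),\nabla v\rangle$ with $v = f-f_+$, I obtain the linear inequality
\begin{align*}
2\Delta v + \langle \nabla(f+f_+),\nabla v\rangle + 2(n-1)v \leq 0
\qquad \text{on } M\setminus B_R.
\end{align*}
Here $v<0$ on $\partial B_R$ by the choice of $C$, and $v\to 0$ at infinity. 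If $v$ were positive somewhere it would attain a positive interior maximum at some $p$; there $\nabla v(p)=0$ and, with the convention $\Delta = -\trace\nabla^2$, $\Delta v(p)\geq 0$, so the left-hand side would be strictly positive, a contradiction. Hence $f\leq f_+$. The same argument applied to $w = f_- - f$ gives $f_-\leq f$, so $f_-\leq f\leq f_+$ outside $B_R$, whence $f=O(e^{-2\mu r})$; on the compact set $B_R$ the bound is trivial.

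The main obstacle is the decay $f\to 0$ at infinity: the comparison step is forced to fail at infinity unless this is known, so it cannot be bypassed, and it is precisely where the global variational information $u_g\in H^1$ must be converted into pointwise decay via elliptic estimates on the AH end. The comparison itself is routine once one observes that the nonlinear gradient term linearizes through the difference-of-squares identity and that the zeroth-order coefficient $2(n-1)$ carries the sign favorable to the maximum principle.
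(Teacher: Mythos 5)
Your proof is correct, but it handles the crux of the matter --- the behavior at infinity --- by a genuinely different route than the paper. Both arguments rest on the barriers $f_\pm$ of Lemma \ref{lem:super_sub_solns} and a maximum-principle comparison, and your treatment of the nonlinear gradient term (difference of squares, producing a drift term that vanishes at the critical point) is equivalent to the paper's device of sliding $f_+$ upward by the smallest constant $C_0$ until it touches. The real difference is what gets compared against the barriers. The paper never proves, nor needs, that $f\to 0$ at infinity: it compares $f_\pm$ with the Dirichlet approximations $f_i$ of Proposition \ref{prop:bdry_value_problem} on the \emph{compact} annuli $\Omega_i\setminus B_R$, where the boundary data are fully known ($f_i=0$ on $\partial\Omega_i$ and $f_i\leq C<f_+$ on $\partial B_R$ by Lemma \ref{lem:uniform_bounds}), and then passes to the locally uniform limit $f_i\to f$. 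You instead compare with $f$ itself on the noncompact end, which forces you to first establish $f\to 0$ at infinity; you do this via the uniform $H^1$ bound on the $u_i$ (from \eqref{eq:local_lagrangian}, Lemma \ref{lem_H1-comparison} and domain monotonicity) and interior elliptic estimates. That extra step is sound, but two points deserve explicit justification: the uniformity in $x$ of the constant in $\sup_{B_{1/2}(x)}|u_g|\lesssim \|u_g\|_{L^2(B_1(x))}+\|\scal_g+n(n-1)\|_{L^\infty(B_1(x))}$ requires the uniformly bounded geometry of AH manifolds (e.g.\ via the M\"obius charts of \cite{Lee06}), and the nonlinearity $H(u_g)$ must be absorbed into a bounded zeroth-order coefficient, which works because $u_g+1=e^{-f_g/2}$ is bounded and bounded away from $0$. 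The trade-off: the paper's route is more self-contained, reusing only the exhaustion already built for Theorem \ref{thm:existence_on_M}; your route costs this extra elliptic input but yields $u_g\in H^1(M)$ and the qualitative decay $f\to 0$ as by-products, and avoids re-introducing the approximating sequence in the comparison step.
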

\begin{proof}
Let $\Omega_i$ be a sequence of bounded domains such as in the proof of  Theorem \ref{thm:existence_on_M}. Let $f_i$ be the solutions of the  Dirichlet problem for \eqref{eq:EL_equation_entropy} on $\Omega_i$, which exists by Proposition \ref{prop:bdry_value_problem}. By Lemma \ref{lem:uniform_bounds}, $|f_i|\leq C$ for some constant $C>0$.

Let $f_+$ and $f_-$ be the functions provided by Lemma \ref{lem:super_sub_solns}.
We are done with the proof if we are able to show $f_-\leq f\leq f_+$. Since a subsequence of the $f_i$ converges locally uniformly to $f$, it suffices to show $f_-\leq f_i\leq f_+$. We may assume that $i$ is so large that $\Omega_i\setminus B_R$ is an annular region with inner boundary $\partial B_R$  and outer boundary  $ \partial \Omega_i$.
On this set,  the functions $f_{+},f_{-}$ and $f_i$ are all defined. 
%
%
%
We consider the inequality $f_i\leq f_+$, the other one is similar. 

Assume that there exists an $i$ for which the inequality $f_i\leq f_+$ fails to hold. We know that $f_i=0<f_+$ on $\partial \Omega_i$ and $f_i\leq C<f_+$ on $\partial B_R$, so that the inequality fails in the interior. Certainly, we have $f_++C>C>f_i$ on $\Omega_i\setminus B_R$. Therefore, there exists a smallest $C_0>0$ for which the inequality $f_{+}+C_0\geq f_i$ holds, and a point $x$ in the interior of the compact set $ \Omega_i\setminus B_R$ such that $f_+(x)+C_0=f_i(x)$. At this point, we have $\nabla f_+(x)=\nabla f_i(x)$ and $\Delta f_i(x)\geq \Delta f_+(x)$. However, since $f$ solves \eqref{eq:EL_equation_entropy} and $f_+$ satisfies \eqref{eq:super_solns}, we get
\begin{align*}
0&=2\Delta f_i(x)+|\nabla f_i(x)|^2-\scal_g(x)-(n-1)n+2(n-1)f_i(x)\\
&\geq 2\Delta f_+(x)+|\nabla f_+(x)|^2-\scal_g(x)-(n-1)n+2(n-1)(f_+(x)+C_0)\\
&\geq 2(n-1)C_0,
\end{align*}
which contradicts $C_0>0$.
\end{proof}
Recall that under the assumptions we made at the beginning of this section, $(M,g)$ is asymptotic to $(\widehat M,\hg)$ of order $\delta>\frac{n-1}{2}$. For technical reasons, we will additionally assume for the remainder of this section that  $\delta<\frac{n-1}{2}+\frac{1}{2}\sqrt{(n+3)(n-1)}$.
\begin{lem}\label{lem:entropyanalytic}
Let the function $f$ be the minimizer given by Theorem~\ref{thm:existence_on_M}.
Then $f\in C^{k,\alpha}_{\delta}(M)$.
\end{lem}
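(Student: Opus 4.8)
The plan is to read the Euler--Lagrange equation \eqref{eq:EL_equation_entropy} as a semilinear equation for the linear operator $P := \Delta + (n-1)$,
\begin{equation*}
Pf = \tfrac{1}{2}\left( (\scal_g + n(n-1)) - |\nabla f|^2 \right) =: S(f),
\end{equation*}
and to bootstrap the decay of $f$ from the mapping properties of $P$ on weighted H\"older spaces. Since $\Delta \geq 0$ and $n-1>0$, the operator $P$ is positive definite and hence has trivial $L^2$-kernel. Inserting $e^{-sr}$ and using $\Delta e^{-sr} \sim s(n-1-s)e^{-sr}$, its indicial roots solve $s^2-(n-1)s-(n-1)=0$, that is
\begin{equation*}
s_\pm = \frac{n-1}{2} \pm \frac{1}{2}\sqrt{(n-1)(n+3)}.
\end{equation*}
By the Fredholm theory for geometric Laplacians on AH manifolds \cite{Lee06}, $P\colon C^{k,\alpha}_\eta(M) \to C^{k-2,\alpha}_\eta(M)$ is an isomorphism for every weight $\eta \in (s_-,s_+)$; in particular, the standing hypothesis $\frac{n-1}{2} < \delta < \frac{n-1}{2}+\frac{1}{2}\sqrt{(n+3)(n-1)} = s_+$ places $\delta$ strictly inside this window.

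I would first record the regularity of the source. The APE condition gives $\scal_g + n(n-1) = \trace_g(\ric_g+(n-1)g) \in C^{k-2,\alpha}_\delta(M)$. Moreover, whenever $f \in C^{k,\alpha}_\beta(M)$ for some $\beta > 0$ one has $\nabla f \in C^{k-1,\alpha}_\beta$ and hence $|\nabla f|^2 \in C^{k-2,\alpha}_{2\beta}$, so that $S(f) \in C^{k-2,\alpha}_{\min\{\delta,2\beta\}}$.

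To start the iteration, recall from Lemma \ref{lem:minimizer_little_decay} that $f = O(e^{-2\mu r})$ for some $\mu>0$; interior Schauder estimates for the quasilinear equation $\Delta f = S(f)-(n-1)f$ on $g$-balls of fixed radius, together with the uniform control of the geometry, upgrade this pointwise bound to $f \in C^{k,\alpha}_{\beta_0}(M)$ with $\beta_0 = 2\mu$ (if $\beta_0 \geq \delta$ we are already done, so assume $\beta_0 < \delta$). Now suppose inductively that $f \in C^{k,\alpha}_\beta$ with $0 < \beta \leq \delta$. Then $S(f) \in C^{k-2,\alpha}_{\beta'}$ with $\beta' = \min\{\delta,2\beta\} \leq \delta < s_+$, so $\beta'$ lies in the isomorphism range; since $C^{k,\alpha}_{\beta'} \subset C^{k,\alpha}_\beta$ and $P$ is injective on $C^{k,\alpha}_\beta$, the unique $C^{k,\alpha}_{\beta'}$-solution of $Pu = S(f)$ must equal $f$, whence $f \in C^{k,\alpha}_{\beta'}$. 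As $\beta' = \min\{\delta,2\beta\}$ doubles $\beta$ at each step until it saturates at $\delta$, finitely many iterations yield $f \in C^{k,\alpha}_\delta(M)$.

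The main obstacle is securing the isomorphism window for $P$. Because the nonlinearity is the \emph{quadratic} term $|\nabla f|^2$, each bootstrap step doubles the admissible weight for free, so the only genuine constraint on the attainable decay is the location of the positive indicial root $s_+$. This is exactly the purpose of the extra hypothesis $\delta < \frac{n-1}{2}+\frac{1}{2}\sqrt{(n+3)(n-1)}$: it guarantees that the target weight $\delta$ remains strictly below $s_+$, so that $P$ stays invertible at every stage and the decay rate $\delta$ is actually reached rather than capped at some smaller indicial threshold.
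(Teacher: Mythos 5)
Your bootstrap phase is essentially identical to the paper's: the same operator $\Delta+(n-1)$, the same indicial roots $s_\pm=\frac{n-1}{2}\pm\frac12\sqrt{(n-1)(n+3)}$ and isomorphism window from Lee's Fredholm theory, the same doubling-of-weights iteration capped at $\delta$, and the same role of the standing hypothesis $\delta<s_+$. Your phrasing of the improvement step (solve $Pu=S(f)$ in the better space and identify $u=f$ by injectivity of $P$ on the worse space, using $s_-<0<\beta\le\delta<s_+$) is a clean, correct variant of the paper's direct appeal to the isomorphism, and the product estimate $f\in C^{k,\alpha}_\beta\Rightarrow|\nabla f|^2\in C^{k-1,\alpha}_{2\beta}$ makes the induction close properly once the inductive hypothesis is a full weighted $C^{k,\alpha}_\beta$ bound.

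The genuine difference, and the gap, is the base case. The paper does \emph{not} apply Schauder to the $f$-equation; it first substitutes $u=e^{-f/2}-1$, which turns \eqref{eq:EL_equation_entropy} into the semilinear equation \eqref{eq_EL_equation_u} whose right-hand side contains \emph{no derivatives} of $u$. Pointwise decay of $u$ then immediately places the right-hand side in a weighted $L^p$ space, and weighted $W^{2,p}$ elliptic regularity plus Sobolev embedding yields $u,f\in C^{1,\alpha}_\eta$ for some $\eta>0$. This detour exists precisely because of the term $|\nabla f|^2$ that you keep on the right-hand side: the pointwise bound $|f|\le Ce^{-2\mu r}$ from Lemma \ref{lem:minimizer_little_decay} gives, by itself, no decay of $\bigl\||\nabla f|^2\bigr\|_{C^{0,\alpha}(B_1(x))}$, so a single interior Schauder estimate does not produce any weighted H\"older bound, let alone $f\in C^{k,\alpha}_{2\mu}$. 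To salvage your route you must first interpolate, using the unweighted $C^{k,\alpha}$ bound from Theorem \ref{thm:existence_on_M}: on balls of fixed radius, $\|\nabla f\|^2_{C^0(B_{1/2}(x))}\le C\|f\|_{C^0(B_1(x))}\|f\|_{C^2(B_1(x))}\le Ce^{-2\mu r(x)}$, which gives gradient decay only at rate $\mu$, and then run Schauder and interpolation in tandem, climbing in both derivative order and weight, to reach $C^{k,\alpha}_{\beta_0}$ for \emph{some} $\beta_0>0$. This is fixable, and since your doubling iteration only needs a positive starting weight, the unjustified value $\beta_0=2\mu$ is immaterial; but as written the initial step is a gap, and it is exactly the difficulty the paper's substitution is designed to eliminate.
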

\begin{proof}
By substituting $u+1=e^{-f/2}$, equation \eqref{eq:EL_equation_entropy} transforms to 
\begin{align}\label{eq_EL_equation_u}
4\Delta u=-(\scal_g +(n-1)n)u-\scal_g-(n-1)n-H(u),
\end{align}
where
\begin{align*}
H(u)=2(n-1)\log((u+1)^2)(u+1).
\end{align*}
Note that for every $\epsilon>0$, there is a constant $C_{\epsilon}$ such that
\begin{align}\label{eq:estimate_nonlinearity}
\left| H(u) \right|
\leq C_{\epsilon}(|u| + |u|^{1+\epsilon}). 
\end{align}
By Lemma \ref{lem:minimizer_little_decay}, $f=O(e^{-\mu r})$ for some $\mu\in (0,\delta)$. Thus, we also get $u=O(e^{-\mu r})$, so that $u\in C^0_{\mu}$ and hence also $H(u)\in C^0_{\mu}$.
 Choose $\eta\in (0,\mu)$ and a sufficiently large $p\in (n,\infty)$ such that $\eta+\frac{n-1}{p}<\mu$. Then $u\in L^p_{\eta}$. Since $\scal_g +(n-1)n\in O (e^{-\delta r})$ with $\delta>\eta$ and $H(u)=O(e^{-\mu r})$, we have
 \begin{align*}
 (\scal_g +(n-1)n)u-\scal_g-(n-1)n-H(u)\in L^p_{\eta},
 \end{align*}
and $u\in W^{2,p}_{\eta}\subset C^{1,\alpha}_{\eta}$ by elliptic regularity applied to \eqref{eq_EL_equation_u} and Sobolev embedding (see \cite[Lemma 3.6]{Lee06}. Hence, we also have $f\in C^{1,\alpha}_{\eta}$. Now we are going to to successively improve the decay of $f$.
Since $f$ satisfies \eqref{eq:EL_equation_entropy}, we get
\begin{align}
2\Delta f+2(n-1)f=-|\nabla f|^2+\scal_g+n(n-1).
\end{align}
We have that $|\nabla f|^2\in C^{0,\alpha}_{2\eta}$ and $\scal_g+n(n-1)\in C^{k-2,\alpha}_{\delta}$. By \cite[Propositions C and E]{Lee06}, the operator $\Delta +(n-1):C^{l,\alpha}_{\beta}\to C^{l-2,\alpha}_{\beta}$ is an isomorphism for all $2\leq l\leq k$ and 
\begin{align}\label{eq:weights}
 \frac{n-1}{2}-\frac{1}{2}\sqrt{(n+3)(n-1)}<   \beta< \frac{n-1}{2}+\frac{1}{2}\sqrt{(n+3)(n-1)} .
\end{align}
Thus we get $f\in C^{k,\alpha}_{\eta_1}$ for $0<\eta_1\leq\min \left\{2\eta,\delta\right\}$, since we assume $\delta<\frac{n-1}{2}+\frac{1}{2}\sqrt{(n+3)(n-1)} $. Using the above arguments again, we obtain $f\in C^{k,\alpha}_{\eta_2}$ for $0<\eta_2\leq\min \left\{2\eta_1,\delta\right\}$.
After repeating this procedure  a finite number of times, we obtain the desired result.
\end{proof}

We now prove that the renormalized expander entropy depends analytically on the Riemannian metric.

\begin{prop} \label{lem_analyticity}
The map 
\begin{align*}
\mathcal{R}^{k,\alpha}_{\delta}(M) \ni
 g\mapsto f_g \in C^{k-2,\alpha}_{\delta}(M),
\end{align*} 
 associating to a metric $g$ the unique minimizer in the definition of $\mu_{\rm{AH},\hg}(g)$ is analytic. In particular, $\mathcal{R}^{k,\alpha}_{\delta}(M) \ni g\mapsto \mu_{\rm{AH},\hg}(g)$ is analytic.
\end{prop}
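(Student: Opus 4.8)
The plan is to obtain the map $g \mapsto f_g$ from the analytic implicit function theorem applied to the Euler--Lagrange equation \eqref{eq:EL_equation_entropy}. Consider the map
\[
\mathcal{F}: \mathcal{R}^{k,\alpha}_{\delta}(M) \times C^{k,\alpha}_{\delta}(M) \to C^{k-2,\alpha}_{\delta}(M), \qquad
\mathcal{F}(g,f) = 2\Delta_g f + |\nabla_g f|^2 - \scal_g - n(n-1) + 2(n-1)f.
\]
This is well-defined, since $\scal_g + n(n-1)\in C^{k-2,\alpha}_{\delta}$ by the APE condition, $\Delta_g f\in C^{k-2,\alpha}_{\delta}$, and $|\nabla_g f|^2\in C^{k-1,\alpha}_{2\delta}\subset C^{k-2,\alpha}_{\delta}$. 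Because $\scal_g$, $\Delta_g$ and $\nabla_g$ are built from $g$, its inverse, and finitely many derivatives via matrix inversion and multilinear operations, the map $\mathcal{F}$ is analytic between these Banach manifolds. By Theorem \ref{thm:existence_on_M} and Lemma \ref{lem:entropyanalytic}, for each $g$ the minimizer $f_g$ lies in $C^{k,\alpha}_{\delta}(M)$ and satisfies $\mathcal{F}(g,f_g)=0$.

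The crux is to show that the partial differential $D_f\mathcal{F}(g,f_g)$ is a Banach space isomorphism. A direct computation gives
\[
D_f\mathcal{F}(g,f_g)[v] = 2\Delta_g v + 2\langle \nabla f_g, \nabla v\rangle_g + 2(n-1)v,
\]
which we denote by $L$; it differs from $2(\Delta_g + (n-1))$ only by the first-order term $2\langle \nabla f_g,\nabla\,\cdot\,\rangle_g$. Since $f_g\in C^{k,\alpha}_{\delta}$ with $\delta>0$, the coefficient $\nabla f_g$ decays at the conformal boundary, and the map $v\mapsto \langle\nabla f_g,\nabla v\rangle_g$ takes $C^{k,\alpha}_{\delta}$ into $C^{k-1,\alpha}_{2\delta}$; as this inclusion into $C^{k-2,\alpha}_{\delta}$ gains both a derivative and additional decay, the first-order term is a compact perturbation. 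The assumption $\frac{n-1}{2}<\delta<\frac{n-1}{2}+\frac{1}{2}\sqrt{(n+3)(n-1)}$ places $\delta$ in the admissible range \eqref{eq:weights}, so by Propositions C and E of Lee \cite{Lee06} the operator $\Delta_g + (n-1)\colon C^{k,\alpha}_{\delta}\to C^{k-2,\alpha}_{\delta}$ is an isomorphism; consequently $L$ is Fredholm of index zero.

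It remains to show $L$ is injective. If $Lv=0$ with $v\in C^{k,\alpha}_{\delta}$, then $v\to0$ at infinity and the argument of Lemma \ref{lem:uniqueness_EL_equation} applies verbatim: at an interior positive maximum $p$ one has $\nabla v(p)=0$ and $\Delta_g v(p)\geq0$, whence $0 = Lv(p)\geq 2(n-1)v(p)>0$, a contradiction, and symmetrically at a negative minimum; thus $v\equiv0$. Being injective and Fredholm of index zero, $L=D_f\mathcal{F}(g,f_g)$ is an isomorphism. The analytic implicit function theorem then yields, near each $g_0\in\mathcal{R}^{k,\alpha}_{\delta}(M)$, a locally unique analytic branch $g\mapsto \widetilde f_g\in C^{k,\alpha}_{\delta}(M)$ solving $\mathcal{F}(g,\widetilde f_g)=0$; by the global uniqueness of solutions of \eqref{eq:EL_equation_entropy} tending to zero at infinity (Lemma \ref{lem:uniqueness_EL_equation}), $\widetilde f_g=f_g$, so $g\mapsto f_g$ is analytic into $C^{k,\alpha}_{\delta}(M)$, and a fortiori into $C^{k-2,\alpha}_{\delta}(M)$. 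Finally, by the computation in the proof of Lemma \ref{lem_W_entropy}, $\mu_{\rm{AH},\hg}(g)=\mathcal{W}_{\rm{AH},\hg}(g,f_g)$ is the sum of the analytic functional $S_{\hg}(g)$ (analytic by Proposition \ref{prop:first_variation_EHaction}) and an integral functional depending on $(g,f)$ through real-analytic pointwise expressions evaluated at the bounded function $f_g$; hence $g\mapsto\mu_{\rm{AH},\hg}(g)$ is analytic. I expect the isomorphism step to be the main obstacle, specifically the careful justification that the decaying first-order term preserves the index-zero Fredholm property in the weighted H\"older spaces.
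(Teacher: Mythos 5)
Your proposal is correct, and its overall skeleton coincides with the paper's proof: the same map $\Phi(g,f)=2\Delta f+|\nabla f|^2-\scal_g-n(n-1)+2(n-1)f$, the same linearization $P_{g,f}(v)=2\Delta v+2\langle\nabla f,\nabla v\rangle+2(n-1)v$, and the analytic implicit function theorem, with the crux being that $P_{g,f}\colon C^{k,\alpha}_{\delta}\to C^{k-2,\alpha}_{\delta}$ is an isomorphism. Where you genuinely diverge is in how that isomorphism is established. For injectivity, the paper integrates by parts against the weighted measure $e^{-f}\dv$ (which makes $P_{g,f}$ manifestly positive), while you invoke the maximum principle exactly as in Lemma \ref{lem:uniqueness_EL_equation}; both are valid, and yours is arguably more elementary since it avoids checking that the boundary terms at infinity vanish. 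For the Fredholm index-zero property, the paper conjugates, $P_{g,f}=e^{f/2}\circ Q_{g,f}\circ e^{-f/2}$, so that the first-order term becomes the decaying potential $\Delta f+\tfrac12|\nabla f|^2$, and then applies Lee's Theorem C using that the indicial roots are unchanged; you instead treat $v\mapsto 2\langle\nabla f_g,\nabla v\rangle_g$ directly as a compact perturbation of the isomorphism $2(\Delta_g+(n-1))$, via boundedness into $C^{k-1,\alpha}_{2\delta}$ followed by the compact inclusion $C^{k-1,\alpha}_{2\delta}\hookrightarrow C^{k-2,\alpha}_{\delta}$ (a gain of one derivative and of weight, cf.\ \cite[Lemma 3.6]{Lee06}). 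Your route avoids the conjugation trick and the indicial-root stability argument at the cost of citing the compact-embedding lemma; the paper's route stays entirely within Lee's framework for geometric operators with zeroth-order perturbations. Two further points in your favor: you explicitly identify the local implicit-function-theorem branch with the global minimizer $f_g$ via Lemma \ref{lem:uniqueness_EL_equation}, a step the paper leaves implicit, and your conclusion lands in $C^{k,\alpha}_{\delta}$, which is slightly stronger than the stated target $C^{k-2,\alpha}_{\delta}$ and is consistent with Lemma \ref{lem:entropyanalytic}.
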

\begin{proof}
We consider the map $\Phi: \mathcal{R}^{k,\alpha}_{\delta}(M)\times C^{k,\alpha}_{\delta}(M) \to C^{k-2,\alpha}_{\delta}(M)$, given by 
\begin{align*}
\Phi(g,f) = 2\Delta f+|\nabla f|^2-\scal_g-n(n-1)+2(n-1)f.
\end{align*}
The differential of $\Phi$ in the second argument is given by
\begin{align}\label{eq_operator_P}
D_{(g,f)}\Phi(0,v)=2\Delta v+2\langle\nabla f,\nabla v\rangle+2(n-1)v.
\end{align}
The result will follow from the implicit function theorem, if we show that the map
\begin{align*}
P_{g,f} = D_{(g,f)}\Phi(0,.):  C^{k,\alpha}_{\delta}(M) \to C^{k-2,\alpha}_{\delta}(M)
\end{align*}
is an isomorphism for each $f\in C^{k,\alpha}_{\delta}$.

 In fact, an integration by parts argument with respect to the weighted measure $e^{-f}\dv$ shows that $P_{g,f}$ has trivial kernel. 
It remains to show that it is surjective. For this purpose,
one computes that 
$P_{g,f} = e^{f/2}\circ Q_{g,f}\circ e^{-f/2}$, 
where
\begin{align*}
Q_{g,f}(v) = 2\Delta v +2(n-1)v+\left(\Delta f+\frac{1}{2}|\nabla f|^2\right)v.
\end{align*}
By \cite[Proposition F]{Lee06} and the assumptions on $\delta$,
 \begin{align*}2\Delta +2(n-1): C^{k,\alpha}_{\delta}(M) \to C^{k-2,\alpha}_{\delta}(M)
 \end{align*}
  is Fredholm of index zero. Since  $\Delta f+\frac{1}{2}|\nabla f|^2\in O(e^{-\delta r})$ as $r\to\infty$, the operator $Q_{f}$ has the same indicial root as $2\Delta +2(n-1)$. Thus by \cite[Theorem C]{Lee06},
 $Q_{g,f}: C^{k,\alpha}_{\delta}(M) \to C^{k-2,\alpha}_{\delta}(M)$ is also a Fredholm operator of index zero. Because multiplication with $e^{\pm f}$ is an isomorphism, $P_{g,f}$ is also Fredholm of index zero. Since its kernel is zero, $P_{g,f}$ is an isomorphism, as desired.
\end{proof}

\subsection{First and second variation}
In the next proposition, we compute the first variation of $g\mapsto \mu_{\rm{AH},\hg}$.
\begin{prop}\label{prop:first_variation_entropy}
We have
\begin{align*}
D_g\mu_{\rm{AH},\hg}[h]=-\int_M \langle \ric+\nabla^2f_g+(n-1)g,h\rangle e^{-f_g}\dv ,
\end{align*}
where $f_g$ is the minimizing metric in the definition of $\mu_{\rm{AH},\hg}(g)$.
\end{prop}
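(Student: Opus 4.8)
The plan is to differentiate the identity $\mu_{\rm{AH},\hg}(g) = \mathcal{W}_{\rm{AH},\hg}(g,f_g)$ in the direction $h$ and to exploit the fact that $f_g$ is a critical point of $f \mapsto \mathcal{W}_{\rm{AH},\hg}(g,f)$. By the envelope theorem, the variation of $f_g$ itself contributes nothing, because the partial derivative of $\mathcal{W}_{\rm{AH},\hg}$ in the $f$-slot vanishes at $f_g$ by Proposition \ref{prop:EL_equation_entropy}; the analyticity of $g \mapsto f_g$ from Proposition \ref{lem_analyticity} guarantees that $f_g$ varies differentiably, so this manipulation is legitimate. Hence
\begin{align*}
D_g\mu_{\rm{AH},\hg}[h] = \frac{\partial}{\partial s}\Big|_{s=0} \mathcal{W}_{\rm{AH},\hg}(g+sh, f_g),
\end{align*}
and I only need to differentiate $\mathcal{W}_{\rm{AH},\hg}(g,f)$ in the metric, holding $f = f_g$ fixed.

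First I would work from the form of $\mathcal{W}_{\rm{AH},\hg}$ given in Lemma \ref{lem_W_entropy}, namely an integral of $(|\nabla f|^2 + \scal_g - 2(n-1)f)e^{-f} + (n-2)(n-1)e^{-f}$ against $\dv_g$, plus the boundary and renormalization terms $2(n-1)\int \dv_{\hg} - \m_{{\rm ADM},\hg}(g,R)$. The metric-variation of each piece is standard: $D_g\scal[h] = \mathrm{div}_g\mathrm{div}_g h + \Delta_g \trace_g h - \langle \ric_g, h\rangle$, the variation of $|\nabla f|^2_g$ is $-\langle h, df\otimes df\rangle$, and $D_g(\dv_g)[h] = \tfrac12 \trace_g h \,\dv_g$. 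The term $\m_{{\rm ADM},\hg}(g,R)$ varies linearly in $h$ as $\m_{{\rm ADM},\hg}(h,R)$, and exactly as in the proof of Proposition \ref{prop:first_variation_EHaction} the difference $\m^{\identity}_{{\rm ADM},g}(h,R) - \m^{\varphi}_{{\rm ADM},\hg}(h,R)$ tends to zero, so the leading ADM boundary contribution from $D_g\scal[h]$ cancels against it in the limit $R \to \infty$.

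The main computation is then to integrate by parts the terms $(\mathrm{div}_g\mathrm{div}_g h + \Delta_g\trace_g h)e^{-f}$ against the weighted measure, moving all derivatives off $h$ and onto $e^{-f}$. Here the weight $e^{-f}$ is essential: the two derivatives landing on $e^{-f}$ produce precisely a Hessian term $\nabla^2 f$ together with a $|\nabla f|^2$ contribution, and after combining with the explicit $|\nabla f|^2$, $-2(n-1)f$, and $(n-2)(n-1)$ terms from the Lagrangian, everything should reorganize into the single clean expression $-\langle \ric + \nabla^2 f_g + (n-1)g, h\rangle e^{-f_g}$. The decay conditions ($h \in C^{k,\alpha}_\delta$ with $\delta > \tfrac{n-1}{2}$ and $f_g \in C^{k,\alpha}_\delta$ by Lemma \ref{lem:entropyanalytic}) ensure all integration-by-parts boundary terms at infinity vanish and that the resulting integral converges. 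The step I expect to be most delicate is the bookkeeping in this weighted integration by parts: one must track the cross terms generated when $\Delta_g$ and $\mathrm{div}_g\mathrm{div}_g$ act through the factor $e^{-f}$, and verify that the $|\nabla f|^2$-type terms cancel exactly rather than leaving a spurious residue, while simultaneously confirming that the boundary terms at radius $R$ either vanish or cancel against the ADM variation.
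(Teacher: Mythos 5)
Your proposal is correct and follows essentially the same route as the paper: the paper likewise kills the $f$-variation via the chain rule and the criticality of $f_g$ (your envelope-theorem step, with the caveat that extending Proposition \ref{prop:EL_equation_entropy} from compactly supported variations to $v\in C^{k,\alpha}_{\delta}$ needs a short approximation argument), then varies the metric with $f=f_g$ fixed, integrates by parts against $e^{-f}\dv$, and disposes of the boundary and ADM terms exactly as in Proposition \ref{prop:first_variation_EHaction}. The only point worth making explicit is that the final ``reorganization'' you anticipate is not a pure cancellation of cross terms: after the weighted integration by parts there remains a term of the form $\tfrac12\int_M\bigl(-2\Delta f-|\nabla f|^2+\scal+n(n-1)-2(n-1)f\bigr)e^{-f}\trace h\dv$, which vanishes precisely because $f_g$ satisfies the Euler--Lagrange equation \eqref{eq:EL_equation_entropy}.
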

\begin{proof}
If $h\in C^{k,\alpha}_{\delta}$, then by Proposition \ref{lem_analyticity}, $v=\frac{d}{dt}f_{g+th}|_{t=0}\in C^{k,\alpha}_{\delta}(M)$. In order to avoid cumbersome notation, let us write $f=f_g$ throughout the rest of the proof.
An approximation argument shows that the computation in the proof of 
Proposition \ref{prop:EL_equation_entropy}
is also valid for $v\in C^{k,\alpha}_{\delta}$. Thus, since $f$ is the minimizer,
\begin{align*}
D_{g,f}\mathcal{W}_{\rm{AH},\hg}[0,v]=0,
\end{align*}
and the chain rule implies
\begin{align*}
D_g\mu_{\rm{AH},\hg}[h] 
&= D_{g,f}\mathcal{W}_{\rm{AH},\hg}[h,v] \\
&= D_{g,f}\mathcal{W}_{\rm{AH},\hg}[h,0]+D_{g,f}\mathcal{W}_{\rm{AH},\hg}[0,v]\\
&= D_{g,f}\mathcal{W}_{\rm{AH},\hg}[h,0].
\end{align*}
Using this and the first variation of the scalar curvature, we compute
\begin{align*}
&D_g\mu_{\rm{AH},\hg}[h]
=\frac{d}{dt} \mathcal{W}_{\rm{AH},\hg}(g+th,f) |_{t=0} \\
&\qquad=\frac{d}{dt} \int_M 
\left( 
\left(|\nabla f|^2+\scal +n(n-1) \right)e^{-f} - 2(n-1)\left((f+1)e^{-f} - 1 \right) \right) \dv |_{t=0} \\
&\qquad\qquad
-\frac{d}{dt} \m_{\rm{VR},\hg}(g) |_{t=0}\\
&\qquad=\int_M 
\left( -\langle h,\nabla f\otimes\nabla f\rangle
+ \Delta\trace h + \mathrm{div}(\mathrm{div} h) - \langle\ric,h\rangle \right) e^{-f}\dv_g\\
&\qquad\qquad +\frac{1}{2}\int_M 
\left( \left( |\nabla f|^2+\scal +n(n-1) \right) e^{-f} - 2(n-1)\left((f+1)e^{-f}\right) \right) \trace h \dv\\
&\qquad\qquad-
\lim_{\eta\to\infty}\int_{\partial B_R}\langle \mathrm{div} h-\nabla\trace h,\nu\rangle \dv.
\end{align*}
By integration by parts over a large ball, we have
\begin{align*}
 \int_{B_R}\Delta(\trace h)e^{-f}\dv&= \int_{B_R}\trace h\Delta(e^{-f})\dv - \int_{\partial B_R}\langle \nabla \trace h,\nu\rangle e^{-f}\dv\\
&\qquad- \int_{\partial B_R}\trace h\langle \nabla f,\nu\rangle e^{-f}\dv\\
&=-\int_{B_R}\trace h(\Delta f+|\nabla f|^2) e^{-f}\dv - \int_{\partial B_R}\langle \nabla \trace h,\nu\rangle e^{-f}\dv\\
&\qquad- \int_{\partial B_R}\trace h\langle \nabla f,\nu\rangle e^{-f}\dv
\end{align*}
and
\begin{align*}
\int_{B_R} \mathrm{div}(\mathrm{div} h)e^{-f}\dv&=\int_{B_R}\langle h,\nabla^2(e^{-f})\rangle\dv\\
&\qquad+ \int_{\partial B_R}\langle \mathrm{div} h+h(\nabla f,\cdot),\nu\rangle e^{-f}\dv\\
&=\int_{B_R}\langle h,\nabla f\otimes\nabla f-\nabla^2f\rangle e^{-f}\dv\\
&\qquad+\int_{\partial B_R}\langle\mathrm{div} h+h(\nabla f,\cdot),\nu\rangle e^{-f}\dv.
\end{align*}
Since $h\in C^{k,\alpha}_{\delta}$ and $f\in C^{k,\alpha}_{\delta}$ for some $\delta>\frac{n-1}{2}$, we have 
\begin{align*}
\left| \int_{\partial B_R}\trace h\langle \nabla f,\nu\rangle e^{-f}\dv \right|
+
\left| \int_{\partial B_R}\langle h(\nabla f,\cdot),\nu\rangle e^{-f}\dv \right|
\to 0
\end{align*}
as well as
\begin{align*}
\left| \int_{\partial B_R}\langle \nabla \trace h,\nu\rangle(1- e^{-f})\dv \right|
+
\left| \int_{\partial B_R} \langle \mathrm{div} h,\nu\rangle (1 - e^{-f}) \dv \right| 
\to 0
\end{align*}
as $R\to \infty$.
Therefore, all the boundary terms vanish and after summing up, we obtain
\begin{align*}
D_g\mu_{\rm{AH},\hg}[h]
&=
-\int_{M}\trace h(\Delta f+|\nabla f|^2) e^{-f}\dv-\int_M \langle \nabla^2f+\ric,h\rangle e^{-f}\dv\\
&\qquad
+\frac{1}{2}\int_M \left([|\nabla f|^2+\scal +n(n-1)]e^{-f}
-2(n-1)(f+1)e^{-f}\right)\trace h\dv\\
&=
-\int_M \langle \nabla^2f+\ric,h\rangle e^{-f}\dv
-(n-1)\int_M\trace h e^{-f}\dv \\
&\qquad
+\frac{1}{2}\int_M [-2\Delta f-|\nabla f|^2+\scal +n(n-1)-2(n-1)f]e^{-f}\trace h\dv.
\end{align*}
Using the Euler--Lagrange equation \eqref{eq:EL_equation_entropy}, the last integral on the right hand side vanishes and we obtain the first variation formula as stated.
\end{proof}

It follows that critical points of the renormalized expander entropy are precisely the Einstein metrics. In what follows we will continue to use $f$ to denote the minimizing function in the definition of $\mu_{\rm{AH},\hg}(g)$.

\begin{cor}\label{cor:Entropy_critical_points}
A metric $g\in\mathcal{R}_{\delta}^{k,\alpha}$ is a critical point of $\mu_{\rm{AH},\hg}$ if and only if it is PE.
\end{cor}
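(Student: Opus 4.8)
The plan is to read off both implications directly from the first-variation formula in Proposition~\ref{prop:first_variation_entropy}, which writes the differential of the entropy as a weighted $L^2$-pairing against the tensor $T_g := \ric_g + \nabla^2 f_g + (n-1)g$. Since the weight $e^{-f_g}$ is a strictly positive smooth function, and the admissible variations $h \in C^{k,\alpha}_{\delta}$ include all smooth compactly supported symmetric $2$-tensors, the condition $D_g\mu_{\rm{AH},\hg}[h]=0$ for all $h$ is equivalent, by the fundamental lemma of the calculus of variations, to the pointwise identity $\ric_g + \nabla^2 f_g + (n-1)g = 0$. Thus criticality of $g$ is exactly this gradient-soliton-type equation for the minimizer $f_g$, and the whole problem reduces to analyzing it together with the Euler--Lagrange equation \eqref{eq:EL_equation_entropy}.

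For the implication that a PE metric is critical, I would observe that $\ric_g = -(n-1)g$ forces $\scal_g = -n(n-1)$, so the Euler--Lagrange equation \eqref{eq:EL_equation_entropy} reduces to $2\Delta f + |\nabla f|^2 + 2(n-1)f = 0$, which is solved by $f \equiv 0$. As the constant $0$ tends to $0$ at infinity, the uniqueness statement of Lemma~\ref{lem:uniqueness_EL_equation} forces $f_g = 0$. The soliton tensor then becomes $T_g = \ric_g + (n-1)g = 0$, so $g$ is a critical point.

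For the converse, suppose $g$ is critical, so that $\ric_g + \nabla^2 f_g + (n-1)g = 0$. Taking the trace and using the sign convention $\Delta = -\trace\,\nabla^2$ gives $\Delta f_g = \scal_g + n(n-1)$. Substituting this into \eqref{eq:EL_equation_entropy} cancels the scalar-curvature term and leaves the clean scalar equation $\Delta f_g + |\nabla f_g|^2 + 2(n-1)f_g = 0$. I would then invoke a maximum principle: at an interior maximum one has $\nabla f_g = 0$ and $\Delta f_g \geq 0$, which forces $f_g \leq 0$ there, while at an interior minimum one gets $f_g \geq 0$. Because $f_g \to 0$ at infinity by Lemma~\ref{lem:minimizer_little_decay}, any strictly positive supremum or strictly negative infimum would be attained at an interior point, contradicting these sign constraints; hence $f_g \equiv 0$. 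Feeding $f_g = 0$ back into the soliton equation then yields $\ric_g = -(n-1)g$, so $g$ is PE.

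The individual computations are short, so I do not anticipate a serious obstacle; the two points requiring care are (i) justifying the passage from the variational identity to the pointwise soliton equation, which rests only on positivity of $e^{-f_g}$ together with density of compactly supported variations, and (ii) getting the Laplacian sign convention right when tracing the soliton equation, since it is precisely the cancellation with \eqref{eq:EL_equation_entropy} that produces the equation on which the maximum-principle argument operates.
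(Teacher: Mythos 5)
Your proof is correct, and while the first step (criticality $\Leftrightarrow$ the soliton equation $\ric_g+\nabla^2 f_g+(n-1)g=0$) and the direction ``PE $\Rightarrow$ critical'' (via uniqueness of the Euler--Lagrange solution, which is the same observation the paper uses implicitly at the start of Subsection \ref{subsec:local_positive_mass}) coincide with the paper, your argument for ``critical $\Rightarrow$ PE'' takes a genuinely different route. The paper rewrites the soliton equation so that its left-hand side is the Einstein tensor, which is divergence-free by the contracted second Bianchi identity; taking the divergence of the remaining terms yields the linear equation $\left(\Delta+\nabla_{\nabla f_g}+\tfrac{n-1}{2}\right)\nabla f_g=0$ for the vector field $\nabla f_g$, and pairing this with $\nabla f_g$ in the weighted $L^2$-space with measure $e^{-f_g}\dv$ (where the drift Laplacian is self-adjoint and nonnegative, and using $\nabla f_g\in H^1$ to justify the integration by parts) forces $\nabla f_g=0$, hence $\nabla^2 f_g=0$ and $\ric_g=-(n-1)g$. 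You instead trace the soliton equation to get $\Delta f_g=\scal_g+n(n-1)$, substitute into \eqref{eq:EL_equation_entropy} to eliminate the scalar curvature, and run the same pointwise maximum principle as in Lemma \ref{lem:uniqueness_EL_equation} on the resulting equation $\Delta f_g+|\nabla f_g|^2+2(n-1)f_g=0$, using the decay $f_g\to 0$ at infinity from Lemma \ref{lem:minimizer_little_decay}; your sign conventions and the attainment of a positive supremum or negative infimum at an interior point are handled exactly as in that lemma, so the argument closes. What each approach buys: yours is more elementary (no Bianchi identity, no vector-valued integration by parts, no $H^1$ justification of boundary terms) and yields the stronger conclusion $f_g\equiv 0$ directly rather than only $\nabla f_g=0$; the paper's argument is the standard expanding-soliton rigidity computation, works entirely at the level of the soliton equation, and does not need the exponential decay of $f_g$, only $\nabla f_g\in H^1$.
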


\begin{proof}
By Proposition \ref{prop:first_variation_entropy},
the critical points of $\mu_{\rm{AH},\hg}$ satisfy
\begin{align*}
\ric+\nabla^2 f=-(n-1)g,
\end{align*}
or equivalently
\begin{align*}
\ric-\frac{1}{2}\scal \, g=-\nabla^2f-\frac{1}{2}(\Delta f) g-(n-1)g+\frac{n}{2}(n-1)g.
\end{align*}
By the second Bianchi identity, the left-hand side is divergence free, so that
\begin{align*}
0&=\mathrm{div}\nabla^2f+\frac{1}{2}\mathrm{div}((\Delta f) g)=
-\Delta \nabla f+\frac{1}{2}\nabla(\Delta f)\\
&=-\Delta \nabla f+\frac{1}{2}(\Delta+\ric)(\nabla f)
=-(\Delta+\nabla^2f+\frac{n-1}{2})(\nabla f) \\
&=-(\Delta+\nabla_{\nabla f}+\frac{n-1}{2})(\nabla f).
\end{align*}
Since $\nabla f\in H^1$, taking the scalar product of the equation with  $\nabla f$ itself and integrating over $M$ with respect to the weighted measure $e^{-f}\dv$ yields $\nabla f=0$. Thus, we have $\nabla^2f=0$ which implies that $\ric=-(n-1)g$, as desired.
\end{proof}

\begin{lem}\label{lem_diff_invariance_entropy} We have
\begin{align*}
D_g\mu_{\rm{AH},\hg}[\mathcal{L}_Xg]=0
\end{align*}
for any $X\in C^{k-1,\alpha}_{\delta}(TM)$.
\end{lem}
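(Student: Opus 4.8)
The plan is to substitute $h=\mathcal L_X g$ into the first variation formula of Proposition \ref{prop:first_variation_entropy} and integrate by parts, the crux being that the symmetric tensor $T:=\ric+\nabla^2 f+(n-1)g$ (with $f=f_g$ the minimizer) is \emph{weighted-divergence-free}, i.e.\ $\mathrm{div}(e^{-f}T)=0$, as a direct consequence of the Euler--Lagrange equation \eqref{eq:EL_equation_entropy}. Conceptually this is the usual soliton identity underlying the diffeomorphism invariance of Perelman-type functionals. Since $\mathcal L_X g\in C^{k-2,\alpha}_{\delta}$ is one derivative rougher than the perturbations handled in Proposition \ref{lem_analyticity}, I would first note that the right-hand side of the first variation formula, $h\mapsto-\int_M\langle T,h\rangle e^{-f}\dv$, is a bounded linear functional on $C^{k-2,\alpha}_{\delta}$: by Lemma \ref{lem:entropyanalytic} we have $f\in C^{k,\alpha}_{\delta}$, and $g$ being APE gives $T=O(e^{-\delta r})$, so pairing against $h=O(e^{-\delta r})$ on a manifold of volume growth $e^{(n-1)r}$ converges absolutely because $2\delta>n-1$. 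Thus this functional extends continuously and represents $D_g\mu_{\rm{AH},\hg}[\mathcal L_X g]=-\int_M\langle T,\mathcal L_X g\rangle e^{-f}\dv$.

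Next I would record the two differential identities needed to compute the weighted divergence $\mathrm{div}_f T:=\mathrm{div}\,T-T(\nabla f,\cdot)$, which satisfies $\mathrm{div}(e^{-f}T)=e^{-f}\,\mathrm{div}_f T$. By the contracted second Bianchi identity $\mathrm{div}\,\ric=\tfrac12\,d\scal$, while the standard Hessian commutation formula, with the paper's convention $\Delta=-\trace\nabla^2$, reads
\[(\mathrm{div}\,\nabla^2 f)_j=-\nabla_j\Delta f+\ric_{jk}\nabla^k f.\]
The Ricci contribution here cancels the $\ric(\nabla f,\cdot)$ term inside $T(\nabla f,\cdot)$, and using $\nabla^k f\,\nabla_j\nabla_k f=\tfrac12\nabla_j|\nabla f|^2$ together with $\mathrm{div}\,g=0$ I obtain
\[(\mathrm{div}_f T)_j=\tfrac12\nabla_j\scal-\nabla_j\Delta f-\tfrac12\nabla_j|\nabla f|^2-(n-1)\nabla_j f.\]

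Finally I would invoke the Euler--Lagrange equation \eqref{eq:EL_equation_entropy}, which gives $\scal=2\Delta f+|\nabla f|^2-n(n-1)+2(n-1)f$; taking the gradient yields $\tfrac12\nabla\scal=\nabla\Delta f+\tfrac12\nabla|\nabla f|^2+(n-1)\nabla f$, and substituting into the previous display shows $\mathrm{div}_f T=0$, hence $\mathrm{div}(e^{-f}T)=0$. Using the symmetry of $T$ to write $\langle T,\mathcal L_X g\rangle=2\,T^{ij}\nabla_i X_j$ and integrating by parts over $B_R$,
\begin{align*}
-\int_{B_R}\langle T,\mathcal L_X g\rangle e^{-f}\dv
&=-2\int_{B_R} e^{-f}T^{ij}\nabla_i X_j\dv\\
&=2\int_{B_R}\langle\mathrm{div}(e^{-f}T),X\rangle\dv-2\int_{\partial B_R}e^{-f}T(X,\nu)\,dA\\
&=-2\int_{\partial B_R}e^{-f}T(X,\nu)\,dA.
\end{align*}
Letting $R\to\infty$, both $T=\ric+\nabla^2 f+(n-1)g$ and $X$ are $O(e^{-\delta r})$ while the area of $\partial B_R$ grows like $e^{(n-1)R}$, so the boundary integral is $O(e^{(n-1-2\delta)R})\to0$ since $\delta>\tfrac{n-1}{2}$, giving $D_g\mu_{\rm{AH},\hg}[\mathcal L_X g]=0$. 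The only genuinely delicate point is the justification that the first variation formula may be applied to the merely $C^{k-2,\alpha}_{\delta}$ tensor $\mathcal L_X g$ (handled by the continuity of the pairing noted above) together with the sign bookkeeping in the Hessian commutator and the boundary decay; the rest is a direct substitution of the Euler--Lagrange equation.
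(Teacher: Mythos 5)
Your proof is correct, and it takes a genuinely different route from the paper's. The paper argues by naturality: after reducing (by density) to $X\in C^{k+1,\alpha}_{\delta}(TM)$, it considers the flow $\varphi_t$ of $X$ and the curve $g_t=\varphi_t^*g$; diffeomorphism invariance of the Euler--Lagrange equation \eqref{eq:EL_equation_entropy} gives $f_{g_t}=f_g\circ\varphi_t$, Corollary \ref{cor:diff_inv:EH} gives constancy of $S_{\hg}(g_t)$, and a change of variables shows the remaining integral in $\overline{\mathcal{W}}_{\rm{AH},\hg}$ is unchanged, so $t\mapsto\mu_{\rm{AH},\hg}(g_t)$ is constant and differentiating at $t=0$ gives the claim. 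You instead substitute $h=\mathcal{L}_Xg$ into the first variation formula of Proposition \ref{prop:first_variation_entropy} and derive the weighted Bianchi/soliton identity $\mathrm{div}\bigl(e^{-f}(\ric+\nabla^2 f+(n-1)g)\bigr)=0$ directly from \eqref{eq:EL_equation_entropy}; your computation checks out (including the sign of the Hessian commutator under the convention $\Delta=-\trace\nabla^2$), and the boundary term in the integration by parts is $O(e^{(n-1-2\delta)R})\to 0$ since $T$ and $X$ both decay like $e^{-\delta r}$ with $2\delta>n-1$. What each approach buys: yours exposes the geometric mechanism (the minimizer $f_g$ turns $\ric+\nabla^2 f_g+(n-1)g$ into a weighted divergence-free tensor, exactly as in Perelman-type functionals), is self-contained, does not invoke Corollary \ref{cor:diff_inv:EH} (whose proof, via Lemma \ref{lem:smalldiffeoinvar}, carries the extra hypothesis $k\geq 3$), and replaces the paper's density step---delicate, since smooth sections are not norm-dense in H\"older spaces---by the cleaner observation that the first variation is a bounded linear functional on $C^0_{\delta}$; the paper's argument, in turn, needs no tensor computation and reuses machinery that recurs later (the same flow trick appears again in the proof of Proposition \ref{thm:second_variation_entropy}). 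Two small bookkeeping points for your write-up: $\mathrm{div}(e^{-f}T)$ involves third derivatives of $f$, so for $k=2$ the pointwise identity should be read distributionally (or after bootstrapping $f$ through the Euler--Lagrange equation), and your continuous-extension reading of $D_g\mu_{\rm{AH},\hg}[\mathcal{L}_Xg]$ should be stated as the \emph{definition} of the left-hand side for such rough directions---it agrees with the interpretation implicit in the paper's density reduction.
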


\begin{proof}
Let $f=f_g$ be the minimizer in the definition of $\mu_{\rm{AH},\hg}$ and let $u=e^{-f/2}-1$. By the proof of Lemma \ref{lem_W_entropy},
\begin{equation}\begin{split}\label{eq:entropy_in_u}
\mu_{\rm{AH},\hg}(g)&=\overline{\mathcal{W}}_{\rm{AH},\hg}(g,u)\\ &=
\int_M \Big( 4|\nabla u|^2+(\scal_g +n(n-1))u^2\\
&\qquad +2(\scal_g +n(n-1))u+F(u) \Big) \dv_g 
+ S_{\hg}(g),
\end{split}
\end{equation}
where $F(x)=2(n-1) \left( (\log((x+1)^2)-1)(x+1)^2+1 \right)$.

By density of $ C^{k+1,\alpha}_{\delta}$ in $ C^{k-1,\alpha}_{\delta}$, it suffices to show the lemma for  $X\in C^{k+1,\alpha}_{\delta}(TM)$. Let $\varphi_t$ be the diffeomorphisms generated by $X$ and $g_t=\varphi_t^*g$. By diffeomorphism invariance of the Euler--Lagrange equation \eqref{eq:EL_equation_entropy}, $f_{g_t}=f_g\circ \varphi_t$, so that also $u_{g_t}=u_g\circ \varphi_t$. Furthermore, Corollary \ref{cor:diff_inv:EH} 
 implies that $S_{\hg}(g_t)$ is constant along $t$. Using this together with $u_{g_t}=u_g\circ \varphi_t$ in \eqref{eq:entropy_in_u} implies that also $\mu_{\rm{AH},\hg}(g)$ is constant in $t$. Differentiating at $t=0$ gives the desired result.
\end{proof}

We have mentioned Ricci flow on AH manifolds but have yet to discuss it in detail. In the AH setting, it natural to normalize the Ricci flow equation to
\begin{align} \label{eq:AH_Ricci_flow}
\frac{d}{dt}g_t=-2\ric_{g_t}+2(n-1)g_t.
\end{align}
Existence and uniqueness of solutions to \eqref{eq:AH_Ricci_flow} in the AH setting has been established by Bahuaud \cite{Bah11}.
While Bahuaud only considers AH metrics with smooth compactifications,  these results can probably be extended to AH metrics of class $C^{k,\alpha}_{\delta}$ along the same lines. Thus if $g_0\in \mathcal{R}^{k,\alpha}_{\delta}(M,\hg)$, we expect to have a unique solution $g_t$ of \eqref{eq:AH_Ricci_flow} starting at $g_0$ such that $g_t\in \mathcal{R}^{k,\alpha}_{\delta}(M,\hg)$.

\begin{lem}
Let $g_t$ be a solution of \eqref{eq:AH_Ricci_flow} with $g_t\in\mathcal{R}^{k,\alpha}_{\delta}(M,\hg)$. Then, the function $t\mapsto \mu_{\rm{AH},\hg}(g_t)$ is monotonically increasing. It is strictly monotonically increasing unless $g_t$ is a constant family of PE metrics.
\end{lem}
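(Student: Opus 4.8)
The plan is to differentiate $t\mapsto \mu_{\rm{AH},\hg}(g_t)$ directly using the first variation formula of Proposition \ref{prop:first_variation_entropy} and then recognize the result as the integral of a perfect square. First I would record that along the flow each metric $g_t$ lies in $\mathcal{R}^{k,\alpha}_{\delta}(M)$, so by Proposition \ref{lem_analyticity} the minimizer $f_t=f_{g_t}$ depends analytically on $t$ and $t\mapsto \mu_{\rm{AH},\hg}(g_t)$ is differentiable. Writing $f=f_t$ and $h=\partial_t g_t=-2(\ric_{g_t}+(n-1)g_t)$ for the normalized flow, Proposition \ref{prop:first_variation_entropy} gives
\begin{align*}
\frac{d}{dt}\mu_{\rm{AH},\hg}(g_t)
= D_{g_t}\mu_{\rm{AH},\hg}[h]
= -\int_M \langle \ric+\nabla^2 f+(n-1)g, h\rangle\, e^{-f}\dv.
\end{align*}
Since $h\in C^{k-2,\alpha}_{\delta}\subset L^2$, the first variation formula extends to this $h$ by the density of $C^{k,\alpha}_{\delta}$ together with continuity of both sides in the $L^2$-topology.

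The key step is to symmetrize the flow direction using the diffeomorphism invariance from Lemma \ref{lem_diff_invariance_entropy}. Because $f\in C^{k,\alpha}_{\delta}$, the vector field $X=-\nabla f$ lies in $C^{k-1,\alpha}_{\delta}(TM)$, and $\mathcal{L}_{-\nabla f}g=-2\nabla^2 f$. Hence $D_{g_t}\mu_{\rm{AH},\hg}[\mathcal{L}_{-\nabla f}g]=0$, and by linearity of $D_{g_t}\mu_{\rm{AH},\hg}$,
\begin{align*}
\frac{d}{dt}\mu_{\rm{AH},\hg}(g_t)
= D_{g_t}\mu_{\rm{AH},\hg}\big[h+\mathcal{L}_{-\nabla f}g\big].
\end{align*}
Since $h+\mathcal{L}_{-\nabla f}g = -2\ric-2(n-1)g-2\nabla^2 f = -2\big(\ric+\nabla^2 f+(n-1)g\big)$, the first variation formula collapses to
\begin{align*}
\frac{d}{dt}\mu_{\rm{AH},\hg}(g_t)
= 2\int_M \big|\ric+\nabla^2 f+(n-1)g\big|^2\, e^{-f}\dv \geq 0.
\end{align*}
The integrand is integrable because $\ric+(n-1)g\in C^{k-2,\alpha}_{\delta}\subset L^2$, $\nabla^2 f\in C^{k-2,\alpha}_{\delta}\subset L^2$, and $e^{-f}$ is bounded. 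This establishes monotonicity.

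For the equality case, I would argue that $\frac{d}{dt}\mu_{\rm{AH},\hg}(g_t)=0$ at some time $t_0$ forces $\ric+\nabla^2 f+(n-1)g=0$ there, which is precisely the critical point equation of $\mu_{\rm{AH},\hg}$. By Corollary \ref{cor:Entropy_critical_points} the metric $g_{t_0}$ is then PE, so in particular $\ric+(n-1)g=0$ and therefore $\partial_t g_{t_0}=0$; by uniqueness of the Ricci flow this gives $g_t\equiv g_{t_0}$, a constant family of PE metrics. Contrapositively, if $g_t$ is not a constant family of PE metrics, then $\frac{d}{dt}\mu_{\rm{AH},\hg}(g_t)>0$ for every $t$, which is strict monotonicity.

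The main obstacle is the regularity bookkeeping rather than the algebra: one must justify that $t\mapsto \mu_{\rm{AH},\hg}(g_t)$ is genuinely differentiable and that the first variation formula applies to $h=\partial_t g_t$, which is only $C^{k-2,\alpha}_{\delta}$ rather than $C^{k,\alpha}_{\delta}$. This is handled by combining the analytic dependence $g\mapsto f_g$ of Proposition \ref{lem_analyticity} with a density and continuity argument, using that all the terms entering the first variation are controlled in $L^2$ and that the decay rate $\delta>\frac{n-1}{2}$ forces every integral and boundary term to converge.
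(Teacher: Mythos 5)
Your proposal is correct and follows essentially the same route as the paper: apply the first variation formula of Proposition \ref{prop:first_variation_entropy} to $\partial_t g_t$, use the diffeomorphism invariance of Lemma \ref{lem_diff_invariance_entropy} to absorb $-2\nabla^2 f_{g_t}$ (a Lie derivative direction) into the variation, obtain the perfect square $2\int_M |\ric+\nabla^2 f_{g_t}+(n-1)g_t|^2 e^{-f_{g_t}}\dv \geq 0$, and settle the equality case via Corollary \ref{cor:Entropy_critical_points}. Your version is in fact more careful than the paper's on two points: you justify applying the first variation to the lower-regularity direction $h=\partial_t g_t\in C^{k-2,\alpha}_{\delta}$, and you correctly use the normalization $\partial_t g=-2\ric-2(n-1)g$ from the introduction (the sign in the displayed flow equation \eqref{eq:AH_Ricci_flow} is a typo, as only this sign makes PE metrics fixed points and the integrand a perfect square).
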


\begin{proof}
By Lemma \ref{lem_diff_invariance_entropy}, $D_g\mu_{\rm{AH},\hg}[\nabla^2 f]=0$ for $f\in C^{k,\alpha}_{\delta}$. If $g_t$ is a solution of \eqref{eq:AH_Ricci_flow}, we  get from Proposition \ref{prop:first_variation_entropy} that
\begin{align*}
\frac{d}{dt}\mu_{\rm{AH},\hg}(g_t)=2\int_M | \ric_{g_t}+\nabla^2f_{g_t}+(n-1)g_t|_{g_t}^2 e^{-f_{g_t}}\dv_{g_t}\geq0.
\end{align*}
 The equality case follows from Corollary \ref{cor:Entropy_critical_points}.
\end{proof}

Next, we compute the second variation of the expander entropy at an Einstein metric.

\begin{defn}
The \emph{Lichnerowicz Laplacian} $\Delta_L$ and the \emph{Einstein operator} $\Delta_E$, which act on symmetric tensor fields $h$, are defined by
\begin{align*}
\Delta_L h_{ij}&=\Delta h_{ij}+\ric_{ik}h_j^k+\ric_{jk}h_i^k-2h^{kl}R_{iklj},
\qquad\text{ and}\\
\Delta_Eh_{ij}&=\Delta h_{ij}-2h^{kl}R_{iklj}.
\end{align*}
Note that if the underlying manifold is Einstein with $\ric=\lambda g$ then we simply have $\Delta_Lh=(\Delta_E+2\lambda)h$.
\end{defn}
\begin{prop}\label{thm:second_variation_entropy} 
Let $(M,g)$ be a complete PE manifold. Then the second variation of $\mu_{\rm{AH},\hg}$ at $g$ is given by
\begin{align*}
D^2_g\mu_{\rm{AH},\hg}[h,h]=\begin{cases}
-\frac{1}{2}\int_M\langle \Delta_E h,h\rangle\dv, & \text{ if }\mathrm{div} h=0,\\
0, & \text{ if }h=\mathcal{L}_Xg \text{ for }X\in C^{k+1,\alpha}_{\delta}(TM).
\end{cases}
\end{align*}
Moreover, $D^2_g\mu_{\rm{AH},\hg}$ is diagonal with respect to the orthogonal decomposition
\begin{align}\label{eq_decomposition}
C^{k,\alpha}_{\delta}(S^2T^*M)=
\mathrm{ker}(\mathrm{div})
\oplus \left\{\mathcal{L}_Xg\mid X\in C^{k+1,\alpha}_{\delta}(TM)\right\}.
\end{align}
\end{prop}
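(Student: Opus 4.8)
The plan is to differentiate the first variation formula of Proposition~\ref{prop:first_variation_entropy} at the critical point $g$. Since $(M,g)$ is PE we have $\scal_g=-n(n-1)$, so $f\equiv 0$ solves the Euler--Lagrange equation \eqref{eq:EL_equation_entropy}; by the uniqueness in Lemma~\ref{lem:uniqueness_EL_equation} the minimizer is $f_g=0$, and by Corollary~\ref{cor:Entropy_critical_points} the metric $g$ is a critical point, so the Hessian is a well-defined symmetric bilinear form. Writing $T(g)=\ric_g+\nabla^2 f_g+(n-1)g$, the first variation reads $D_g\mu_{\rm{AH},\hg}[h]=-\int_M\langle T(g),h\rangle e^{-f_g}\dv$, and at $g$ we have $T(g)=\ric_g+(n-1)g=0$. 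Hence, differentiating along $g_t=g+th$, every term carrying a factor of $T$ drops out together with all derivatives of $e^{-f_g}=1$, and only the linearization of $T$ survives:
\begin{equation*}
D^2_g\mu_{\rm{AH},\hg}[h,h]=-\int_M\left\langle D_g\ric[h]+\nabla^2\dot f+(n-1)h,\,h\right\rangle\dv,
\end{equation*}
where $\dot f=\tfrac{d}{dt}f_{g+th}|_{t=0}\in C^{k,\alpha}_{\delta}(M)$ exists by Proposition~\ref{lem_analyticity}. Here $\tfrac{d}{dt}(\nabla^2_{g_t}f_{g_t})|_{0}=\nabla^2\dot f$, since the variation of the Hessian operator acts on $\nabla f_g=0$.

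Next I would identify $\dot f$ on $\ker(\mathrm{div})$. Linearizing \eqref{eq:EL_equation_entropy} at $(g,0)$, using the formula for $D_g\scal$ recorded in the proof of Proposition~\ref{prop_conformal_decomposition} together with $\langle\ric_g,h\rangle=-(n-1)\trace h$, gives $2(\Delta_g+(n-1))\dot f=\Delta_g(\trace h)+\mathrm{div}(\mathrm{div}\,h)+(n-1)\trace h$. For $\mathrm{div}\,h=0$ this reduces to $(\Delta_g+(n-1))(2\dot f-\trace h)=0$, and since $\delta$ lies in the range where $\Delta_g+(n-1)$ is an isomorphism on $C^{k,\alpha}_{\delta}$ (the reason for the standing assumption $\delta<\tfrac{n-1}{2}+\tfrac12\sqrt{(n+3)(n-1)}$, cf.\ \cite[Propositions C and F]{Lee06}), we conclude $\dot f=\tfrac12\trace h$. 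Substituting the standard linearization of the Ricci tensor, whose $\mathrm{div}\,h$-dependent term vanishes on $\ker(\mathrm{div})$ and which reduces there to $D_g\ric[h]=\tfrac12\Delta_L h-\tfrac12\nabla^2(\trace h)$, the Hessian-of-trace term is cancelled by $\nabla^2\dot f=\tfrac12\nabla^2(\trace h)$, leaving $D_g\ric[h]+\nabla^2\dot f+(n-1)h=\tfrac12\Delta_L h+(n-1)h$. Using $\Delta_L h=(\Delta_E-2(n-1))h$ at the Einstein metric, this equals $\tfrac12\Delta_E h$, yielding the first case $D^2_g\mu_{\rm{AH},\hg}[h,h]=-\tfrac12\int_M\langle\Delta_E h,h\rangle\dv$.

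For the gauge directions and for diagonality I would exploit diffeomorphism invariance. Integrating the infinitesimal statement of Lemma~\ref{lem_diff_invariance_entropy} along the flow $\varphi_t$ of a vector field $X\in C^{k+1,\alpha}_{\delta}(TM)$ shows $\mu_{\rm{AH},\hg}(\varphi_t^*g')=\mu_{\rm{AH},\hg}(g')$ for every metric $g'$ and all $t$. Applying this to $g'=g+sk$ for arbitrary $k\in C^{k,\alpha}_{\delta}(S^2T^*M)$ and computing $\partial_t\partial_s|_{(0,0)}$ of the ($t$-independent) function $\mu_{\rm{AH},\hg}(\varphi_t^*(g+sk))$ gives $0=D^2_g\mu_{\rm{AH},\hg}[\mathcal{L}_Xg,k]+D_g\mu_{\rm{AH},\hg}[\mathcal{L}_Xk]$, where the last term vanishes because $g$ is critical. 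Thus $\mathcal{L}_Xg$ lies in the null space of the Hessian, which simultaneously produces the second case ($k=\mathcal{L}_Xg$) and the vanishing of all mixed terms $D^2_g\mu_{\rm{AH},\hg}[h_1,\mathcal{L}_Xg]$ with $h_1\in\ker(\mathrm{div})$, that is, the asserted diagonality.

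Finally, the orthogonality in \eqref{eq_decomposition} follows by integration by parts: $\langle\mathcal{L}_Xg,h\rangle_{L^2}$ equals a constant multiple of $\langle X,\mathrm{div}\,h\rangle_{L^2}$, with the boundary term at infinity vanishing because $X,h\in C^{k,\alpha}_{\delta}$ and $2\delta>n-1$, so $\ker(\mathrm{div})$ and $\{\mathcal{L}_Xg\}$ are $L^2$-orthogonal; that they span $C^{k,\alpha}_{\delta}(S^2T^*M)$ reduces to the invertibility of $X\mapsto\mathrm{div}\,\mathcal{L}_Xg$ on weighted spaces, which holds since the PE manifold has $\ric=-(n-1)g<0$ and hence carries no nontrivial decaying Killing fields. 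I expect the main obstacle to be the identification $\dot f=\tfrac12\trace h$: it is precisely here that invertibility of $\Delta_g+(n-1)$ (and thus the sharpened range for $\delta$) is essential, and the entire collapse to $\Delta_E$ hinges on the resulting cancellation of the Hessian-of-trace terms; a secondary technical point is the careful bookkeeping of boundary terms at infinity, legitimate under the decay $\delta>\tfrac{n-1}{2}$ used throughout.
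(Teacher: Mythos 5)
Your proposal is correct and follows essentially the same route as the paper: differentiating the first variation at the critical PE metric, identifying $\dot f=\tfrac12\trace h$ on $\ker(\mathrm{div})$ via the linearized Euler--Lagrange equation and the isomorphism $\Delta+(n-1)$, collapsing $\tfrac12\Delta_L h+(n-1)h$ to $\tfrac12\Delta_E h$, and handling the Lie-derivative directions and diagonality through the two-parameter family $\varphi_t^*(g+sk)$ and Lemma~\ref{lem_diff_invariance_entropy}. Your additional remarks (the explicit vanishing of the term $D_g\mu_{\rm{AH},\hg}[\mathcal{L}_Xk]$ at the critical point, and the $L^2$-orthogonality and spanning of the decomposition \eqref{eq_decomposition}) are correct elaborations of points the paper treats implicitly or by citing the Lichnerowicz splitting.
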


\begin{proof}
Recall that the first variation of the Ricci tensor \cite[Theorem 1.174]{Bes08} is given by
\begin{equation*}
D_g\ric[h]=\frac12\left( \Delta_L h+\mathcal{L}_{\mathrm{div}h}g-\nabla^2\trace h \right).
\end{equation*} 
From this we	 compute for $h\in C^{k,\alpha}_{\delta}(S^2T^*M)$ with $\mathrm{div} h=0$,
\begin{align*}
&\frac{d^2}{dt^2}\mu_{\rm{AH},\hg}(g+th)|_{t=0}=-\frac{d}{dt}
\int_M\langle \ric+\nabla^2f)+(n-1)g,h\rangle e^{-f}\dv\big|_{t=0}\\
&\qquad=-\int_M\langle D_g\ric(h)+\nabla^2f'+(n-1)h,h\rangle  e^{-f}\dv\\
&\qquad=-\int_M\langle \frac{1}{2}[\Delta_Lh+\mathcal{L}_{\mathrm{div}h}g-\nabla^2\trace h]+\nabla^2f'+(n-1)h,h\rangle  e^{-f}\dv\\ 
&\qquad=-\frac{1}{2}\int_M\langle \Delta_Lh+\nabla^2(2f'-\trace h)+2(n-1)h,h\rangle,
\end{align*}
where $f'=\frac{d}{dt}f_{g+th}|_{t=0}$, and here we also used that $f= 0$. In order to compute  $f'$, we differentiate the Euler--Lagrange equation \eqref{eq:EL_equation_entropy}.
Since $f$ is constant and $\mathrm{div} h=0$, this yields
\begin{align*}
0&=2\Delta f'-D_g\scal(h)+2(n-1)f'\\
&=2(\Delta+(n-1))(f')-[\Delta \trace h+\delta(\delta h)-\langle\ric,h\rangle]\\
&=2(\Delta+n-1)(f')-(\Delta+n-1) \trace h.
\end{align*}
Recall that the operator $\Delta +n-1: C^{k,\alpha}_{\delta}(M)\to C^{k-2,\alpha}_{\delta}(M)$ is an isomorphism. This implies $2f'=\trace h$ and 
\begin{align*}
D^2_g\mu_{\rm{AH},\hg}[h,h]=-\frac{1}{2}\int_M\langle \Delta_E h,h\rangle\dv,
\end{align*}
which proves the variational formula for the case $\mathrm{div} h=0$.
For the other case, let $h=\mathcal{L}_Xg$ for some $X\in C^{k+1,\alpha}_{\delta}(TM)$ and $k\in C^{k,\alpha}_{\delta}(S^2T^*M)$ be arbitrary. Let $\varphi_t$ be the diffeomorphism generated by $X$ and consider the $2$-parameter family of metrics $g_{t,s}=\varphi_t^*(g+s k)$. Then by Lemma \ref{lem_diff_invariance_entropy}, $\mu_{\rm{AH},\hg}(g_{t,s})$ only depends on $s$, so that
\begin{align*}
D^2_g\mu_{\rm{AH},\hg}[h,k]=D^2_g\mu_{\rm{AH},\hg}[k,h]=\frac{d^2}{dsdt}\mu_{\rm{AH},\hg}(g_{t,s})\vert_{s,t=0}=0.
\end{align*}
Together with the fact that $\Delta_{E,\widetilde{g}}$ preserves the splitting \eqref{eq_decomposition} (see \cite[pp.\ 28--29]{Lic61}), the second variational formula as well as the orthogonality statement follow.
\end{proof}

\section{A local positive mass theorem}\label{sec:localPMT}

From this point onward, we will take the metric $\hat{g}$ to be a complete PE metric on the manifold $M=\widehat{M}$. We consider the functional $g\mapsto \mu_{\rm{AH},\hg}(g)$ on the space $\mathcal{R}^{k,\alpha}_{\delta}(M,\hg)$, with $\delta>\frac{n-1}{2}$.

\subsection{Local maxima of the entropy}
In order to establish a criterion for local maximality of $\mu_{\rm{AH},\hg}$ at $\hg$, we need to take into account that our second variation has an infinite-dimensional kernel. However due to diffeomorphism invariance, the space of Lie derivatives of the metric is
a subspace of finite codimension of the kernel.
Due to this fact, we may restrict our attention to the  subset
\begin{align*}
\mathcal{S}_{\hg}=\left\{g\in \mathcal{R}^{k,\alpha}_{\delta}(M,\hg)\mid \mathrm{div}_{\hg}g=0\right\}.
\end{align*}
Let $\mathrm{Diff}^{k+1,\alpha}_{\delta}(M)$ be the set of diffeomorphisms generated by the vector fields $X\in C^{k+1,\alpha}_{\delta}(TM)$.
\begin{lem}\label{lem:slice}
There exists a $C^{k,\alpha}_{\delta}$-neighbourhood $\mathcal{U}$ of $\hg$ in the space of metrics such that any $g\in\mathcal{U}$ can be uniquely written as $g=\varphi^*\widetilde{g}$ for some $\widetilde{g}\in \mathcal{U}\cap\mathcal{S}_{\hg}$ and a diffeomorphism $\varphi\in \mathrm{Diff}^{k+1,\alpha}_{\delta}(M)$ that is $C^{k+1,\alpha}_{\delta}$-close to the identity.
\end{lem}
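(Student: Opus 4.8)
The plan is to prove this by an Ebin--Palais type slice argument, implemented through the analytic implicit function theorem on weighted H\"older spaces. Writing $\psi=\varphi^{-1}$, the assertion $g=\varphi^*\widetilde g$ with $\mathrm{div}_{\hg}\widetilde g=0$ is equivalent to finding, for each $g$ near $\hg$, a diffeomorphism $\psi$ close to the identity with $\mathrm{div}_{\hg}(\psi^*g)=0$; one then sets $\widetilde g=\psi^*g$ and $\varphi=\psi^{-1}$. I parametrise diffeomorphisms near the identity by their generators: for $Y\in C^{k+1,\alpha}_{\delta}(TM)$ let $\psi_Y\in\Diff^{k+1,\alpha}_{\delta}(M)$ be the time-one flow of $Y$, which depends analytically on $Y$. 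I then consider the map
\begin{align*}
F\colon C^{k+1,\alpha}_{\delta}(TM)\times \mathcal{R}^{k,\alpha}_{\delta}(M,\hg)\to C^{k-1,\alpha}_{\delta}(T^*M),\qquad
F(Y,g)=\mathrm{div}_{\hg}(\psi_Y^*g).
\end{align*}
Since the pullback of a $C^{k,\alpha}_{\delta}$ metric by a $C^{k+1,\alpha}_{\delta}$ diffeomorphism is again $C^{k,\alpha}_{\delta}$ (this is exactly why the diffeomorphisms are taken one order more regular than the metrics), the map $F$ is well-defined and, as in \cite{Lee06}, analytic between these Banach manifolds, with $F(0,\hg)=\mathrm{div}_{\hg}\hg=0$.

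The crux of the argument, and the step I expect to be the main obstacle, is to show that the partial differential $P:=D_YF(0,\hg)$, namely $PY=\mathrm{div}_{\hg}(\mathcal{L}_Y\hg)$, is an isomorphism from $C^{k+1,\alpha}_{\delta}(TM)$ onto $C^{k-1,\alpha}_{\delta}(T^*M)$. A Weitzenb\"ock computation expresses $P$ through $\nabla^*\nabla$, $\nabla\,\mathrm{div}$ and a Ricci term; in particular its principal symbol $\xi\mapsto|\xi|^2\,\mathrm{Id}+\xi\otimes\xi$ is positive definite, so $P$ is a second-order elliptic operator. Its indicial roots can be computed explicitly, and---just as for the scalar operator $\Delta+(n-1)$ used earlier---one checks that the weight $\delta$ lies strictly between the relevant roots, so that by the Fredholm theory of \cite{Lee06} the operator $P$ is Fredholm of index zero. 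It therefore suffices to prove injectivity. If $PY=0$ with $Y\in C^{k+1,\alpha}_{\delta}$, then integrating $\langle\mathrm{div}_{\hg}(\mathcal{L}_Y\hg),Y\rangle_{\hg}$ by parts---the boundary integrals over $\p B_R$ decaying like $e^{(n-1-2\delta)R}\to 0$ precisely because $\delta>\frac{n-1}{2}$---yields $\int_M|\mathcal{L}_Y\hg|^2_{\hg}\dv=0$, so $\mathcal{L}_Y\hg=0$ and $Y$ is a Killing field. Since $\hg$ is PE we have $\ric_{\hg}=-(n-1)\hg$, and the Bochner formula $\nabla^*\nabla Y=\ric_{\hg}(Y)=-(n-1)Y$ gives, after a further integration by parts (the same decay killing the boundary term), $\|\nabla Y\|^2_{L^2}=-(n-1)\|Y\|^2_{L^2}\leq 0$, forcing $Y\equiv 0$. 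Hence $P$ is injective, and being Fredholm of index zero it is an isomorphism.

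By the analytic implicit function theorem there exist a $C^{k,\alpha}_{\delta}$-neighbourhood $\mathcal{U}$ of $\hg$ and an analytic map $g\mapsto Y(g)\in C^{k+1,\alpha}_{\delta}(TM)$ with $Y(\hg)=0$ and $F(Y(g),g)=0$ for all $g\in\mathcal{U}$. Setting $\psi=\psi_{Y(g)}$, $\varphi=\psi^{-1}\in\Diff^{k+1,\alpha}_{\delta}(M)$ and $\widetilde g=\psi^*g$, we obtain $\mathrm{div}_{\hg}\widetilde g=0$ and $g=\varphi^*\widetilde g$. Since $Y(\hg)=0$ and $Y$ depends continuously on $g$, shrinking $\mathcal{U}$ makes $\varphi$ arbitrarily $C^{k+1,\alpha}_{\delta}$-close to the identity and keeps $\widetilde g\in\mathcal{U}\cap\mathcal{S}_{\hg}$. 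Finally, uniqueness of the pair $(\varphi,\widetilde g)$ in this neighbourhood is exactly the local uniqueness furnished by the implicit function theorem.
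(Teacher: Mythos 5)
Your proof is correct and is essentially the paper's argument: both are Ebin--Palais slice arguments on weighted H\"older spaces, run through the inverse/implicit function theorem with the diffeomorphisms parametrized by flows of $C^{k+1,\alpha}_{\delta}$ vector fields, and both hinge on the invertibility of the linearized gauge operator. The isomorphism you establish for $PY=\mathrm{div}_{\hg}(\mathcal{L}_Y\hg)$ --- Fredholm of index zero by Lee's theory (your weight check does succeed: the radial indicial roots are $\tfrac{n-1}{2}\pm\tfrac{1}{2}\sqrt{(n-1)(n+3)}$ and the tangential ones are $-1$ and $n$, so any $\delta\in(\tfrac{n-1}{2},n-1)$ lies in the Fredholm range) together with injectivity via integration by parts and the Bochner formula --- is precisely the content behind the paper's one-line appeal to the decomposition \eqref{eq_decomposition}, so the two proofs rest on the same linear analysis and differ only in whether one inverts the action map $(g,\varphi)\mapsto\varphi^{*}g$ or solves the gauge equation $\mathrm{div}_{\hg}(\psi_Y^{*}g)=0$.
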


\begin{proof}
Consider the smooth map
\begin{align*}
\Phi:\mathcal{S}_{\hg}\times \mathrm{Diff}^{k+1,\alpha}_{\delta}(M)&\to C^{k,\alpha}_{\delta}(S^2_+M),\\
(g,\varphi)&\mapsto \varphi^*g.
\end{align*}
Its differential at $(\hg,\identity)$ exactly corresponds to the decomposition \eqref{eq_decomposition}.
Therefore the assertion is an immediate consequence of the inverse function theorem.
\end{proof}

\begin{defn} \label{defn_lin_stab_int}
The PE manifold $(M,\widehat{g})$ is called \emph{linearly stable} if $\Delta_E$ is a nonnegative operator. It is called \emph{integrable} if there exists a $C^{k,\alpha}_{\delta}$-neighbourhood $\mathcal{U}$ of $\widehat{g}$ in the space of metrics such that
\begin{align*}
\mathcal{E}=\left\{g\in \mathcal{U}\cap\mathcal{S}_{\widehat{g}}\mid \ric_g=-(n-1)g\right\}
\end{align*}
is a smooth manifold with 
$T_{\hg}\mathcal{E}=\mathrm{ker}_{L^2}(\Delta_E)$.
\end{defn}

In order to prove a statement of local maximality for the entropy we also need to control some error terms.

\begin{lem} \label{lem_third_variation}
There exists a $H^k$-neighbourhood of $\widehat{g}$ in the space of metrics and a constant $C>0$ such that
\begin{align*}
\left| \frac{d^3}{dt^3} \mu_{\rm{AH},\hg}(g+th)\big|_{t=0}  \right|\leq C\left\| h\right\|_{C^{k,\alpha}_{\delta}}\left\| h\right\|_{H^1}^2
\end{align*}
for all $g\in\mathcal{U}$.
\end{lem}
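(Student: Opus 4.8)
The plan is to obtain the third variation by differentiating the first-variation formula of Proposition~\ref{prop:first_variation_entropy} twice in $t$ along the line $g_t = g+th$, and then to estimate the resulting trilinear expression in $h$ by carefully distributing derivatives through integration by parts. Writing $f=f_g$, $f'=\frac{d}{dt}f_{g_t}|_{t=0}$ and $f''=\frac{d^2}{dt^2}f_{g_t}|_{t=0}$, we have
\begin{equation*}
\frac{d^3}{dt^3}\mu_{\rm{AH},\hg}(g+th)\Big|_{t=0}
= -\frac{d^2}{dt^2}\Big|_{t=0}\int_M \big\langle \ric_{g_t}+\nabla^2_{g_t}f_{g_t}+(n-1)g_t,\,h\big\rangle_{g_t}\,e^{-f_{g_t}}\dv_{g_t}.
\end{equation*}
Carrying out the differentiation produces a finite sum of integrals $\int_M A\cdot B\cdot C\, e^{-f}\dv_g$, each a product of three factors drawn from $\{h,\nabla h,\nabla^2 h,f',\nabla f',f'',\nabla^2 f''\}$, with coefficients that are analytic functions of $g$, $g^{-1}$, $\ric_g$, $f$ and its derivatives. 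The factors involving $\nabla^2 h$ come only from the second variation of $\ric_{g_t}$ (whose top-order part is $h\,\nabla^2 h + (\nabla h)^2$, by the formula of \cite[Theorem 1.174]{Bes08}), and $\nabla^2 f''$ comes from differentiating the Hessian term.

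The two auxiliary estimates that drive everything are bounds on $f'$ and $f''$ obtained by differentiating the Euler--Lagrange equation \eqref{eq:EL_equation_entropy}, i.e.\ $\Phi(g,f_g)=0$ with $\Phi(g,f)=2\Delta f+|\nabla f|^2-\scal_g-n(n-1)+2(n-1)f$. This gives $P_{g,f}(f')=-D_g\Phi[h]$ and, schematically, $P_{g,f}(f'')=-D^2_g\Phi[h,h]-2D^2_{g,f}\Phi[h,f']-D^2_f\Phi[f',f']$, where $P_{g,f}$ is the operator \eqref{eq_operator_P}, shown to be an isomorphism in the proof of Proposition~\ref{lem_analyticity}. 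Since $D_g\Phi[h]$ is linear in $\nabla^2 h$, the isomorphism property on weighted H\"older spaces yields $\|f'\|_{C^{k,\alpha}_\delta}\lesssim\|h\|_{C^{k,\alpha}_\delta}$, while testing against $H^1$ and using the spectral gap $\Delta\geq\Lambda>0$ from Lemma~\ref{lem_H1-comparison} (i.e.\ \cite[Proposition~F]{Lee06}) gives the weak bound $\|f'\|_{H^1}\lesssim\|h\|_{H^1}$. The \emph{crucial} point is the mixed bound on $f''$: each quadratic source term splits with one factor measured in $C^{k,\alpha}_\delta$ and one in $H^1$; for instance $\|D^2_g\Phi[h,h]\|_{L^2}\lesssim\|h\|_{C^{k,\alpha}_\delta}\|h\|_{H^1}$, using $\|\nabla^2 h\|_{L^\infty}\lesssim\|h\|_{C^{k,\alpha}_\delta}$ (here $k\geq2$) together with $\|h\|_{L^2},\|\nabla h\|_{L^2}\leq\|h\|_{H^1}$. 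Coercivity of $P_{g,f}$ on the weighted $L^2$ scale then produces $\|f''\|_{H^1}\lesssim\|h\|_{C^{k,\alpha}_\delta}\|h\|_{H^1}$.

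With these bounds, I would estimate each integral $\int_M A\cdot B\cdot C\,e^{-f}\dv_g$ by placing the $C^{k,\alpha}_\delta$ norm on exactly one factor and the $H^1$ norm on the other two. Terms carrying $\nabla^2 h$ or $\nabla^2 f''$ are first integrated by parts once to transfer one derivative, so that afterwards no factor measured in $H^1$ carries more than one derivative: e.g.\ $\int_M c\,h^2\nabla^2 h \to \int_M(\nabla c)h^2\nabla h + \int_M c\,h(\nabla h)^2$, each bounded by $\|h\|_{C^{k,\alpha}_\delta}\|h\|_{H^1}^2$, and $\int_M\langle\nabla^2 f'',h\rangle e^{-f}\dv_g \to \int_M\langle\nabla f'',(\mathrm{div}\,h)-h(\nabla f)\rangle e^{-f}\dv_g$, bounded by $\|f''\|_{H^1}\|h\|_{H^1}\lesssim\|h\|_{C^{k,\alpha}_\delta}\|h\|_{H^1}^2$. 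The decay $h,f\in C^{k,\alpha}_\delta$ (Lemma~\ref{lem:entropyanalytic}), the exponential weight $e^{-f}\dv_g$, and the decay of the coefficients $\ric_g+(n-1)g$ and $\scal_g+n(n-1)$ guarantee absolute convergence and the vanishing of all boundary terms at infinity. Summing the finitely many terms gives the claimed inequality, and uniformity of $C$ over an $H^k$-neighbourhood of $\hg$ follows because all coefficient functions, and the constants in the $f'$-- and $f''$--estimates, depend continuously on $g$ and stay bounded near $\hg$ (controlling the coefficients via Sobolev embedding on the $H^k$-neighbourhood, and using that $P_{g,f}$ remains an isomorphism with uniform spectral gap).

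I expect the main obstacle to be precisely the mixed estimate $\|f''\|_{H^1}\lesssim\|h\|_{C^{k,\alpha}_\delta}\|h\|_{H^1}$. It is what rules out the naive bound $\|h\|_{C^{k,\alpha}_\delta}^3$ and forces one to view $P_{g,f}$ simultaneously on the weighted H\"older and weighted $L^2$/$H^1$ scales, to split every quadratic source term into one strong and one weak factor, and to keep the weights consistent so that the single integration by parts in each term produces no boundary contribution.
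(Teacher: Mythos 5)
Your proposal is correct and takes essentially the same route as the paper: the paper's proof (which defers the computational details to \cite[Proposition 4.5]{Kro20}) likewise isolates, as the only nontrivial ingredient, the estimates on $f'=\frac{d}{dt}f_{g+th}|_{t=0}$ and $f''=\frac{d^2}{dt^2}f_{g+th}|_{t=0}$ obtained by differentiating the Euler--Lagrange equation \eqref{eq:EL_equation_entropy} and invoking the isomorphism property of $P_{g,f_g}$ from the proof of Proposition \ref{lem_analyticity}, with all remaining trilinear estimates treated as standard. Your strong/weak norm splitting (one factor in $C^{k,\alpha}_{\delta}$, two in $H^1$) is precisely the mechanism of the cited compact-setting argument, transplanted to the weighted spaces via the inclusion $C^{k,\alpha}_{\delta}\subset C^{k,\alpha}_{0}$ exactly as the paper indicates.
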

\begin{proof}
We follow the proof of \cite[Proposition 4.5]{Kro20} for the expander entropy in the compact setting, to which we refer for further details.
Large part of the estimate follows from standard computations. The nontrivial part is to establish estimates on first and second variations of the minimizing function $f_g$. 
By differentiating \eqref{eq:EL_equation_entropy} twice, one sees that the defining equations for $v=\frac{d}{dt}f_{g+th}|_{t=0}$ and $w=\frac{d^2}{dt^2}f_{g+th}|_{t=0}$ are of the form
\begin{align*}
P_{g,f_g}(v)&=(*)\qquad P_{g,f_g}(w)=(**)
\end{align*}
for some right hand sides $(*),(**)$. Here, $P_{g,f_g}$ is the operator defined in \eqref{eq_operator_P}. In the proof of Proposition \ref{lem_analyticity}, we show that  $P_{g,f_g}:C^{k,\alpha}_{\delta}\to C^{k-2,\alpha}_{\delta}$ is an isomorphism. Because of that, we can carry out all nessecary estimates exactly as in \cite[Section 4]{Kro20}. The only difference is that one needs to replace the unweighted H\"{o}lder spaces by weighted ones and to use the trivial inclusion $C^{k,\alpha}_{\delta}\subset C^{k,\alpha}_{0}$.
\end{proof}
\begin{lem}\label{lem_divergence_control}
Let $h\in C^{k,\alpha}_{\delta}(S^2T^*M)$ be a tensor field satisfying  $\mathrm{div}_{\hg}h=0$.
For a PE metric $g\in \mathcal{R}^{k,\alpha}_{\delta}(M,\hg)$, we consider the $g$-dependent vector field $X_g$ satisfying
$\overline{h}_g=h-\mathcal{L}_{X_g}g\in\mathrm{ker}(\mathrm{div}_g)$ given by \eqref{eq_decomposition}. Then,
\begin{align*}
\left\|\mathcal{L}_{X_g}g\right\|_{H^1}\leq C \left\|g-\widehat{g}\right\|_{C^{k,\alpha}_{\delta}}\left\|h\right\|_{H^1}.
\end{align*}
\end{lem}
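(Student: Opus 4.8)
The plan is to realise $X_g$ as the solution of an elliptic equation and to exploit the hypothesis $\mathrm{div}_{\hg}h=0$ to extract a factor of $g-\hg$. Writing out the decomposition \eqref{eq_decomposition}, the defining property of $X_g$ is that $\mathrm{div}_g(h-\mathcal{L}_{X_g}g)=0$, that is,
\begin{align*}
P_g X_g:=\mathrm{div}_g\big(\mathcal{L}_{X_g}g\big)=\mathrm{div}_g h,
\end{align*}
where $P_g=\mathrm{div}_g\circ\mathcal{L}_{(\cdot)}g=2\,\mathrm{div}_g\circ\delta_g^*$ is a second order elliptic operator on vector fields. Since $g$ is PE, the Weitzenb\"ock identity gives $-P_g X=\nabla_g^*\nabla_g X-\nabla_g(\mathrm{div}_g X)+(n-1)X$, so that for decaying $X$ an integration by parts yields
\begin{align*}
\langle -P_g X,X\rangle_{L^2(g)}=\|\nabla_g X\|_{L^2(g)}^2+\|\mathrm{div}_g X\|_{L^2(g)}^2+(n-1)\|X\|_{L^2(g)}^2\ge c\,\|X\|_{H^1(g)}^2
\end{align*}
with $c=\min\{1,n-1\}>0$. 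The crucial observation is that, because $\mathrm{div}_{\hg}h=0$, one may rewrite the right hand side of the equation as $\mathrm{div}_g h=\mathrm{div}_g h-\mathrm{div}_{\hg}h$.

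Next I would estimate this difference. The map $h\mapsto \mathrm{div}_g h-\mathrm{div}_{\hg}h$ is a first order operator in $h$ whose coefficients are built from $g-\hg$ and $\nabla_{\hg}(g-\hg)$ and which vanishes at $g=\hg$; pointwise one has $|\mathrm{div}_g h-\mathrm{div}_{\hg}h|\le C\big(|g-\hg|\,|\nabla_{\hg}h|+|\nabla_{\hg}(g-\hg)|\,|h|\big)$. Since the weighted norm dominates the unweighted one, $\|g-\hg\|_{C^1}\le C\|g-\hg\|_{C^{k,\alpha}_\delta}$, and therefore
\begin{align*}
\|\mathrm{div}_g h\|_{L^2}=\|\mathrm{div}_g h-\mathrm{div}_{\hg}h\|_{L^2}\le C\,\|g-\hg\|_{C^{k,\alpha}_\delta}\,\|h\|_{H^1}.
\end{align*}

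Finally I would feed this into the invertibility of $P_g$. The coercivity above shows that $-P_g$ has trivial $L^2$ kernel (a kernel element would be an $L^2$ Killing field, which vanishes on a complete PE manifold) and gives $\|X_g\|_{H^1}\le c^{-1}\|P_g X_g\|_{L^2}$ by Cauchy--Schwarz. Combined with the global elliptic estimate $\|X_g\|_{H^2}\le C(\|P_g X_g\|_{L^2}+\|X_g\|_{L^2})$, valid on the AH manifold because it has bounded geometry, this yields $\|X_g\|_{H^2}\le C\|P_g X_g\|_{L^2}$, with all constants uniform for $g$ in a small $C^{k,\alpha}_\delta$-neighbourhood of $\hg$, by continuity of the coefficients and equivalence of the $g$- and $\hg$-norms there. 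Since $\mathcal{L}_{X_g}g=2\delta_g^*X_g$ is first order in $X_g$, we conclude
\begin{align*}
\|\mathcal{L}_{X_g}g\|_{H^1}\le C\|X_g\|_{H^2}\le C\|P_gX_g\|_{L^2}=C\|\mathrm{div}_g h\|_{L^2}\le C\|g-\hg\|_{C^{k,\alpha}_\delta}\|h\|_{H^1},
\end{align*}
as desired. I expect the main obstacle to be the \emph{uniformity} of the elliptic constants across the neighbourhood together with the global $H^2$ estimate in the $L^2$-based setting; the coercivity computation, which is exact because $g$ is genuinely Einstein, is what makes both the absence of kernel and the uniform lower bound transparent, while the cancellation $\mathrm{div}_g h=\mathrm{div}_g h-\mathrm{div}_{\hg}h$ supplies the decisive factor of $g-\hg$.
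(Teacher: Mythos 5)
Your proof is correct, and its skeleton coincides with the paper's: both reduce the lemma to the chain
$\|\mathcal{L}_{X_g}g\|_{H^1}\le C\|X_g\|_{H^2}\le C\|P_gX_g\|_{L^2}=C\|\mathrm{div}_g h\|_{L^2}=C\|(\mathrm{div}_g-\mathrm{div}_{\hg})h\|_{L^2}\le C\|g-\hg\|_{C^1}\|h\|_{H^1}$,
with the hypothesis $\mathrm{div}_{\hg}h=0$ supplying the decisive cancellation and the inclusion $C^{k,\alpha}_{\delta}\subset C^1$ finishing the estimate. The only genuine difference is how the middle estimate $\|X_g\|_{H^2}\le C\|P_gX_g\|_{L^2}$ is justified. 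The paper splits $H^l(TM)$ orthogonally into gradient fields and divergence-free fields; with respect to this splitting the operator $P:X\mapsto-\mathrm{div}_g\mathcal{L}_Xg$ diagonalizes as $2\Delta_g\oplus(\Delta_g+(n-1))$ (this is where the Einstein condition enters), and both blocks are isomorphisms from $H^2$ to $L^2$. You instead prove coercivity directly from the Weitzenb\"ock identity $-P_gX=\nabla^*\nabla X-\nabla(\mathrm{div}X)+(n-1)X$ — again using the Einstein condition — and then upgrade the resulting $H^1$ bound to $H^2$ via a global elliptic estimate on a bounded-geometry manifold. Your route is more self-contained (no Helmholtz decomposition, no appeal to isomorphism theorems for the two blocks), while the paper's diagonalization makes the spectral structure of $P$ explicit and the isomorphism property transparent; both give constants that are uniform for $g$ in a $C^{k,\alpha}_{\delta}$-neighbourhood of $\hg$, which is what the application in Theorem \ref{thm:local_maximum_entropy} requires. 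Two minor remarks: your integration by parts needs decay of $X_g$, which is exactly what the decomposition \eqref{eq_decomposition} provides ($X_g\in C^{k+1,\alpha}_{\delta}(TM)$ with $\delta>\frac{n-1}{2}$, hence $X_g\in H^2$); and your coercivity identity already forces any decaying kernel element of $P_g$ to vanish, so the aside about $L^2$ Killing fields is superfluous.
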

\begin{proof}
An analogous statement was shown in \cite[Lemma~5.4.4]{Kro13} for the compact setting and the proof is the same here. 
With respect to the orthogonal decomposition
\begin{align*}
H^l(TM)=\left\{\nabla_g f\mid f\in H^{l+1}(M)\right\}\oplus 
\left\{X\in  H^{l}(TM) \mid \mathrm{div}_gX=0\right\}
\end{align*}
for $l\in \left\{0,1,2\right\}$,
the operator $P:X\mapsto -\mathrm{div}_g\mathcal{L}_{X}g$ decomposes as
\begin{align*}
P=2\Delta_g\oplus (\Delta_g+(n-1))
\end{align*}
and both operators on the right hand side are isomorphisms from $H^2$ to $L^2$. Thus,
\begin{align*}
\left\|\mathcal{L}_{X_g}g\right\|_{H^1}
&\leq C \left\|X_g\right\|_{H^2}
\leq C \left\|PX_g\right\|_{L^2}
\leq C\left\|\mathrm{div}_gh\right\|_{L^2} \\
&\leq C\left\|(\mathrm{div}_g-\mathrm{div}_{\hg})h\right\|_{L^2}
\leq C \left\|g-\widehat{g}\right\|_{C^1}\left\|h\right\|_{H^1}.
\end{align*}
 The estimate of the lemma follows from the trivial inclusion $C^{k,\alpha}_{\delta}\subset C^1$.
\end{proof}

Now we are ready to prove the main result of this subsection.
\begin{thm}\label{thm:local_maximum_entropy}
Let the PE manifold $(M,\hg)$ be linearly stable and integrable. Then it is a local maximum of $\mu_{\rm{AH},\hg}$.
\end{thm}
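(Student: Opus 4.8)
The plan is to follow the strategy used by Kr\"oncke in the compact setting \cite{Kro20,Kro13}, adapted to the weighted AH function spaces, combining the slice decomposition of Lemma \ref{lem:slice}, the integrability hypothesis, and the second variation formula of Proposition \ref{thm:second_variation_entropy}. Since $\mu_{\rm{AH},\hg}$ is invariant under the diffeomorphisms in $\mathrm{Diff}^{k+1,\alpha}_{\delta}(M)$ (this is exactly what the proof of Lemma \ref{lem_diff_invariance_entropy} establishes), Lemma \ref{lem:slice} lets me write any metric $g$ in a small $C^{k,\alpha}_{\delta}$-neighbourhood of $\hg$ as $g=\varphi^*\widetilde g$ with $\widetilde g\in\mathcal{S}_{\hg}$, so that $\mu_{\rm{AH},\hg}(g)=\mu_{\rm{AH},\hg}(\widetilde g)$. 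It therefore suffices to prove $\mu_{\rm{AH},\hg}(\widetilde g)\le\mu_{\rm{AH},\hg}(\hg)$ for all $\widetilde g\in\mathcal{S}_{\hg}$ close to $\hg$.

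Next I would use integrability to project $\widetilde g$ onto the Einstein manifold $\mathcal{E}$ of Definition \ref{defn_lin_stab_int}. Taking $g_E\in\mathcal{E}$ to be the nearest point to $\widetilde g$ (in $L^2$, say), the residual $h:=\widetilde g-g_E$ is orthogonal to $T_{g_E}\mathcal{E}$, which by integrability equals $\ker_{L^2}(\Delta_E)$ at $g_E$. Since every point of $\mathcal{E}$ is a PE metric, hence a critical point of $\mu_{\rm{AH},\hg}$ by Corollary \ref{cor:Entropy_critical_points}, the functional is constant along the connected manifold $\mathcal{E}$; in particular $\mu_{\rm{AH},\hg}(g_E)=\mu_{\rm{AH},\hg}(\hg)$, and it remains to show $\mu_{\rm{AH},\hg}(\widetilde g)\le\mu_{\rm{AH},\hg}(g_E)$.

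Expanding $t\mapsto\mu_{\rm{AH},\hg}(g_E+th)$ around the critical point $g_E$ and bounding the cubic remainder by Lemma \ref{lem_third_variation} gives
\[
\mu_{\rm{AH},\hg}(\widetilde g)-\mu_{\rm{AH},\hg}(g_E)
=\tfrac12 D^2_{g_E}\mu_{\rm{AH},\hg}[h,h]
+O\!\left(\|h\|_{C^{k,\alpha}_{\delta}}\,\|h\|_{H^1}^2\right).
\]
By Proposition \ref{thm:second_variation_entropy}, only the $\mathrm{div}_{g_E}$-free part $h'$ of $h$ contributes, via $-\tfrac12\int_M\langle\Delta_E h',h'\rangle\dv$; Lemma \ref{lem_divergence_control} shows the discarded Lie-derivative part has $H^1$-norm of order $\|g_E-\hg\|_{C^{k,\alpha}_{\delta}}\|h\|_{H^1}$, so $\|h'\|_{H^1}$ is comparable to $\|h\|_{H^1}$ and $h'$ is, up to a small error, orthogonal to $\ker_{L^2}(\Delta_E)$. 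Linear stability then yields a coercive estimate $D^2_{g_E}\mu_{\rm{AH},\hg}[h,h]\le -c\|h\|_{H^1}^2$ with $c>0$. Shrinking the neighbourhood so that $\|h\|_{C^{k,\alpha}_{\delta}}<c/(2C)$ makes the negative quadratic term dominate the cubic remainder, giving $\mu_{\rm{AH},\hg}(\widetilde g)\le\mu_{\rm{AH},\hg}(g_E)$, which completes the argument.

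The main obstacle is the coercivity estimate $-\tfrac12\int_M\langle\Delta_E h',h'\rangle\dv\le -c\|h'\|_{H^1}^2$ on the $L^2$-orthogonal complement of $\ker_{L^2}(\Delta_E)$. In the compact case this is automatic from discreteness of the spectrum, but here one must rule out $0$ being the bottom of the essential spectrum of $\Delta_E$, i.e.\ establish a genuine spectral gap above the finite-dimensional $L^2$-kernel; linear stability alone only provides $\Delta_E\ge 0$. I expect this gap to come from the hyperbolic asymptotics, which push the essential spectrum of $\Delta_E$ away from $0$. The second, more technical, difficulty is the uniform bookkeeping of the $g_E$-dependent splitting \eqref{eq_decomposition} as $g_E$ ranges over $\mathcal{E}$ --- transferring the coercivity, the orthogonality, and the third-variation bound from $\hg$ to nearby Einstein metrics $g_E$ --- which is precisely what Lemmas \ref{lem_divergence_control} and \ref{lem_third_variation} are designed to handle.
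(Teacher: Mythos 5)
Your proposal reproduces the paper's own argument essentially step for step: the slice lemma reduction, the integrability-based decomposition $g=\widetilde g+h$ (the paper implements this with the implicit function theorem applied to $\Psi:\mathcal{E}\times N\to T_{\hg}\mathcal{S}_{\hg}$, with $N$ the orthogonal complement of $\ker_{L^2}(\Delta_E)$ fixed at $\hg$, rather than a nearest-point projection, but this is only bookkeeping), Taylor expansion with the cubic remainder bound of Lemma \ref{lem_third_variation}, and control of the discarded Lie-derivative part via Lemma \ref{lem_divergence_control}. The sole obstacle you flag --- a spectral gap for $\Delta_E$ above its finite-dimensional $L^2$-kernel, which linear stability alone does not provide --- is resolved in the paper exactly as you anticipate: \cite[Proposition D]{Lee06} gives $\Delta_{E,\hg}\geq\tfrac{1}{4}(n-1)^2$ on $N$ from the hyperbolic asymptotics, and an $\epsilon$-interpolation argument (splitting $\int_M\langle\Delta_{E,\hg}h,h\rangle\dv$ into $(1-\epsilon)$ and $\epsilon$ parts and using boundedness of the curvature of $\hg$ to extract a gradient term) upgrades this $L^2$ bound to the $H^1$-coercivity $\int_M\langle\Delta_{E,\hg}h,h\rangle\dv\geq C\left\|h\right\|_{H^1}^2$ needed to dominate the cubic remainder.
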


\begin{proof}
Recall that by Lemma~\ref{lem:slice}, any metric $\widetilde{g}$ close to $\widehat{g}$ is isometric to a metric $g\in\mathcal{S}_{\hg}$ close to $\hg$. Therefore it suffices to prove
\begin{align*}
\mu_{\rm{AH},\hg}(g)\leq \mu_{\rm{AH},\hg}(\widehat{g})\qquad \text{for all }g\in\mathcal{S}_{\widehat{g}}\cap\mathcal{U}
\end{align*}
for a sufficiently small $C^{k,\alpha}_{\delta}$-neighbourhood $\mathcal{U}$ of $\hg$.
In order to prove this, let $\mathcal{E}$ be as in Definition~\ref{defn_lin_stab_int} and let $N$ be the orthogonal complement of $\ker_{L^2}(\Delta_E)$ in 
\begin{align*}
T_{\hg}S_{\hg}=\left\{h\in C^{k,\alpha}_{\delta}(S^2T^*M)\mid \mathrm{div}_{\hg}h=0\right\}.
\end{align*}
Since $\widehat{g}$ is integrable, $\mathcal{E}$ is a manifold and
\begin{align*}
T_{\hg}S_{\hg}=T_{\hg}\mathcal{E}\oplus N
\end{align*}
By the implicit function theorem applied to the map
\begin{align*}
\Psi: \mathcal{E}\times N\to T_{\hg}S_{\hg},\qquad (g,h)\mapsto g+h,
\end{align*}
there exists
a $C^{k,\alpha}_{\delta}$-neighbourhood $\mathcal{U}$ such that any $g\in\mathcal{S}_{\widehat{g}}\cap\mathcal{U}$ can be uniquely written as $g=\widetilde{g}+h$ with $\widetilde{g}\in \mathcal{E}$ and $h\in N$.
Let $h_{\widetilde{g}}$ be as in Lemma \ref{lem_divergence_control}.  Taylor expansion yields
\begin{align*}
\mu_{\rm{AH},\hg}(g)
&= \mu_{\rm{AH},\hg}(\widetilde{g})+\frac{d}{dt}\mu_{\rm{AH},\hg}(\widetilde{g}+th)\big|_{t=0}
+\frac{1}{2}\frac{d^2}{dt^2}\mu_{\rm{AH},\hg}(\widetilde{g}+th)\big|_{t=0}\\
&\qquad +\frac{1}{2}\int_0^1 (1-t)^2\frac{d^3}{dt^3}\mu_{\rm{AH},\hg}(\widetilde{g}+th)\\
&=\mu_{\rm{AH},\hg}(\hg)
-\frac{1}{4}\int_M \langle \Delta_{E,\widetilde{g}}h_{\widetilde{g}},h_{\widetilde{g}}\rangle_{\widetilde{g}}\dv_{\widetilde{g}}\\
&\qquad+\frac{1}{2}\int_0^1 (1-t)^2\frac{d^3}{dt^3}\mu_{\rm{AH},\hg}(\widetilde{g}+th).
\end{align*}
For the second equality, we used the fact that $\mu_{AH}$ is constant on the manifold $\mathcal{E}$ of its critical points. Let us now look more carefully at the term coming from the second variation.
By \cite[pp.\ 28--29]{Lic61}, see also \cite[Lemma 2.4.5]{Kro13}, we know that  $\Delta_{E,\widetilde{g}}$ preserves the splitting \eqref{eq_decomposition} according to which we have $h=h_{\widetilde{g}}+\mathcal{L}_{X_{\widetilde{g}}}\widetilde{g}$. Therefore,
\begin{align*}
\int_M \langle \Delta_{E,\widetilde{g}}h_{\widetilde{g}},h_{\widetilde{g}}\rangle_{\widetilde{g}}\dv_{\widetilde{g}}=
\int_M \langle \Delta_{E,\widetilde{g}}h,h\rangle_{\widetilde{g}}\dv_{\widetilde{g}}
-\int_M \langle \Delta_{E,\widetilde{g}}\mathcal{L}_{X_{\widetilde{g}}}\widetilde{g},\mathcal{L}_{X_{\widetilde{g}}}\widetilde{g}\rangle_{\widetilde{g}}\dv_{\widetilde{g}}.
\end{align*}
By Lemma \ref{lem_divergence_control},
\begin{align*}
\int_M \langle \Delta_{E,\widetilde{g}}\mathcal{L}_{X_{\widetilde{g}}}\widetilde{g},\mathcal{L}_{X_{\widetilde{g}}}\widetilde{g}\rangle_{\widetilde{g}}\dv_{\widetilde{g}}\leq C\left\|\mathcal{L}_{X_{\widetilde{g}}}\widetilde{g}
\right\|_{H^1}\leq C\left\|g-\widehat{g}\right\|_{C^{k,\alpha}_{\delta}}\left\|h\right\|_{H^1}
\end{align*}
and a Taylor expansion argument (see for example \cite[p.\ 74]{Kro13}) implies
\begin{align*}
\int_M \langle \Delta_{E,\widetilde{g}}h,h\rangle_{\widetilde{g}}\dv_{\widetilde{g}} \geq\int_M \langle \Delta_{E,\hg}h,h\rangle_{\hg}\dv_{\hg}- C\left\|g-\widehat{g}\right\|_{C^{k,\alpha}_{\delta}}\left\|h\right\|_{H^1}.
 \end{align*}
By \cite[Proposition D]{Lee06} we have that $\Delta_{E,\hg} \geq \frac{1}{4}(n-1)^2$ on $N$. Therefore, with a suitable choice of $\epsilon>0$, we get
\begin{align*}
\int_M \langle \Delta_{E,\hg}h,h\rangle_{\hg}\dv_{\hg}&=(1-\epsilon)\int_M \langle \Delta_{E,\hg}h,h\rangle_{\hg}\dv_{\hg}+\epsilon\int_M \langle \Delta_{E,\hg}h,h\rangle_{\hg}\dv_{\hg}\\
&\geq\frac{1-\epsilon}{4}(n-1)^2\left\|h\right\|_{L^2}^2+\epsilon\left\|\nabla h\right\|_{L^2}^2-\epsilon\left\|R_{\widehat{g}}\right\|_{L^{\infty}}\left\|h\right\|_{L^2}^2\\
&=\epsilon\left\|\nabla h\right\|_{L^2}^2
+\left[\frac{1-\epsilon}{4}(n-1)^2-\epsilon\left\|R_{\widehat{g}}\right\|_{L^{\infty}}\right]\left\|h\right\|_{L^2}^2\\
&\geq C\left\|h\right\|_{H^1}^2
\end{align*}
for some constant $C>0$. Combining these estimates, we have thus shown
\begin{align*}
\int_M \langle \Delta_{E,\widetilde{g}}h_{\widetilde{g}},h_{\widetilde{g}}\rangle_{\widetilde{g}}\dv_{\widetilde{g}}\geq C(1-\left\|g-\widehat{g}\right\|_{C^{k,\alpha}_{\delta}}) \left\|h\right\|_{H^1}^2
\end{align*}
which implies, together with Lemma \ref{lem_third_variation},
\begin{align*}
\mu_{\rm{AH},\hg}(g)
&= 
\mu_{\rm{AH},\hg}(\hg)
-\frac{1}{4}\int_M \langle \Delta_{E,\widetilde{g}}h_{\widetilde{g}},h_{\widetilde{g}}\rangle_{\widetilde{g}}\dv_{\widetilde{g}}\\
&\qquad+\frac{1}{2}\int_0^1 (1-t)^2\frac{d^3}{dt^3}\mu_{AH}(\widetilde{g}+th) \, dt\\
&\leq 
\mu_{AH}(\hg)-C(1-\left\|g-\widehat{g}\right\|_{C^{k,\alpha}_{\delta}}) \left\|h\right\|_{H^1}^2+\left\|h\right\|_{H^k}\left\|h\right\|_{H^1}^2.
\end{align*}
Thus we get the desired statement, provided that the $C^{k,\alpha}_{\delta}$-neighbourhood $\mathcal{U}$ is small enough.
\end{proof}

\subsection{Positivity of mass}\label{subsec:local_positive_mass}

As in Subsection \ref{subsec:conformal_PMT}, we set
\begin{align*}
\mathcal{C}=\left\{g\in \mathcal{R}^{k,\alpha}_{\delta}(M,\hg)\mid \scal_g=-n(n-1)\right\}.
\end{align*}
Observe that if $g\in \mathcal{C}$, the Euler--Lagrange equation \eqref{eq:EL_equation_entropy} implies that $f_g=0$, so that we have the identity $\mu_{\rm{AH},\hg}(g)=-\m_{\rm{VR},\hg}(g)$. 

\begin{prop}\label{prop_conformal_second_variation}
Let $g\in \mathcal{C}$. Then $g$ is a critical point of $\mu_{\rm{AH},\hg}$ with respect to conformal variations. Moreover, the second variation in conformal directions is given by
\begin{align*}
D_g^2\mu_{\rm{AH},\hg}(v g,v g)=-\int_M (Pv) v\dv,
\end{align*}
where the operator $P$ is given by
\begin{align*}
(n-1)(\Delta+n)\left(1-\frac{1}{2}\Delta(\Delta+(n-1))^{-1}\right).
\end{align*}
\end{prop}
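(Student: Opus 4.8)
The plan is to read off both statements from the first variation formula of Proposition~\ref{prop:first_variation_entropy}, specialized to the constant scalar curvature locus $\mathcal{C}$. Since the Euler--Lagrange equation \eqref{eq:EL_equation_entropy} is solved by $f_g=0$ whenever $\scal_g=-n(n-1)$, the first variation reduces to $D_g\mu_{\rm{AH},\hg}[h]=-\int_M\langle\ric_g+(n-1)g,h\rangle\dv$ for $g\in\mathcal{C}$. Evaluating on a conformal direction $h=vg$ gives $D_g\mu_{\rm{AH},\hg}[vg]=-\int_M v(\scal_g+n(n-1))\dv=0$, which is the critical point claim.

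For the second variation I would differentiate the first variation along the linear path $g_t=g+tvg$, noting that the formula of Proposition~\ref{prop:first_variation_entropy} holds for all $g_t$ and that $f_t=f_{g_t}$ exists and varies analytically by Proposition~\ref{lem_analyticity}. Writing $T_t=\ric_{g_t}+\nabla^2_{g_t}f_{g_t}+(n-1)g_t$, we have $D^2_g\mu_{\rm{AH},\hg}(vg,vg)=-\frac{d}{dt}\int_M\langle T_t,vg\rangle_{g_t}e^{-f_t}\dv_{g_t}\big|_{t=0}$. The structural point that makes the computation tractable is that $T_0=\ric_g+(n-1)g$ is trace-free, since $\trace_g T_0=\scal_g+n(n-1)=0$. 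Hence every term arising from differentiating $e^{-f_t}$, the volume form $\dv_{g_t}$, or the two inverse metrics in the inner product carries a factor $\langle T_0,vg\rangle_g=v\,\trace_g T_0=0$ and drops out, leaving only $D^2_g\mu_{\rm{AH},\hg}(vg,vg)=-\int_M v\,\trace_g\dot T_0\dv$, where $\dot T_0=\frac{d}{dt}T_t\big|_{t=0}$.

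It then remains to compute $\trace_g\dot T_0$ as a function of $v$. For the Ricci contribution I would use the identity $\trace_g(D_g\ric[vg])=D_g\scal[vg]+v\scal_g$ together with $D_g\scal[vg]=(n-1)(\Delta+n)v$ on $\mathcal{C}$ (Proposition~\ref{prop_conformal_decomposition}), which gives $\trace_g(D_g\ric[vg])=(n-1)\Delta v$. For the Hessian contribution, the vanishing of $f_0$ reduces the trace of $\frac{d}{dt}\nabla^2_{g_t}f_t|_{t=0}$ to $-\Delta f'$ with $f'=\frac{d}{dt}f_t|_{t=0}$. The only genuinely nonformal step is to identify $f'$: linearizing \eqref{eq:EL_equation_entropy} at $f_0=0$ annihilates all first-order-in-$f$ terms and yields $2(\Delta+(n-1))f'=D_g\scal[vg]=(n-1)(\Delta+n)v$; since $\Delta+(n-1)$ is an isomorphism on $C^{k,\alpha}_\delta$ by \cite[Theorem~C and Proposition~E]{Lee06}, this gives $f'=\frac{n-1}{2}(\Delta+(n-1))^{-1}(\Delta+n)v$. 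Adding the three contributions and using that these are all functions of $\Delta$ and hence commute, $\trace_g\dot T_0=(n-1)(\Delta+n)v-\Delta f'=(n-1)(\Delta+n)\bigl(1-\tfrac12\Delta(\Delta+(n-1))^{-1}\bigr)v=Pv$, which is the asserted formula.

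The main obstacle is careful bookkeeping rather than any deep idea: one must verify that the trace-free property of $T_0$ really eliminates the metric-variation, volume, and weight terms, and correctly linearize the metric-dependent operators $\Delta_{g_t}$ and $\nabla^2_{g_t}$ at $f_0=0$, where the repeated vanishing of $f_0$ and $\nabla f_0$ is what keeps the computation manageable.
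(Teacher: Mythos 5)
Your proposal is correct and follows essentially the same route as the paper: differentiate the first variation formula of Proposition \ref{prop:first_variation_entropy} along $g_t=g+tvg$, use that $f_g=0$ on $\mathcal{C}$, and determine $f'$ by linearizing \eqref{eq:EL_equation_entropy} at $f=0$ and invoking the isomorphism $\Delta+(n-1)\colon C^{k,\alpha}_{\delta}\to C^{k-2,\alpha}_{\delta}$; both arguments pass through the same intermediate identity
\begin{align*}
D^2_g\mu_{\rm{AH},\hg}(vg,vg)=-\int_M \left((n-1)(\Delta v+nv)-\Delta f'\right)v\dv .
\end{align*}
Two differences are worth recording. First, the paper computes the full tensor $D_g\ric[vg]=\tfrac12\left((\Delta v)g+(2-n)\nabla^2 v\right)$ from the Lichnerowicz--Laplacian formula and then contracts with $vg$, whereas you pass to traces immediately via $\trace_g D_g\ric[h]=D_g\scal[h]+\langle h,\ric\rangle$ together with $D_g\scal[vg]=(n-1)(\Delta+n)v$ from Proposition \ref{prop_conformal_decomposition}; this is a cosmetic shortcut, and both yield the Ricci contribution $(n-1)\Delta v$. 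Second, and more substantively, the paper justifies discarding the terms that arise from differentiating $e^{-f_t}$, $\dv_{g_t}$ and the inner product merely by reference to the proof of Proposition \ref{thm:second_variation_entropy}; but there those terms vanish because the gradient $\ric+\nabla^2 f+(n-1)g$ is identically zero at a PE metric, which fails at a general $g\in\mathcal{C}$, where it is only trace-free. Your observation that for conformal directions $h=vg$ each such term carries the factor $v\,\trace_g(\ric_g+(n-1)g)=v(\scal_g+n(n-1))=0$ is precisely the justification this step needs, so your write-up makes explicit the one genuinely delicate point that the paper leaves implicit.
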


\begin{proof}
Since $\scal_g=-n(n-1)$ we have $f_g= 0$ so the gradient of $\mu_{\rm{AH},\hg}$ at $g$ is trace-free. This proves the first assertion. For the second variation formula, we get as in the proof of Theorem \ref{thm:second_variation_entropy} that
\begin{align*}
&\frac{d^2}{dt^2}\mu_{\rm{AH},\hg}(g+th)\big|_{t=0}\\
&\qquad=-\int_M\langle \frac{1}{2}\left(\Delta_Lh+\mathcal{L}_{\mathrm{div}h}g-\nabla^2\trace h\right)+\nabla^2f'+(n-1)h,h\rangle  e^{-f}\dv.
\end{align*}
Continuing the computation with $h=v g$ yields
\begin{align*}
&\frac{d^2}{dt^2}\mu_{\rm{AH},\hg}((1+tv) g)\big|_{t=0}\\
&\qquad=-\frac{1}{2}\int_M\langle (\Delta v) g+(2-n)\nabla^2v+2\nabla^2f'+2(n-1)v g,v g\rangle \dv\\
&\qquad=-\frac{1}{2}\int_M (n\Delta v+(n-2)\Delta v-2\Delta f'+2n(n-1)v)v\dv\\
&\qquad=-\int_M \left((n-1)(\Delta v+nv)v-(\Delta f') v\right)\dv.
\end{align*}
To compute $f'$, we differentiate \eqref{eq:EL_equation_entropy}.
Since $f_g= 0$ is constant and $h=v g$, this yields
\begin{align*}
0&=2\Delta f'-\scal'+2(n-1)f'\\
&=2(\Delta+(n-1))(f')-\left(\Delta \trace h+\mathrm{div}(\mathrm{div} h)-\langle\ric,h\rangle\right)\\
&=2(\Delta+n-1)(f')-(n-1)(\Delta+n) v.
\end{align*}
Inserting this equation in the above yields the desired formula.
\end{proof}

\begin{rem}\label{rem:estimate_conf_second_variation}
Observe that
\begin{align*}
-\int_M (Pv) v\dv\leq -\frac{n-1}{2}\int_M (\Delta v+nv)v\dv
=-\frac{n-1}{2}\int_M (|\nabla v|^2+n v^2)\dv.
\end{align*}
\end{rem}

\begin{thm}\label{thm_local_positive_mass}
Let $(M,\hg)$ be a complete PE manifold. Then the following are equivalent.
\begin{itemize}
\item[(i)] $\hg$ is a local maximiser of $\mu_{\rm{AH},\hg}$.
\item[(ii)] For all metrics $g$ sufficiently close to $\hg$ with $\scal_g+n(n-1)$ nonnegative and integrable, we have $m_{\rm{VR},\hg}(g)\geq 0$. Moreover, equality holds if and only if $g$ is PE.
\item[(iii)] $\hg$ is a local minimizer of $m_{\rm{VR},\hg}$ on $\mathcal{C}$.
\item[(iv)] $\hg$ is a local maximiser of $\mu_{\rm{AH},\hg}$ on $\mathcal{C}$.
\end{itemize} 
\end{thm}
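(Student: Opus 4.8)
The plan is to prove the chain of equivalences by isolating the two substantive implications, $(iv)\Rightarrow(i)$ and $(iii)\Rightarrow(ii)$, the remaining ones being formal. First I record the normalization $\mu_{\rm{AH},\hg}(\hg)=-\m_{\rm{VR},\hg}(\hg)=0$, which holds because $\hg\in\mathcal{C}$ and $\m_{\rm{VR},\hg}(\hg)=0$ directly from the definition with $\varphi=\identity$. Then $(i)\Rightarrow(iv)$ is immediate since $\mathcal{C}$ is a subset of the full space of metrics; $(iii)\Leftrightarrow(iv)$ follows from the identity $\mu_{\rm{AH},\hg}=-\m_{\rm{VR},\hg}$ on $\mathcal{C}$ noted at the start of Subsection \ref{subsec:local_positive_mass}; and $(ii)\Rightarrow(iii)$ follows by restricting the class in $(ii)$ to $\mathcal{C}$, where $\scal_g+n(n-1)\equiv 0$ is nonnegative and integrable, so that $\m_{\rm{VR},\hg}(g)\ge 0=\m_{\rm{VR},\hg}(\hg)$ for $g\in\mathcal{C}$ near $\hg$.

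For $(iv)\Rightarrow(i)$ the idea is to exploit that the conformal directions are strictly stabilizing. By Proposition \ref{prop_conformal_decomposition} every metric $g$ near $\hg$ factors uniquely as $g=e^{2\phi}\overline{g}$ with $\overline{g}\in\mathcal{C}$ close to $\hg$ and $\phi\in C^{k,\alpha}_{\delta}(M)$ small. I would then establish the \emph{conformal stability estimate} $\mu_{\rm{AH},\hg}(g)\le\mu_{\rm{AH},\hg}(\overline{g})$, with equality iff $\phi=0$; combined with $(iv)$ (equivalently $(iii)$), which gives $\mu_{\rm{AH},\hg}(\overline{g})\le\mu_{\rm{AH},\hg}(\hg)=0$, this yields $\mu_{\rm{AH},\hg}(g)\le 0=\mu_{\rm{AH},\hg}(\hg)$, which is $(i)$. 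To prove the conformal stability estimate I consider the analytic curve $t\mapsto\mu_{\rm{AH},\hg}(e^{2t\phi}\overline{g})$. Since $\overline{g}\in\mathcal{C}$, it is a critical point in conformal directions, so the first derivative at $t=0$ vanishes, while Proposition \ref{prop_conformal_second_variation} together with Remark \ref{rem:estimate_conf_second_variation} show that the second derivative is bounded above by $-c\,\|\phi\|_{H^1}^2$ with $c>0$ uniform as $\overline{g}$ ranges near $\hg$ (the volume measures being uniformly comparable). Controlling the third derivative along this curve exactly as in Lemma \ref{lem_third_variation} and Theorem \ref{thm:local_maximum_entropy}, a Taylor expansion makes the negative quadratic term dominate for $\phi$ small, giving the claim.

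For $(iii)\Rightarrow(ii)$, let $g$ be near $\hg$ with $\scal_g+n(n-1)\ge0$ and integrable, and let $\overline{g}=e^{2w}g\in\mathcal{C}$ be its representative from Proposition \ref{prop_conformal_decomposition}, a complete APE metric close to $\hg$ with $\scal_{\overline{g}}=-n(n-1)$. Applying Theorem \ref{thm:conformal_positive_mass} with base metric $\overline{g}$ and conformal metric $g$ gives $\m_{\rm{VR},\overline{g}}(g)\ge0$, with equality iff $g=\overline{g}$. The additivity of the mass in Proposition \ref{cor:additivity_renormalized_EH} yields $\m_{\rm{VR},\hg}(g)=\m_{\rm{VR},\overline{g}}(g)+\m_{\rm{VR},\hg}(\overline{g})$, and the second summand is nonnegative by $(iii)$ since $\overline{g}\in\mathcal{C}$; hence $\m_{\rm{VR},\hg}(g)\ge0$. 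In the equality case both summands vanish: $\m_{\rm{VR},\overline{g}}(g)=0$ forces $g=\overline{g}\in\mathcal{C}$ by the rigidity in Theorem \ref{thm:conformal_positive_mass}, and then $g$ realizes the minimum value $0$ of $\m_{\rm{VR},\hg}$ on $\mathcal{C}$, so it is a critical point and therefore PE by Corollary \ref{cor:crit_points_mass}. Conversely, a PE metric near $\hg$ lies in $\mathcal{C}$ and is a critical point of $\m_{\rm{VR},\hg}|_{\mathcal{C}}$; after reducing to a divergence-free representative via the slice Lemma \ref{lem:slice} and using diffeomorphism invariance together with the constancy of $\m_{\rm{VR},\hg}$ along families of PE metrics, it has vanishing mass, closing the equivalence in $(ii)$.

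The main obstacle is the conformal stability estimate in $(iv)\Rightarrow(i)$: upgrading the pointwise negative-definiteness of the conformal Hessian (Remark \ref{rem:estimate_conf_second_variation}) to an actual inequality $\mu_{\rm{AH},\hg}(e^{2\phi}\overline{g})\le\mu_{\rm{AH},\hg}(\overline{g})$ valid for finite small $\phi$ and \emph{uniformly} as $\overline{g}$ varies in a neighbourhood of $\hg$ in $\mathcal{C}$, which needs both the uniform spectral gap and a uniform third-order remainder bound, following the pattern of Theorem \ref{thm:local_maximum_entropy}. A secondary delicate point is the backward direction of the equality statement in $(ii)$, where one must ensure that \emph{every} PE metric near $\hg$, not only those joined to $\hg$ through PE metrics, has mass exactly zero; this is handled by combining diffeomorphism invariance, the slice decomposition, and the constancy of the mass along Einstein deformations.
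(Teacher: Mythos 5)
Your overall structure is sound, and your proof of $(iv)\Rightarrow(i)$ is essentially the paper's own argument: the conformal decomposition of Proposition \ref{prop_conformal_decomposition}, criticality of metrics in $\mathcal{C}$ in conformal directions, the Hessian bound from Proposition \ref{prop_conformal_second_variation} and Remark \ref{rem:estimate_conf_second_variation}, and the third-derivative control of Lemma \ref{lem_third_variation} fed into a Taylor expansion. (Two small remarks there: the paper parametrizes conformal deformations linearly, $\olg_t=(1+tv)\olg$, which is the setting of Lemma \ref{lem_third_variation}; your exponential curve $e^{2t\phi}\olg$ produces extra second-order terms $4\phi^2\olg$, which are harmless only because they are again conformal directions and $\olg$ is conformally critical — or you can simply switch to the linear parametrization. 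The uniformity you flag as the main obstacle is already built in: the constant in Lemma \ref{lem_third_variation} is uniform on a neighbourhood of $\hg$, and the conformal Hessian estimate is explicit.)

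Where you genuinely diverge is in how (ii) is reached. The paper proves $(i)\Rightarrow(ii)$ in a few lines directly from the definition of the entropy: when $\scal_g+n(n-1)\geq 0$, nonnegativity of the integrand in $\widetilde{\mathcal{W}}_{\rm{AH},\hg}(g,\omega_g)$ gives $\mu_{\rm{AH},\hg}(g)\geq-\m_{\rm{VR},\hg}(g)$, so local maximality at $\hg$ yields $0\geq\mu_{\rm{AH},\hg}(g)\geq-\m_{\rm{VR},\hg}(g)$, and equality forces $g$ to be another local maximizer, hence PE by Corollary \ref{cor:Entropy_critical_points}. You instead prove $(iii)\Rightarrow(ii)$ by the scheme of the three-dimensional positive mass theorem (Theorem \ref{thm:3dpmt}): conformal decomposition, the conformal positive mass theorem (Theorem \ref{thm:conformal_positive_mass}) with base metric $\olg$, mass additivity (Proposition \ref{cor:additivity_renormalized_EH}), and Corollary \ref{cor:crit_points_mass} for the equality case. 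The hypotheses do check out ($\olg$ is complete, APE, of constant scalar curvature, and all three metrics have integrable normalized scalar curvature), your implication set closes the equivalence, and your route even gives the sharper statement $\m_{\rm{VR},\hg}(g)\geq\m_{\rm{VR},\hg}(\olg)$. The trade-off is that the paper's argument is shorter and purely entropy-based, while yours reuses the machinery of Section \ref{Sec:PMTs} and never needs $(i)$ as an intermediary.

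The one step that fails as written is the backward direction of the equality clause in (ii): that every PE metric $g$ near $\hg$ has $\m_{\rm{VR},\hg}(g)=0$. Your argument rests on constancy of the mass along Einstein deformations (Corollary \ref{cor-EH}), but that only applies when $g$ is joined to $\hg$ by a path of PE metrics; Theorem \ref{thm_local_positive_mass} does not assume integrability of $\hg$ in the sense of Definition \ref{defn_lin_stab_int}, so the set of PE metrics near $\hg$ need not be a manifold or path-connected, and neither the slice Lemma \ref{lem:slice} nor diffeomorphism invariance produces such a path. You should know that the paper's own proof silently omits this direction entirely (it proves only ``equality $\Rightarrow$ PE''), so your proposal is no worse off; but the fix you sketch does not close it. With the tools available in this paper, the natural substitute would be a \L{}ojasiewicz--Simon inequality for the analytic functional $\mu_{\rm{AH},\hg}$ (whose analyticity is Proposition \ref{lem_analyticity}), which would force all critical points near $\hg$ to share the critical value $0$; such an inequality is only established in the cited follow-up work.
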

\begin{proof}
For proving $(i)\Rightarrow (ii)$, let $g$ be as in $(ii)$ and let $\omega_g$ be the minimizing function in the definition of $\mu_{\rm{AH},\hg}(g)$ through the functional $\widetilde{\mathcal{W}}_{\rm{AH},\hg}(g,\omega)$. Then, because $\scal_g+n(n-1)\geq 0$ and $\omega_g=e^{-f_g/2}\geq0$, we have
\begin{align*}
0=\mu_{\rm{AH},\hg}(\hg)&\geq \mu_{\rm{AH},\hg}(g)\\&=
\widetilde{\mathcal{W}}_{\rm{AH},\hg}(g,\omega_g)\\
&=\int_M \left(4|\nabla \omega_g|^2+(\scal_g +n(n-1))\omega_g^2\right) \dv\\
&\qquad +2(n-1)\int_M \left((\log(\omega_g^2)-1)\omega_g^2+1\right)\dv-\m_{\rm{VR},\widehat{g}}(g)\\
&\geq -\m_{\rm{VR},\hg}(g).
\end{align*}
Thus we have $\m_{\rm{VR},\hg}(g)\geq 0$. Moreoever, we get that $\m_{\rm{VR},\hg}(g)=0$ implies $\mu_{\rm{AH},\hg}(g)=0$. Therefore, $g$ is another local maximum of $\mu_{\rm{AH},\hg}$. In particular, it is a critical point and $\ric_g=-(n-1)g$ follows from Corollary \ref{cor:Entropy_critical_points}.

The implication $(ii)\Rightarrow (iii)$ is trivial. The implication $(iii)\Rightarrow (iv)$ follows immediately from the fact that $\mu_{\rm{AH},\hg}(g)=-\m_{\rm{VR},\hg}(g)$ for $g\in \mathcal{C}$.

For proving $(iv)\Rightarrow (i)$, we show that every metric $\olg\in \mathcal{C}$ is a local maximum of $\mu_{\rm{AH},\hg}$ in its conformal class. In fact, by Taylor expansion along the curve $\olg_t=\olg+tv\olg$, $t\in [0,1]$ and using Proposition \ref{prop_conformal_second_variation}, Remark \ref{rem:estimate_conf_second_variation} and Lemma \ref{lem_third_variation}, we obtain
\begin{align*}
\mu_{\rm{AH},\hg}((1+v)\olg)-\mu_{\rm{AH},\hg}(\olg)&=
\frac{d}{dt}\mu_{\rm{AH},\hg}(\olg_t)\big|_{t=0}+\frac{1}{2}\frac{d^2}{dt^2}\mu_{\rm{AH},\hg}(\olg_t)\big|_{t=0}\\
&\qquad+\frac{1}{2}\int_0^1(1-t)^2\frac{d^3}{dt^3}\mu_{\rm{AH},\hg}(\olg_t)\\
&\leq -\frac{n-1}{4}\left\|v\right\|_{H^1}^2+C\left\|v\right\|_{C^{k,\alpha}_{\tau}}\left\|v\right\|_{H^1}^2\\
&\leq -\left(\frac{n-1}{4}-C \left\|v\right\|_{C^{k,\alpha}_{\tau}}\right)\left\|v\right\|_{H^1}^2,
\end{align*}
 where the right hand side is nonpositive, provided that $v$ is sufficiently small. Now, let $g$ be any metric sufficiently close to $\hg$.
By Proposition \ref{prop_conformal_decomposition}, there exists a unique metric $\olg\in\mathcal{C}\cap [g]$ close to $g$. By the assumption in (ii) and the above computation,
\begin{align*}
\mu_{\rm{AH},\hg}(g)\leq \mu_{\rm{AH},\hg}(\olg)\leq \mu_{\rm{AH},\hg}(\hg),
\end{align*} 
which proves (i).
\end{proof}
\begin{rem}
The above proof shows that the equivalent assertions in Theorem \ref{thm_local_positive_mass} imply the bound
\begin{align*}
\m_{\rm{VR},\widehat{g}}(g)\geq \inf_{\omega-1\in C^{\infty}_c(M)}\int_M \left(4|\nabla \omega|^2+(\scal_g +n(n-1))\omega^2+F(\omega)
\right)\dv_g
\end{align*}
for all $g$ close to $\hg$ with $\scal_g +n(n-1)$ nonnegative and integrable. Here, $F$ denotes the nonnegative function $F(x)=2(n-1)\left((\log(x^2)-1)x^2+1\right)$ which vanishes exactly for $x=\pm 1$.
\end{rem}
The equivalent assertions in Theorem \ref{thm_local_positive_mass} imply scalar curvature rigid in the following sense.
\begin{cor}\label{cor:scr}
Let $(M,\hg)$ be a complete PE manifold which satisfies the equivalent assertions of Theorem \ref{thm_local_positive_mass}.
Then, any metric $g$ on $M$ sufficiently close to $\hg$ with $\scal_{g}\geq \scal_{\hg}$, for which there exists a compact set $K\subset M$ with
\begin{align}\label{eq:cpsupp_deformation}
g-\hg|_{M\setminus K}= 0,\qquad \volume(K,g)=\volume(K,\hg),
\end{align}
is isometric to $\hg$.
\end{cor}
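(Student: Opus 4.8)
The plan is to show that the stated hypotheses force $\m_{\rm{VR},\hg}(g)=0$, and then to read off the conclusion from the equality case of Theorem \ref{thm_local_positive_mass}, upgraded by a unique continuation argument. First I would record that, since $g=\hg$ on $M\setminus K$, the difference $h=g-\hg$ is supported in the compact set $K$. Hence we may take $\varphi=\identity$ as the defining diffeomorphism, $g$ is APE (indeed PE off $K$) and complete (it agrees with the complete metric $\hg$ outside the compact $K$), and $\scal_g+n(n-1)=\scal_g-\scal_{\hg}\ge 0$ is compactly supported, hence integrable. Thus, as $g$ is assumed close to $\hg$, it is an admissible competitor in Theorem \ref{thm_local_positive_mass}(ii).

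Next I would evaluate the two terms in the definition of $\m_{\rm{VR},\hg}(g)$ directly. For $R$ so large that $\partial B_R\cap K=\emptyset$ we have $h\equiv 0$ near $\partial B_R$, so the ADM integrand $(\mathrm{div}_{\hg}(\varphi_*g)-d\trace_{\hg}(\varphi_*g))(\nu)=(\mathrm{div}_{\hg}h-d\trace_{\hg}h)(\nu)$ (using $\mathrm{div}_{\hg}\hg=0$ and $d\trace_{\hg}\hg=0$) vanishes identically; hence the ADM surface term is zero for all large $R$. For the renormalized volume, since $B_R=\widehat B_R$ when $\varphi=\identity$ and $\dv_g=\dv_{\hg}$ off $K$,
\begin{align*}
RV_{\hg}(g,R)=\int_{B_R}(\dv_g-\dv_{\hg})=\int_K(\dv_g-\dv_{\hg})=\volume(K,g)-\volume(K,\hg)=0
\end{align*}
by the volume constraint \eqref{eq:cpsupp_deformation}. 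Therefore $\m_{\rm{VR},\hg}(g)=\lim_{R\to\infty}\big(0+2(n-1)\cdot 0\big)=0$.

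Now the equality case of Theorem \ref{thm_local_positive_mass}(ii) applies: since $\m_{\rm{VR},\hg}(g)=0$, the metric $g$ is PE, i.e.\ $\ric_g=-(n-1)g$. It remains to promote ``$g$ is Einstein and coincides with $\hg$ on the open set $M\setminus K$'' to ``$g$ is isometric to $\hg$''. I would obtain this from the unique continuation property of Einstein metrics with fixed Einstein constant: two such metrics agreeing on a nonempty open subset of a connected manifold are isometric. Concretely, one fixes the DeTurck (Bianchi) gauge relative to $\hg$, in which the Einstein condition becomes a second-order quasilinear elliptic system, and applies Aronszajn's unique continuation theorem to the difference, which vanishes on the open set $M\setminus K$; alternatively one invokes the real-analyticity of Einstein metrics in harmonic coordinates (DeTurck--Kazdan) together with connectedness of $M$.

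The main obstacle is precisely this last step. The computation $\m_{\rm{VR},\hg}(g)=0$ is elementary, and the passage to ``$g$ is Einstein'' is immediate from Theorem \ref{thm_local_positive_mass}. The genuine work lies in the unique continuation statement, whose subtlety is that the Einstein equation is elliptic only after gauge fixing, while gauge fixing does not manifestly preserve the coincidence region $M\setminus K$. I would handle this either by a purely local unique continuation argument near $\partial K$ (where coincidence on one side already fixes the gauge, since $\hg$ is in DeTurck gauge with respect to itself) or by the analyticity route, both of which are standard once the reduction above is in place.
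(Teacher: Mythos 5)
Your proposal is correct and follows essentially the same route as the paper's proof: it deduces $\m_{\rm{VR},\hg}(g)=0$ from \eqref{eq:cpsupp_deformation} (you simply make the vanishing of the ADM and renormalized-volume terms explicit), invokes the equality case of Theorem \ref{thm_local_positive_mass}(ii) to conclude that $g$ is PE, and then passes from ``two Einstein metrics agreeing outside a compact set'' to isometry. The paper handles this last step by citing \cite{deturckkazdan1981}, which is precisely the analyticity-in-harmonic-coordinates route you describe as one of your two options, so there is no substantive difference.
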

\begin{proof}
By \eqref{eq:cpsupp_deformation}, $\m_{\rm{VR},\hg}(g)=0$. By Theorem \ref{thm_local_positive_mass} (ii), $(M,g)$ is a complete PE manifold. Since $g$ and $\hg$ are both Einstein and agree outside a compact set, they have to be isometric, see \cite{deturckkazdan1981}.
\end{proof}

\begin{exmp}
	Hyperbolic space is well known to satisfy the spectral inequality $\Delta_E\geq \frac{1}{4}(n-1)^2$, so it is linearly stable and integrable. By Theorem \ref{thm:local_maximum_entropy}, it is a local maximum of the entropy and by Theorem \ref{thm_local_positive_mass}, it is a local minimizer of the volume-renormalized mass.
\end{exmp}

%
\section*{Declarations}
\subsection*{Data availability.} No data associate for the submission.

\subsection*{Conflict of interest.} On behalf of all authors, the corresponding author states that there is no conflict of
interest.


\bibliographystyle{abbrvurl}
\bibliography{AHmass}

\end{document}